\def\draft{n}
\newtheorem{theorem}{Theorem}[section]
\theoremstyle{definition}
\newtheorem{proposition}[theorem]{Proposition}
\newtheorem{lemma}[theorem]{Lemma}
\newtheorem{definition}[theorem]{Definition}
\newtheorem{remark}[theorem]{Remark}
\newtheorem{corollary}[theorem]{Corollary}
\newtheorem{example}[theorem]{Example}
\newtheorem{notation}[theorem]{Notation}
\newtheorem{observation}[theorem]{Observation}
\numberwithin{equation}{section}
\def\printname#1{
        \if\draft y
                \smash{\makebox[0pt]{\hspace{-0.5in}
                        \raisebox{8pt}{\tt\tiny #1}}}
        \fi
}
\newlength{\standardunitlength}
\long\def\@makecaption#1#2{%
     \vskip 10pt

\setbox\@tempboxa\hbox{%\ifvoid\tinybox\else\box\tinybox\fi
       \small\sf{\bfcaptionfont #1. }\ignorespaces #2}%
     \ifdim \wd\@tempboxa >\captionwidth {%
         \rightskip=\@captionmargin\leftskip=\@captionmargin
         \unhbox\@tempboxa\par}%
       \else
         \hbox to\hsize{\hfil\box\@tempboxa\hfil}%
     \fi}
\font\bfcaptionfont=cmssbx10 scaled \magstephalf
\newdimen\@captionmargin\@captionmargin=2\parindent
\newdimen\captionwidth\captionwidth=\hsize
\def\truncsimp{\overline{\Delta}}
\def\dtruncsimp{\overline{\overline{\Delta}}}
\def\cxymatrix#1{\xy*[c]\xybox{\xymatrix#1}\endxy}
\def\BZ{\mathbb Z}
\def\BQ{\mathbb Q}
\def\BC{\mathbb C}
\def\a{\alpha}
\def\e{\epsilon}
\def\b{\beta}
\def\longto{\longrightarrow}
\def\SL{\mathrm{SL}}
\def\PSL{\mathrm{PSL}}
\def\PGL{\mathrm{PGL}}
\def\GL{\mathrm{GL}}
\def\sgn{\mathrm{sgn}}
\DeclareMathOperator{\Conj}{Conj}
\DeclareMathOperator{\id}{id}
\DeclareMathOperator{\sub}{sub}
\def\be{  \begin{equation} }
\def\ee{  \end{equation} }
\def\diag{\text{diag}}
\def\CT{\mathcal T}
\def\R{\mathbb{R}}
\def\Z{\mathbb{Z}}
\def\C{\mathbb{C}}
\def\Q{\mathbb{Q}}
\def\T{\mathcal{T}}
\begin{document}

\begin{abstract}
In \cite{GaroufalidisThurstonZickert} we parametrized boundary-unipotent representations of a 3-manifold group into $\SL(n,\C)$ using \emph{Ptolemy coordinates}, which were inspired by $\mathcal A$-coordinates on higher Teichm\"{u}ller space due to Fock and Goncharov. In this paper, we parametrize representations into $\mathrm{PGL}(n,\C)$ using \emph{shape coordinates} which are a 3-dimensional analogue of Fock and Goncharov's $\mathcal X$-coordinates. These coordinates satisfy equations generalizing Thurston's gluing equations. These equations are of Neumann-Zagier type and satisfy symplectic relations with applications in quantum topology. We also explore a duality between the Ptolemy coordinates and the shape coordinates.
\end{abstract}

%%%%%%%%%%%%%%%%%%%%%%{page1}

\title[Gluing equations for $\PGL(n,\BC)$-representations of 3-manifolds]{
Gluing equations for $\PGL(n,\BC)$-representations of 3-manifolds}
\author{Stavros Garoufalidis}
\address{School of Mathematics \\
         Georgia Institute of Technology \\
         Atlanta, GA 30332-0160, USA \newline
         {\tt \url{http://www.math.gatech.edu/~stavros }}}
\email{stavros@math.gatech.edu}
\author{Matthias Goerner}
\address{University of Maryland \\
         Department of Mathematics \\
         College Park, MD 20742-4015, USA \newline
         {\tt \url{http://math.berkeley.edu/~matthias/}}}
\email{enischte@gmail.com}
\author{Christian K. Zickert}
\address{University of Maryland \\
         Department of Mathematics \\
         College Park, MD 20742-4015, USA \newline
         {\tt \url{http://www2.math.umd.edu/~zickert}}}
\email{zickert@umd.edu}
\thanks{S.~G.~and C.~Z.~were supported in part by the NSF. \\
\newline
1991 {\em Mathematics Classification.} Primary 57N10. Secondary 57M27.
\newline
{\em Key words and phrases: Generalized gluing equations, shape coordinates, Ptolemy coordinates, Neumann-Zagier datum.
}
}

\date{July 25, 2012}%\today }

\maketitle

\tableofcontents

\section{Introduction}

\subsection{Thurston's gluing equations}
\label{sub.ge}

Thurston's gluing equations are a system of polynomial equations defined for 
a compact $3$-manifold $M$ together with a topological ideal triangulation 
$\T$ of the interior of $M$. The gluing equations were introduced to concretely construct a complete hyperbolic structure on $M$ from a (suitable) solution to the gluing equations. 

Although Thurston only considered manifolds whose boundary components are tori (a necessary condition for the existence of a solution yielding a hyperbolic structure), the gluing equations are defined for manifolds with arbitrary (possibly empty) boundary. The existence of hyperbolic structures is of no concern to us here.

The gluing equations consist of an \emph{edge equation} for each $1$ cell of $\T$ and a \emph{cusp equation} for each generator of the fundamental group of each boundary component of $M$. The system may be written in the form
\begin{equation}\label{eq:GluingEqIntro}
\prod_j z_j^{A_{ij}} \prod_j (1-z_j)^{B_{ij}}=\e_i,
\end{equation}
where $A$ and $B$ are matrices whose columns are parametrized by the simplices of $\T$ and $\e_i$ is a sign (which is $1$ for the edge equations). Each variable $z_j$ may be thought of as an assignment of an ideal simplex shape to a simplex of $\T$. If the shapes $z_j\in\C\setminus\{0,1\}$ satisfy~\eqref{eq:GluingEqIntro}, as well as some extra conditions on the arguments of $z_i$, the ideal simplices glue together to form a complete hyperbolic structure on $M$. Ignoring the cusp equations gives structures that are incomplete. This gives rise to an efficient algorithm for constructing hyperbolic structures, which has been effectively implemented in software packages such as SnapPea~\cite{SnapPea}, Snap~\cite{snapprogram}, and SnapPy~\cite{SnapPy}. %All software solve the gluing equations numerically.

Among the numerous important features of the gluing equations we will focus
on two:
\begin{itemize}
\item[(a)] The symplectic property of the exponent matrix $(A \vert B)$
of the gluing equations due to Neumann and Zagier~\cite{NeumannZagier}.% (see also~Neumann~\cite{NeumannComb}).
\item[(b)] The link to $\PGL(2,\BC)$ representations via a 
\emph{developing map}
\begin{equation}\label{eq.dev2}
V_2(\CT) \to \{\rho\colon\pi_1(M)\to\PGL(n,\C)\}\big/\Conj
\end{equation}
where  $V_2(\T)$ denotes the affine variety of solutions in 
$\C\setminus\{0,1\}$ to the \emph{edge} equations, and the right hand side denotes the set of conjugacy classes of representations of $\pi_1(M)$ in $\PGL(2,\C)$. 

\end{itemize}
We may thus think of $V_2(\T)$ as a parametrization of representations. Note, however, that $V_2(\T)$ depends on the triangulation, and that the developing map need neither be onto nor finite to one. However, if the triangulation is sufficiently fine (a single barycentric subdivision suffices) the developing map is onto, i.e.~every representation is detected (including reducible ones). A solution satisfying the cusp equations as well gives rise to a representation that is boundary-unipotent, i.e.~takes peripheral curves to unipotent elements. Our goal is to
\begin{itemize}
\item
Extend Thurston's gluing equations to $\PGL(n,\BC)$ preserving the above
features for $n=2$, using suitable shape parameters.
\item
Relate the Ptolemy parameters of~\cite{GaroufalidisThurstonZickert}
to the shape parameters via a monomial map.
\end{itemize}

The Ptolemy and shape coordinates were inspired by the $\mathcal A$-coordinates
and $\mathcal X$-coordinates on higher Teichm\"{u}ller spaces due to Fock and 
Goncharov~\cite{FockGoncharov}. Note, however, that Fock and 
Goncharov study surfaces, whereas we study 3-manifolds. Shape coordinates for $n=3$ have been studied independently by Bergeron, Falbel and Guilloux~\cite{FrenchPeople}.

There is a very interesting interplay between the shape coordinates and the Ptolemy coordinates. The groups $\PGL(n,\C)$ and $\SL(n,\C)$ are Langlands dual, and we believe that this interplay is a 3-dimensional aspect of the duality discussed for surfaces by Fock and Goncharov~\cite[p.~33]{FockGoncharov}. The duality is particularly explicit when all the boundary components of $M$ are tori, see Proposition~\ref{prop:DualityTori}.
%\begin{theorem}
%If all boundary components of $M$ are tori, we have
%\begin{equation}
%\#\left\{\txt{\textnormal{Ptolemy}\\ \textnormal{~coord.}}\right\}=\#\left\{\txt{\textnormal{Ptolemy}\\\textnormal{relations}}\right\}=\#\left\{\txt{\textnormal{Shape}\\ \textnormal{~coord.}}\right\} =\#\left\{\txt{\textnormal{Gluing}\\\textnormal{equations}}\right\}=\binom{n+1}{3}t,
%\end{equation}
%where $t$ is the number of simplices in the triangulation of $M$.
%\end{theorem}

While the Ptolemy variety naturally parametrizes boundary-unipotent representations, the gluing equations also parametrize representations that are not necessarily boundary-unipotent. The boundary-unipotent ones can be determined by adding additional equations, which are generalizations of Thurston's cusp equations. This is studied in Section~\ref{sec:CuspEquations}.

\subsection{Our main results}
\label{sub.results}
Given a topological ideal triangulation $\CT$, each simplex of $\T$ is divided into $\binom{n+1}{3}$ overlapping \emph{subsimplices} (see Definition~\ref{def:Subsimplex}) and each edge of each subsimplex is assigned a \emph{shape parameter} (see Definition~\ref{shapeassignment}). These are the variables of the gluing equations. There is an equation for each non-vertex integral point 
point of $\T$ (see Definition \ref{def:IntegralPoint}). These are given in Definition~\ref{def:GluingEquationsOri}.
%$p \in \dot{\mathcal{T}}_n(\Z)$ given in Definition

%are three shape parameters $z_p,z'_p,z''_p$ which satisfy a system of
%gluing equations given in Definition \ref{def:GluingEquationsOri}. 
We can write the gluing equations (without the cusp equations) in the form
\begin{equation}\label{eq:GluinEqSimplified}
\prod_s z_s^{A_{p,s}}\prod_s(1-z_s)^{B_{p,s}}=1,
\end{equation}
where $A$ and $B$ are matrices whose rows are parametrized by the non-vertex integral points $p$ of $\T$ and whose colums are parametrized by the subsimplices $s$ of $\T$.

\begin{theorem}\label{thm:soft} Let $P=(A\vert B)$ be the concatenation of the matrices $A$ and $B$ in~\eqref{eq:GluinEqSimplified}.

\begin{itemize}
\item[(a)]\label{item:Soft1} The rows of $P$ Poisson commute, i.e.~for any two rows $v$ and $w$, $\langle v,w\rangle=0$, where $\langle,\rangle$ is the symplectic form given by $\left(\begin{smallmatrix}0&I\\-I&0\end{smallmatrix}\right)$.
\item[(b)]\label{item:soft2} If all boundary components of $M$ are tori, $P$ is an $r\times 2r$ matrix, where $r=t\binom{n+1}{3}$ and $t$ is the number of simplices of $\T$.
\end{itemize}
\end{theorem}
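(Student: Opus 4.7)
The plan is to tackle (b) first, which is a pure dimension count, and then (a), which is the true Neumann--Zagier-type content.

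For (b), I would classify non-vertex integral points of $\T$ by the dimension of the open cell containing them. The number of interior integral points on a $k$-simplex at level $n$ is $\binom{n-1}{k}$. Writing $V,E,F,t$ for the number of $0$-, $1$-, $2$-, $3$-cells of $\T$ and using $F=2t$, the total number of non-vertex integral points is $E(n-1)+2t\binom{n-1}{2}+t\binom{n-1}{3}$. When every boundary component is a torus, summing Euler characteristics of the vertex links gives
\[
0 \;=\; \sum_v \chi(\mathrm{lk}(v)) \;=\; 2E-6t+4t \;=\; 2E-2t,
\]
so $E=t$. Substituting and using the identity $(n-1)+2\binom{n-1}{2}+\binom{n-1}{3}=\binom{n+1}{3}$ (two applications of Pascal's rule) yields $r=t\binom{n+1}{3}$ rows, and $2r$ columns since $P=(A\vert B)$.

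For (a), the strategy is to localize the bracket to $3$-simplices. Each subsimplex $s$ lies in a unique $3$-simplex $\Delta$, so
\[
\langle v_p,v_{p'}\rangle \;=\; \sum_{\Delta}\Biggl(\,\sum_{s\subset\Delta}\bigl(A_{p,s}B_{p',s}-A_{p',s}B_{p,s}\bigr)\Biggr),
\]
and it suffices to show that the inner sum vanishes for every $\Delta$. By the definition of the shape coordinate assignment and the gluing equations, the exponent $A_{p,s}$ (resp.\ $B_{p,s}$) counts, with sign, the edges of the subsimplex $s$ meeting $p$ whose assigned shape parameter is $z_s$ (resp.\ $1-z_s$). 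I expect the per-$\Delta$ cancellation to follow from the classical Thurston relation at each vertex of each subsimplex, which organizes $z_s$-edges and $(1-z_s)$-edges in a cyclic pattern. This is exactly the mechanism behind the $n=2$ Neumann--Zagier calculation, now applied in parallel to each of the $\binom{n+1}{3}$ subsimplices inside $\Delta$.

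The main obstacle is the combinatorial bookkeeping for (a): each integral point $p$ can meet a given $3$-simplex $\Delta$ in a rather intricate pattern of subsimplex edges, and a pair $(p,p')$ may interact through several subsimplices at once. I would organize the verification by the position of $p$ and $p'$ relative to $\Delta$ (interior, on a face, on an edge, or at a vertex of $\Delta$), and within each case reduce the local identity to a small set of elementary identities among the shape parameters at a subsimplex. Once the per-$\Delta$ cancellation is established, (a) follows immediately, and together with the count above the theorem is complete.
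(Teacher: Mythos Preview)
Your count for (b) is correct and essentially identical to the paper's argument.

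For (a), the per-simplex localization you propose is not valid. You correctly decompose
\[
\langle v_p,v_{p'}\rangle=\sum_\Delta\sum_{s\subset\Delta}\bigl(A_{p,s}B_{p',s}-A_{p',s}B_{p,s}\bigr),
\]
but the inner sum over a fixed $\Delta$ does \emph{not} vanish in general, already for $n=2$. If $p$ meets $\Delta$ only in the edge $01$ and $p'$ only in the edge $12$, then in the normalization $A=A'-C'$, $B=C'-B'$ one finds $(A_{p,\Delta},B_{p,\Delta})=(1,0)$ and $(A_{p',\Delta},B_{p',\Delta})=(0,-1)$, so the $\Delta$-contribution is $-1$. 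The cancellation that makes the global bracket vanish happens \emph{between} neighbouring simplices, not inside a single one; this is also the actual mechanism in the classical Neumann--Zagier calculation, so the analogy you invoke points in the wrong direction. Your proposed case analysis on the positions of $p$ and $p'$ relative to $\Delta$ would therefore not terminate in zero.

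The missing idea is to reorganize by a single integral point rather than by a pair. The paper encodes the situation as a sequence $L_n(\T)\xrightarrow{\beta}J_n(\T)\xrightarrow{\beta^*}L_n(\T)$, where $J_n(\T)$ is the orthogonal direct sum over all subsimplices of the rank-two group generated by edge classes modulo ``opposite edges equal'' and ``three edges at a vertex sum to zero'', with the evident alternating form, and $\beta(p)$ is the formal sum of all subsimplex edges whose midpoint is $p$. In the natural basis the rows of $(A\vert B)$ are exactly the $\beta(p)$, so Poisson commutation becomes $\beta^*\!\circ\beta=0$. One then computes $\beta^*\beta(p)$ by cases on the type of $p$: for an interior point all terms cancel within one simplex, for a face point the contributions from the two incident simplices cancel (because $\T$ is oriented), and for an edge point the contributions telescope around the edge cycle. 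A bonus your approach would not uncover is that $\beta^*$ is the transpose of the exponent matrix of the monomial map $\mu$ from Ptolemy coordinates to shapes, so $\beta^*\!\circ\beta=0$ is the linear shadow of the statement that Ptolemy assignments satisfy the gluing equations.
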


Let $V_n(\CT)$ denote the affine variety of solutions to the 
$\PGL(n,\BC)$ gluing equations and let $P_n(\CT)$ denote the affine variety
of solutions to the Ptolemy equations of~\cite{GaroufalidisThurstonZickert} (see Section~\ref{sec:PtolemyReview} for a review).
The link to representations is given by the result below, which also gives the relationship between the shape coordinates and the Ptolemy coordinates.

\begin{theorem}\label{thm:ShortMainThm} There is a monomial map $\mu\colon P_n(\T)\to V_n(\T)$ which fits in a commutative diagram
\begin{equation}\label{eqn:ShortDiagramIntro}
\cxymatrix{{{P_n(\T)}\ar[r]^-{\mathcal R}\ar[d]^\mu&{\left\{\txt{$\rho\colon\pi_1(M)\to\SL(n,\C)$\\\textnormal{ boundary-unipotent}}\right\}\Big/\Conj}\ar[d]^\pi\\{V_n(\T)}\ar[r]^-{\mathcal R}&{\{\rho\colon\pi_1(M)\to\PGL(n,\C)\}\big/\Conj}}}
\end{equation}
where the map $\pi$ is induced by the canonical map $\SL(n,\C)\to\PGL(n,\C)$. Furthermore, the horizontal maps are surjective if the triangulation $\T$ is sufficiently fine.
\end{theorem}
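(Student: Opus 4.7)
The plan is to break the statement into three pieces: construction of $\mu$, commutativity of the square, and surjectivity of the horizontal arrows. The construction of $\mu$ and the verification of commutativity are essentially local (one simplex at a time), while surjectivity is global and is where the ``sufficiently fine'' hypothesis enters.

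First I will define $\mu$ explicitly. Recall that a Ptolemy assignment attaches a scalar $c_\alpha$ to each non-vertex integral point $\alpha$ of the triangulation. For each subsimplex of each $3$-simplex, and for each edge of that subsimplex, I will declare the shape parameter to be the cross-ratio-like monomial in the four Ptolemy coordinates at the corners of the ``rhombus'' dual to that edge inside the ambient simplex; this is the direct 3-dimensional lift of the classical $n=2$ formula $z=\frac{c_{02}c_{13}}{c_{03}c_{12}}$, and it generalizes the $n=3$ formulas of Bergeron--Falbel--Guilloux. With this definition $\mu$ is a monomial map by construction. The content to verify is that $\mu$ sends the Ptolemy variety into the shape variety, i.e.\ that the Ptolemy relations force the edge gluing equations \eqref{eq:GluinEqSimplified}. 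This is a local calculation: around each non-vertex integral point $p$ of $\T$, the link is a disc or a sphere, and the corresponding edge gluing equation at $p$ is obtained by multiplying together the $z_s$ and $(1-z_s)$ factors around $p$; after substituting the monomial formula for $\mu$, all Ptolemy factors cancel in pairs modulo the Ptolemy relation, leaving $1$.

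Next I will construct the developing maps and check commutativity. The Ptolemy developing map $\mathcal R\colon P_n(\T)\to\{\rho\colon\pi_1(M)\to\SL(n,\C)\}/\Conj$ is provided by \cite{GaroufalidisThurstonZickert}: a Ptolemy assignment gives a cocycle by prescribing an element of $\SL(n,\C)$ on each oriented face-pairing via explicit $n\times n$ matrices in the Ptolemy coordinates. For the shape side I will mimic this construction by using the shape parameters to write down face-pairing matrices in $\PGL(n,\C)$, building on the $n=2$ prescription where the face-pairing is conjugation into the standard form of an ideal tetrahedron with cross-ratio $z$. The commutativity of \eqref{eqn:ShortDiagramIntro} then reduces to checking, on each face-pairing, that projecting the Ptolemy matrix to $\PGL(n,\C)$ agrees with the shape matrix obtained by the monomial substitution $\mu$. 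This again is a one-simplex computation, since both cocycles are defined simplex by simplex.

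Finally I will address surjectivity of the horizontal maps. The key observation is that given a representation $\rho$, one can try to construct a Ptolemy/shape assignment by choosing a $\rho$-equivariant decoration of the universal cover, namely a flag (for $\SL(n,\C)$) or a generic position of flags (for $\PGL(n,\C)$) on each vertex of $\widetilde\T$. The obstruction to such a decoration existing on \emph{every} simplex is that the chosen flags at the four vertices of some simplex fail to be in general position; this is a codimension condition on the representation. The standard remedy, and the one I plan to use, is to pass to a barycentric subdivision of $\T$: the added freedom of choosing flags at the new vertices allows one to achieve general position on every simplex for an arbitrary $\rho$, as in the Fock--Goncharov argument for surfaces and the argument of \cite{GaroufalidisThurstonZickert} for the Ptolemy side. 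The main obstacle I anticipate is precisely this global genericity argument: one must show that for a sufficiently fine (e.g.\ once-barycentrically subdivided) triangulation, generic-position decorations exist for every conjugacy class of representations, including reducible ones, and that the resulting Ptolemy/shape assignment satisfies the gluing relations and hits the prescribed $\rho$ under $\mathcal R$. Once this is in hand, surjectivity of the bottom map follows either directly by the analogous argument with flags up to scale, or by combining surjectivity of the top arrow with surjectivity of $\pi$ after refinement.
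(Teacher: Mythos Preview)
Your plan is broadly aligned with the paper's. The paper also defines $\mu$ by explicit monomial formulas (Section~\ref{sec:PtolemyToShapes}) and verifies case by case (edge, face, and interior points, Sections~\ref{subsection:ProofForEdgeEqn}--\ref{subsection:ProofForInternalEqn}) that the Ptolemy relations force the gluing equations; your cancellation sketch is the right idea, though note that for face and interior points the Ptolemy relation itself is not used---the cancellation is purely monomial. For surjectivity the paper likewise invokes barycentric subdivision and generic decorations (Remark~\ref{rm:FineTriangulation}).

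The main difference is in how the shape-side developing map $\mathcal R\colon V_n(\T)\to\{\rho\}/\Conj$ is built and how commutativity is checked. Rather than writing down $\PGL(n,\C)$ face-pairing matrices directly from shapes and comparing them to the projected Ptolemy matrices, the paper deduces Theorem~\ref{thm:ShortMainThm} from the larger diagram of Theorem~\ref{thm:MainThm}: shape assignments are put in bijection with generic $\PGL(n,\C)/B$-decorations, and these with natural $(\PGL(n,\C),B,H)$-cocycles on the \emph{doubly truncated} simplices; the $\SL$ row sits above via $N$-cosets and singly truncated simplices, and commutativity is built into the construction. The explicit cocycle formulas in terms of shapes (Theorem~\ref{thm:NaturalCocycleShapes}) are nontrivial, so your ``mimic the $n=2$ prescription'' hides the bulk of the work; the paper's decoration/cocycle layer is what makes the two developing maps comparable without a brute-force matrix check.

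One genuine error: your alternative route to surjectivity of the bottom map---``combining surjectivity of the top arrow with surjectivity of $\pi$''---does not work. The map $\pi$ goes from boundary-unipotent $\SL(n,\C)$-representations to \emph{all} $\PGL(n,\C)$-representations and is not surjective; refining the triangulation does nothing to $\pi$. You must use the direct argument with $B$-cosets (flags up to scale), which is what the paper does.
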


%Let $L_n(\T)$ be the free abelian group on the non-vertex integral points of $\T$ and let 
%\begin{equation}
%J_n(\T)=\sum_{\Delta\in\T} J_n(\Delta), 
%\end{equation}
%where the sum is over the simplices of $\T$ and $J_n(\Delta)$ is the free abelian group on the edges of the subsimplices of $\Delta$.
%be the free abelian group on the

%{\bf define $J_n(\CT)$ and prove theorem below}

%\begin{theorem}
%\label{thm:soft}
%\rm{(a)} 
%There is a chain complex
%$$
%L_n(\CT) \longto J_n(\CT) \longto L_n(\CT).
%$$
%The first map is given by the exponent matrix of the gluing equations and the
%second map is given by the exponent matrix of the monomial map $\mu\colon P_n(\T)\to V_n(\T)$. It follows that the
%rows of the gluing equation matrix Poisson commute.
%\newline
%\rm{(b)}
%When the boundary of $M$ is a union of torii and $\CT$ is a triangulation
%with $t$ ideal tetrahedra, then 
%\be
%\rk(J_n(\CT))=2\,\, \rk( L_n(\CT))=2r, \qquad
%r=t \binom{n+1}{3}.
%\ee
%In this case the gluing equations gives an $r \times 2 r$
%matrix $(A_n| B_n)$ with Poisson commuting rows. In a future publication
%we will show the symplectic property of the matrix  $(A_n\vert B_n)$.
%\end{theorem}

Theorem~\ref{thm:ShortMainThm} is an immediate consequence of Theorem~\ref{thm:MainThm} below which displays some more of the underlying structure. Briefly, a \emph{decoration} is an equivariant assignment of a coset to each vertex of each simplex of $\T$ (see Definition~\ref{def:Decoration}) and a \emph{cocycle} is an assignment of matrices to the edges satisfying the standard cocycle condition that the product around each face is $1$ (see Definition~\ref{def:GCocycle}). Generic decorations and natural cocycles are defined in Definition~\ref{def:generic} and Definitions~\ref{def:NaturalSLCocycle} and \ref{def:NaturalPGLCocycle}. %\emph{Shape assignments} and \emph{Ptolemy assignments} are defined in .

\begin{theorem}\label{thm:MainThm}
There is a commutative diagram
\begin{equation} \label{eqn:DiagramIntro}
\cxymatrix{{
\left\{\txt{\textnormal{Ptolemy}\\\textnormal{assignments}}\right\} 
%{\txt{\textnormal{points in}\\$P_n(\T)$}}
\ar[d]^\mu & \ar[d]^\pi \ar_-{\mathcal C}[l] \left\{\txt{\textnormal{Generic} $\SL(n,\C)/N$\\\textnormal{decorations}}\right\} \ar^-{\mathcal L_{\alpha\beta}}[r] & \ar[d]^\tau \left\{\txt{\textnormal{Natural} $(\SL(n,\C),N)$\\\textnormal{cocycles}}\right\}\\
\left\{\txt{\textnormal{Shape}\\\textnormal{assignments}}\right\} 
%{V_n(\T)}
& \ar_-{\mathcal Z}[l] \left\{\txt{\textnormal{Generic} $\PGL(n,\C)/B$\\\textnormal{decorations}}\right\} \ar^-{\mathcal L_{\alpha\beta\gamma}}[r] & \left\{\txt{\textnormal{Natural} $(\PGL(n,\C),B,H)$\\\textnormal{cocycles}}\right\}
}}
\end{equation}
in which the horizontal maps are $1$-$1$-correspondences. All maps are explicit with explict inverses and respect the symmetries of a simplex.
\end{theorem}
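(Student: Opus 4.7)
The plan is to handle the two rows of the diagram separately, then verify commutativity of the two squares, and finally confirm that every map respects the symmetries of a simplex.

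For the top row — the $(\SL(n,\C),N)$ line — the correspondences $\mathcal{C}$ and $\mathcal{L}_{\alpha\beta}$ and their inverses are already available in \cite{GaroufalidisThurstonZickert}, so I would quote them directly. Briefly, $\mathcal{C}$ sends a generic $\SL(n,\C)/N$-decoration to a Ptolemy assignment by taking $n\times n$ minors of concatenated unipotent coset representatives, and $\mathcal{L}_{\alpha\beta}$ builds a natural $(\SL(n,\C),N)$-cocycle from the decoration using universal ``long edge'' and ``short edge'' matrices whose entries are monomials in the Ptolemy coordinates.

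The bulk of the work is the bottom row. I would define $\mathcal{Z}$ by assigning, to each edge $e$ of each subsimplex, the generalized cross-ratio of the four flags at the vertices of the ambient simplex that is indexed by the integral point dual to $e$ — the 3-dimensional analogue of the Fock--Goncharov $\mathcal{X}$-coordinate. Three local checks are then required: the output is independent of the $B$-representative chosen (cross-ratios of flags are $B$-invariant), it lies in $\C\setminus\{0,1\}$ exactly when the decoration is generic, and the gluing equation around each non-vertex integral point follows from a Pl\"ucker-type relation among the relevant minors. The inverse of $\mathcal{Z}$ is built by fixing a reference flag at one vertex of each simplex and propagating the remaining flags out of the shape data edge by edge; the face-type gluing equations are precisely what guarantees that the local reconstructions on adjacent simplices agree along shared faces. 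The map $\mathcal{L}_{\alpha\beta\gamma}$ is defined in complete parallel with $\mathcal{L}_{\alpha\beta}$, now carrying an extra Cartan truncation $\gamma$ that is needed because a Borel factors as $N\rtimes H$ rather than as $N$ alone.

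The two squares commute by direct computation on a single simplex. For the left square, both $\mathcal{Z}\circ\pi$ and $\mu\circ\mathcal{C}$ can be expanded as explicit monomials in the Ptolemy determinants, and the exponent pattern by which a cross-ratio of flags is written in terms of minors is exactly what defines $\mu$. For the right square, it suffices to check that the natural $(\PGL(n,\C),B,H)$-cocycle attached to a $B$-decoration equals the image under $\tau$ of the natural $(\SL(n,\C),N)$-cocycle attached to any lift — a formula-level verification that the $N$-cocycle factors as an $H$-piece times the $B$-cocycle. Since the cross-ratio formulas and the universal edge matrices are manifestly $S_4$-equivariant in the vertices of the ambient simplex, the symmetries of a simplex act compatibly on subsimplices, integral points, shape/Ptolemy assignments, decorations, and cocycles, so the symmetry claim reduces to a short combinatorial bookkeeping.

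The hardest step will be the global invertibility of $\mathcal{Z}$, and dually of $\mathcal{C}$: while each has an evident local inverse on a single simplex, one must show that the locally reconstructed flags patch together into a genuinely equivariant decoration over all of $\T$, and that the genericity condition is preserved by this reconstruction. This is exactly where the face portion of the gluing equations enters, and handling it carefully is the technical crux; once in place, the rest of the diagram is functorial in the decoration by construction and the remaining verifications are finite calculations.
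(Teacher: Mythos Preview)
Your outline parallels the paper's architecture (top row quoted from \cite{GaroufalidisThurstonZickert}; define $\mathcal Z$ and $\mathcal L_{\alpha\beta\gamma}$; verify squares; prove bottom bijections; check symmetries), but several specifics diverge and one step has a genuine gap.

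\textbf{Definition of $\mathcal Z$ and the left square.} You propose to define $\mathcal Z$ directly via generalized cross-ratios of flags and then \emph{verify} that $\mathcal Z\circ\pi=\mu\circ\mathcal C$. The paper instead \emph{defines} $\mathcal Z(\bar g_0B,\dots,\bar g_3B)=\mu\circ\mathcal C(g_0N,\dots,g_3N)$ for any lift and then checks this is independent of the lift (Proposition~\ref{prop:DefofZ}). This makes the left square commute by construction and shifts all the work into a single well-definedness computation.

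\textbf{Inverting $\mathcal Z$.} You plan to propagate flags from a reference vertex using the shapes. The paper does not do this; it instead routes through the cocycle side. First it derives an explicit formula for the natural $(\PGL(n,\C),B,H)$-cocycle purely in terms of shapes and $X$-coordinates (Theorem~\ref{thm:NaturalCocycleShapes}, obtained by hitting the known $(\SL(n,\C),N)$-cocycle formula with an explicit diagonal $0$-cochain $\tau$). Injectivity of $\mathcal Z$ is then immediate. Surjectivity on a single simplex (Proposition~\ref{prop:ZisSurjective}) is proved by writing down a candidate decoration $(B,q_1B,\beta^{012}q_1B,\gamma^{012}\beta^{013}q_1B)$ and using a two-face-determines-all lemma (Lemma~\ref{lemma:AgreeonFaces}) plus a unique-factorization lemma for $\beta$ (Lemma~\ref{lemma:UniqueFactorization}) to show its shapes match.

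\textbf{A gap: edge equations.} You identify the face equations as what makes the local reconstructions agree across simplices. That is only half the story. In the paper's gluing (Theorem~\ref{thm:One-One}), the face equations ensure that the $\beta$-labels on glued middle edges match, but the \emph{edge equations} are what force the induced labeling on the prisms (the pieces left over after doubly truncating) to be an honest cocycle. In your flag-propagation language, face equations alone do not guarantee consistency when you walk all the way around a $1$-cell of $\T$; the edge equations are exactly the monodromy condition there. Without them your inverse to $\mathcal Z$ is not globally defined.

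\textbf{Symmetries.} Your claim that $S_4$-equivariance is ``manifest'' and ``short combinatorial bookkeeping'' undersells the difficulty. The Ptolemy pullback requires a nontrivial sign $\det(I_{\sigma,\sigma(t)})$ (Definition~\ref{def:PullbackPtolemy}), and the shape pullback involves $\sgn(\sigma)$ as an exponent (Definition~\ref{def:PullbackShapes}); compatibility of $\mu$ and $\mathcal C$ with these twisted actions (Lemmas~\ref{lemma:ReorderCompatibility} and~\ref{lemma:DecorationsAndPullbacks}) is a genuine computation, not a tautology.
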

The fact that the top horizontal maps are $1$-$1$-correspondences was proved in Garoufalidis-Thurston-Zickert~\cite{GaroufalidisThurstonZickert}.

To see that Theorem~\ref{thm:MainThm} implies Theorem~\ref{thm:ShortMainThm}, note that a cocycle determines a representation by picking a base point and taking products along edge paths. Furthermore, a decoration also determines a representation using the dual triangulation of $\T$ which is generated by face pairings. The last statement of Theorem~\ref{thm:ShortMainThm} follows from Remark~\ref{rm:FineTriangulation}. %Hence, Theorem~\ref{thm:ShortMainThm} is an immediate consequence of Theorem~\ref{thm:MainThm}, and the construction of the representation is completely explicit.

%Since a cocycle obviously determines a representation (up to conjugation), we have:
%\begin{corollary}\label{cor:IntroCor}
%Every point in the gluing equation variety determines a (framed) representation $\pi_1(M)\to\PGL(n,\C)$. If the triangulation is sufficiently fine, every representation is detected.
%\end{corollary}
%\begin{proof}
%A cocycle determines a representation by choosing a vertex as the base point, and taking products around edge paths. A different choice of base point changes the representation by a conjugation. The second statement follows from Remark~\ref{rm:FineTriangulation}.
%\end{proof}

\subsection{Computations and applications}
\label{sub.computations}

The gluing equations of an ideal triangulation is a standard object of 
SnapPy~\cite{SnapPy}, which is used to study invariants of hyperbolic $3$-manifolds. From the gluing equations, one can compute the so-called
\emph{Neumann-Zagier datum} of an ideal triangulation, i.e.~a triple $((A|B),z,f)$ that consists of the matrices $(A|B)$ of
the gluing equations, a shape solution $z$, and a choice of flattening $f$.
There are three recent applications of the Neumann-Zagier datum in quantum topology:
the quantum Riemann surfaces of \cite{Di1}, the loop invariants of \cite{DG}
and the 3D index of \cite{DGG2} (see also \cite{Ga-3Dindex}). These 
applications are reviewed in Section \ref{sub.applicationsQT}, and lead
to exact computations.

Our generalized gluing equations for $\PGL(n,\BC)$ have
been coded into SnapPy by the second author and will be available
in the next release of SnapPy. As an application, we can define and efficiently
compute the $\PGL(n,\BC)$ Neumann-Zagier datum of an ideal triangulation.
Every function of the $\PGL(2,\BC)$ Neumann-Zagier datum can be evaluated
at the $\PGL(n,\BC)$ Neumann-Zagier datum. Sample computations of the one-loop invariant
of the $\PGL(n,\BC)$ Neumann-Zagier datum (the former being an element
of the invariant trace field) are given in Section 
\ref{sub.applicationsQT}. 

Even for $n=2$, our results provide new data. Preexisting software such as SnapPea~\cite{SnapPea}, Snap~\cite{snapprogram}, and SnapPy~\cite{SnapPy} all solve the gluing equations numerically (exact computations can then be guessed using the LLL algorithm), but only give the shapes for the geometric representation. For the Ptolemy varieties exact computations are possible for $n=2$ even when there are many simplices, and there are often several components of representations besides the geometric one. We should point out that while Gr\"obner basis computations are feasible for the Ptolemy varieties even for many simplices, they are usually impractical for the gluing equations even when the cusp equations are added. However, the monomial map $\mu$ can be used to obtain shapes from Ptolemy coordinates. All our tools will be available in the upcoming release of SnapPy.

\subsection{Overview of the Paper}\label{sub:Overview}

In Section~\ref{sec:TriangulationSection} we define the notion of a \emph{concrete triangulation}, which is a triangulation together with a vertex ordering of each simplex. Two types of concrete triangulations are particularly important, \emph{oriented} triangulations and \emph{ordered} triangulations. 
%The paper contains two rather independent parts. The first part concerns the
%Neumann-Zagier property of the gluing equations. 
In Section~\ref{sec:ThurstonReview} we review Thurson's gluing equations, and 
in Section~\ref{sec:GeneralizedGluingEq} we define the analogues for $n\geq 2$. The key notion is that of a \emph{shape assignment}, which is defined first for a simplex and later for a triangulation. A shape assignment on a triangulation is a shape assignment on each simplex, such that the shapes satisfy the generalized gluing equations. 
%The second part concerns the representation theory aspects of the gluing
%equations.
In Section~\ref{sec:PtolemyReview} we review the theory of Ptolemy coordinates developed in~\cite{GaroufalidisThurstonZickert}, and in Section~\ref{sec:PtolemyToShapes} we define a map $\mu$ from Ptolemy assignments to shape assignments. 
In section~\ref{sec:Symplectic} we prove Theorem~\ref{thm:soft} and discuss some applications in quantum topology.
Sections~\ref{sec:Decorations}-\ref{sec:ReconstructRep} are devoted to proving Theorem~\ref{thm:MainThm}.
In Section~\ref{sec:Decorations}, we briefly review the notion of a decoration, and define the maps $\mathcal C$ and $\mathcal Z$ in \eqref{eqn:DiagramIntro}. In Section~\ref{sec:NaturalCocycles} we define the notion of a natural cocycle, and define the maps $\mathcal L_{\alpha\beta}$ and $\mathcal L_{\alpha\beta\gamma}$. In Section~\ref{sec:ExplicitFormulas}, we show that the natural cocycle of a decoration is given explicitly in terms of the shapes (or Ptolemy coordinates), and in Section~\ref{sec:ReconstructRep} we show that the bottom maps of \eqref{eqn:DiagramIntro} are bijective concluding the proof of Theorem~\ref{thm:MainThm}. In Section~\ref{sec:Duality} we discuss a duality between Ptolemy coordinates and shape coordinates, and in Section~\ref{sec:CuspEquations} we show how to add cusp equations to ensure that the representations are boundary-unipotent. Finally, in Section~\ref{sec:Example} we write down the gluing equations and cusp equations for the figure-eight knot complement for $n=3$ and $n=4$.

\begin{remark}\label{rm:ObstructionCocycles}
The $\SL(2,\C)$-Ptolemy varieties are often empty for the cusped census manifolds. Even though the geometric representation of a cusped hyperbolic manifold lifts to $\SL(2,\C)$, no lift is boundary-unipotent, and often (non-trivial) boundary-unipotent $\SL(2,\C)$-representations don't exist.
In Garoufalidis-Thurston-Zickert, we also considered Ptolemy varieties for $p\SL(n,\C)=\SL(n,\C)\big/\pm I$, defined when $n$ is even via an obstruction class in $H^2(M,\partial M;\Z/2\Z)$. The primary purpose of this was to ensure that the image of the geometric representation under the unique irreducible representation $\PSL(2,\C)\to p\SL(n,\C)$ is detected for all census manifolds (more generally, for triangulations where all edges are essential). In this paper we shall only consider the $\SL(n,\C)$-Ptolemy variety. %One can in fact, more generally, parametrize representations in $\PSL(n,\C)$ via an obstruction class in $H^2(M,\partial M;\Z/n\Z)$, and develop all our theory in this language. 
%Since the main interest here is in the shape coordinates (and for clarity of exposition), we shall not do this here.
One can develop all the theory using the $p\SL(n,\C)$-Ptolemy varieties, but since our main interest here is in the shape coordinates (and for clarity of exposition), we shall not do this here.
\end{remark}

\begin{remark}\label{rm:Volume}
In Garoufalidis-Thurston-Zickert~\cite{GaroufalidisThurstonZickert} we defined the volume (in fact, complex volume) of a boundary-unipotent $\SL(n,\C)$-representation and gave an explicit formula using the Ptolemy coordinates. Similary, one can define the volume of a decorated $\PGL(n,\C)$-representation by adding the volumes of each of the shapes. The volume is an invariant of a decorated $\PGL(n,\C)$-representation (in the sense of Remark~\ref{rm:DecorationsAndBundles}), but we do not know if the volume is independent of the decoration. This is non-trivial even for $n=2$, where it was first proved by Francaviglia~\cite{FrancavigliaVolume}. We shall not deal with this here. 
\end{remark}
%\section{Summary of notation}
%This section is probably not needed.
%Throughout the paper, the subgroups $H$, $B$ and $N$ of $\GL(n,\C)$ are the groups of diagonal matrices, upper triangular matrices, and unipotent upper triangular matrices, respectively. 
%Change $q_1$ to $q_1$.

\subsection{Acknowledgement} The authors wish to thank Nathan Dunfield, Walter Neumann and Dylan Thurston for helpful comments.

\section{Concrete triangulations}\label{sec:TriangulationSection}
%In this section we collect our conventions regarding triangulations.  In Section~\ref{} we express everything in terms of the more general \emph{abstract triangulations}. 

In all of the following $M$ denotes a compact, oriented $3$-manifold with (possibly empty) boundary. Let $\widehat M$ be the space obtained from $M$ by collapsing each boundary component of $M$ to a point. An \emph{ordered simplex} is a simplex together with an ordering of its vertices.

\begin{definition}
An \emph{abstract triangulation} $\mathcal{T}$ of $M$ is an identification of $\widehat{M}$ with a space obtained from a finite collection of $3$-simplices by gluing together pairs of faces via \emph{face-pairings}, i.e.~affine homeomorphisms. A \emph{concrete triangulation} $\mathcal{T}$ of $M$ is an abstract triangulation together with a fixed identification of each $3$-simplex with a standard ordered $3$-simplex.
\end{definition}

The advantage of a concrete triangulation is that each simplex inherits a vertex ordering from the standard simplex. This extra datum gives us a concrete indexing scheme for the vertices and edges and allows us to concretely write down defining equations for the gluing equation variety and the Ptolemy variety. An abstract triangulation can be thought of as an equivalence class of concrete triangulations under reordering. As we shall see, a reordering changes the varieties by canonical isomorphisms. Hence, they only depend on the abstract triangulation.

Note that the vertex ordering of each simplex induces an orientation, which may or may not agree with the orientation inherited from $M$.

\begin{definition}
A concrete triangulation of $M$ is an \emph{oriented triangulation} if the orientation of each simplex agrees with the orientation of $M$. A concrete triangulation of $M$ is an \emph{ordered triangulation} if the face-pairings are order-preserving. An abstract triangulation is {\it orderable} if it supports an ordered triangulation.
\end{definition}

%\begin{definition} Let $e$ be an edge of a simplex in a triangulation $\mathcal T$ of $M$ (of any type). The \emph{edge cycle} of $e$ is the collection of all edges that are identified with $e$ in $\widehat M$. 
%\end{definition}

As we shall see, the shape coordinates are most conveniently expressed in terms of \emph{oriented triangulations}, whereas the Ptolemy coordinates are most conveniently expressed in terms of \emph{ordered triangulations}. Note that since $M$ is assumed to be oriented, one can always order the vertices making the triangulation oriented.

%Note that the face pairings in an oriented triangulation are orientation-reversing (with respect to the orientations induced on the faces from the simplices). Say this when it becomes relevant. 
 
\begin{remark} %Since $M$ is oriented, any abstract triangulation of $M$ can be made oriented. However, not all triangulations are orderable. In fact, most of the triangulations in the SnapPea census, e.g.~the sister of the figure 8 knot complement, are not. 
One can always obtain an orderable triangulations by performing a sequence of $2$-$3$ moves and $1$-$4$ moves. One can do this systematically in such a way that the total number of simplices is increased by at worst a factor of $6$. Alternatively, a barycentric subdivision always provides an ordered triangulation by ordering vertices by codimension.  
\end{remark}

\begin{figure}[htpb]
\centering
\begin{minipage}[c]{0.49\textwidth}
\includegraphics[width=8.4cm]{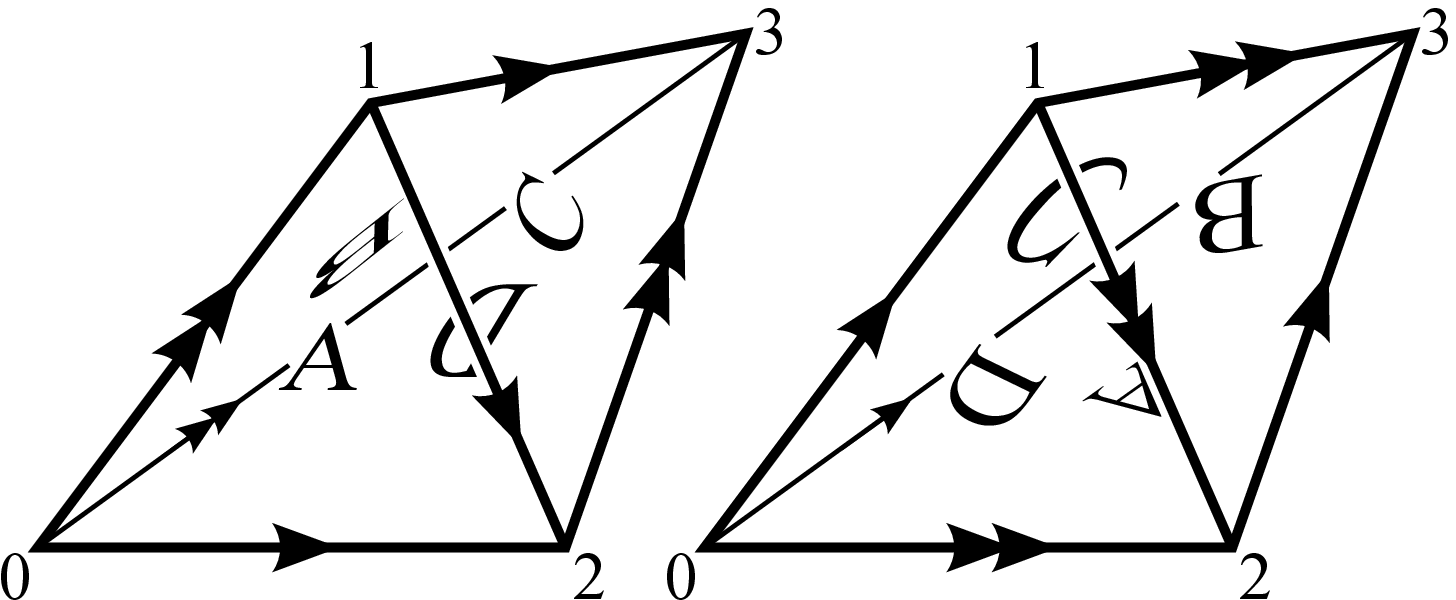}
\captionsetup{width=10cm}
\caption{An ordered, but not oriented triangulation of the figure $8$ knot. No vertex ordering exists making the triangulation both ordered and oriented.}\label{fig:Fig8Ordered}
\end{minipage}	
\begin{minipage}[c]{0.49\textwidth}
\includegraphics[width=8.4cm]{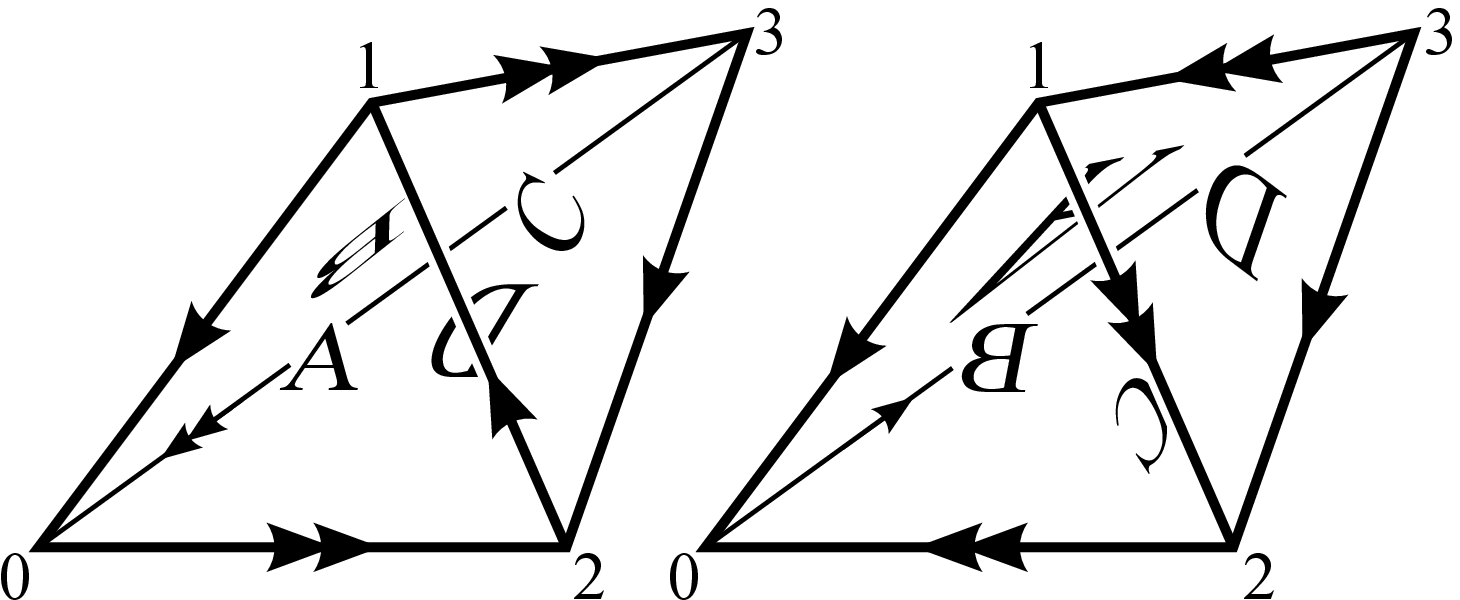}
\captionsetup{width=10cm}
\caption{An oriented, but not ordered triangulation of the figure $8$ knot sister. The underlying abstract triangulation is unorderable.}
\end{minipage}					
%\caption{The census triangulation of the figure 8 knot sister. No vertex ordering exists making all face pairings order preserving.}\label{PtolemyFigure}
\end{figure}

%Add figure with triangulation of the figure 8 knot sister (no ordering exists). Also add figure of the figure 8 (no ordered triangulation is oriented).

%\begin{figure}[htpb]
%\begin{center}
%\includegraphics[bb = -3 -3 130 76]{Figure8sister.1.pdf}\qquad
%\includegraphics{Figure8sister.2.pdf}
%\end{center}
%\caption{The census triangulation of the figure 8 knot sister. No vertex ordering exists making all face pairings order preserving.}\label{PtolemyFigure}
%\end{figure}

\subsection{Face pairing permutations}
We canonically identify the symmetry group of an ordered simplex with
$S_4$.
\begin{definition} 
Let $\Delta_0$ and $\Delta_1$ be ordered simplices and let $\psi$ from face $f_0$ of $\Delta_0$ to face $f_1$ of $\Delta_1$ be a face pairing. The \emph{face pairing permutation} corresponding to $\psi$ is the unique permutation $\sigma\in S_4$ such that $\psi$ takes vertex $v$ of $\Delta_0$ to vertex $\sigma(v)$ of $\Delta_1$ whenever $v$ is a vertex in $f_0$. %Recall that a face-pairing from face $f_0$ of $\Delta_0$ to face $f_1$ of $\Delta_1$ is an affine homeomorphism. We associate a permutation $\sigma\in S_4$ to this face-pairing such that $\psi$ takes vertex $v$ of $\Delta_0$ to vertex $\sigma(v)$ of $\Delta_1$. The face-pairing permutation $\sigma$ is uniquely determined by these three values and sends $f_0$ to $f_1$. Face-pairings are directed and the face-pairing from face $f_1$ of $\Delta_1$ to $f_0$ of $\Delta_0$ is associated to $\sigma^{-1}$.
\end{definition}
Note that if we identify $\Delta_0$ and $\Delta_1$ via the unique order preserving isomorphism, $\sigma$ is the unique extension of $\psi$ to a symmetry of $\Delta_0$. See Figure~\ref{fig:faceGluing}.
%Note that $\sigma$ induces an isomorphism $\sigma_*\colon \Delta_0\to\Delta_0$. Restrict the unique order-preserving isomorphism from $\Delta_0$ to $\Delta_1$ to the face $f_1$, call it $\psi_{f_1}$. Then $\psi=\psi_{f_1}\circ\sigma_*$.

%Maybe this section should contain general conventions regarding left and right actions, identification of $S_4$ by the symmetry group of an ordered simplex, etc. Need concrete triangulation to explicitly label vertices and edges. This induces an identification of $S_4$ with the symmetry group.

\section{Thurston's gluing equations}\label{sec:ThurstonReview}
In this section we briefly review Thurston's gluing equations. For details we refer to Thurston~\cite{ThurstonNotes} or Neumann-Zagier~\cite{NeumannZagier}.

Let $\T$ be an \emph{oriented} triangulation of $M$. The gluing equations are given in terms of a variable $z_\Delta\in\C\setminus\{0,1\}$, called a \emph{shape coordinate}, for each simplex $\Delta$ of $M$. To define the equations, assign to each edge of each simplex $\Delta$ of $M$ one of three \emph{shape parameters}, see Figure~\ref{fig:TraditionalShapes}. The shape parameters are given in terms of the shape coordinate $z_\Delta$ by
\begin{equation} \label{eqn:shapeParms}
z_\Delta,\qquad z_\Delta^\prime=\frac{1}{1-z_\Delta},\qquad z_\Delta^{\prime\prime}=1-\frac{1}{z_\Delta}=-\frac{1-z_\Delta}{z_\Delta}.
\end{equation}

\begin{figure}[htpb]
\begin{center}
\includegraphics[width=4.5cm]{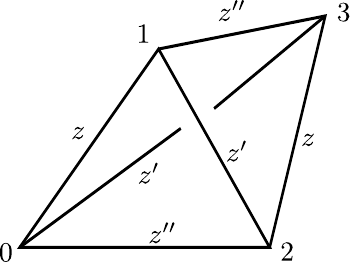}
\end{center}
%\scalebox{0.6}{
%\includegraphics{SLrepFigures.22}}
\caption{Assigning shape parameters to the edges of a 
simplex.}\label{fig:TraditionalShapes}
\end{figure}

The gluing equations consist of \emph{edge equations} and \emph{cusp equations}. There is an edge equation for each edge cycle $e$ of $\mathcal T$, obtained by setting the product of the shape parameters assigned to each edge in $e$ equal to $1$. 
Each edge equation thus has the form 
\begin{equation}\label{eqn:ThurstonGluing}
\prod_\Delta z_\Delta^{a_\Delta} \prod_\Delta z'^{b_\Delta}_\Delta \prod_\Delta z''^{c_\Delta}_\Delta = 1,
\end{equation}
where $a_\Delta$, $b_\Delta$ and $c_\Delta$ are integers.

The cusp equations consist of an equation for each generator of the fundamental group of each boundary component. If $\gamma$ is a peripheral (normal) curve, we obtain a cusp equation by setting the product of the shape parameters (or their inverses) of edges passed by $\gamma$ equal to $1$ ($\gamma$ \emph{passes} an edge $E$ of a simplex $\Delta$ if it enters and exits $\Delta$ through faces intersecting in $E$). A shape parameter appears with its inverse if and only if $\gamma$ passes $e$ in a clockwise direction viewed from the cusp. The cusp equations have the same form as \eqref{eqn:ThurstonGluing}.

The following result is well known. We will generalize this to representations in $\mathrm{PGL}(n,\C)$ below.
\begin{theorem} A solution to the edge equations with all shape coordinates in $\C\setminus\{0,1\}$ uniquely (up to conjugation) determines a representation $\rho\colon\pi_1(M)\to \PGL(2,\C)$. If the solution also satisfies the cusp equations, $\rho$ is boundary-unipotent, i.e.~takes peripheral curves to a conjugate of $N$.
\end{theorem}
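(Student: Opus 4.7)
The plan is to construct $\rho$ via a \emph{developing map} $D\colon \widetilde M \to \BP^1(\C) \cong \pt\BH^3$. First, fix a lift $\widetilde{\Delta}_0$ of some simplex of $\T$ in the universal cover and place its four ideal vertices at $0,1,\infty,z_{\Delta_0}\in \BP^1(\C)$ so that the cross-ratio at each edge equals the prescribed shape parameter from Figure~\ref{fig:TraditionalShapes}. Then propagate: whenever a lifted simplex has been placed, any simplex glued across a face inherits three vertex positions, and its remaining vertex is determined by the requirement that the shape parameter on the shared face be realized as the corresponding cross-ratio. This step uses that $\PGL(2,\C)$ acts simply transitively on ordered triples of distinct points in $\BP^1(\C)$.

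The central consistency check is that walking around an edge cycle $e$ back to the starting simplex returns the identity M\"obius transformation. The associated holonomy fixes both endpoints of $e$ in $\BP^1(\C)$, so in coordinates placing these at $0$ and $\infty$ it has the form $w\mapsto \lambda w$. A direct cross-ratio computation, moving from one simplex to the next around $e$, identifies $\lambda$ with the product of shape parameters appearing in the left-hand side of~\eqref{eqn:ThurstonGluing}. The edge equation is thus exactly the requirement $\lambda=1$, making $D$ well defined on $\widetilde M$. The deck action of $\pi_1(M)$ on $\widetilde M$ then commutes via $D$ with a unique action on $\BP^1(\C)$ by M\"obius transformations, yielding $\rho\colon \pi_1(M)\to \PGL(2,\C)$. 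Any different choice of initial placement of $\widetilde{\Delta}_0$ differs from the original by an element of $\PGL(2,\C)$, which conjugates both $D$ and $\rho$; this establishes canonicity up to $\Conj$.

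For the boundary-unipotent claim, fix a peripheral curve $\gamma$ and a lift $\widetilde{\gamma}\subset\widetilde M$ running through a chain of simplices all meeting a common ideal vertex $\widetilde v$ that lifts the cusp. Normalize so that $D(\widetilde v)=\infty$; then $\rho(\gamma)$ fixes $\infty$ and is affine, $w\mapsto aw+b$. The same cross-ratio bookkeeping, now applied to the triangles of the vertex link at $\widetilde v$, identifies $a$ with the product of shape parameters (and their inverses) appearing in the cusp equation for $\gamma$, where the clockwise passes contribute with opposite exponent because they reverse the induced orientation on the link. The cusp equation therefore forces $a=1$, so $\rho(\gamma)$ is a translation in $\C$, i.e.\ parabolic, and conjugate to an element of $N$.

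The main obstacle is not conceptual but combinatorial bookkeeping: one must verify carefully that the simplex orientations match that of $M$ (this is why $\T$ is assumed oriented), and that the cross-ratio identifications correctly produce exactly the exponents and inverse conventions of~\eqref{eqn:ThurstonGluing} and the cusp equations. Since this is a review section, I would organize the proof around the three structural steps above (place one simplex, propagate, verify consistency around edges and around cusp vertices) and refer to Thurston's notes and Neumann-Zagier~\cite{NeumannZagier} for the explicit sign and exponent computations.
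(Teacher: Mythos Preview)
Your sketch is the standard developing-map argument from Thurston's notes, and it is correct. The paper does not give its own proof of this statement: it is presented as a review result (``The following result is well known''), with the reader referred to Thurston~\cite{ThurstonNotes} and Neumann--Zagier~\cite{NeumannZagier} for details. Your outline is precisely the classical argument behind those references, and your closing remark about deferring the sign/exponent bookkeeping to them matches the paper's intent exactly.
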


%\begin{remark}
%Historically, the main motivation for setting up these equations was to concretely construct hyperbolic structures. Thurston showed that a solution with all $z_\Delta$'s having positive imaginary part and the logarithms around each edge adding up to $2\pi i$ corresponds to a hyperbolic structure. For the hyperbolic structure to be complete, the logarithms of the shapes in each cusp equation have to add up to $0$. Note that solutions satisfying these conditions (thus giving rise to complete hyperbolic structures) can only exist if all boundary components are tori. However, the gluing equations are defined for all compact manifolds $M$ with arbitrary boundary. The existence of hyperbolic structures is of no concern to us here.
%\end{remark}

\subsection{Symplectic properties of the gluing equations}
It is sometimes convenient to express the gluing equations entirely in terms of the $z_\Delta$'s. Using \eqref{eqn:shapeParms}, we can rewrite \eqref{eqn:ThurstonGluing} as
\begin{equation}
\prod_\Delta z_\Delta^{A_\Delta} \prod_\Delta (1-z_\Delta)^{B_\Delta}=\pm 1.
\end{equation}
These equations are said to be of \emph{Neumann-Zagier} type. 
Each such equation gives a row vector consisting of $A_\Delta$ and $B_\Delta$'s. The resulting matrix has some symplectic properties.

%\subsection{Comments}
%Caption to figure: Assignment of shape parameters to the edges of a simplex.

\section{Generalized gluing equations}\label{sec:GeneralizedGluingEq}
In this section we define the higher dimensional analog of Thurston's edge equations. %The generalized equations naturally come in $3$ flavors, \emph{edge equations}, \emph{cusp equations} and \emph{entirior equations}. 
The generalized cusp equations will be studied in Section~\ref{sec:CuspEquations}.  

The idea is to subdivide each simplex of $M$ into overlapping subsimplices, and assign a shape coordinate to each edge of each subsimplex. When the edge midpoints of different subsimplices intersect, we obtain a gluing equation by setting the product of the respective shape parameters equal to 1. 

\subsection{Simplex coordinates}
We identify each simplex of a concrete triangulation $\T$ with the ordered simplex 
\begin{equation}
\Delta^3_n = \left\{(x_0,x_1,x_2,x_3) \in \R^4\bigm\vert 0\leq x_i \leq n, x_0 + x_1 + x_2 + x_3 = n\right\}.
\end{equation}
By removing the four vertices, we obtain the ideal standard simplex
$\dot{\Delta}^3_n$.
Consider the sets
$$\Delta^3_n(\Z)=\Delta^3_n\cap \Z^4,\qquad \dot{\Delta}^3_n(\Z)=\dot{\Delta}^3_n\cap\Z^4,\quad\mbox{and}\quad \Delta^3_n(\Z_+)=\Delta^3_n\cap\Z_+^4$$
of integral points, non-vertex integral points, and integral points lying entirely inside the simplex. A simple counting argument shows that 
\begin{equation}\label{eq:Counting}
\left|\Delta^3_n(\Z)\right| = \binom{n+3}{3},\quad \big|\dot{\Delta}^3_n(\Z)\big| = \binom{n+3}{3}-4,\quad\mbox{and}\quad \left|\Delta^3_n(\Z_+)\right| = \binom{n-1}{3}.
\end{equation}
Note that $\dot{\Delta}^3_2(\Z)$ consists of the edge midpoints of $\Delta^3_2$ and thus naturally parametrize the undirected edges.

When convenient, we abbreviate tuples by dropping the parenthesis and the commas, e.g., we write 1010 instead of $(1,0,1,0)$. Note that the indices of an edge and its opposite edge add up to $1111$. 

\begin{figure}
\begin{center}
\includegraphics{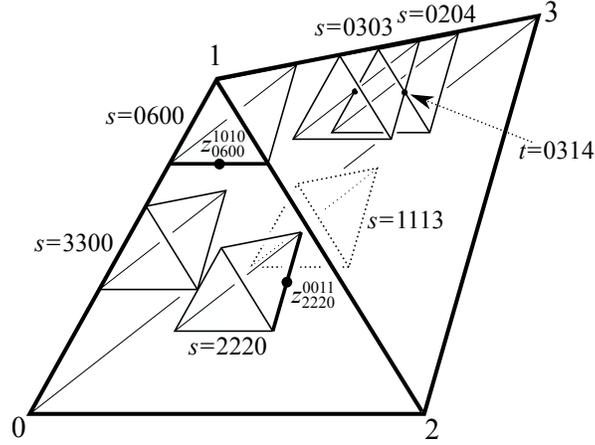}
\end{center}
\caption{Subsimplices and shape parameters for $n=8$.}
\end{figure}

\subsection{Symmetries of a simplex}
The natural vertex ordering of $\Delta_n^3$ induces an identification of the symmetry group of $\Delta^3_n$ with $S_4$, such that $\sigma\in S_4$ is the restriction to $\Delta^3_n$ of the unique linear map taking the standard basis vector $e_i$ to $e_{\sigma(i)}$, $i\in\{0,1,2,3\}$. Note that 
\begin{equation}
\sigma(x_0,x_1,x_2,x_3)=(x_{\sigma^{-1}(0)},x_{\sigma^{-1}(1)},x_{\sigma^{-1}(2)},x_{\sigma^{-1}(3)}),\qquad (x_0,x_1,x_2,x_3)\in\Delta_n^3.
\end{equation}

\subsection{Shape assignments}
We now introduce the generalized shape parameters. We will need to replace the traditional labeling of the shape parameters $z^\prime$ and $z^{\prime\prime}$ by a notation, which better exhibits the symmetry and naturally allows for a unified treatment of the gluing equations for all $n\geq 2$.

\begin{definition}\label{def:Subsimplex}
A \emph{subsimplex} of $\Delta^3_n$ is a subset $S$ of $\Delta^3_n$ obtained by translating $\Delta^3_2\subset\R^4$ by an element $s$ in $\Delta^3_{n-2}(\Z)\subset\Z^4$, i.e.~$S=s+\Delta^3_2$.
\end{definition}

Fix $n\geq 2$. %Let $\T$ be a \emph{concrete} triangulation of $M$.
We wish to assign shape parameters to each edge of each subsimplex. Note that the set of all these edges is naturally parameterized by the set $\Delta^3_{n-2}(\Z)\times\dot{\Delta}^3_2(\Z)$, the first coordinate being the subsimplex, and the second coordinate the edge.

\begin{definition}\label{shapeassignment}
A shape assignment on $\Delta^3_n$ is an assignment 
\begin{equation}
z\colon\Delta^3_{n-2}(\Z)\times \dot{\Delta}^3_2(\Z)\to\C\setminus\{0,1\}, \qquad (s,e)\mapsto z^e_s
\end{equation}
satisfying the \emph{shape parameter relations}
\begin{subequations}
\label{eqn:shapeparamrel}
\begin{align}
z^{0011}_s&=z^{1100}_s=\frac{1}{1-z^{0101}_s}\label{shapepar1}\\
z^{0110}_s&=z^{1001}_s=\frac{1}{1-z^{0011}_s}\label{shapepar2}\\
z^{0101}_s&=z^{1010}_s=\frac{1}{1-z^{0110}_s}\label{shapepar3}.
\end{align}
\end{subequations}
%The variety of shape assignments on $\Delta^3_n$ is denoted by $V_n(\Delta_n^3)$.
\end{definition}

%\begin{remark}
%Note that \eqref{eqn:shapeparamrel} is preserved under the $A_4$-action on $\Delta^3_{n-2}(\Z)\times \dot{\Delta}^3_2(\Z)$. We elaborate on this in Section \ref{}.
%\end{remark}

\begin{remark}When $n=2$, there is only a single subsimplex indexed by $s=0000$, so $\Delta^3_{n-2}(\Z)\times\dot{\Delta}^3_2(\Z)\cong \dot{\Delta}^3_2(\Z)$ parametrizes the edges of a simplex. Since the shape parameters in \eqref{eqn:shapeParms} satisfy $$z=\frac{1}{1-z''},\qquad z'=\frac{1}{1-z},\qquad z''=\frac{1}{1-z'},$$  Definition \ref{shapeassignment} generalizes Thurston's shape assignments. The new notation relates to that of Thurston as follows:
$$z=z_s^{0011}=z_s^{1100},\qquad z^\prime=z_s^{0110}=z_s^{1001}, \qquad z''=z_s^{0101}=z_s^{1010}.$$
\end{remark}

\subsection{Gluing equations for oriented triangulations}\label{sec:GluingEquations}

Let $\T$ be an \emph{oriented} triangulation $\mathcal{T}$. The gluing equations are indexed by (non-vertex) integral points of $\mathcal{T}$ defined below and come in three flavors: \emph{edge equations}, \emph{face equations}, and \emph{interior equations}. In Section~\ref{sec:ShapeAssignmentSymmetries} we generalize the gluing equations to all concrete triangulations.

Recall that $\mathcal{T}$ is an identification of $\widehat{M}$ with a quotient of a disjoint union of standard simplices. Hence, $\mathcal{T}$ comes naturally equipped with a map
\begin{equation}
q\colon \coprod\Delta^3_n \rightarrow \widehat{M}.
\end{equation}

\begin{definition}\label{def:IntegralPoint}
An \emph{integral point} of $\mathcal{T}$ is a point $p$ in  
$$q\left(\coprod\Delta^3_n(\Z)\right)\subset \widehat{M}.$$
We view $p$ as an equivalence class of pairs $(t,\Delta)$ with $t\in\Delta^3_n(\Z)$ and $\Delta\in\mathcal T$ and write $(t,\Delta)\in p$ if $(t,\Delta)$ is a representative of $p$. The set of all integral points of $\mathcal{T}$ is denoted by $\mathcal{T}(\Z)$.
\end{definition}

\begin{definition} Let $p$ be an integral point of $\mathcal T$ represented by $(t,\Delta)$.
\begin{enumerate}[(i)]
\item We call $p$ a \emph{vertex point} if $t$ is a vertex of $\Delta^3_n(\Z)$.
\item We call $p$ an \emph{edge point} if $t$ is on an edge of $\Delta^3_n(\Z)$.
\item We call $p$ a \emph{face point} if $t$ is on a face $\Delta^3_n(\Z)$.
\item We call $p$ an \emph{interior point} if $t$ is in the interior of $\Delta^3_n(\Z)$, i.e.~if $t\in\Delta_n^3(\Z_+)$.
\end{enumerate}
We denote the set of non-vertex integral points by $\dot{\mathcal T}_n(\Z)$.
\end{definition}

\begin{definition}\label{def:GluingEquationsOri}
A shape assignment on an ordered triangulation $\mathcal{T}$ is a shape assignment $z^e_{s,\Delta}$ for each simplex $\Delta\in\mathcal{T}$ such that for each non-vertex integral point $p\in\dot{\mathcal{T}}_n(\Z)$, the \emph{generalized gluing equation}
\begin{equation}
\prod\limits_{(t,\Delta)\in p}\,\prod\limits_{t = s + e} z^e_{s,\Delta} = 1. \label{eqn:GeneralizedGluing}
\end{equation}
is satisfied. The variety of shape assignments on $\T$ is denoted by $V_n(\T)$.
\end{definition}

\begin{figure}
\begin{center}
\subfigure[$z_0z''_1z'_2=1$]{
\includegraphics[scale=0.35]{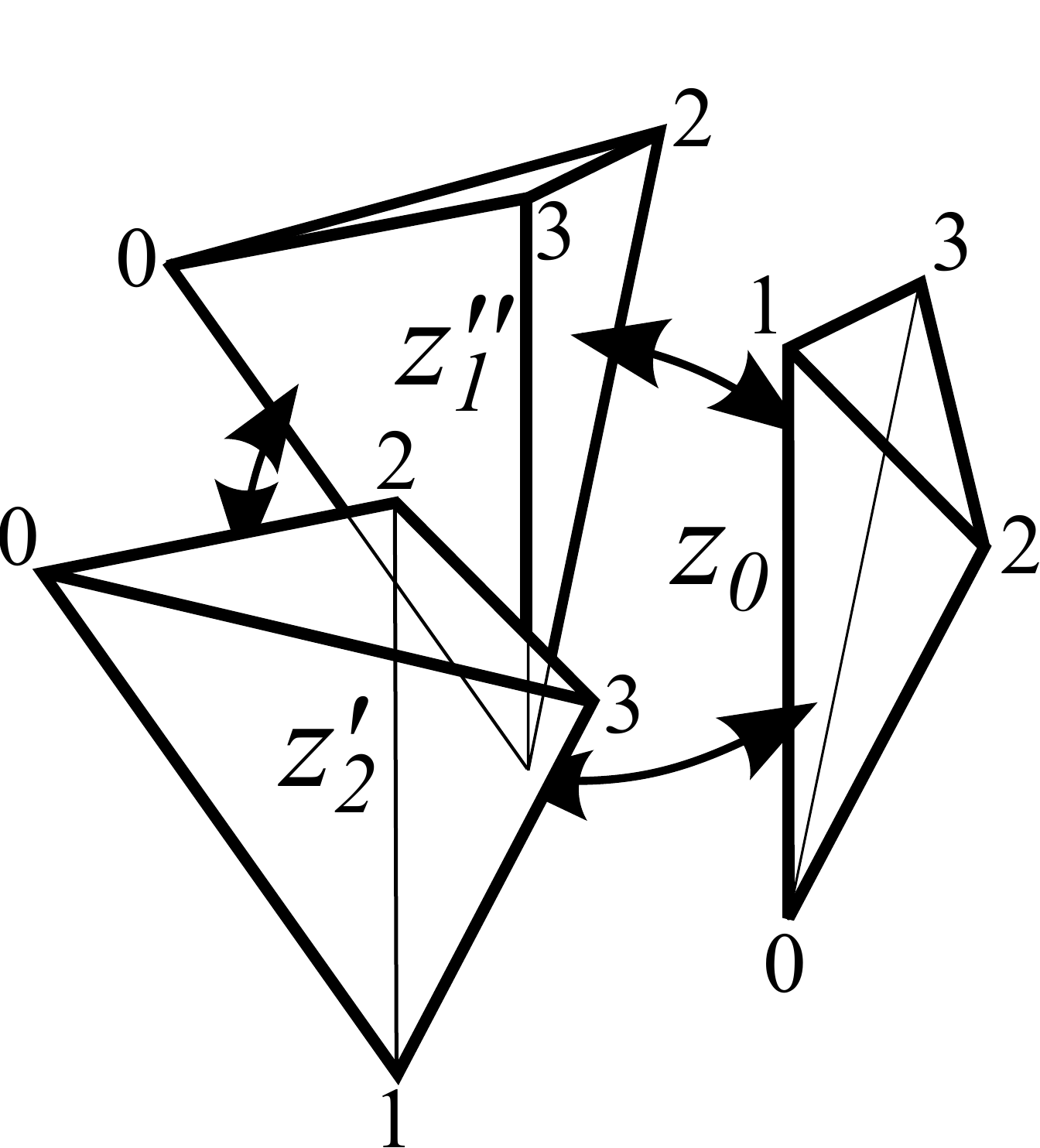}} 
\subfigure[$z_{1200,0}^{1100}z_{0102,1}^{0101}z_{0120,2}^{0110}=1$]{
\includegraphics[scale=0.35]{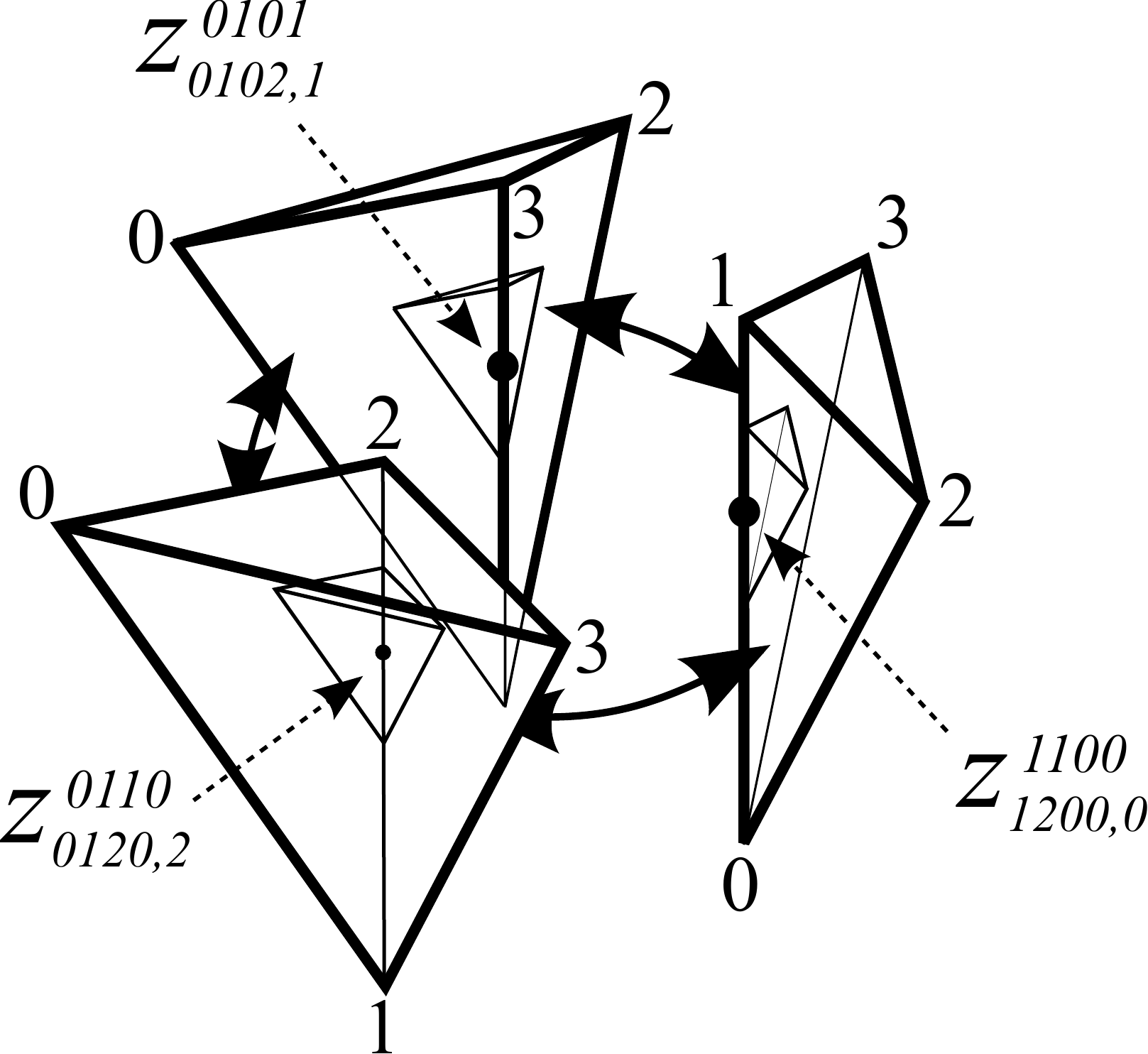}}
\end{center}
\caption{Edge equations for $n=2$ (traditional notation) and $n=5$. \label{fig:edgeGluing}}
\end{figure}

\begin{figure}
\begin{center}
\scalebox{0.9}{\input{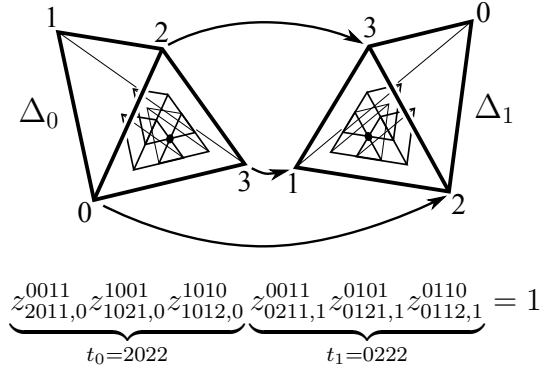}}\\
\bigskip
$\phantom{=1} \underbrace{z^{0011}_{2011,0}z^{1001}_{1021,0}z^{1010}_{1012,0}}_{t_0=2022}\underbrace{z^{0011}_{0211,1}z^{0101}_{0121,1}z^{0110}_{0112,1}}_{t_1=0222}=1$
\end{center}
\caption{A face equation for $n=6$. The indicated face-pairing is encoded by the permutation $\sigma=(0231)\in S_4$.}\label{fig:faceGluing}
\end{figure}

\begin{figure}
\begin{center}
\includegraphics[scale=0.35]{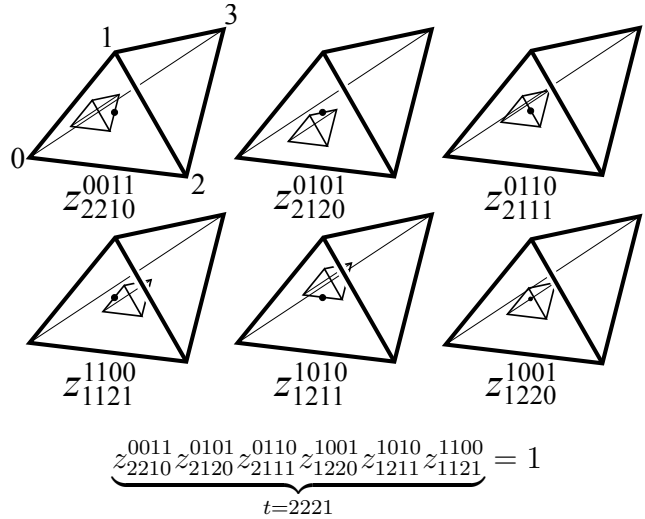}\\
\bigskip
$\underbrace{z^{0011}_{2210} z^{0101}_{2120} z^{0110}_{2111}z^{1001}_{1220} z^{1010}_{1211} z^{1100}_{1121}}_{t=2221}=1$
\end{center}
\caption{An internal equation for $n=7$.\label{fig:internalGluing}}
\end{figure}

 Note that the gluing equation for an integral point $p$ sets equal to $1$ the product of the shape parameters of all edges of subsimplices such that the edge midpoint intersects $p$.
The generalized gluing equations come in three different flavors depending on the type of the integral point $p$.
\begin{itemize}
\item {\bf Edge equations:} If $p$ is an edge point, the equation is similar to the usual gluing equation in that the number of terms equals the length of the edge cycle. There are $n-1$ edge gluing equations per edge of $\mathcal{T}$ involving shape parameters at different levels. See Figure \ref{fig:edgeGluing}.
\item {\bf Face equations:} If $p$ is a face point, the product consists of six terms with three terms from each of the two simplices sharing the face. There are $\binom{n-1}{2}$ equations per face. See Figure \ref{fig:faceGluing}.

\item {\bf Internal equations:} If $p$ is an interior point, the equation consists of six terms involving subsimplices of the same simplex, i.e.~this equation is independent of the triangulation. There are $\binom{n-1}{3}$ equations per simplex. See Figure \ref{fig:internalGluing}.
\end{itemize}

\begin{remark}
There are no vertex equations. If $p$ is a vertex point, \eqref{eqn:GeneralizedGluing} is tautologically satisfied since the product is empty. 
\end{remark}

%In Section~\ref{} we prove that a shape assignment of $\mathcal{T}$ determines a representation $\pi_1(M)\rightarrow \mathrm{PGL}(n,\C)$.

%In this section we study how the shape assignments on a simplex behave under symmetries and extend the definition of shape assignments to general concrete triangulations.

\subsection{The pullback of a shape assignment under a symmetry}

Recall that we identify the symmetry group of $\Delta_n$ with $S_4$.

\begin{definition}\label{def:PullbackShapes}
Let $\sigma\in S_4$ and let $z\colon\Delta^3_{n-2}(\Z)\times\dot\Delta^3_2(\Z)\to \C\setminus\{0,1\}$ be a map. 
The \emph{pullback} of $z$ under $\sigma$ is the map given by
\begin{equation}\label{actiononshapeassignments}
\sigma^*z\colon \Delta^3_{n-2}(\Z)\times\dot\Delta^3_2(\Z)\to\C\setminus\{0,1\},\qquad (s,e)\mapsto (z^{\sigma(e)}_{\sigma (s)})^{\sgn(\sigma)}.
\end{equation}
\end{definition}

The pullback obviously satisfies the standard properties $\tau^*\sigma^*=(\sigma\tau)^*$ and $\id^*=\id$.

\begin{lemma}\label{shapesymmetries}
The pullback preserves shape assignments.
\end{lemma}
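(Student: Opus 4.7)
The plan is to reduce the statement to a finite, essentially $S_3$-indexed calculation, using that $S_4$ acts on the three pairs of opposite edges of $\Delta_2^3$ through its quotient $S_4/V_4\cong S_3$, where $V_4=\{\id,(01)(23),(02)(13),(03)(12)\}$. Since $\sgn\colon S_4\to\{\pm1\}$ is trivial on $V_4$, the sign character also factors through this $S_3$. Combining these observations, the pullback formula \eqref{actiononshapeassignments} applied at a fixed subsimplex index $s$ depends on $\sigma$ only through its image $\bar\sigma\in S_3$ permuting the three pairs $\{0011,1100\}$, $\{0110,1001\}$, $\{0101,1010\}$, together with an overall inversion when $\sgn(\sigma)=-1$. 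Note also that the pullback behaviour in the $s$-variable is automatic: $\sigma$ restricts to a bijection of $\Delta^3_{n-2}(\Z)$, so it suffices to check the shape-parameter relations \eqref{eqn:shapeparamrel} at each fixed subsimplex.

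Accordingly, fix a subsimplex $s$ and abbreviate $a=z_s^{0011}$, $b=z_s^{0110}$, $c=z_s^{0101}$; by \eqref{eqn:shapeparamrel} these satisfy the cyclic system
\begin{equation*}
a=\frac{1}{1-c},\qquad b=\frac{1}{1-a},\qquad c=\frac{1}{1-b}.
\end{equation*}
Since $S_3$ is generated by a $3$-cycle and a transposition, it suffices to verify the preservation of these relations in two cases. First, for $\sigma\in A_4$ whose image in $S_3$ is the $3$-cycle cyclically permuting the three pairs above, one has $\sgn(\sigma)=+1$ and $\sigma^*$ sends $(a,b,c)$ to a cyclic rotation of itself; the three relations are invariant under such rotation, so they are preserved.

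Second, for a transposition $\sigma\in S_4$, say $\sigma=(01)$, a direct check of \eqref{actiononshapeassignments} on the six non-vertex integral points of $\Delta_2^3$ shows that the pair $\{0011,1100\}$ is fixed while $\{0110,1001\}$ is swapped with $\{0101,1010\}$; combined with $\sgn(\sigma)=-1$, this gives $\sigma^*(a,b,c)=(a^{-1},c^{-1},b^{-1})$. Substituting into \eqref{eqn:shapeparamrel} and using $b=1/(1-a)$ and $c=1/(1-b)$, one reduces each of the three required identities to the tautology $1-1/b=a$ (and its cyclic variants), which holds. This handles all generators of $S_3$; hence the relations are preserved for every $\sigma\in S_4$. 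The only step requiring any care is bookkeeping the action on the six edge-midpoint labels of $\Delta_2^3$, which is the main (but entirely mechanical) obstacle.
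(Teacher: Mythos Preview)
Your proof is correct and follows essentially the same approach as the paper: both reduce to observing that the even permutations (the paper's $A_4$, your 3-cycle representative) preserve the relations trivially, and then verify a single transposition $\sigma_{01}$ by direct computation. Your explicit use of the quotient $S_4/V_4\cong S_3$ is a clean way to organize the reduction, but the actual verification performed is the same as the paper's.
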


\begin{proof}
Let $z$ be a shape assignment. One easily checks that \eqref{eqn:shapeparamrel} is preserved under the $A_4$ action on the indices, so the result follows for $\sigma\in A_4$. Hence, all that remains is to prove the result for the permutation $\sigma_{01}$ switching $0$ and $1$. The equation
\begin{equation}
(\sigma_{01}^*z)_s^{0011}=(\sigma_{01}^*z)_s^{1100}=\frac{1}{1-(\sigma_{01}^*z)_s^{0101}}
\end{equation}
is equivalent to
\begin{equation}
(z^{0011}_{\sigma_{01}(s)})^{-1}=(z^{1100}_{\sigma_{01}(s)})^{-1}=\frac{1}{1-(z_{\sigma_{01}(s)}^{1001})^{-1}},
\end{equation}
which follows from \eqref{shapepar1} and \eqref{shapepar2}. The other equations are similarly verified.
\end{proof}

Note that if $\sigma\in A_4$ is a rotation, the pullback $\sigma^* z$ is the shape assignment obtained from $z$ by rotating the simplex by $\sigma$. If $\sigma$ is orientation reversing, one must also replace all the shape parameters by their inverses.

\begin{remark}\label{rm:reordering}%Recall that we view a shape assignment $z$ as an assignment of complex numbers to the edges of the subsimplices of $\Delta_n$. 
We view the pullback of a shape assignment on an ordered simplex $\Delta$ as the natural induced shape assignment on the simplex $\Delta'$ obtained from $\Delta$ by reordering the vertices such that the $i$th vertex of $\Delta'$ is the $\sigma(i)$th vertex of $\Delta$.
\end{remark}

\begin{figure}[htb]
\begin{center}
\begin{minipage}[b]{0.45\textwidth}
\input{figures_gen/ActionReorder.tex} 
\end{minipage}
\hfill
\begin{minipage}[b]{0.45\textwidth}
\input{figures_gen/ActionPullback.tex}
\end{minipage}
\\
\begin{minipage}[t]{0.45\textwidth}
\caption{Reordering by $\sigma=(123)$.}
\end{minipage}
\begin{minipage}[t]{0.45\textwidth}
\caption{Pullback by $\sigma=(123)$.}
\end{minipage}
\end{center} 
\end{figure}

\subsection{Gluing equations for general concrete triangulations}\label{sec:ShapeAssignmentSymmetries}

Since every concrete triangulation can be obtained from an oriented one by reordering some of the vertices, Lemma~\ref{shapesymmetries} motivates the following.

\begin{definition}\label{def:ShapeAssignmentGeneral} A shape assignment on a concrete triangulation $\T$ is a shape assignment on each simplex such that
\begin{equation}\label{eqn:GeneralizedGluingOrderAgnostic}
\prod\limits_{(t,\Delta)\in p}\,\prod\limits_{t = s + e} (z^e_{s,\Delta})^{\epsilon_\Delta} = 1, 
\end{equation}
where $\epsilon_\Delta$ is a sign indicating whether or not the orientation of $\Delta$ given by the vertex ordering agrees with the orientation inherited from $M$. The variety of shape assignments is denoted by $V_n(\T)$.
\end{definition}

The following is an immediate corollary of Lemma~\ref{shapesymmetries}, c.f.~Remark~\ref{rm:reordering}.
\begin{lemma}\label{lemma:reorderingshapes}
Let $\{z_{\Delta_i}\}$ be a shape assignment on $(M,\T)$ and let $\T'$ be the triangulation obtained from $\T$ by reordering each simplex $\Delta_i$ by a permutation $\sigma_i\in S_4$. The shape assignments $\{\sigma_i^*z_{\Delta_i}\}$ form a shape assignment on $(M,\T')$. \qed
\end{lemma}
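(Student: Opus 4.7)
The plan is to verify the two defining conditions of a shape assignment on $\T'$ separately. The \emph{shape parameter relations} \eqref{eqn:shapeparamrel} on each simplex $\Delta_i'$ follow immediately from Lemma~\ref{shapesymmetries} applied to each individual simplex: pullback by $\sigma_i$ sends shape assignments to shape assignments. So all that remains is to verify the generalized gluing equations \eqref{eqn:GeneralizedGluingOrderAgnostic} at every non-vertex integral point of $\T'$.

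The crucial observation is that reordering the vertices induces a bijection between integral points of $\T$ and those of $\T'$ (they correspond to the same points of $\widehat M$), and under this bijection a representative $(t,\Delta_i)$ of $p\in\dot\T_n(\Z)$ corresponds to a representative $(\sigma_i^{-1}(t),\Delta_i')$ of the associated $p'\in\dot\T'_n(\Z)$. Consequently, as $(t',\Delta_i')$ ranges over representatives of $p'$ with $t'=s'+e'$, the pairs $(s,e):=(\sigma_i(s'),\sigma_i(e'))$ range over the corresponding data at $p$ in $\T$.

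Next I would unwind the exponents. By Definition~\ref{def:PullbackShapes},
\[
(\sigma_i^*z_{\Delta_i})^{e'}_{s'}=\bigl(z^{\sigma_i(e')}_{\sigma_i(s'),\Delta_i}\bigr)^{\sgn(\sigma_i)}.
\]
On the other hand, reordering by $\sigma_i$ changes the orientation sign attached to the simplex, so $\epsilon_{\Delta_i'}=\sgn(\sigma_i)\cdot\epsilon_{\Delta_i}$. These two signs multiply to $\sgn(\sigma_i)^2=1$, so the total exponent on $z^{e}_{s,\Delta_i}$ appearing in the product on $\T'$ is exactly $\epsilon_{\Delta_i}$, matching the corresponding factor in the gluing equation on $\T$. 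After reindexing via $(s,e)=(\sigma_i(s'),\sigma_i(e'))$ as above, the gluing equation at $p'$ on $\T'$ becomes literally the gluing equation at $p$ on $\T$, which holds by assumption.

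The only real point of care is the bookkeeping: keeping track of (a) the cancellation of the two factors of $\sgn(\sigma_i)$ and (b) the fact that $t=s+e$ is compatible with the change of variables $(s,e)\mapsto(\sigma_i(s),\sigma_i(e))$ because the $S_4$-action is linear and thus preserves addition. Granted these, the gluing equations on $\T'$ follow term-for-term from those on $\T$.
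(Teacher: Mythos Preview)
Your proof is correct and follows the same approach as the paper, which declares the lemma an immediate corollary of Lemma~\ref{shapesymmetries} (cf.~Remark~\ref{rm:reordering}) and provides no further argument. You have simply made explicit the bookkeeping the paper omits: the bijection of integral points under reordering, the cancellation of the two factors of $\sgn(\sigma_i)$ coming from the pullback definition and the orientation sign $\epsilon_{\Delta_i'}=\sgn(\sigma_i)\epsilon_{\Delta_i}$, and the compatibility of the decomposition $t=s+e$ with the linear $S_4$-action.
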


\begin{corollary}
Up to canonical isomorphism, the gluing equation variety only depends on the abstract triangulation.\qed
\end{corollary}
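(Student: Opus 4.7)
The plan is to exhibit, for any two concrete triangulations $\T$ and $\T'$ sharing the same underlying abstract triangulation, an explicit isomorphism $V_n(\T)\to V_n(\T')$ of affine varieties, and to check that it is canonical.

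First I would observe that if $\T$ and $\T'$ refine the same abstract triangulation, then they differ only in the chosen identification of each $3$-simplex with the standard ordered simplex. Concretely, for each simplex $\Delta_i$ of the abstract triangulation, there is a unique permutation $\sigma_i\in S_4$ which takes the vertex ordering of $\Delta_i$ inherited from $\T$ to the one inherited from $\T'$. The tuple $(\sigma_i)$ records the entire difference between $\T$ and $\T'$.

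Next I would define the candidate isomorphism $\Phi\colon V_n(\T)\to V_n(\T')$ simplex-by-simplex by the pullback formula of Definition~\ref{def:PullbackShapes}, namely $\{z_{\Delta_i}\}\mapsto \{\sigma_i^*z_{\Delta_i}\}$. By Lemma~\ref{lemma:reorderingshapes}, this map lands in $V_n(\T')$. Since the formula $(s,e)\mapsto (z^{\sigma(e)}_{\sigma(s)})^{\sgn(\sigma)}$ is either a coordinate or the inverse of a coordinate, and shape coordinates lie in $\C\setminus\{0,1\}\subset\C^*$ so that coordinate inverses are regular functions on the varieties in question, $\Phi$ is a morphism of affine varieties. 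The inverse morphism is given by the pullback under $(\sigma_i^{-1})$, which is also a morphism by the same argument; that these two are mutually inverse is immediate from the identity $(\sigma^{-1})^*\sigma^*=\id^*=\id$ recorded just after Definition~\ref{def:PullbackShapes}. Hence $\Phi$ is an isomorphism of affine varieties.

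Finally, canonicity follows from the fact that each $\sigma_i$ is uniquely determined by the pair $(\T,\T')$, so $\Phi$ is uniquely determined by the data of the two concrete triangulations. Applying this argument to the collection of all concrete refinements of a fixed abstract triangulation yields a coherent system of isomorphisms between their gluing equation varieties, which is the asserted statement. The only subtlety I foresee is the bookkeeping of signs $\sgn(\sigma_i)$ and orientation factors $\epsilon_\Delta$ appearing in Definition~\ref{def:ShapeAssignmentGeneral}: since reordering by $\sigma_i$ multiplies $\epsilon_{\Delta_i}$ by $\sgn(\sigma_i)$, and the pullback simultaneously raises the shape parameters to the power $\sgn(\sigma_i)$, the two effects cancel inside the generalized gluing equation~\eqref{eqn:GeneralizedGluingOrderAgnostic}, so no further verification beyond Lemma~\ref{lemma:reorderingshapes} is required.
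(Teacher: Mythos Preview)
Your proposal is correct and follows essentially the same approach as the paper, which treats the corollary as immediate from Lemma~\ref{lemma:reorderingshapes} (hence the bare \qed). You have simply spelled out the details the paper leaves implicit: that the pullback formula is regular on $\C\setminus\{0,1\}$, that $(\sigma^{-1})^*$ provides the inverse, and that the sign bookkeeping with $\epsilon_\Delta$ works out.
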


\begin{remark}
The gluing equations can also be defined if $M$ is non-orientable: pick an oriented neighborhood $U$ of the integer point $p$. The sign $\epsilon_\Delta$ now indicates whether or not the orientation of $\Delta$ agrees with the orientation of $U$. We will not explore this further.
\end{remark}

\section{Review of Ptolemy coordinates}\label{sec:PtolemyReview}

Ptolemy coordinates were introduced in \cite{GaroufalidisThurstonZickert} inspired by $\mathcal A$-coordinates on higher Teichm\"{u}ller spaces due to Fock and Goncharov~\cite{FockGoncharov}. They are indexed by non-vertex integral points of $\mathcal{T}$ satisfying \emph{Ptolemy relations} each involving the six Ptolemy coordinates assigned to the edges of a subsimplex. They are most naturally defined for ordered triangulations. General concrete triangulations are studied in Section~\ref{sec:OrderAgnosticPtolemy}.

\subsection{Ptolemy assignments for ordered triangulations}
\begin{definition}\label{def:PtolemyAssignment}
A \emph{Ptolemy assignment} on $\Delta^3_n$ is an assignment
\begin{equation}
\dot{\Delta}^3_n(\Z)\to\C\setminus\{0\},\qquad t\mapsto c_t
\end{equation}
of a non-zero complex number $c_t$ to each non-vertex integral point $t$ of $\Delta^3_n$ such that the \emph{Ptolemy relation}
\begin{equation}
c_{s+1001}c_{s+0110}+c_{s+1100}c_{s+0011}=c_{s+1010}c_{s+0101}\label{eqn:PtolemyRelation}
\end{equation}
is satisfied for each subsimplex $s\in\Delta^3_{n-2}(\Z)$. 
\end{definition}
\begin{definition}\label{def:PtolemyAssignmentOrd}
A Ptolemy assignment on an \emph{ordered} triangulation $\mathcal{T}$ is an assignment
\begin{equation}
\dot{\mathcal{T}}_n(\Z)\to\C\setminus\{0\},\qquad p\mapsto c_p
\end{equation}
of a non-zero complex number to each non-vertex integral point $p$ of $\mathcal{T}$ such that for each simplex in $\mathcal{T}$ the identification with $\Delta^3_n$ induces a Ptolemy assignment on $\Delta^3_n$. If $(t,\Delta)\in p$ is a representative of $p$, we write $c_{t,\Delta}$ for the Ptolemy coordinate $c_p$. The variety of Ptolemy assignments is denoted by $P_n(\T)$.
\end{definition}

\begin{remark}
Whenever convenient, we extend a Ptolemy assignment, so that it takes vertex points to $1$.
\end{remark}
%\begin{remark}
%Note that \eqref{eqn:PtolemyRelation} is preserved under the action on $\dot{\Delta}^3_n(\Z)$ by is the Klein four-group in $S_4$ generated by $(12)(34)$ and $(13)(24)$. We elaborate on this in Section \ref{}.
%\end{remark}

\begin{figure}[htb]
\begin{center}
\scalebox{0.9}{
\includegraphics{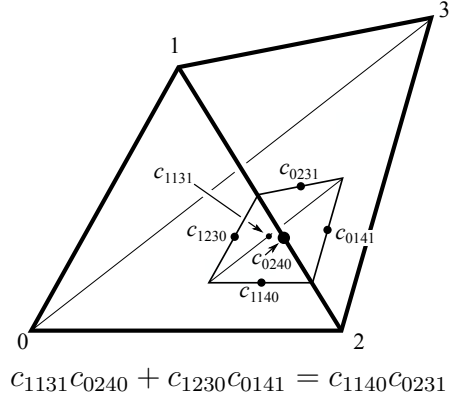}}\\
$c_{1131}c_{0240}+c_{1230}c_{0141}=c_{1140}c_{0231}$
\end{center}
\caption{Ptolemy relation for the subsimplex $s=0130$.}
\end{figure}

\begin{remark} Note that the Ptolemy relation \eqref{eqn:PtolemyRelation} is local, i.e.~independent of the triangulation $\mathcal{T}$. The triangulation determines whether $(t,\Delta)$ and $(t',\Delta')$ represent the same integral point $p$, and, hence, whether the Ptolemy coordinates $c_{t,\Delta}$ and $c_{t',\Delta'}$ are identified.
\end{remark}

\subsection{The pullback of a Ptolemy assignment under a symmetry} 
The Ptolemy coordinates are not as well behaved under symmetries as the shapes. % The obvious pullback defined by $(\sigma^*c)_t=c_{\sigma t}$ only preserves Ptolemy assignments if $\sigma$ is in the Klein four group generated by $(01)(23)$ and $(02)(13)$. 
The obvious pullback defined by $(\sigma^*c_t)=c_{\sigma(t)}$ does not preserve Ptolemy assignments. To fix this, we must modify by signs depending on both $\sigma\in S_4$ and $t$.

Let $I$ denote the identity matrix in $\GL(n,\C)$. For each $t\in\Delta_n^3(\Z)$, we can write $I$ as a concatenation of $n\times t_i$ matrices $I_i^t$, i.e.~we have $I=[I_0^t\vert I_1^t\vert I_2^t\vert I_3^t]$. For $\sigma\in S_4$ define
\begin{equation}
I_{\sigma,t}=[I_{\sigma(0)}^t\vert I_{\sigma(1)}^t\vert I_{\sigma(2)}^t\vert I_{\sigma(3)}^t].
\end{equation}
Note that 
\begin{equation} \label{Isigmaproperties}
I_{\sigma \tau,\sigma \tau(t)} =I_{\sigma, \sigma\tau (t)} I_{\tau,\tau(t)}, \qquad \det(I_{\sigma_{ij},t}) = \det(I_{\sigma_{ij},\sigma_{ij}(t)})=(-1)^{t_it_j} ,
\end{equation}
where $\sigma_{ij}$ is the permutation switching $i$ and $j$.

\begin{definition}\label{def:PullbackPtolemy} 
Let $\sigma\in S_4$ and let $c\colon\dot\Delta^3_n(\Z)\to\C^*$ be a map. The \emph{pullback} of $c$ under $\sigma$ is the map
\begin{equation}\label{ActiononPtolemys}
(\sigma^* c)_t=\det(I_{\sigma,\sigma(t)})c_{\sigma(t)}.
\end{equation}
\end{definition}

Using~\eqref{Isigmaproperties}, one checks that the pullback satisfies the properties $\tau^*\sigma^*=(\sigma\tau)^*$ and $\id^*=\id$.

\begin{remark}
The formula is motivated by Lemma~\ref{lemma:DecorationsAndPullbacks} below.
\end{remark}

\begin{remark}\label{rm:ShuffleOdd}
Note that  $\det(I_{\sigma,\sigma(t)})$ only depends on the parity of the entries of $t$ (and on $\sigma$). It equals the sign of the permutation shuffling the odd entries of $t$, e.g., if $\sigma$ takes $t_0=(0,0,3,1)$ to $t_1=(0,1,0,3)$,  $\det(I_{\sigma,\sigma(t_0)})=-1$ since the permutation taking $(3,1)$ to $(1,3)$ is odd. See, e.g.,  Figure~\ref{fig:PtolemyIdentification}.
\end{remark}

\begin{lemma}\label{lemma:PtolemySymmetries}
The pullback preserves Ptolemy assignments.
\end{lemma}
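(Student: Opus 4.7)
The plan is to exploit that pullback is a group action. From the identities $\tau^*\sigma^* = (\sigma\tau)^*$ and $\mathrm{id}^* = \mathrm{id}$ noted right after Definition~\ref{def:PullbackPtolemy}, pullback defines a right $S_4$-action on the set of functions $\dot\Delta^3_n(\Z) \to \C^*$. Hence it suffices to verify that $\sigma^*$ preserves Ptolemy assignments when $\sigma$ runs over a generating set of $S_4$. I would take the three adjacent transpositions $\sigma_{01}, \sigma_{12}, \sigma_{23}$, which play symmetric roles; so the entire content is in checking one of them, say $\sigma = \sigma_{01}$.

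Fix a subsimplex $s \in \Delta^3_{n-2}(\Z)$ and set $s' = \sigma_{01}(s)$. Using \eqref{Isigmaproperties}, for any $t$ we have $\det(I_{\sigma_{01}, t}) = (-1)^{t_0 t_1}$. Since $\sigma_{01}(s+e) = s' + \sigma_{01}(e)$, Definition~\ref{def:PullbackPtolemy} gives
\[
(\sigma_{01}^* c)_{s+e} \;=\; (-1)^{(s'_0 + \sigma_{01}(e)_0)(s'_1 + \sigma_{01}(e)_1)}\, c_{s'+\sigma_{01}(e)}.
\]
Substituting into the Ptolemy relation for $\sigma_{01}^* c$ at $s$ produces the same six Ptolemy coordinates of $c$ at the subsimplex $s'$, grouped into the images under $\sigma_{01}$ of the original pairings of complementary edges, each multiplied by a product of two such signs. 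For a pairing $\{e, e'\}$ (so $e + e' = 1111$ and thus $\sigma_{01}(e)_k + \sigma_{01}(e')_k = 1$ for $k = 0, 1$), the sum of the two exponents simplifies modulo $2$ to
\[
s'_0 + s'_1 + \sigma_{01}(e)_0\sigma_{01}(e)_1 + \sigma_{01}(e')_0\sigma_{01}(e')_1.
\]

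The factor $(-1)^{s'_0 + s'_1}$ is common to all three pairings and therefore cancels from both sides of the relation. A direct case check over the three pairings shows the residual parity is $1$ precisely on the pairing $\{1100, 0011\}$ that is fixed by $\sigma_{01}$, and $0$ on the pairings $\{1001, 0110\}$ and $\{1010, 0101\}$ that $\sigma_{01}$ swaps. Combining these signs with the induced permutation of pairings, the relation for $\sigma_{01}^* c$ at $s$ rearranges to $c_{s'+1001}c_{s'+0110} + c_{s'+1100}c_{s'+0011} = c_{s'+1010}c_{s'+0101}$, which is the Ptolemy relation for $c$ at $s'$ and holds by hypothesis. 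The arguments for $\sigma_{12}$ and $\sigma_{23}$ are identical after relabeling indices: in each case the residual $-1$ appears precisely on the unique pairing fixed by the transposition, and the signs again conspire to reproduce the original Ptolemy relation at the image subsimplex.

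The principal obstacle is the sign bookkeeping. Its resolution hinges on two transparent observations: the $s'$-dependent sign $(-1)^{s'_i + s'_j}$ is uniform across the three pairings and so contributes nothing, and the only nontrivial residual sign occurs on the single pairing fixed by $\sigma_{ij}$, which is exactly the combinatorial input needed to let the permutation of pairings in the pulled-back relation collapse back to the standard Ptolemy relation at $\sigma(s)$.
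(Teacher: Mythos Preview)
Your proof is correct and follows essentially the same approach as the paper: reduce to the generating transpositions, treat $\sigma_{01}$ in detail, and verify that the pulled-back Ptolemy relation at $s$ reduces (up to the global sign $(-1)^{s_0+s_1}$) to the Ptolemy relation for $c$ at $s'=\sigma_{01}(s)$. The only difference is presentational: the paper writes out the three sign products explicitly (as in \eqref{eq:PullbackPtolemy}) rather than abstracting them into the parity formula $s'_0+s'_1+f_0f_1+f'_0f'_1$, but the computations are identical in content.
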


\begin{proof}
Let $c\colon\dot\Delta^3_n\to\C^*$ be a Ptolemy assignment. Since $S_4$ is generated by the transpositions $\sigma_{01}$, $\sigma_{12}$ and $\sigma_{23}$, it is enough to prove the result for these. We prove it for $\sigma_{01}$, the others being similar. We wish to prove that
\begin{equation}\label{eq}
(\sigma_{01}^* c)_{s+1001}(\sigma_{01}^* c)_{s+0110}+(\sigma_{01}^* c)_{s+1100}(\sigma_{01}^* c)_{s+0011}-(\sigma_{01}^* c)_{s+1010}(\sigma_{01}^* c)_{s+0101}=0.
\end{equation}
Using, \eqref{ActiononPtolemys} and \eqref{Isigmaproperties} and letting $s'=\sigma_{01}s$, the left side of \eqref{eq} becomes
\begin{equation}\setlength\arraycolsep{0.01em}\label{eq:PullbackPtolemy}
\begin{array}{llll}
&(-1)^{(s_0+1)s_1} & (-1)^{s_0(s_1+1)} & c_{s'+0101}c_{s'+1010}\\
+&(-1)^{(s_0+1)(s_1+1)} & (-1)^{s_0s_1} & c_{s'+1100} c_{s'+0011}\\
-&(-1)^{s_0(s_1+1)} & (-1)^{s_1(s_0+1)} & c_{s'+1001}c_{s'+0110}\\
= && (-1)^{s_0+s_1} & \left(c_{s'+0101}c_{s'+1010}-c_{s'+1100} c_{s'+0011}-c_{s'+1001}c_{s'+0110}\right).
\end{array}
\end{equation}
By the Ptolemy relation for $s'$ this equals $0$, proving the result.
\end{proof}

As in Remark~\ref{rm:reordering}, we shall view the pullback as a natural induced Ptolemy assignment on a reordered simplex.

\subsection{Ptolemy assignments for general concrete triangulations}
\label{sec:OrderAgnosticPtolemy}

For general concrete triangulations the Ptolemy coordinates on faces of different simplices must be identified by signs given by the face pairing permutations.
\begin{definition}\label{def:PtolemyAssignmentGeneral}
A Ptolemy assignment on $(M,\T)$ is a Ptolemy assignment for each simplex of $\T$ such that Ptolemy coordinates on identified faces are identified via the pullback of the permutation matrix. More precisely, if $f_0\subset\Delta_0$ is paired with $f_1\subset\Delta_1$ via the permutation $\sigma$, we require that 
\begin{equation}
(\sigma^*c_{\Delta_1})_{t_0}= (c_{\Delta_0})_{t_0}
\end{equation}
for each $t_0\in\Delta_n^3$ on face $f_0$. Equivalently, we require that $(c_{\Delta_0})_{t_0} = \det(I_{\sigma,\sigma(t_0)}) (c_{\Delta_1})_{\sigma(t_0)}$.
The variety of Ptolemy assignments is denoted by $P_n(\T)$.
\end{definition}

\begin{figure}[htb]
\begin{center}
\input{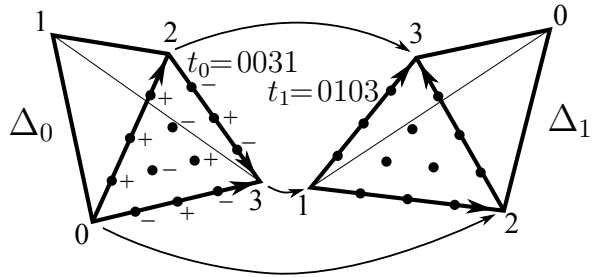} 
\end{center}
\caption{Identification of Ptolemy coordinates.}\label{fig:PtolemyIdentification}
\end{figure}

Note that if $\T$ is an ordered triangulation, all face pairings are order preserving, so all signs are positive, and the definition agrees with Definition~\ref{def:PtolemyAssignment}.

\begin{lemma}\label{lemma:reorderingPtolemys}
Let $\T'$ be the triangulation obtained from $\T$ by reordering the vertices of $\Delta_i$ by a permutation $\sigma_i$.
Then $\{\sigma_i^*c_{\Delta_i}\}$ is a Ptolemy cochain on $\T'$.
\end{lemma}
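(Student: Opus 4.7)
The plan is to verify the two requirements of Definition~\ref{def:PtolemyAssignmentGeneral} for the reordered assignment $\{\sigma_i^* c_{\Delta_i}\}$ on $\T'$. First, each $\sigma_i^* c_{\Delta_i}$ must satisfy the Ptolemy relations \eqref{eqn:PtolemyRelation} on every subsimplex of $\Delta_n^3$; this is immediate from Lemma~\ref{lemma:PtolemySymmetries}, which says exactly that the pullback sends Ptolemy assignments to Ptolemy assignments. So the entire content of the lemma is verifying the face-identification compatibility for each pair of glued faces of $\T'$.

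To set that up, first work out how a face-pairing permutation transforms under the reordering. If $\sigma\in S_4$ is the face-pairing permutation between $f_0\subset\Delta_0$ and $f_1\subset\Delta_1$ in $\T$, then by Remark~\ref{rm:reordering} the vertex $v$ of the reordered $\Delta_0$ is the vertex $\sigma_0(v)$ of $\Delta_0$, which is sent by the original face pairing to vertex $\sigma\sigma_0(v)$ of $\Delta_1$, which in turn is vertex $\sigma_1^{-1}\sigma\sigma_0(v)$ of the reordered $\Delta_1$. Hence the new face-pairing permutation is the conjugate $\sigma' = \sigma_1^{-1}\sigma\sigma_0$. Moreover, a point $t_0'\in\Delta_n^3$ representing a point on the reordered face $f_0'$ corresponds, under the identification of $\Delta_0'$ with $\Delta_0$, to the point $\sigma_0(t_0')\in f_0$.

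The compatibility to check is then
\[
((\sigma')^*(\sigma_1^* c_{\Delta_1}))_{t_0'} \;=\; (\sigma_0^* c_{\Delta_0})_{t_0'}.
\]
Using the composition law $\tau^*\sigma^* = (\sigma\tau)^*$ for the pullback (which follows from the cocycle identity for $I_{\sigma,t}$ recorded in \eqref{Isigmaproperties}), the left-hand side becomes $((\sigma_1\sigma')^* c_{\Delta_1})_{t_0'} = ((\sigma\sigma_0)^* c_{\Delta_1})_{t_0'} = (\sigma_0^*(\sigma^* c_{\Delta_1}))_{t_0'}$. The face-pairing compatibility of $c$ in $\T$ gives $(\sigma^* c_{\Delta_1})_{\sigma_0(t_0')} = (c_{\Delta_0})_{\sigma_0(t_0')}$ since $\sigma_0(t_0')\in f_0$, and multiplying both sides by the determinantal factor $\det(I_{\sigma_0,\sigma_0(t_0')})$ prescribed by Definition~\ref{def:PullbackPtolemy} produces exactly the required identity. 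The only real obstacle is bookkeeping: making sure that the conjugation $\sigma' = \sigma_1^{-1}\sigma\sigma_0$ is precisely the arrangement that lines up with the contravariance of the pullback, after which functoriality telescopes the compatibility in $\T'$ directly onto the compatibility already assumed in $\T$.
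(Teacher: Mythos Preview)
Your proof is correct and follows essentially the same approach as the paper: compute the new face-pairing permutation as $\sigma' = \sigma_1^{-1}\sigma\sigma_0$ and then use the functoriality property $\tau^*\sigma^* = (\sigma\tau)^*$ to reduce the compatibility in $\T'$ to the assumed compatibility in $\T$. Your version is simply more explicit about the bookkeeping (tracking which point $\sigma_0(t_0')$ lands on $f_0$, and spelling out the determinantal factor), whereas the paper compresses all of this into the phrase ``standard pullback properties.''
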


\begin{proof} We must prove that the coordinates on identified faces are identified via the pullback.
Let $c_{\Delta_i'}=\sigma_i^*c_{\Delta_i}$. Suppose $f_i\subset\Delta_i$ is glued to $f_j\subset\Delta_j$ by a permutation $\tau$. The corresponding face pairing involving $\Delta_i'$ and $\Delta_j'$ is then $\tau'=\sigma_j^{-1}\tau\sigma_i$. Since $\tau^*c_{\Delta_j}$ agrees with $c_{\Delta_i}$ on face $f_0$, it follows from the standard pullback properties that $\tau'^*c_{\Delta_j'}$ and $c_{\Delta_i'}$ also agree on $f_0$. This proves the result.
\end{proof}
\begin{corollary}
Up to canonical isomorphism, the Ptolemy variety only depends on the abstract triangulation.\qed
\end{corollary}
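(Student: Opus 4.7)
The plan is to deduce this corollary directly from Lemma~\ref{lemma:reorderingPtolemys} together with the functoriality of the pullback.

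First I would make precise what ``canonical isomorphism'' means here: any two concrete triangulations $\T$ and $\T'$ of the same abstract triangulation $\T_{\mathrm{abs}}$ differ, simplex by simplex, by a reordering of vertices, i.e.~there is a collection of permutations $\sigma_i \in S_4$, one per simplex, such that $\T'$ is obtained from $\T$ by reordering $\Delta_i$ by $\sigma_i$. I would then define
\[
\Phi_{\T,\T'} \colon P_n(\T) \to P_n(\T'), \qquad \{c_{\Delta_i}\} \mapsto \{\sigma_i^* c_{\Delta_i}\}.
\]
Lemma~\ref{lemma:reorderingPtolemys} guarantees that $\Phi_{\T,\T'}$ lands in $P_n(\T')$, so this map is well defined.

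Next I would verify that $\Phi_{\T,\T'}$ is an isomorphism of affine varieties. Using the functoriality $\tau^*\sigma^* = (\sigma\tau)^*$ and $\id^* = \id$ of the pullback stated just after Definition~\ref{def:PullbackPtolemy}, one has
\[
\Phi_{\T',\T} \circ \Phi_{\T,\T'} = \Phi_{\T,\T} = \id_{P_n(\T)},
\]
and symmetrically $\Phi_{\T,\T'}\circ\Phi_{\T',\T} = \id_{P_n(\T')}$. So $\Phi_{\T,\T'}$ is a bijection with inverse $\Phi_{\T',\T}$. Moreover, the formula~\eqref{ActiononPtolemys} shows that each coordinate of $\Phi_{\T,\T'}$ is the product of the input Ptolemy coordinate with a sign $\det(I_{\sigma_i,\sigma_i(t)})$, so $\Phi_{\T,\T'}$ is a monomial map; in particular it is a morphism (indeed an isomorphism) of affine varieties.

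Finally I would note that $\Phi_{\T,\T'}$ depends only on the reorderings $\sigma_i$, which are themselves determined by the pair $(\T,\T')$ of concrete triangulations of the same $\T_{\mathrm{abs}}$; no further choices are made. This is the sense in which the isomorphism is canonical, and it yields $P_n(\T)\cong P_n(\T')$ depending only on $\T_{\mathrm{abs}}$, proving the corollary. I do not foresee a genuine obstacle here: the entire content of the corollary has essentially already been absorbed into Lemma~\ref{lemma:reorderingPtolemys} and the functoriality of the pullback, and the remaining work is purely bookkeeping.
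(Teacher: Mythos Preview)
Your proof is correct and is exactly the argument the paper has in mind: the corollary is marked with a bare \qed\ immediately after Lemma~\ref{lemma:reorderingPtolemys}, so the intended proof is precisely to define the reordering map via pullbacks, invoke the lemma for well-definedness, and use the functoriality $\tau^*\sigma^*=(\sigma\tau)^*$, $\id^*=\id$ for invertibility. You have simply spelled out these details.
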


\section{From Ptolemy coordinates to shapes}\label{sec:PtolemyToShapes}

We now define a monomial map $\mu$ from Ptolemy assignments to shape assignments.
Given a Ptolemy assignment $c$ on a simplex $\Delta^3_n$, define $z\colon\Delta^3_{n-2}(\Z)\times\dot\Delta_2^3(\Z)$ by
\begin{equation}\label{eqn:PtolemyToShapes}
\begin{aligned}
z_s^{1100}  = z_s^{0011} & = & & \frac{c_{s+1001}c_{s+0110}}{c_{s+1010}c_{s+0101}}\\
z_s^{0110}  = z_s^{1001} & =  & & \frac{c_{s+0101}c_{s+1010}}{c_{s+1100}c_{s+0011}}\\
z_s^{1010}  = z_s^{0101} & =  & - & \frac{c_{s+1100}c_{s+0011}}{c_{s+1001}c_{s+0110}}.
\end{aligned}
\end{equation}

\begin{lemma}
The assignment \eqref{eqn:PtolemyToShapes} is a shape assignment, i.e.~we have a well defined map 
\begin{equation}\mu\colon\text{\{Ptolemy assignments on $\Delta^3_n$\}}\to \text{\{shape assignments on $\Delta^3_n$\}}.
\end{equation}
\end{lemma}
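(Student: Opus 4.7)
The plan is to verify the two conditions that make~\eqref{eqn:PtolemyToShapes} a shape assignment in the sense of Definition~\ref{shapeassignment}: (i) all the values $z_s^e$ lie in $\C\setminus\{0,1\}$, and (ii) the shape parameter relations~\eqref{eqn:shapeparamrel} hold. Both conditions will be immediate algebraic consequences of the non-vanishing of Ptolemy coordinates together with the Ptolemy relation~\eqref{eqn:PtolemyRelation}.

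First I would argue that every $z_s^e$ is non-zero: the formulas in~\eqref{eqn:PtolemyToShapes} are ratios of products of Ptolemy coordinates, each of which is in $\C\setminus\{0\}$. For the condition $z_s^e\neq 1$, I would use the Ptolemy relation at the subsimplex $s$ to argue that $z_s^e=1$ would force one of the three products $c_{s+1100}c_{s+0011}$, $c_{s+1001}c_{s+0110}$, or $c_{s+1010}c_{s+0101}$ to vanish, contradicting non-vanishing of Ptolemy coordinates. For instance, $z_s^{0011}=1$ forces $c_{s+1001}c_{s+0110}=c_{s+1010}c_{s+0101}$, so by~\eqref{eqn:PtolemyRelation} we would get $c_{s+1100}c_{s+0011}=0$, a contradiction; the other two cases are analogous (the case $z_s^{1010}=1$ uses the minus sign, giving $c_{s+1010}c_{s+0101}=0$).

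Next I would verify one of the three shape parameter relations in~\eqref{eqn:shapeparamrel}; the others follow by the same argument (or by the $\Z/3$ symmetry cycling the three unordered edges). Concretely, from the third line of~\eqref{eqn:PtolemyToShapes},
\[
1-z_s^{0101}=1+\frac{c_{s+1100}c_{s+0011}}{c_{s+1001}c_{s+0110}}=\frac{c_{s+1001}c_{s+0110}+c_{s+1100}c_{s+0011}}{c_{s+1001}c_{s+0110}},
\]
and the numerator equals $c_{s+1010}c_{s+0101}$ by the Ptolemy relation, so $\frac{1}{1-z_s^{0101}}=\frac{c_{s+1001}c_{s+0110}}{c_{s+1010}c_{s+0101}}=z_s^{0011}=z_s^{1100}$, which is exactly~\eqref{shapepar1}. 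The relations~\eqref{shapepar2} and~\eqref{shapepar3} are verified by the same two-line manipulation.

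There is no real obstacle here: the content is entirely in the Ptolemy relation, and the proof is a short verification. The only thing worth flagging is the role of the minus sign in the formula for $z_s^{1010}=z_s^{0101}$, which is precisely what is needed for the three shape parameter relations to close up consistently with the single Ptolemy relation at $s$.
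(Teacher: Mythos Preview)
Your proof is correct and follows essentially the same approach as the paper: the key computation verifying~\eqref{shapepar1} via the Ptolemy relation is identical, and the remaining relations are dismissed as similar in both. You are in fact slightly more thorough than the paper, which does not explicitly address the condition $z_s^e\in\C\setminus\{0,1\}$.
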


\begin{proof}
Using the Ptolemy relation, we obtain
$$\frac{1}{1-z_s^{0101}}=\frac{1}{1+\frac{c_{s+1100}c_{s+0011}}{c_{s+1001}c_{s+0110}}}=\frac{c_{s+1001}c_{s+0110}}{c_{s+1100}c_{s+0011}+c_{s+1001}c_{s+0110}}=\frac{c_{s+1001}c_{s+0110}}{c_{s+1010}c_{s+0101}}=z_s^{1100}.$$
The other two equations in \eqref{eqn:shapeparamrel} follow similarly.
\end{proof}

\begin{lemma}\label{lemma:ReorderCompatibility}
The map $\mu$ respects pullbacks.
\end{lemma}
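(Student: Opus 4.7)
The plan is to reduce to a generating set of $S_4$ via the compositional property of pullback. Since both pullback operations satisfy $\tau^*\sigma^* = (\sigma\tau)^*$, the set of $\sigma \in S_4$ for which $\mu(\sigma^*c) = \sigma^*\mu(c)$ holds is a subgroup, so it suffices to verify the identity for the adjacent transpositions $\sigma_{01}, \sigma_{12}, \sigma_{23}$.

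Fix such a transposition $\sigma = \sigma_{ij}$ and a pair $(s,e) \in \Delta^3_{n-2}(\Z) \times \dot\Delta^3_2(\Z)$. I would expand both sides of $(\mu(\sigma^*c))_s^e = (\sigma^*\mu(c))_s^e$ using the definitions. By \eqref{eqn:PtolemyToShapes} applied to $\sigma^* c$, the left side is a ratio of four factors $(\sigma^* c)_{s+f}$, with $f$ ranging over a specific four-element subset of $\dot\Delta^3_2(\Z)$ determined by $e$; by \eqref{ActiononPtolemys}, each such factor equals $(-1)^{(s_i+f_i)(s_j+f_j)} c_{\sigma(s+f)}$. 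On the right side, \eqref{actiononshapeassignments} gives $(\mu(c)^{\sigma(e)}_{\sigma(s)})^{-1}$ since $\sgn(\sigma_{ij}) = -1$, and by \eqref{eqn:PtolemyToShapes} this is the reciprocal of a ratio of four Ptolemy coordinates at arguments $\sigma(s)+\sigma(f) = \sigma(s+f)$. The $c$-factors then match on both sides, because $\sigma$ permutes the six edges of a subsimplex through its action on the three opposite-edge pairs, and this is precisely how the formula for $z^e$ converts into the (inverted) formula for $z^{\sigma(e)}$.

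What remains is a sign identity: the product of the four $(-1)^{(s_i+f_i)(s_j+f_j)}$ signs on the left, together with any $-1$ from the third line of \eqref{eqn:PtolemyToShapes}, must equal the corresponding sign on the right. I expect this to be the main obstacle, because \eqref{eqn:PtolemyToShapes} is asymmetric: only the distinguished pair $\{1010, 0101\}$ carries an explicit minus sign, yet under $\sigma_{ij}$ that pair is in general moved. Under the quotient $S_4 \to S_3$ onto permutations of the three opposite-edge pairs (with kernel the Klein four-group), each adjacent transposition fixes one pair and swaps the other two; whenever the distinguished pair is moved to or from the pair containing $e$, the explicit minus signs appear in mismatched positions on the two sides of the equation and must be balanced by the product of the four determinant signs. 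A direct case analysis of three transpositions against three edge-pair classes, with many cases collapsing by the symmetry that opposite edges carry equal shape parameters, will complete the proof. As a representative computation, for $\sigma_{01}$ the four signs multiply to $+1$ when $e \in \{1100, 0011\}$ (and no explicit minus signs appear on either side), and multiply to $-1$ when $e \in \{1010, 0101\}$ (cancelling the explicit minus on the left and matching the minus-free right side that arises because $\sigma_{01}(1010) = 0110$ lies in a different pair); the remaining six cases are analogous.
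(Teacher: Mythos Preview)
Your proposal is correct and follows essentially the same approach as the paper: reduce to the generating transpositions $\sigma_{01},\sigma_{12},\sigma_{23}$ and verify the identity by expanding both sides via \eqref{eqn:PtolemyToShapes} and \eqref{ActiononPtolemys}, checking that the Ptolemy factors line up and the signs cancel. The paper's version is terser because it reuses the sign computation already carried out in \eqref{eq:PullbackPtolemy}, whereas you redo the sign bookkeeping from scratch; but the underlying argument is the same.
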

\begin{proof}
It is enough to prove this for the permutations $\sigma_{01}$, $\sigma_{12}$ and $\sigma_{23}$. We prove it for $\sigma_{01}$, the other cases being similar. Since $\sgn(\sigma_{01})=-1$, we must prove that 
\begin{equation}\label{eqn:murespectspullback}(\mu(\sigma_{01}^*c))^e_s=(\mu(c)_{\sigma_{01}(s)}^{\sigma_{01}(e)})^{-1}.\end{equation}
We prove this for the edge $e=1100$, the other cases being similar. Using \eqref{eq:PullbackPtolemy}, we obtain
\begin{equation}
(\mu(\sigma_{01}^*c))^{1100}_s=\frac{(\sigma_{01}^*c)_{s+1001}(\sigma_{01}^*c)_{s+0110}}{(\sigma_{01}^*c)_{s+1010})(\sigma_{01}^*c)_{s+0101}}=\frac{c_{s'+0101}c_{s'+1010}}{c_{s'+0110}c_{s'+1001}}=(\mu(c)_{s'}^{1100})^{-1}
\end{equation}
where $s'=\sigma_{01}(s)$. This proves \eqref{eqn:murespectspullback}, hence the result.
\end{proof}

\begin{theorem}\label{thm:PtolemyYieldsShapes}
Let $\mathcal{T}$ be a concrete triangulation of $M$, and let $c_{t,\Delta}$ be a Ptolemy assignment on $(M,\mathcal{T})$. The induced shape assignment on each simplex satisfies the generalized gluing equations and thus induces a shape assignment on $(M,\mathcal{T})$. \qed
\end{theorem}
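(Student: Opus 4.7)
The plan is to verify the generalized gluing equation \eqref{eqn:GeneralizedGluingOrderAgnostic} at each non-vertex integral point $p \in \dot{\mathcal{T}}_n(\Z)$ after substituting \eqref{eqn:PtolemyToShapes} for every shape parameter.

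First I would reduce to the case of an oriented triangulation. By Lemma~\ref{lemma:ReorderCompatibility}, $\mu$ respects the $S_4$-pullback action, and by Lemma~\ref{shapesymmetries} together with Lemma~\ref{lemma:PtolemySymmetries} this action preserves shape and Ptolemy assignments respectively. Since any concrete triangulation is obtained from an oriented one by reordering vertices of individual simplices, it suffices to verify \eqref{eqn:GeneralizedGluing} (with all $\varepsilon_\Delta = +1$) on an oriented $\mathcal{T}$.

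Next I would split into the three flavors of gluing equation. For an \emph{internal} equation at an interior point $t$ of a simplex $\Delta$, the product runs over the six pairs $(e, t-e)$ with $e$ an edge midpoint of $\Delta^3_2$. Substituting \eqref{eqn:PtolemyToShapes} writes each factor as a monomial in the twelve Ptolemy coordinates $c_{t + e_j - e_i,\Delta}$ for $0 \le i \neq j \le 3$; a direct tally shows each such coordinate appears exactly once in the total numerator and once in the total denominator, while the two minus signs from the $z^{0101}$ and $z^{1010}$ formulas multiply to $+1$. For a \emph{face} equation at a face point paired between $\Delta_0$ and $\Delta_1$ by $\sigma$, each side contributes three shape parameters; within each simplex the Ptolemy coordinates off the shared face cancel in exactly the same manner as in the internal case (applied to the three shapes whose fixed index is the one perpendicular to the face), while the Ptolemy coordinates on the shared face are identified across the pairing via Definition~\ref{def:PtolemyAssignmentGeneral}, with the $\det(I_{\sigma, \sigma(t)})$ signs organized consistently by Lemma~\ref{lemma:ReorderCompatibility}. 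For an \emph{edge} equation at an edge point, each simplex in the edge cycle contributes a single shape parameter, which by \eqref{eqn:PtolemyToShapes} is a cross-ratio of four Ptolemy coordinates on the two faces meeting along the edge; going around the cycle and invoking the face identifications of Definition~\ref{def:PtolemyAssignmentGeneral}, these cross-ratios telescope to $1$.

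The main obstacle is the combinatorial bookkeeping of signs in the face and edge cases. The reduction to oriented triangulations removes the $\varepsilon_\Delta$ signs, and Lemma~\ref{lemma:ReorderCompatibility} ensures that the $\det(I_{\sigma,\sigma(t)})$ signs attached to the face-pairing identifications of Ptolemy coordinates translate precisely into the sign compatibilities needed for the shape factors to cancel. Once signs are under control, each case reduces to a routine telescoping verification analogous to the one carried out for the internal equation.
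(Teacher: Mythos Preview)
Your proposal is correct and follows essentially the same route as the paper: split by equation type, and for each type expand the shapes via \eqref{eqn:PtolemyToShapes} and observe a telescoping cancellation of Ptolemy coordinates (complete for interior points, off-face for face points, across adjacent simplices for edge points). The only organizational difference is that the paper packages each case into an explicit \emph{local model} $K$ (a single simplex, two simplices glued order-preservingly along a face, or $k$ simplices glued cyclically along the $01$ edge), does the telescoping computation once in that canonical configuration, and then invokes compatibility under reordering (Lemmas~\ref{lemma:reorderingshapes}, \ref{lemma:reorderingPtolemys}, \ref{lemma:ReorderCompatibility}) to transfer to an arbitrary concrete triangulation; you instead reduce globally to an oriented triangulation up front and then argue directly inside $\T$. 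The local-model formulation buys slightly cleaner bookkeeping---the face pairing is order-preserving, so no $\det(I_{\sigma,\sigma(t)})$ signs enter the explicit computation---whereas your version has to track those signs through the face identifications, which is exactly the ``main obstacle'' you flag. Either way the content is the same.
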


\begin{corollary} The map $\mu$ induces a map $\mu\colon P_n(\T)\to V_n(\T)$.\qed
%\begin{equation}\mu\colon\text{\{Ptolemy assignments on $\T$\}}\to \text{\{shape assignments on $\T$\}}.
%\end{equation}
\end{corollary}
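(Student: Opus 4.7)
The plan is to reduce to an oriented triangulation and verify the generalized gluing equations directly by substituting the Ptolemy expressions \eqref{eqn:PtolemyToShapes} and showing that the resulting monomial in Ptolemy coordinates collapses to $1$.

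For the reduction, I apply Lemmas \ref{lemma:reorderingshapes}, \ref{lemma:reorderingPtolemys}, and \ref{lemma:ReorderCompatibility}: reordering vertices induces compatible pullbacks on Ptolemy and shape assignments, and these pullbacks commute with $\mu$. Since any concrete triangulation becomes oriented after suitable reorderings, it suffices to verify \eqref{eqn:GeneralizedGluing} for oriented $\mathcal{T}$. Fix a non-vertex integral point $p \in \dot{\mathcal{T}}_n(\Z)$. Substituting \eqref{eqn:PtolemyToShapes} into the left-hand side yields a monomial $\prod_{p'} c_{p'}^{n_{p'}}$ times a sign $(-1)^N$, where $N$ counts the factors $z^e_{s,\Delta}$ with $e \in \{1010, 0101\}$ appearing in \eqref{eqn:GeneralizedGluing}, and $p'$ ranges over integral points adjacent to $p$. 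I would prove that (a) each exponent $n_{p'}$ vanishes and (b) the total sign is $+1$.

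For an \emph{internal} point $p$ inside a single simplex, the six decompositions $t = s + e$ contribute Ptolemy coordinates at the twelve adjacent positions $t + \delta$ for $\delta = e_k - e_j$ with $k \neq j$. For each such $\delta$, precisely two decompositions contribute (namely, $(e, e') = (e_i + e_j,\, e_i + e_k)$ for the two indices $i \notin \{j, k\}$), and a case check confirms that these contribute $c_{p'}$ with exponents $+1$ and $-1$. Exactly two of the six shape parameter formulas carry a $-1$ factor, so $(-1)^N = +1$. For \emph{face} and \emph{edge} points, the same local argument handles the Ptolemys in the interior of each simplex; Ptolemys at positions shared across simplices are identified via Definition \ref{def:PtolemyAssignmentGeneral} with signs $\det(I_{\sigma, \sigma(t)})$, so the exponents from different simplices add to $0$ only after accounting for these identification signs, and the total sign must be checked to conspire with $(-1)^N$ to yield $+1$.

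The main technical obstacle is the sign bookkeeping for edge equations, where Ptolemy coordinates are identified around an edge cycle through a chain of face pairings. Here, both $N$ and the signs $\det(I_{\sigma, \sigma(t)})$ depend individually on the vertex orderings and face-pairing permutations along the cycle, but their combined contribution must always equal $+1$; this is a combinatorial identity that I would verify by a local calculation at each face pairing and induction around the cycle. Alternatively, if this sign analysis proves too intricate, one can appeal to the broader framework of Theorem \ref{thm:MainThm}, where $\mu$ factors through the intermediary of generic decorations, making the global compatibility structural rather than computational.
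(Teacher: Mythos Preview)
Your approach uses the same ingredients as the paper---the reordering-compatibility Lemmas~\ref{lemma:reorderingshapes}, \ref{lemma:reorderingPtolemys}, \ref{lemma:ReorderCompatibility}---but in the reverse order, and this is exactly why you run into the sign bookkeeping that the paper avoids. The paper does not reduce globally to an oriented triangulation and then compute. Instead, for each type of integral point it fixes a \emph{local model} $K$ with deliberately chosen face pairings: for edge points, simplices glued cyclically along edge $01$ via the transposition $(23)$; for face points, two simplices glued along face $012$ order-preservingly. In these models every Ptolemy identification sign $\det(I_{\sigma,\sigma(t)})$ is $+1$ on the coordinates that actually appear (for $(23)$ because $t_2 t_3=0$ on the relevant points; for the identity trivially), so the telescoping cancellation is immediate and sign-free. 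Only \emph{after} this clean computation does the paper invoke reordering compatibility to transport the result to arbitrary local configurations, and hence to the actual triangulation via the simplicial map $\pi\colon K\to\widehat M$. By contrast, your global reduction to an oriented $\mathcal T$ still leaves arbitrary odd face-pairing permutations in place, so the identification signs persist and must be tracked around each edge cycle---doable, but precisely the obstacle the paper's organization sidesteps.

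Your fallback to Theorem~\ref{thm:MainThm} is circular: the map $\mathcal Z$ in that diagram is \emph{defined} as $\mu\circ\mathcal C$ (Proposition~\ref{prop:DefofZ}), and the proof that a generic decoration on $(M,\mathcal T)$ yields a shape assignment on $(M,\mathcal T)$ passes through the very same local-model lemma (Lemma~\ref{lemma:edgelocalmodel}) that proves Theorem~\ref{thm:PtolemyYieldsShapes} directly.
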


%\begin{remark} It also satisfies the cusp equation. Is this remark necessary?
%\end{remark}

We divide the proof of Theorem~\ref{thm:PtolemyYieldsShapes} into three parts, one for each type of equation. The idea is to prove the result for the local model near an integral point of each type. Although the local model is not a manifold, Ptolemy assignments are defined in the obvious way, i.e.~by identifying Ptolemy coordinates of identified faces via the pullback of the face pairing permutations.
\subsection{Proof for edge equations}\label{subsection:ProofForEdgeEqn}
Let $K$ be the space defined by cyclically gluing together $k$ simplices $\Delta_0,\dots,\Delta_{k-1}$ along the common edge $01$, pairing all faces via the permutation $(23)$, see Figure~\ref{fig:EdgeLocal}. 

\begin{figure}[htb]
\begin{center}
\begin{minipage}[b]{0.45\textwidth}
\begin{center}
\scalebox{0.75}{
\includegraphics{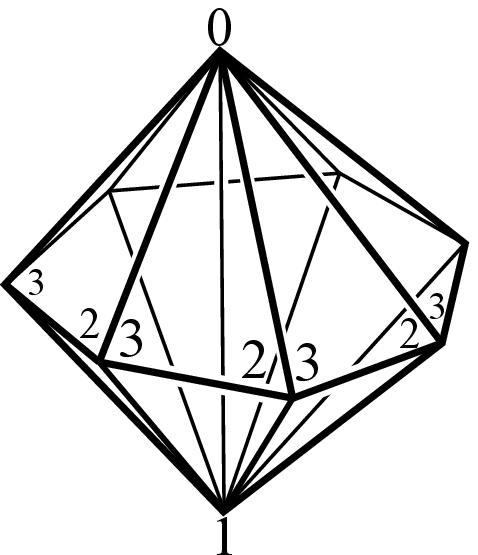}}
\end{center}
\end{minipage}
\hfill
\begin{minipage}[b]{0.45\textwidth}
\begin{center}
\scalebox{0.5}{
\includegraphics{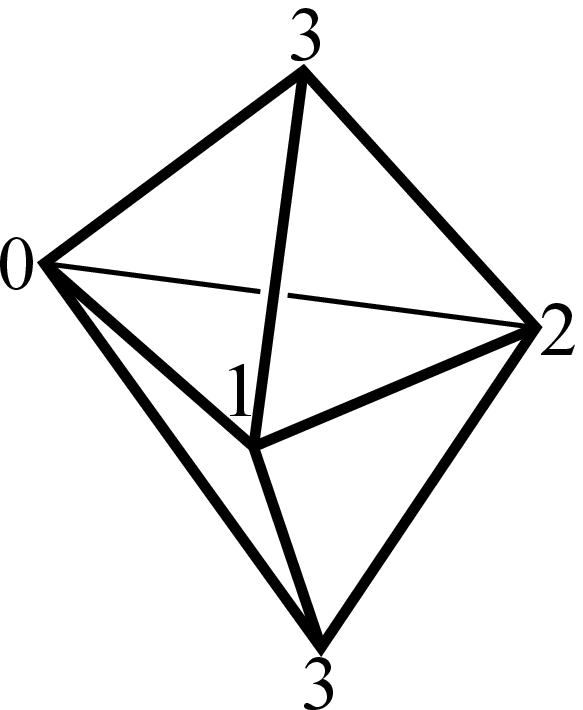}}
\end{center}
\end{minipage}
\\
\begin{minipage}[t]{0.45\textwidth}
\caption{Local model near an edge point.}\label{fig:EdgeLocal}
\end{minipage}
\hfill
\begin{minipage}[t]{0.45\textwidth}
\caption{Local model near a face point.}\label{fig:FaceLocal}
\end{minipage}
\end{center}
\end{figure}

\begin{lemma}\label{lemma:edgelocalmodel}
If $\{c_{\Delta_i}\}$ is a Ptolemy assignment on $K$, the assignments $\{\mu(c_{\Delta_i})\}$ satisfy the edge equations for all integral points on the edge $01$. The same holds for the complex $K'$ obtained by reordering the vertices of the simplices of $K$.
\end{lemma}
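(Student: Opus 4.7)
The plan is to reduce the edge equation at each point on edge $01$ to a single shape parameter per simplex, then exploit cyclic telescoping of the Ptolemy coordinates under the face pairings. First I would identify the subsimplices in each $\Delta_i$ containing an integral point $t = (a, n-a, 0, 0)$ on edge $01$. A decomposition $t = s+e$ with $s\in\Delta^3_{n-2}(\Z)$ and $e\in\dot\Delta^3_2(\Z)$ forces $e_2=e_3=0$ and both $e_0, e_1\ge 1$, so the only valid edge midpoint is $e=1100$, giving $s=(a-1,n-a-1,0,0)$. Hence the edge equation at $t$ in $K$ reduces to $\prod_{i=0}^{k-1} z^{1100}_{s,\Delta_i}=1$.

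Using the definition of $\mu$, I would expand
\[
z^{1100}_{s,\Delta_i} = \frac{c_{(a,n-a-1,0,1),\Delta_i}\, c_{(a-1,n-a,1,0),\Delta_i}}{c_{(a,n-a-1,1,0),\Delta_i}\, c_{(a-1,n-a,0,1),\Delta_i}}.
\]
The four Ptolemy coordinates appearing here lie on the two faces $\{0,1,2\}$ (where $x_3=0$) and $\{0,1,3\}$ (where $x_2=0\}$) of $\Delta_i$, which are precisely the faces containing edge $01$. Under the cyclic gluing, these faces are paired to the corresponding faces of $\Delta_{i\pm 1}$ by the permutation $\sigma_{23}$, and Definition~\ref{def:PtolemyAssignmentGeneral} identifies $c_{(x_0,x_1,1,0),\Delta_i}$ with $c_{(x_0,x_1,0,1),\Delta_{i+1}}$ up to the sign $\det(I_{\sigma_{23},(x_0,x_1,0,1)})=(-1)^{0\cdot 1}=+1$. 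Thus each numerator factor from $\Delta_i$ cancels against the corresponding denominator factor from $\Delta_{i+1}$ (and symmetrically for the other face); after cycling through all $k$ simplices, every Ptolemy coordinate cancels with its image under the next face pairing, so the product equals $1$.

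For the reordered complex $K'$, I would appeal to the three compatibility lemmas already established: Lemma~\ref{lemma:reorderingPtolemys} (reordering sends Ptolemy assignments to Ptolemy assignments), Lemma~\ref{lemma:ReorderCompatibility} ($\mu$ respects pullbacks, so $\mu(\sigma_i^*c_{\Delta_i})=\sigma_i^*\mu(c_{\Delta_i})$), and Lemma~\ref{lemma:reorderingshapes} (the pullback of a shape assignment is a shape assignment). Given a Ptolemy assignment $\{c'_{\Delta_i}\}$ on $K'$, pulling back by $\sigma_i^{-1}$ yields a Ptolemy assignment on $K$; the $K$ case gives the edge equations on $K$; and pulling forward through these three lemmas delivers the order-agnostic edge equations of Definition~\ref{def:ShapeAssignmentGeneral} on $K'$.

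The only real bookkeeping obstacle is confirming that the face-pairing signs are trivial. This is where the special structure of edge $01$ helps: the $t$-coordinates appearing in the numerator and denominator always have at most one non-zero entry among positions $2$ and $3$, so the formula $\det(I_{\sigma_{ij},t})=(-1)^{t_it_j}$ gives $+1$ in every instance. No additional signs intrude, and the cancellation is clean.
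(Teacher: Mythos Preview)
Your argument is correct and follows the same route as the paper: isolate the unique decomposition $t=s+1100$, expand $\prod_i z^{1100}_{s,\Delta_i}$ via $\mu$, and let the Ptolemy coordinates telescope under the cyclic $(23)$ face pairings; then invoke the reordering lemmas for $K'$. Your extra care in checking $\det(I_{\sigma_{23},t})=(-1)^{t_2t_3}=1$ makes explicit a sign issue the paper simply absorbs into the phrase ``are identified,'' but the underlying proof is the same.
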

\begin{proof}
An integral point $p$ on $01$ has representatives $(t,\Delta_i)$ with $t=(t_0,t_1,0,0)$ being fixed. The left hand side of the edge equation for $p$ is given by
\begin{equation}\label{eq:edgeequationforK}
z^{1100}_{s,0}z^{1100}_{s,1}\cdots z^{1100}_{s,k-1}\qquad\mbox{with}~s=(t_0-1,t_1-1,0,0),
\end{equation}
which expands to
$$\frac{c_{s+1001,0}c_{s+0110,0}}{c_{s+1010,0}c_{s+0101,0}}\frac{c_{s+1001,1}c_{s+0110,1}}{c_{s+1010,1}c_{s+0101,1}}\cdots \frac{c_{s+1001,k-1}c_{s+0110,k-1}}{c_{s+1010,k-1}c_{s+0101,k-1}}.$$
Since the Ptolemy coordinates $c_{1001,i}=c_{1010,i+1}$ and $c_{0101,i}=c_{0110,i+1}$ on two adjacent simplices are identified, all terms cancel. Hence, \eqref{eq:edgeequationforK} equals $1$ as desired. The corresponding result for $K'$ follows from compatibility under reordering (Lemmas~\ref{lemma:ReorderCompatibility}, \ref{lemma:reorderingshapes} and \ref{lemma:reorderingPtolemys}).
%Add Figure showing the pattern in which the four Ptolemy coordinates occurring in $z^{1100}_{s,i}$ appear.
\end{proof}

%Consider an edge cycle $e$ of $\T$ of length $k$ and let $p$ be an integral point on $e$ for which we want to verify the edge equation. Let $K$ be the complex obtained by gluing $k$ consecutive simplices $\Delta_0, \dots, \Delta_{k-1}$ using the face-pairing 012 to 013, see Figure ... All simplices share the 01 edge in $K$ and all face-pairings are orientation and order preserving. 
\begin{corollary} Theorem~\ref{thm:PtolemyYieldsShapes} holds for the edge equations.
\end{corollary}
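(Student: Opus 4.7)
The plan is to show that the edge equation at an arbitrary edge point of $\mathcal{T}$ is literally the equation verified in Lemma~\ref{lemma:edgelocalmodel}, after transporting the Ptolemy assignment through a local simplicial map $K' \to M$.

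Fix an edge point $p\in \dot{\mathcal{T}}_n(\mathbb Z)$ and let $E$ denote the edge cycle of $\mathcal{T}$ containing $p$. Enumerate the simplices around $E$ cyclically as $\Delta_0,\dots,\Delta_{k-1}$, with face pairings identifying the two faces of $\Delta_i$ adjacent to $E$ with the corresponding faces of $\Delta_{i\pm 1}$. The representatives of $p$ are precisely pairs $(t_i,\Delta_i)$ where $t_i\in\Delta_n^3(\mathbb Z)$ is the unique point on the edge of $\Delta_i$ mapping to $E$. Let $K'$ be the abstract cyclic complex built from $k$ ordered copies of $\Delta_n^3$ glued cyclically along a common edge, where the vertex orderings are chosen so that each copy inherits the ordering of the corresponding $\Delta_i$, and the face-pairing permutations match those of $\mathcal{T}$. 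By construction $K'$ is precisely a reordered copy of the complex $K$ in Lemma~\ref{lemma:edgelocalmodel}, and there is a simplicial map $\varphi\colon K'\to \widehat M$ whose image is the closed star of $E$.

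Pulling back the Ptolemy assignment $\{c_{\Delta}\}$ along $\varphi$ gives a family of Ptolemy coordinates on the $k$ simplices of $K'$ whose identifications on paired faces, by Definition~\ref{def:PtolemyAssignmentGeneral}, satisfy exactly the pullback rule used to define Ptolemy assignments on $K'$. Hence $\{c_{\Delta_i}\}$ is a Ptolemy assignment on $K'$, and Lemma~\ref{lemma:ReorderCompatibility} together with Lemma~\ref{lemma:reorderingshapes} shows that the induced shape assignment on $K'$ coincides with the pullback of the shape assignment induced on the $\Delta_i$'s. In particular, the edge equation of Definition~\ref{def:ShapeAssignmentGeneral} for the integral point $p$ in $(M,\mathcal{T})$ is, term by term, the same product that appears in the edge equation at the corresponding integral point of $K'$.

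By the second sentence of Lemma~\ref{lemma:edgelocalmodel}, that product equals $1$. Therefore the edge equation at $p$ is satisfied. Since $p$ was an arbitrary edge point of $\dot{\mathcal{T}}_n(\mathbb Z)$, this proves Theorem~\ref{thm:PtolemyYieldsShapes} for all edge equations. The only subtle point is the bookkeeping needed to see that the face pairings of the star of $E$ really do assemble into a reordered version of $K$; this is handled by invoking the reordering compatibility already proven in Lemma~\ref{lemma:reorderingPtolemys}, so no new computation is required.
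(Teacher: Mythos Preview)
Your proof is correct and follows essentially the same approach as the paper's. The only cosmetic difference is that the paper invokes compatibility under reordering to assume the simplicial map $\pi\colon K\to\widehat M$ is order preserving (and then applies the first part of Lemma~\ref{lemma:edgelocalmodel}), whereas you build $K'$ with orderings matching those of the $\Delta_i$ so that $\varphi$ is order preserving by construction (and then apply the second part of the lemma); these are two sides of the same reordering argument.
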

\begin{proof}
The complex $K$ models a neighborhood around an edge $e$ of $\T$ in the sense that there is a simplical map $\pi\colon K\to\widehat{M}$ mapping $01$ to $e$, which is unique up to changing the orientation of $K$ and cyclically relabeling the simplices. By compatibility under reordering, we may assume that $\pi$ is order preserving. A Ptolemy assignment on $\T$ pulls back to a Ptolemy assignment on $K$, such that edge equations on $01$ descend to the corresponding edge equations on $e$. The result now follows from Lemma~\ref{lemma:edgelocalmodel}.  
%If $\pi\colon K\to\widehat{M}$ is an order preserving embedding, the result follows from Lemma~\ref{lemma:edgelocalmodel}.
%Assume that $K$ embeds into $\widehat{M}$, but the embedding does not necessarily preserve the ordering.
%Let $\sigma_i$ be the unique permutation identifying $\Delta_i$ with its image $\Delta'_i$ in $\T$ in an order-preserving way. Let $c_{\Delta_i}$ be the pullback on $K$ given by $\sigma_i c_{\Delta'_i}$. By Lemma ..., this is a valid Ptolemy assignment on $K$. By the argument above, the shape assignment on $K$ given by $\mu(c_{\Delta_i})$ fulfills the edge equation. The map $\mu$ is compatible under reordering, hence the edge equation descends to an edge equation in the shapes on $\T$. 
%If $\pi$ is not an embedding, the same argument holds but now several $\Delta_i$ might map to the same $\Delta'_i$.
\end{proof}

\subsection{Proof for face equations}\label{subsection:ProofForFaceEqn}
A local model for a neighborhood of a face point $p$ is the complex $K$ obtained by gluing two simplices $\Delta_0$ and $\Delta_1$ by identifying the faces 012 of each simplex in the order-preserving way, see Figure~\ref{fig:FaceLocal}. As for the edge equations, it is enough to verify the face equations on $K$. 
\begin{lemma}
A Ptolemy assigment on $K$ gives shape assignments satisfying the face equations. The same holds after reordering.
\end{lemma}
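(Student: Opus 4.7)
The plan is to reduce to the order-preserving case and then verify by a direct computation, in the spirit of the proof for edge equations in Section~\ref{subsection:ProofForEdgeEqn}. By Lemmas~\ref{lemma:ReorderCompatibility}, \ref{lemma:reorderingshapes}, and \ref{lemma:reorderingPtolemys}, the passage from a Ptolemy assignment to a shape assignment is compatible with reordering simplex vertices, so any face-equation identity on $K$ transports to the corresponding identity on a reordered $K'$; it therefore suffices to verify the face equations on the order-preserving model $K$ glued along the face $012$.

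For an interior face point $t=(a,b,c,0)$ (necessarily with $a,b,c\geq 1$), I would first enumerate the contributions to the face equation. For each $\Delta_i$, the condition $t=s+e$ with $e\in\dot\Delta^3_2(\Z)$ and $s\in\Delta^3_{n-2}(\Z)$ forces $e_3=0$, leaving the three choices $e\in\{1100,1010,0110\}$ with $s=t-e$. So the face equation reads
\begin{equation*}
\prod_{i\in\{0,1\}} z^{1100}_{t-1100,\Delta_i}\cdot z^{1010}_{t-1010,\Delta_i}\cdot z^{0110}_{t-0110,\Delta_i}=1.
\end{equation*}

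The heart of the argument is then a bookkeeping substitution of~\eqref{eqn:PtolemyToShapes} into this product. I expect the three Ptolemy coordinates in each simplex whose last entry equals $1$ to appear once in numerators and once in denominators within that simplex and thus to cancel, leaving in each $\Delta_i$ a sign $-1$ (arising solely from the minus sign in the $z^{1010}$ formula) multiplied by a ratio of six Ptolemy coordinates whose last entry is $0$. These surviving coordinates sit on the shared face $012$, so by Definition~\ref{def:PtolemyAssignmentGeneral} --- with $\sigma=\id$ in~\eqref{ActiononPtolemys} acting trivially --- the corresponding coordinates in $\Delta_0$ and $\Delta_1$ are identified, causing the two ratios to cancel. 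The two signs multiply to $+1$, yielding the claim.

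The only real obstacle is combinatorial: I need to verify that the three ``fourth-coordinate-one'' Ptolemy coordinates really do match up between numerator and denominator within a single simplex, and that exactly one minus sign is contributed per simplex. Neither step is deep; both reduce to tabulating, for the three relevant subsimplices, the six neighboring integral points $s+e$ and observing the pattern of repetitions. No new structural input is required beyond the algebraic identities in~\eqref{eqn:PtolemyToShapes} and the face-identification rule of Definition~\ref{def:PtolemyAssignmentGeneral}.
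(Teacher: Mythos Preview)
Your reduction to the order-preserving model via Lemmas~\ref{lemma:ReorderCompatibility}, \ref{lemma:reorderingshapes}, \ref{lemma:reorderingPtolemys} is fine, and your enumeration of the three contributions $e\in\{1100,1010,0110\}$ is correct. The gap is in the face equation itself: in the model $K$ the identity gluing of face $012$ to face $012$ forces $\Delta_0$ and $\Delta_1$ to carry \emph{opposite} orientations, so the relevant gluing equation is the one from Definition~\ref{def:ShapeAssignmentGeneral}, not Definition~\ref{def:GluingEquationsOri}. The correct face equation is
\[
z^{1100}_{t-1100,\Delta_0}\, z^{1010}_{t-1010,\Delta_0}\, z^{0110}_{t-0110,\Delta_0}
\left(z^{1100}_{t-1100,\Delta_1}\, z^{1010}_{t-1010,\Delta_1}\, z^{0110}_{t-0110,\Delta_1}\right)^{-1}=1,
\]
with the $\Delta_1$ block inverted because $\epsilon_{\Delta_1}=-\epsilon_{\Delta_0}$.

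This matters for your cancellation step. Your computation inside a single simplex is exactly right: each $\Delta_i$ contributes
\[
-\,\frac{c_{\alpha+2010,i}\,c_{\alpha+1200,i}\,c_{\alpha+0120,i}}{c_{\alpha+2100,i}\,c_{\alpha+0210,i}\,c_{\alpha+1020,i}}\,=\,-R_i,
\]
and since $\sigma=\id$ the face Ptolemy coordinates are identified, giving $R_0=R_1=R$. But then your version of the equation yields $(-R_0)(-R_1)=R^2$, which is not $1$ in general; identical ratios do not ``cancel''. With the correct orientation sign the left side is $(-R_0)(-R_1)^{-1}=1$, which is precisely how the paper argues. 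So the missing ingredient is the $\epsilon_\Delta$ exponent coming from the opposite orientations of the two simplices in $K$; once you insert it, your outline coincides with the paper's proof.
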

\begin{proof}
Let $t=(t_0,t_1,t_2,0)=\alpha+1110$ where $\alpha$ is a face point of $\Delta^3_{n-3}$. The representatives of the corresponding face point $p$ of $K$ are $(t,\Delta_0)$ and $(t,\Delta_1)$. Since $\Delta_0$ and $\Delta_1$ have opposite orientations in $K$, the face equation for $p$ involves the terms
\begin{equation}\label{eqn:proofFaceEqn}
z^{0110}_{\alpha+1000,0}z^{1010}_{\alpha+0100,0}z^{1100}_{\alpha+0010,0} \left(z^{0110}_{\alpha+1000,1} z^{1010}_{\alpha+0100,1} z^{1100}_{\alpha+0010,1}\right)^{-1}.
\end{equation}
Using \eqref{eqn:PtolemyToShapes}, the product of the first three terms equals
\begin{equation}\frac{c_{\alpha+2010,0}c_{\alpha+1101,0}}{c_{\alpha+2100,0}c_{\alpha+1011,0}} \left(-\frac{c_{\alpha+1200,0}c_{\alpha+0111,0}}{c_{\alpha+1101,0}c_{\alpha+0210,0}}\right) \frac{c_{\alpha+1011,0}c_{\alpha+0120,0}}{c_{\alpha+1020,0}c_{\alpha+0111,0}},
\end{equation}
which simplifies to
\begin{equation}\label{eq:Xcoordinate}
-\frac{c_{\alpha+2010,0} c_{\alpha+1200,0}c_{\alpha+0120,0}}{c_{\alpha+2100,0} c_{\alpha+0210,0} c_{\alpha+1020,0}}.
\end{equation}
Note that the ratio~\eqref{eq:Xcoordinate} only involves Ptolemy coordinates on the face 012 of $\Delta_0$ and that these are identified with the corresponding Ptolemy coordinates on $\Delta_1$. Hence, \eqref{eq:Xcoordinate} equals the corresponding expression for $z^{0110}_{\alpha+1000,1} z^{1010}_{\alpha+0100,1} z^{1100}_{\alpha+0010,1}$, so \eqref{eqn:proofFaceEqn} equals 1 as desired. The second statement follows from compatibility under reordering.
\end{proof}
\begin{corollary}
Theorem~\ref{thm:PtolemyYieldsShapes} holds for the face equations.\qed
\end{corollary}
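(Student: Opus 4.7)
The strategy is to mimic the local-model argument already used for the edge equations, reducing the face equation at an arbitrary face point $p\in\dot{\mathcal T}_n(\Z)$ to the face equation on the two-simplex complex $K$ handled by the preceding lemma.

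First I would observe that every face point $p$ of $\mathcal T$ lies on a 2-face $f$ shared by exactly two 3-simplices of $\mathcal T$, and that this local situation is modeled by a simplicial map $\pi\colon K\to\widehat M$ that carries the interior face of $K$ onto $f$ and sends each half of $K$ into the corresponding 3-simplex of $\mathcal T$ incident to $f$. The map $\pi$ is determined up to a choice of vertex orderings on the two simplices of $K$, and in particular up to replacing each simplex of $K$ by a reordering.

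Next, invoking the compatibility of $\mu$ with vertex reorderings (Lemma~\ref{lemma:ReorderCompatibility}), together with the reordering compatibility of Ptolemy and shape assignments (Lemmas~\ref{lemma:reorderingPtolemys} and \ref{lemma:reorderingshapes}), we may apply suitable permutations on each simplex of $K$ so that $\pi$ becomes order-preserving on each half. A Ptolemy assignment on $(M,\mathcal T)$ then pulls back along $\pi$ to a Ptolemy assignment on $K$, since Definition~\ref{def:PtolemyAssignmentGeneral} requires Ptolemy coordinates on identified faces to be matched via the pullback by the face-pairing permutation, which is exactly what $\pi$ realizes.

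Finally, applying the preceding lemma to the pullback assignment on $K$, the face equation at the preimage of $p$ in $K$ holds. Because the face equation at $p$ only involves shapes attached to subsimplices meeting $p$, and these subsimplices correspond bijectively under $\pi$ to those of $K$ meeting the preimage of $p$, the face equation at $p$ in $\mathcal T$ follows directly. The main point of care is the bookkeeping of the orientation signs $\epsilon_\Delta$ appearing in Definition~\ref{def:ShapeAssignmentGeneral} versus the reorderings required to make $\pi$ order-preserving; but this is precisely what the three reordering lemmas encode, so no additional argument beyond citing them is needed.
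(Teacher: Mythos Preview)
Your argument is correct and matches the paper's implicit proof: the corollary is marked \qed\ precisely because it follows from the preceding lemma via the same local-model-plus-reordering reduction already spelled out for the edge equations, which is exactly what you have written out. The only minor imprecision is the phrase ``shared by exactly two 3-simplices'': the two simplices of $\mathcal T$ incident to $f$ may coincide (a self-gluing), but the map $\pi\colon K\to\widehat M$ still makes sense and your argument goes through unchanged.
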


\begin{remark}
The ratio \eqref{eq:Xcoordinate} will reappear in later sections as $X$-coordinates. They agree with the $X$-coordinates considered by Fock and Goncharov~\cite{FockGoncharov}. 
\end{remark}

\subsection{Proof for internal equations} \label{subsection:ProofForInternalEqn}
A local model near an interior point is a single simplex.
\begin{lemma}\label{lemma:InternalEquation} A Ptolemy assignment on $\Delta^3_n$ gives rise to a shape assignment satisfying the internal gluing equations.
\end{lemma}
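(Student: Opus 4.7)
The plan is to verify the internal equation by direct monomial expansion using the formula \eqref{eqn:PtolemyToShapes}. Since the local model near an interior point consists of a single simplex and no face identifications are involved, I will not need the Ptolemy relation \eqref{eqn:PtolemyRelation} itself; only the monomial definition of $\mu$ and bookkeeping are required.

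Fix an interior point $t=(a,b,c,d)\in\Delta^3_n(\Z_+)$. There are exactly six pairs $(s,e)$ with $t=s+e$ and $e$ an edge midpoint of $\Delta^3_2$, namely $s=t-e$ for each of the six edges $e\in\{0011,0101,0110,1001,1010,1100\}$. For $(i,j)$ with $0\le i\ne j\le 3$, let $[ij]:=c_{t+e_i-e_j}$; these are exactly the Ptolemy coordinates attached to the $12$ nearest lattice neighbours of $t$. A routine substitution of $s=t-e$ into \eqref{eqn:PtolemyToShapes} rewrites each factor in the internal equation as one of
\begin{align*}
z^{0011}_{t-0011}&=\frac{[02][13]}{[03][12]}, &
z^{1100}_{t-1100}&=\frac{[20][31]}{[21][30]},\\
z^{0110}_{t-0110}&=\frac{[32][01]}{[02][31]}, &
z^{1001}_{t-1001}&=\frac{[10][23]}{[13][20]},\\
z^{1010}_{t-1010}&=-\frac{[12][30]}{[32][10]}, &
z^{0101}_{t-0101}&=-\frac{[03][21]}{[01][23]}.
\end{align*}

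Now one simply multiplies the six expressions. The two minus signs (from $z^{1010}$ and $z^{0101}$) cancel, and a quick accounting shows that each of the twelve symbols $[ij]$, $i\ne j$, occurs exactly once in the numerator and exactly once in the denominator of the total product, so the result is $1$. Thus the product
\[
\prod_{t=s+e} z^e_{s}=1,
\]
which is the internal gluing equation at $t$. The main ``obstacle'' is organizational rather than mathematical: one must keep careful track of the sign conventions in \eqref{eqn:PtolemyToShapes} and the bijection between the six edges and the six subsimplices; once the substitutions are laid out as above, cancellation is immediate. One could alternatively invoke Lemma \ref{lemma:ReorderCompatibility} together with the $A_4$-symmetry of the interior equation to reduce to checking a single pair, but the direct calculation is short enough to carry out in full.
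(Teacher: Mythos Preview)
Your proof is correct and follows essentially the same approach as the paper: both expand the six factors via \eqref{eqn:PtolemyToShapes}, note that the two minus signs cancel, and observe that each of the twelve neighbouring Ptolemy coordinates appears once in the numerator and once in the denominator (the paper phrases this as ``all $c_{\alpha+\beta}$ with $\beta$ a permutation of $(0,1,1,2)$'', which is exactly your $[ij]$ notation after writing $t=\alpha+1111$). Your version is slightly more explicit, but the argument is the same.
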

\begin{proof}
Let $t$ be of the form $t=\alpha+1111$ with $\alpha\in\Delta^{3}_{n-4}(\Z)$. The gluing equation for $t$ involves
\begin{equation}
z^{0011}_{\alpha+1100}  z^{0101}_{\alpha+1010}    z^{0110}_{\alpha+1001}    z^{1001}_{\alpha+0110}     z^{1010}_{\alpha+0101}    z^{1100}_{\alpha+0011}.
\end{equation}
When expanding this using \eqref{eqn:PtolemyToShapes}, the two signs cancel and the numerator and denominator both consist of all Ptolemy coordinates $c_{\alpha+\beta}$ where $\beta$ is a permutation of $(0,1,1,2)$. Hence, the product is 1.
\end{proof}
\begin{corollary}
Theorem~\ref{thm:PtolemyYieldsShapes} holds for the internal gluing equations.\qed
\end{corollary}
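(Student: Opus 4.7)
The plan is to observe that this corollary is essentially an immediate consequence of Lemma~\ref{lemma:InternalEquation} because, unlike the edge and face equations, internal equations are completely local to a single simplex and hence require no gluing argument beyond what the lemma already provides.

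First, I would appeal to Definition~\ref{def:IntegralPoint}: an interior point $p\in\dot{\mathcal T}_n(\Z)$ is represented by a pair $(t,\Delta)$ with $t\in\Delta^3_n(\Z_+)$ lying strictly inside the simplex $\Delta$, and this representative is unique since no face of $\Delta$ is incident to $t$. Consequently the product~\eqref{eqn:GeneralizedGluingOrderAgnostic} defining the internal equation at $p$ collapses to
\begin{equation}
\prod_{t=s+e}\bigl(z^e_{s,\Delta}\bigr)^{\epsilon_\Delta},
\end{equation}
involving only shape parameters of the single simplex $\Delta$.

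Next, I would restrict the Ptolemy assignment $\{c_{t,\Delta'}\}$ on $(M,\T)$ to the simplex $\Delta$. This restriction is a Ptolemy assignment on the ordered simplex $\Delta^3_n$ (no face identifications are invoked), and its image under $\mu$ is, by construction, precisely the shape assignment on $\Delta$ produced by Theorem~\ref{thm:PtolemyYieldsShapes}. Lemma~\ref{lemma:InternalEquation} then directly gives $\prod_{t=s+e} z^e_{s,\Delta}=1$ for every interior point of $\Delta^3_n$. Raising both sides to the power $\epsilon_\Delta\in\{\pm 1\}$ preserves the identity, so~\eqref{eqn:GeneralizedGluingOrderAgnostic} holds at $p$.

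There is really no main obstacle here: the work was already done in Lemma~\ref{lemma:InternalEquation}, and the only thing to check is that concreteness (as opposed to orientability) of $\T$ does not cause trouble, which it does not because $\epsilon_\Delta$ factors out uniformly. The compatibility of $\mu$ with pullbacks (Lemma~\ref{lemma:ReorderCompatibility}), together with Lemma~\ref{shapesymmetries}, further confirms that reordering the vertices of $\Delta$ does not affect the validity of the internal equation, so the corollary follows. \qed
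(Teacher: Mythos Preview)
Your proof is correct and follows the same approach as the paper: the local model near an interior point is a single simplex, so the corollary is immediate from Lemma~\ref{lemma:InternalEquation}. You have simply written out in full the details the paper leaves implicit (the uniqueness of the representative $(t,\Delta)$ and the harmlessness of the global sign $\epsilon_\Delta$), which is fine.
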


\section{Symplectic properties and quantum topology}\label{sec:Symplectic}
In this section we prove Theorem~\ref{thm:soft}. This is done by generalizing some of the combinatorial properties of triangulations studied by Neumann~\cite{NeumannComb}. The first part follows immediately from Proposition~\ref{prop:BetaStarBeta} below, and the second part is an elementary counting argument. We assume for simplicity that the triangulation $\T$ is oriented. 

%We start by rewriting the gluing equations in the form~\eqref{eq:GluinEqSimplified}. Let
Letting
\begin{equation}
z_s=z_s^{1100}=z_s^{0011},\quad z_s'=z_s^{0110}=z_s^{1001},\quad z_s''=z_s^{1010}=z_s^{0101},
\end{equation}
it follows immediately from Definition~\ref{def:GluingEquationsOri} that the gluing equations can be written as 
\begin{equation}\label{eq:Unsimplified}
\prod z_s^{A'_{p,s}}\prod (z_s')^{B_{p,s}'}\prod (z_s'')^{C_{p,s}'}=1
\end{equation}
for integral matrices $A'$, $B'$ and $C'$, whose rows are parametrized by the integral points of $\T$ and whose columns are parametrized by the subsimplices of $\T$.

\begin{lemma}\label{lemma:Even}
For each integral point $p$, the integer $\sum_s C_{p,s}'$ is even.
\end{lemma}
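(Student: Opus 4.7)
The plan is to read $\sum_s C'_{p,s}$ as a combinatorial count and split into three cases according to whether $p$ is interior to a simplex, a face, or an edge of $\T$. Since $z''_s = z^{1010}_s = z^{0101}_s$, the exponent $C'_{p,s}$ records (with orientation signs $\epsilon_\Delta$, which are $+1$ throughout in the oriented case) the number of edges of the subsimplex $s$ carrying a label $1010$ or $0101$ whose midpoint represents $p$. Summing over $s$ this collapses to
\[
\sum_s C'_{p,s} \;=\; \sum_{(t,\Delta)\in p} N''(t), \qquad N''(t) := [t_0 \ge 1 \text{ and } t_2\ge 1] + [t_1\ge 1 \text{ and } t_3\ge 1],
\]
which takes values in $\{0,1,2\}$ on each representative.

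The two easy cases are dispatched directly. When $p$ is interior to a simplex, the unique representative has $N''(t)=2$. When $p$ is interior to a face of $\T$, the two representatives each have a unique zero coordinate which lies in exactly one of the two pairs $\{0,2\}, \{1,3\}$, so $N''(t)=1$ for each; the total is $2$. In both cases the sum is even.

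When $p$ lies in the interior of an edge $e$ of $\T$, each representative $(t_\Delta,\Delta)$ has two zero coordinates, and $N''(t_\Delta)=1$ precisely when the vertex indices of $e$ in $\Delta$ form a ``diagonal'' pair $\{0,2\}$ or $\{1,3\}$, and $0$ otherwise. Hence $\sum_s C'_{p,s}$ counts the simplices around $e$ in which $e$ appears as a diagonal edge. To show this is even, I would invoke Theorem~\ref{thm:PtolemyYieldsShapes}: for any Ptolemy assignment $c$, the substitution \eqref{eqn:PtolemyToShapes} turns the gluing equation $\prod_s z_s^{A'}(z'_s)^{B'}(z''_s)^{C'}=1$ into $(-1)^{\sum_s C'_{p,s}}\,F(c) = 1$, where $F(c)$ is a sign-free Laurent monomial in the Ptolemy coordinates (only the $z''$-formula in \eqref{eqn:PtolemyToShapes} carries a minus sign). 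Telescoping $F(c)$ via the face-pairing identifications of Definition~\ref{def:PtolemyAssignmentGeneral}, while keeping careful track of the signs $\det(I_{\sigma,\sigma(t)})$ contributed at each face pairing around the cyclic link of $e$---as in Lemma~\ref{lemma:edgelocalmodel}---shows that under the oriented hypothesis these identification signs compose to $+1$, so $F(c)=+1$ and the desired parity follows.

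The main obstacle is precisely the sign bookkeeping in this telescoping: establishing $\prod_i \det(I_{\sigma_i,\sigma_i(t_i)})=+1$ around the edge cycle is the $n$-dimensional analogue of the combinatorial parity underlying Neumann's $n=2$ analysis~\cite{NeumannComb}, and is where the oriented-triangulation hypothesis enters essentially. Concretely, one uses that each face pairing in an oriented triangulation is orientation-reversing on the shared face, which constrains the sign $\det(I_{\sigma_i,\sigma_i(t_i)})$ relative to the transition between the types ($z$, $z'$, $z''$) that $e$ takes when passing between $\Delta_i$ and $\Delta_{i+1}$; tallying these relations across the cycle yields the required parity.
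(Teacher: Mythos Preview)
Your treatment of the interior and face cases is correct and matches the paper. The edge case, however, has a genuine gap.

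You correctly reduce $\sum_s C'_{p,s}$ to the count of simplices around $e$ in which $e$ carries the label $1010$ or $0101$, and you correctly observe that substituting \eqref{eqn:PtolemyToShapes} into the gluing equation yields $(-1)^{\sum_s C'_{p,s}}F(c)=1$. But from there you need an \emph{independent} computation of $F(c)$, and you explicitly flag the required step---that the Ptolemy identification signs $\det(I_{\sigma_i,\sigma_i(t_i)})$ compose to $+1$ around the edge cycle---as the ``main obstacle'' and offer only a sketch. That sketch is not a proof: each face crossing contributes two such determinants (one for the numerator Ptolemy coordinate, one for the denominator), each depending on both the pairing permutation and the specific integral point, and you do not carry out the tally. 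Invoking Theorem~\ref{thm:PtolemyYieldsShapes} does not help here, since that theorem only gives you the product $(-1)^{\sum_s C'_{p,s}}F(c)=1$; determining the two factors separately is exactly the content of the lemma. Nor can you shortcut by taking a Ptolemy assignment with all coordinates positive real (which would force $F(c)>0$): the identification rule of Definition~\ref{def:PtolemyAssignmentGeneral} can impose sign changes across faces, so such assignments need not exist on the local model with its given face pairings.

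The paper's proof avoids Ptolemy coordinates entirely and gives the parity argument directly. The vertex ordering of each simplex orients each of its $2$-faces; one checks on a single standard simplex that the two faces through a given edge $e$ receive \emph{compatible} orientations precisely when $e$ is not a $1010$ or $0101$ edge. Since the triangulation is oriented, the face orientations coming from the two simplices on either side of any shared face are automatically compatible. Going once around the edge cycle, compatibility must hold globally, so the number of internal incompatibilities---your diagonal count---is even. This is the very combinatorial parity you describe as ``the $n$-dimensional analogue of Neumann's analysis''; the paper simply states it in its natural home (face orientations) rather than encoding it in Ptolemy identification signs.
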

\begin{proof}
This is obvious for the face equations and interior equations. Let $K$ be the local model (see Section~\ref{subsection:ProofForEdgeEqn}) near an edge point $p$, and let $e$ be the interior edge of $K$. We must prove that $e$ is a $1010$ edge or a $0101$ edge for an even number of simplices of $K$. Consider a curve $\gamma$ encircling the interior edge $e$ of $K$. The vertex ordering induces an orientation on each face of each simplex of $K$, such that when $\gamma$ passes through two faces of a simplex in $K$, the two orientations agree unless $e$ is a $1010$ edge or a $0101$ edge. Since $K$ is orientable, if follows that the number of such edges is even. This proves the result.
\end{proof}

Since $z_s'=\frac{1}{1-z_s}$ and $z_s''=-\frac{1-z}{z}$, it follows from Lemma~\ref{lemma:Even} that we can write the gluing equations as
\begin{equation}\label{eq:Simplified}
\prod z_s^{A_{p,s}}\prod (1-z_s)^{B_{p,s}}=1,
\end{equation}
where $A=A'-C'$ and $B=C'-B'$. We wish to prove that the rows of $(A\vert B)$ Poisson commute.

Recall that $\dot\Delta_2^3$ parametrizes the edges of $\Delta^3_2$. Let $J_{\Delta^3_2}$ be the abelian group generated by $\dot\Delta_2^3$ subject to the relations
\begin{subequations}\label{eq:JRelation}
\begin{gather}
1100-0011=1010-0101=1001-0110=0\label{sub:JRel1},\\
1100+0110+1010=0\label{sub:JRel2}.
\end{gather}
\end{subequations}
Relation~\eqref{sub:JRel1} states that opposite edges are equal, and \eqref{sub:JRel2} states that the sum of the $3$ edges meeting at a vertex is $0$.

We endow $J_{\Delta^3_2}$ with the skew symmetric bilinear form given by
\begin{equation}\label{eq:SymplecticForm}
\begin{aligned}
\langle1100,0110\rangle=\langle0110,1010\rangle=\langle1010,1100\rangle&=1\\
\langle0110,1100\rangle=\langle1010,0110\rangle=\langle1100,1010\rangle&=-1.
\end{aligned}
\end{equation}
Note that $\langle,\rangle$ is non-singular. Let
\begin{equation}
J_n(\T)=\bigoplus_{\Delta\in\T}\bigoplus_{s\in\Delta_{n-2}^3}J_{\Delta_2^3},
\end{equation}
be a direct sum of copies of $J_{\Delta^3_2}$, one for each subsimplex of each simplex of $\T$. Note that $J_n(\T)$ is generated by the set of all edges of all subsimplices of the simplices of $\T$. We represent a generator as a tuple $(\Delta,s,e)$. We extend the bilinear form $\langle,\rangle$ in the natural way, making the direct sum orthogonal. 
\begin{remark} When $n=2$, $J_n(\T)$ equals the space $J$ considered by Neumann~\cite[Section~4]{NeumannComb}.
\end{remark}
Let $L_n(\T)$ denote the free abelian group on the non-vertex integral points of $\T$.
Consider the map
\begin{equation}
\beta\colon L_n(\T)\to J_n(\T),\qquad p=\{(t,\Delta)\}\mapsto\sum_{(\Delta,t)\in p}\sum_{e+s=t}(\Delta,s,e).
\end{equation}
%where $\varepsilon_\Delta$ is a sign, which is $1$ if and only if the orientation of $\Delta$ agrees with that of $M$.
Using \eqref{sub:JRel1}, we can write $\beta(p)$ as
\begin{equation}
\sum_{\Delta\in\T,\,s\in\Delta_{n-2}^3}A_{p,s}'(\Delta,s,1100)+\sum_{\Delta\in\T,\,s\in\Delta_{n-2}^3}B_{p,s}'(\Delta,s,0110)+\sum_{\Delta\in\T,\,s\in\Delta_{n-2}^3}C_{p,s}'(\Delta,s,1010),
\end{equation}
where the entries of $A'$, $B'$ and $C'$ are all either $0$, $1$, or $2$. Using \eqref{sub:JRel2}, this further simplifies to
\begin{equation}\label{eq:Beta}
\beta(p)=\sum_{\Delta\in\T,\,s\in\Delta_{n-2}^3}A_{p,s}(\Delta,s,1100)+\sum_{\Delta\in\T,\,s\in\Delta_{n-2}^3}B_{p,s}(\Delta,s,0110).
\end{equation}
Note that the matrices $A'$, $B'$, $C'$, $A$ and $B$ are exactly those given by \eqref{eq:Unsimplified} and \eqref{eq:Simplified}. %It follows that the map $\beta$ is represented by (the transpose of) the matrix $(A\vert B)$.

We identify $L_n(\T)$ with its dual via the natural basis, and $J_n(\T)$ with its dual via $\langle,\rangle$.
\begin{lemma}
The dual $\beta_n^*\colon J_n(\T)\to L_n(T)$ of $\beta_n$ is given by
\begin{equation}\label{eq:BetaStar}
\begin{aligned}
(\Delta,s,1100)&\mapsto [(\Delta,s+1001)]+[(\Delta,s+0110)]-[(\Delta,s+1010)]-[(\Delta,s+0101)]\\
(\Delta,s,0110)&\mapsto [(\Delta,s+1010)]+[(\Delta,s+0101)]-[(\Delta,s+1100)]-[(\Delta,s+0011)]\\
(\Delta,s,1010)&\mapsto [(\Delta,s+1100)]+[(\Delta,s+0011)]-[(\Delta,s+1001)]-[(\Delta,s+0110)],
\end{aligned}
\end{equation}
where $[(\Delta,t)]$ denotes the integral point determined by $(\Delta,t)$.
\end{lemma}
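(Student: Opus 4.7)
The plan is a direct computation, since the claim is purely a matter of unpacking the definitions. Because $J_n(\T) = \bigoplus_{\Delta,s} J_{\Delta_2^3}$ is an \emph{orthogonal} direct sum under $\langle,\rangle$, the only terms in $\beta(p) = \sum_{(t,\Delta)\in p}\sum_{e+s=t}(\Delta,s,e)$ that pair nontrivially with a generator $(\Delta_0,s_0,e_0)$ are those with $\Delta = \Delta_0$ and $s = s_0$. Thus the coefficient of $p$ in $\beta^*((\Delta_0,s_0,e_0))$ equals
$$
\langle (\Delta_0,s_0,e_0),\, \beta(p)\rangle \;=\; \sum_{\substack{e\in\dot\Delta_2^3(\Z) \\ (s_0+e,\,\Delta_0)\in p}} \langle e_0, e\rangle.
$$
Summing over $p$ then yields
$$
\beta^*((\Delta_0,s_0,e_0)) \;=\; \sum_{e\in\dot\Delta_2^3(\Z)} \langle e_0, e\rangle\, \bigl[(\Delta_0,\, s_0+e)\bigr],
$$
so the entire lemma reduces to tabulating the six values $\langle e_0,e\rangle$.

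Next I would record these values using \eqref{eq:SymplecticForm} together with the identifications $1100=0011$, $0110=1001$, $1010=0101$ from \eqref{sub:JRel1}. For $e_0=1100$ we get $\langle 1100, e\rangle = +1$ for $e\in\{0110,1001\}$, $\langle 1100, e\rangle = -1$ for $e\in\{1010,0101\}$, and $0$ for $e\in\{1100,0011\}$. Plugging these into the displayed formula for $\beta^*$ gives the first line of \eqref{eq:BetaStar}. The cases $e_0=0110$ and $e_0=1010$ are identical by the cyclic symmetry of \eqref{eq:SymplecticForm}, and produce the second and third lines of \eqref{eq:BetaStar} respectively.

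The only mild obstacle is verifying that the formula, originally defined on triples $(\Delta,s,e)$, actually descends to $J_n(\T)$. This amounts to checking invariance under \eqref{sub:JRel1} and \eqref{sub:JRel2}: invariance under swapping opposite edges is automatic, because the coefficients $\langle e_0, e\rangle$ depend only on the equivalence class of $e_0$ in $J_{\Delta_2^3}$; and the relation \eqref{sub:JRel2} corresponds to the identity
$$
\beta^*((\Delta,s,1100)) + \beta^*((\Delta,s,0110)) + \beta^*((\Delta,s,1010)) \;=\; 0,
$$
which holds term by term because each integral point $[(\Delta,s+e')]$ appears with total coefficient $\langle 1100+0110+1010,\, e'\rangle = 0$. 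Finally, one should confirm that under the identifications stated before the lemma (pair $L_n(\T)$ with its dual via the canonical basis, and $J_n(\T)$ with its dual via $\langle,\rangle$) the matrix of $\beta^*$ is indeed the transpose of the matrix of $\beta$ read off from \eqref{eq:Beta}; with this convention the formulas above match \eqref{eq:BetaStar} exactly.
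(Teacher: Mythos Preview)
Your proof is correct and follows the same approach as the paper, which simply states that the formula ``is an immediate consequence of \eqref{eq:Beta} and \eqref{eq:SymplecticForm}.'' You have spelled out exactly what that immediate consequence is: reduce to a single summand $J_{\Delta_2^3}$ by orthogonality, then read off the coefficient of each integral point $p$ as a pairing $\langle e_0,e\rangle$, and tabulate. Your additional check that the formula respects the relations \eqref{sub:JRel1} and \eqref{sub:JRel2} is a nice touch that the paper leaves implicit.
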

\begin{proof}
This is an immediate consequence of \eqref{eq:Beta} and \eqref{eq:SymplecticForm}.
\end{proof}

One can view the map geometrically as in Figure~\ref{fig:Signs}. The orientation of $\Delta$ determines which signs are positive.

\begin{figure}[htb]
\begin{center}
\begin{minipage}[b]{0.45\textwidth}
\begin{center}
\scalebox{0.75}{
\input{figures_gen/Signs.tex}}
\end{center}
\end{minipage}
\hfill
\begin{minipage}[b]{0.45\textwidth}
\begin{center}
\scalebox{0.75}{
\input{figures_gen/FacePointSigns.tex}}
\end{center}
\end{minipage}
\\
\begin{minipage}[t]{0.45\textwidth}
\caption{The signs of the terms in $\beta^*(\Delta,s,e)$.}\label{fig:Signs}
\end{minipage}
\begin{minipage}[t]{0.45\textwidth}
\caption{The signs of the terms in $\beta^*\circ\beta([(\Delta_0,t_0])$ coming from $\Delta_0$.}\label{fig:FacePointSigns}
\end{minipage}
\end{center}
\end{figure}

%\begin{remark}\label{rm:BetaStarMu}
%The map $\beta_n^*$ can be viewed as the dual of the monomial map $\mu$. To see this, compare~\eqref{eq:BetaStar} and \eqref{eqn:PtolemyToShapes}.
%\end{remark}

The elements $(\Delta,s,1100)$ and $(\Delta,s,0110)$ provide a basis for $J_n(\T)$. We fix an ordering such that $(\Delta,s,1100)>(\Delta',s',0110)$. In this basis, the form $\langle,\rangle$ becomes the standard symplectic form on $\Z^{2r}$ given by $\left(\begin{smallmatrix}0&I\\-I&0\end{smallmatrix}\right)$.

\begin{proposition}\label{prop:BetaStarBeta}
We have a chain complex
\begin{equation}
\xymatrix{L_n(\T)\ar[r]^\beta& J_n(\T)\ar[r]^{\beta^*}&L_n(\T),}
\end{equation}
i.e.~the map $\beta^*\circ\beta=0$.
The matrix representation of $\beta$ is the transpose of $(A\vert B)$, and the matrix representation of $\beta^*$ is the transpose of the coefficient matrix of the monomial map $\mu$ relating the Ptolemy coordinates and the shapes.
\end{proposition}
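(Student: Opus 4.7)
The first statement (matrix forms of $\beta$ and $\beta^*$) is immediate from the formulas already on the page. By \eqref{eq:Beta}, the coefficient of $(\Delta, s, 1100)$ in $\beta(p)$ is $A_{p,s}$ and the coefficient of $(\Delta, s, 0110)$ is $B_{p,s}$, so the matrix of $\beta$ in the basis $\{(\Delta, s, 1100), (\Delta, s, 0110)\}$ of $J_n(\T)$ and the natural basis of $L_n(\T)$ is the transpose of $(A \vert B)$. For $\beta^*$, comparing \eqref{eq:BetaStar} with \eqref{eqn:PtolemyToShapes} line by line shows that the four signed neighbors appearing in $\beta^*(\Delta, s, 1100)$ are exactly the four Ptolemy coordinates (with matching signs) appearing as the exponents of $z_s^{1100}$ in $\mu$, and similarly for the $0110$ and $1010$ rows. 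Hence the matrix of $\beta^*$ is the transpose of the exponent matrix of $\mu$.

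For the chain-complex claim $\beta^* \circ \beta = 0$, the plan is to split the composition according to representatives of a non-vertex integral point $p$,
\[
\beta^* \circ \beta(p) \;=\; \sum_{(t, \Delta) \in p} \alpha(\Delta, t), \qquad \alpha(\Delta, t) \;:=\; \sum_{s + e = t} \beta^*(\Delta, s, e),
\]
and then verify the cancellation by case analysis on the stratum of $p$. Each local contribution $\alpha(\Delta, t)$ is an integer combination of neighboring integer points of $t$ in $\Delta^3_n$ of the form $t + e_i - e_j$. In the \emph{interior case}, $p$ has a unique representative $(t, \Delta)$, all six edge pairs $(s,e)$ with $s+e=t$ contribute, and a direct expansion using \eqref{eq:BetaStar} produces $24$ signed terms which pair off into $12$ cancellations, one per neighbor direction, so $\alpha(\Delta, t) = 0$. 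In the \emph{face case}, only three edge pairs contribute in each of the two representatives $(t_i, \Delta_i)$; one first checks that the contributions in the direction normal to the face cancel internally inside a single $\alpha(\Delta_i, t_i)$, reducing to six face-neighbor contributions on each side, which then cancel across the face pairing because the vertex orderings of the oriented triangulation induce opposite orientations on the shared face. The \emph{edge case} is handled analogously for the cyclic gluing of simplices around the edge, with the only contributing edge $e = 1100$ on each simplex and a mix of edge-neighbor and face-neighbor cancellations.

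The interior case is purely algebraic and requires only routine bookkeeping of the $24$ neighbor contributions. The main obstacle is the sign analysis in the face and edge cases: one must verify that the face-pairing permutations combine with the orientation convention of $\T$ and the explicit formulas \eqref{eq:BetaStar} to yield exactly the opposite signs needed for cancellation between adjacent simplices. This is the natural higher-$n$ extension of Neumann's argument in \cite{NeumannComb}, which handled only the edge case (the only case arising for $n=2$), and it can be carried out using local models exactly analogous to those of Section~\ref{sec:PtolemyToShapes}: a single simplex for interior points, the two-simplex model (Figure~\ref{fig:FaceLocal}) for face points, and the cyclic edge-link model (Figure~\ref{fig:EdgeLocal}) for edge points.
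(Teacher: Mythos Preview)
Your approach is correct and essentially identical to the paper's: both argue $\beta^*\circ\beta=0$ by case analysis on the type of integral point (interior, face, edge), using the same local models as in Section~\ref{sec:PtolemyToShapes}, and both derive the matrix statements directly from \eqref{eq:Beta} and the comparison of \eqref{eq:BetaStar} with \eqref{eqn:PtolemyToShapes}. One small inaccuracy: in the edge case there are no ``edge-neighbor'' terms at all---for an edge point $t=(t_0,t_1,0,0)$ the single contribution $\beta^*(\Delta,s,1100)$ with $s=(t_0-1,t_1-1,0,0)$ produces four \emph{face} points only, and the cancellation is entirely between adjacent simplices in the cyclic link (as in the paper's Figure~\ref{fig:EdgePointCancellation}), not a mix of two kinds.
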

\begin{proof}
The proof is similar (in some sense dual) to the proof of Theorem~\ref{thm:PtolemyYieldsShapes}. 
Let $p=\{(\Delta_0,t_0),\dots,(\Delta_k,t_k)\}$ be an edge point. Let $s_k$ be the unique subsimplex of $\Delta_k$ having $t_k$ as an edge point. The triangulation induces a gluing of the simplices $s_k$ along a common edge as in Figure~\ref{fig:EdgeLocal}. Viewed from the top, this configuration looks like Figure~\ref{fig:EdgePointCancellation}. The signs indicated are the signs of the integral points involved in $\beta_n^*\circ\beta(p)$.
It follows that all signs cancel out.

\begin{figure}[htb]
\begin{center}
\scalebox{0.75}{
\input{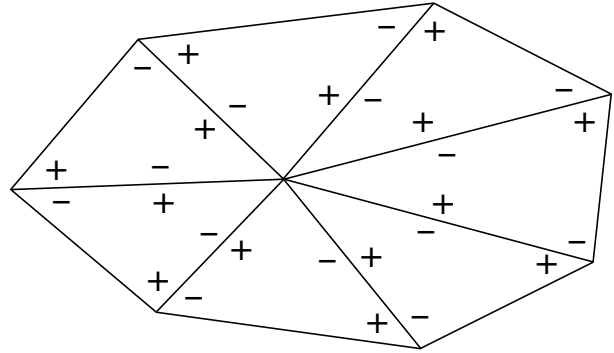}}
\end{center}
\caption{Cancellation of terms in $\beta^*\circ\beta(p)$ for an edge point $p$.}\label{fig:EdgePointCancellation}
\end{figure}

Let $p=\{(\Delta_0,t_0),(\Delta_1,t_1)\}$ be a face point. For simplicity we assume that $t_0=\alpha+1110$ is on the face opposite vertex $3$ of $\Delta_0$ (the other cases are similar). Then $t_0$ is an edge point of exactly $3$ subsimplices, $s_1=\alpha+0010$, $s_2=\alpha+1000$ and $s_3=\alpha+0100$. The terms of $\beta_n(p)$ coming from $(\Delta_0,t_0)$ are then
\begin{equation}
(\Delta_0,s_1,1100)+(\Delta_0,s_2,0110)+(\Delta_0,s_3,1010).
\end{equation}
Applying $\beta^*$ we obtain
\begin{equation}
\begin{aligned}
&[(\Delta_0,s_1+1001)]+[(\Delta_0,s_1+0110)]-[(\Delta_0,s_1+1010)]-[(\Delta_0,s_1+0101)]\\
+&[(\Delta_0,s_2+1010)]+[(\Delta_0,s_2+0101)]-[(\Delta_0,s_2+1100)]-[(\Delta_0,s_2+0011)]\\
+&[(\Delta_0,s_3+1100)]+[(\Delta_0,s_3+0011)]-[(\Delta_0,s_3+1001)]-[(\Delta_0,s_3+0110)],
\end{aligned}
\end{equation}
which equals
\begin{multline}
[(\Delta_0,\alpha+2010)]+[(\Delta_0,\alpha+1200)]+[(\Delta_0,\alpha+0120)]\\-[(\Delta_0,\alpha+2100)]-[(\Delta_0,\alpha+0210)]-[(\Delta_0,\alpha+1020)].
\end{multline}
Note that all terms are integral points lying on the same face as $t_0$. The signs are indicated in Figure~\ref{fig:FacePointSigns}. Since $\T$ is oriented, the terms arising from $(\Delta_1,t_1)$ are the same, but appear with opposite signs. Hence, they cancel out.

Let $p=\{(\Delta,t)\}$ be an interior point, where $t=\alpha+1111$. We have
\begin{multline}
\beta(p)=(\Delta,\alpha+1100,0011)+(\Delta,\alpha+1100,0011)+(\Delta,\alpha+1100,0011)+\\(\Delta,\alpha+1100,0011)+(\Delta,\alpha+1100,0011)+(\Delta,\alpha+1100,0011).
\end{multline}
As in the proof of Lemma~\ref{lemma:InternalEquation}, the positive and negative terms of $\beta^*\circ\beta(p)$ both consist of all terms $[(\Delta,\alpha+\beta)]$, where $\beta$ is a permutation of $(0,1,1,2)$. Hence, all terms cancel out.

The last statement follows from~\eqref{eq:Beta}, and by comparing \eqref{eq:BetaStar} and \eqref{eqn:PtolemyToShapes}.
\end{proof}
\begin{corollary}
The rows of $(A\vert B)$ Poisson commute.\qed
\end{corollary}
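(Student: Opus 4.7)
The corollary is essentially a translation of Proposition~\ref{prop:BetaStarBeta} into matrix language, so the plan is just to carefully unwind the identifications. First I would fix the basis $\{(\Delta,s,1100),(\Delta,s,0110)\}_{\Delta,s}$ of $J_n(\T)$ (in the order specified just before the proposition), and the basis of non-vertex integral points of $L_n(\T)$. In these bases the bilinear form $\langle,\rangle$ on $J_n(\T)$ is represented by the standard symplectic matrix $\Omega = \left(\begin{smallmatrix}0&I\\-I&0\end{smallmatrix}\right)$, while the pairing on $L_n(\T)$ is the standard one.

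Let $M$ denote the matrix of $\beta$ in these bases. From equation~\eqref{eq:Beta}, the $p$-th column of $M$ lists the coefficients $A_{p,s}$ followed by $B_{p,s}$, so $M = (A\vert B)^T$, i.e.\ $M^T=(A\vert B)$. Next I would derive the matrix of $\beta^*$ from the defining adjoint relation $\langle \beta(x),y\rangle_J = \langle x,\beta^*(y)\rangle_L$; writing $\beta(x)=Mx$ gives $(Mx)^T \Omega y = x^T(M^T \Omega)y$, hence $\beta^* = M^T\Omega$ as a matrix.

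Now I simply invoke $\beta^*\circ\beta=0$ from Proposition~\ref{prop:BetaStarBeta}, which in matrix form reads $M^T\Omega M = 0$. Substituting $M^T=(A\vert B)$ yields
\begin{equation}
(A\vert B)\,\Omega\,(A\vert B)^T \;=\; 0,
\end{equation}
and this identity is exactly the statement that $\langle v,w\rangle=0$ for any two rows $v,w$ of $(A\vert B)$, i.e.\ the rows Poisson commute.

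There is no real obstacle: the combinatorial content is in Proposition~\ref{prop:BetaStarBeta}, and the only thing to verify carefully is the bookkeeping that $\beta^*$ is truly the adjoint with respect to $\Omega$ (not some sign-twisted version) so that the matrix identity $M^T\Omega M=0$ comes out as written. As a small sanity check I would verify against \eqref{eq:BetaStar} that on a basis vector such as $(\Delta,s,1100)$ (whose $\Omega$-dual is $-(\Delta,s,0110)$), the formula $M^T\Omega$ reproduces the stated expansion, confirming that the signs and ordering conventions match.
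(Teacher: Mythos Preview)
Your proposal is correct and is precisely the natural unwinding that the paper leaves implicit (the corollary carries a \qed\ with no separate proof). The paper regards the statement as immediate from Proposition~\ref{prop:BetaStarBeta}, and your argument---writing $\beta$ as $(A\vert B)^T$, identifying $\beta^*$ with $(A\vert B)\Omega$, and reading $\beta^*\circ\beta=0$ as $(A\vert B)\Omega(A\vert B)^T=0$---is exactly that unwinding.
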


\begin{lemma}\label{lemma:Rank}
If all the boundary components of $M$ are tori, the number of non-vertex integral points of $\T$ equals $t\binom{n+1}{3}$.
\end{lemma}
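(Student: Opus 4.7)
The plan is to partition the non-vertex integral points of $\T$ by type (edge, face, interior) and count each class using the standard counts \eqref{eq:Counting}. Every interior point of a $3$-simplex has a unique representative, so interior points contribute $t\binom{n-1}{3}$ in total. Every face of $\T$ carries $\binom{n-1}{2}$ non-vertex integral points strictly in its interior, and since $\widehat M$ has no free $2$-cells each such point has exactly two representatives; therefore face points contribute $F\binom{n-1}{2}$, where $F$ denotes the number of $2$-cells of $\T$. A simple incidence count (four faces per simplex, paired two-by-two) yields $F=2t$. Finally each $1$-cell of $\T$ contains $n-1$ non-vertex integral points (independently of how many simplices wrap around the edge, as each such point is a single integral point of $\T$), contributing $E(n-1)$, where $E$ is the number of edges.

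A direct computation gives the binomial identity
\[
\binom{n-1}{3}+2\binom{n-1}{2}+(n-1)=\binom{n+1}{3},
\]
so the lemma reduces to the assertion $E=t$ under the torus-boundary hypothesis. This is where the hypothesis enters: one computes $\chi(\widehat M)$ in two different ways. From the CW decomposition,
\[
\chi(\widehat M)=V-E+F-t=V-E+t.
\]
On the other hand, $\widehat M$ is obtained from $M$ by collapsing each boundary component to a point, so $\chi(\widehat M)=\chi(M)-\chi(\partial M)+|\pi_0(\partial M)|$. For a compact $3$-manifold one has $\chi(M)=\tfrac12\chi(\partial M)$, and when every component of $\partial M$ is a torus both sides vanish. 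Since vertices of $\T$ correspond bijectively to boundary components of $M$, $|\pi_0(\partial M)|=V$, and hence $\chi(\widehat M)=V$. Comparing gives $E=t$, which finishes the proof.

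The main (and essentially only) obstacle is the step $E=t$: the combinatorics of integral points is routine, but $E=t$ genuinely uses the torus hypothesis via $\chi(M)=0$. With higher genus boundary components the same bookkeeping would produce a correction term $-\chi(\partial M)/2\cdot(n-1)$ added to the count, reflecting the fact that $E-t$ measures the deviation of $\chi(\widehat M)$ from $V$. No subtlety arises from self-identifications of faces, since the $F=2t$ identity counts (simplex,face) incidences and is insensitive to whether a face is paired with one in the same simplex or a different one.
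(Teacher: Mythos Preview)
Your proof is correct and follows essentially the same approach as the paper: count non-vertex integral points by type (edge, face, interior), use $F=2t$ and an Euler characteristic argument to obtain $E=t$ under the torus-boundary hypothesis, and combine via the identity $\binom{n-1}{3}+2\binom{n-1}{2}+(n-1)=\binom{n+1}{3}$. The paper compresses the Euler characteristic step into a single sentence (``a simple Euler characteristic argument shows that $e=\tfrac{1}{2}f=t$''), whereas you spell it out via $\chi(\widehat M)=V$ and $\chi(M)=\tfrac12\chi(\partial M)$; the content is the same.
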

\begin{proof}
Let $e$, $f$, and $t$ denote the number of edges, faces and simplices of $\T$. Since all boundary components are tori, a simple Euler characteristic argument shows that $e=\frac{1}{2}f=t$. 
Using this, we have
\begin{equation}\label{eq:EulerEquation}
\vert \dot\T_n(\Z)\vert=(n-1)e+\frac{(n-1)(n-2)}{2}f+\frac{(n-1)(n-2)(n-3)}{6}t=t\binom{n+1}{3}
\end{equation}
as desired.
\end{proof}
\begin{corollary}
If all the boundary components of $M$ are tori, the matrix $(A\vert B)$ is $r\times 2r$, where $r=t\binom{n+1}{3}$ and $t$ is the number of simplices of $\T$.
\end{corollary}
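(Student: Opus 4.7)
The plan is to combine the row count already delivered by Lemma~\ref{lemma:Rank} with a parallel, easier column count, and then observe that the two counts coincide. Concretely, by the definitions in Section~\ref{sec:GluingEquations} (see \eqref{eq:Simplified}), the matrix $(A\mid B)$ has rows indexed by the non-vertex integral points $p\in\dot\T_n(\Z)$, while the block $A$ (resp.\ $B$) has columns indexed by the subsimplices $s$ of $\T$. So the row count is $|\dot\T_n(\Z)|$ and the column count is $2\cdot(\text{number of subsimplices of }\T)$.

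For the rows, Lemma~\ref{lemma:Rank} already gives $|\dot\T_n(\Z)|=t\binom{n+1}{3}=r$ under the torus boundary hypothesis, via the identities $e=\tfrac12 f=t$ coming from $\chi(\partial M)=0$ applied to the link of each cusp. For the columns, I would simply invoke Definition~\ref{def:Subsimplex}, which identifies the subsimplices of one simplex with $\Delta^3_{n-2}(\Z)$, together with equation~\eqref{eq:Counting}, which gives
\begin{equation*}
|\Delta^3_{n-2}(\Z)|=\binom{(n-2)+3}{3}=\binom{n+1}{3}.
\end{equation*}
Summing over the $t$ simplices, the total number of subsimplices of $\T$ is $t\binom{n+1}{3}=r$, so $A$ and $B$ each have $r$ columns and $(A\mid B)$ has $2r$ columns.

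Putting the two counts together yields an $r\times 2r$ matrix as claimed. There is no real obstacle here: the torus hypothesis is used only through Lemma~\ref{lemma:Rank}, and the column count is a direct consequence of the definition of a subsimplex and the elementary binomial identity in~\eqref{eq:Counting}. The only thing worth being slightly careful about is to match the indexing conventions (rows $\leftrightarrow$ integral points, columns $\leftrightarrow$ subsimplices) used implicitly when passing from \eqref{eq:Unsimplified} to \eqref{eq:Simplified}, but this is already fixed by the preceding discussion.
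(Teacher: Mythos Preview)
Your proof is correct and matches the paper's own argument essentially verbatim: invoke Lemma~\ref{lemma:Rank} for the row count and use $|\Delta^3_{n-2}(\Z)|=\binom{n+1}{3}$ from~\eqref{eq:Counting} for the column count.
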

\begin{proof}
By Lemma~\ref{lemma:Rank} the number of rows equals $r$. The number of columns equals $2t\vert\Delta_{n-2}^3(\Z)\vert$, which by \eqref{eq:Counting} equals $2t\binom{n+1}{3}=2r$. 
\end{proof}
This concludes the proof of Theorem~\ref{thm:soft}.

\subsection{Applications in quantum topology}
\label{sub.applicationsQT}

Recently, ideal triangulations $\CT$ of 3-manifolds $M$ with torus boundary 
components
and their gluing equations have found several applications
in quantum topology, and this has been a main motivation for our work. We
will list three applications here, and refer to the literature for more
details:

\begin{itemize}
\item[(a)] The {\em Quantum Riemann surfaces} of \cite{Di1}
\item[(b)] The {\em loop invariants} of \cite{DG}
\item[(c)] The {\em 3D index} of \cite{DGG1,DGG2,Ga-3Dindex}
\end{itemize}

The input of a quantum Riemann surface of \cite{Di1} is an ideal triangulation 
$\CT$ of a 3-manifold with torus boundary components, and the output is
a polynomial in $q$-commuting variables (one per meridian and longitude
of each torus boundary component). The operators generate a $q$-holonomic
ideal which depends on $\CT$ and ought to map to the gluing
equation variety $V_2(\CT)$ when $q=1$. 

The input of the loop invariants of \cite{DG} is a Neumann-Zagier datum
which consists of an ideal triangulation together with a solution of
the gluing equations whose image under the map \eqref{eq.dev2} is
the discrete faithful representation of $M$. The output is
a formal power series in a variable $\hbar$ with
coefficients rational functions on the image of the map  
\eqref{eq.dev2}. The coefficient of $\hbar$ in the above series ought to agree
with the non-abelian torsion of \cite{Porti} and the evaluation of the
series at the discrete faithful representation when $\hbar=2\pi i/N$, ought
to agree to all orders in $1/N$ with the asymptotics of the Kashaev invariant
\cite{Kashaev} of a hyperbolic knot complement.
 
The input of the 3D index of \cite{DGG1,DGG2} (see also the survey
article \cite{Ga-3Dindex}) is an ideal triangulation $\CT$ which supports
a strict angle structure. The output is a $q$-holonomic function $
I_{\CT}: \BZ^{2r} \longto \BZ((q^{1/2}))$ where $r$ is the number of torus
boundary components of $M$ and $\BZ((q^{1/2}))$ is the ring of Laurent 
series in $q$ with integer coefficients.

Using our Theorems \ref{thm:soft} and \ref{thm:ShortMainThm}, one can extend 
the above invariants to the case of representations in $\PGL(n,\BC)$.
For example, fix an ideal triangulation $\CT$ with $t$ tetrahedra
of a 3-manifold $M$ with torus boundary. Following \cite[Sec.1.2]{DG},
choose a pair of opposite edges of each subsimplex, remove $n-1$ gluing
equations which are dependent on the others, and replace them with the
$n-1$ cusp equations of the meridian to obtain matrices $r \times r$
$A_n$ and $B_n$ where $r=t\binom{n+1}{3}$. Let $z$ denote the $r$ vector
of shapes of solutions to the gluing equations. Following 
\cite[Defn.1.1]{DG} consider the Neumann-Zagier datum 
$\b_{\CT,n}=((A_n|B_n),z)$ and its enhanced version 
$\hat\b_{\CT,n}=((A_n|B_n),z,f)$ where $f$ is a choice of flattening of each
subsimplex. Following \cite[Defn.1.2]{DG} we define the 1-loop invariant
of $\hat\b_{\CT,n}$ by
\be 
\qquad\tau_{\CT,n} = 
\pm \frac12\det\big( A_n\Delta_{z''}
+B_n\Delta_z^{-1}\big)z^{f''}z''^{-f}
\ee
where $\Delta_z:=\diag(z_1,...,z_r)$ and 
$\Delta_{z''}:=\diag(z_1'',...,z_r'')$ are diagonal matrices, and  
$z^{f''}z''^{-f} := \prod_i z_i{}^{f''_i}z''_i{}^{-f_i}$. When $z$ is the
solution that comes from the discrete faithful representation of $M$,
then $\tau_{\CT,n}$ lies in the invariant trace field of $M$.
An exact computation is possibe using the SnapPy tools \cite{SnapPy}. 
We thank N. Dunfield for providing an automated code for exact computation. 
Let us give some examples.

\begin{example}
\label{ex:411loop}
The $4_1$ knot has invariant trace field $\BQ(x)$ where 
$$
x^2 - x + 1=0, \qquad x=\frac{1+i \sqrt{3}}{2}
$$
If $\tau_n=\tau_{4_1,n}$, the quotient $\tau_{n+2}/\tau_n$
appears to have lower complexity than $\tau_n$ and is given by
\begin{align*}
\tau_2 &= 1/2 - x &
\tau_3 &= 21/2
\\
\frac{\tau_4}{\tau_2} &= 6720 &
\frac{\tau_5}{\tau_3} &= 52147200
\\
\frac{\tau_6}{\tau_4} &= -5381278156800 &
\frac{\tau_7}{\tau_5} &= -7383730314510950400
\\
\frac{\tau_8}{\tau_6} &= -138731589652863387775795200 &
\end{align*}
\end{example}

\begin{example}
\label{ex:52.1loop}
Consider the knot $5_2$ with invariant trace field $\BQ(x)$ where
$$
x^3-x^2+1=0, \qquad x=0.877439 \dots -i \, 0.744862 \dots \,.
$$
The {\em mirror} $-(-2,3,7)$ of the $(-2,3,7)$ pretzel knot has the same volume
and the same invariant trace field as $5_2$.  
If $\tau_n=\tau_{5_2,n}$ and $\tau'_n=\tau_{-(-2,3,7),n}$ 
then we have
%%%%% see Mathematica file: zickert/OneLoop.SLN.52.nb
{\small
\begin{align*}
\tau_2 &= 1-3/2 x &
\tau'_2 &= -2 + 5  x -3  x^2
\\
\frac{\tau_3}{\tau_2} &= -63+ 57 x^2 &
\frac{\tau'_3}{\tau'_2} &= -120 -48  x + 144  x^2
\\
\frac{\tau_4}{\tau_3} &= 1536 -3072 x -9792 x^2 &
\frac{\tau'_4}{\tau'_3} &= 21984 + 5712  x -31008  x^2
\\
\frac{\tau_5}{\tau_4} &= -12831360 -10393200 x +  8023440 x^2 &
\frac{\tau'_5}{\tau'_4} &= -3196800 + 14346000  x +  24614400  x^2
\\
\frac{\tau_6}{\tau_5} &= -95788707840 -84869406720 x  &
\frac{\tau'_6}{\tau'_5} &= 158834390400 + 213765955200 x 
\\
 & \quad + 55161630720 x^2 & & \quad + 4444675200 x^2
\end{align*}
}
\end{example}

\section{Decorations}\label{sec:Decorations}
We refer to Garoufalidis-Thurston-Zickert~\cite{GaroufalidisThurstonZickert} or Zickert~\cite{ZickertDuke} for more details on decorations.
Let $G$ be a group and $H$ a subgroup of $G$.
\begin{definition}\label{def:Decoration}
Let $\Delta$ be an ordered $k$-simplex. A $G/H$-\emph{decoration} of $\Delta$ is an assignment of a left $H$-coset to each vertex of $\Delta$. We only consider decorations up to $G$-action, i.e.~we consider two decorations to be equal if they differ by left multiplication by an element in $G$.
We represent a decoration by a tuple $(g_0H,\dots,g_kH)$.
\end{definition}
If $G$ and $H$ are clear from the context, we refer to a $G/H$-\emph{decoration} as a decoration.
\begin{definition} A decoration of a triangulated manifold $(M,\mathcal T)$ is a decoration of each simplex of $\mathcal T$ such that if two faces with decorations represented by $(g_0H,g_1H,g_2H)$ and $(g_0^\prime H,g_1^\prime H,g_2^\prime H)$ are identified, the decorations must differ by left multiplication by a \emph{unique} element in $G$.
\end{definition}

\begin{remark} Since the fundamental group is generated by face pairings, a decoration determines a representation $\pi_1(M)\to G$ taking peripheral curves to conjugates of $H$. Moreover, every such representation can be decorated.
\end{remark}
\begin{remark}
One can define, more intrinsically, a decoration as an equivariant assignment of cosets to the vertices of the space obtained from the universal cover of $M$ by collapsing each boundary component to a point. 
\end{remark}

\begin{remark}\label{rm:DecorationsAndBundles} A representation determines a flat bundle $E$ over $M$. One can show (\cite[Prop.~4.6]{GaroufalidisThurstonZickert}) that a decoration corresponds to a reduction of the restriction of $E$ to $\partial M$ to a flat $H$ bundle. Two decorations determine the same reduction if and only if they are \emph{equivalent} in the sense of \cite[Def.~4.4]{GaroufalidisThurstonZickert}. We shall not need this here.
\end{remark}

\subsection{Generic decorations, Ptolemy coordinates and shapes}
For an element $g\in\GL(n,\C)$, let $\{g\}_i$ denote the ordered set consisting of the first $i$ column vectors of $g$.
\begin{definition}\label{def:generic}
A $\GL(n,\C)/N$-decoration $(g_0N,g_1N,g_2N,g_3N)$ on $\Delta^3_n$ is \emph{generic} if for each $(t_0,t_1,t_2,t_3)\in\Delta_n^3(\Z)$
\begin{equation}\label{eqn:PtolemyDets}
\det\big(\{g_0\}_{t_0}\cup\{g_1\}_{t_1}\cup\{g_2\}_{t_2}\cup\{g_3\}_{t_3}\big)\neq 0.
\end{equation}
Genericity of $\PGL(n,\C)/B$-decorations is defined similarly.
\end{definition}
The definition is obviously independent of the choice of coset representatives, and of the ordering of the tuple.

\begin{remark}\label{rm:FineTriangulation}
Although all representations can be decorated, some representations may only have non-generic decorations. However, after a single barycentric subdivision, every representation has a generic decoration. For $\SL(n,\C)/N$-decorations this is proved in \cite[Prop.~5.4]{GaroufalidisThurstonZickert}, and the proof for $\PGL(n,\C)/B$-decorations is similar. This proves the last statement of Theorem~\ref{thm:ShortMainThm}.
\end{remark}

\begin{lemma}[{Fock-Goncharov~\cite[Lemma~10.3]{FockGoncharov}; see also~\cite{GaroufalidisThurstonZickert}}]\label{lemma:DecorationToPtolemy}
A generic $\GL(n,\C)/N$-decoration $(g_0N,g_1N,g_2N,g_3N)$ of $\Delta^3_n$ induces a Ptolemy assignment
\begin{equation}\label{eqn:DecoToPtolemy}
c\colon\dot\Delta_n^3(\Z)\to \C^*,\quad t\mapsto \det\left(\{g_0\}_{t_0}\cup\{g_1\}_{t_1}\cup\{g_2\}_{t_2}\cup\{g_3\}_{t_3}\right).
\end{equation}\qed
\end{lemma}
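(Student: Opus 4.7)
The plan is to verify three properties of the assignment \eqref{eqn:DecoToPtolemy}: independence of the choice of coset representatives, non-vanishing on non-vertex points, and the Ptolemy relation \eqref{eqn:PtolemyRelation}. Well-definedness is immediate: since $N$ consists of upper triangular unipotent matrices, the replacement $g_i \mapsto g_i n$ with $n \in N$ modifies the $j$-th column of $g_i$ only by adding linear combinations of columns $1,\ldots,j-1$. Hence the wedge of the first $t_i$ columns, and therefore the determinant \eqref{eqn:DecoToPtolemy}, is unchanged. Non-vanishing on non-vertex integral points is exactly Definition \ref{def:generic}.

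The substance of the lemma is the Ptolemy relation. Fix $s \in \Delta^3_{n-2}(\Z)$, let $B_i = \{g_i\}_{s_i}$ denote the first $s_i$ columns of $g_i$, and let $v_i$ denote the $(s_i+1)$-st column of $g_i$. For each $a \in \{0,1\}^4$ with $\sum a_i = 2$, the determinant defining $c_{s+a}$ has as its column sequence $B_0, v_0^{a_0}, B_1, v_1^{a_1}, B_2, v_2^{a_2}, B_3, v_3^{a_3}$, where $v_i^{a_i}$ appears if and only if $a_i = 1$. I would permute the included $v_i$'s past all subsequent $B_j$ and $v_j$ blocks to bring each matrix into the standardized column sequence $B, v_i, v_j$ (with $i<j$), where $B := B_0 \cup B_1 \cup B_2 \cup B_3$ is in fixed order.

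A short computation of the resulting sign factors $\epsilon_a$ shows that
\begin{equation*}
\epsilon_{1001}\epsilon_{0110} \;=\; \epsilon_{1100}\epsilon_{0011} \;=\; \epsilon_{1010}\epsilon_{0101},
\end{equation*}
so the three products appearing in \eqref{eqn:PtolemyRelation} carry a common sign. The relation then reduces to the Pl\"ucker identity
\begin{equation*}
[B, v_0, v_3][B, v_1, v_2] + [B, v_0, v_1][B, v_2, v_3] = [B, v_0, v_2][B, v_1, v_3],
\end{equation*}
where $[B,u,v]$ denotes the $n \times n$ determinant with column sequence $B,u,v$. If $\mathrm{rank}(B) < n-2$ both sides vanish. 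Otherwise one completes $B$ to a basis of $\C^n$ by vectors $w_1, w_2$ and writes $v_i \equiv \alpha_i w_1 + \beta_i w_2 \pmod{\mathrm{span}(B)}$; each $[B, v_i, v_j]$ equals $(\alpha_i \beta_j - \alpha_j \beta_i) \cdot [B, w_1, w_2]$, and the identity collapses to the classical quadratic Pl\"ucker relation for four vectors in a two-dimensional space, verified by direct expansion.

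The main obstacle is the sign bookkeeping in the second step: because the $v_i$'s are interleaved among the $B_j$'s rather than grouped at the end, one must carefully track the transpositions needed to normalize each of the six matrices. Once this is carried out and shown to yield a common sign across the three terms, the proof reduces cleanly to the classical Pl\"ucker identity in the two-dimensional quotient $\C^n / \mathrm{span}(B)$, and the lemma follows.
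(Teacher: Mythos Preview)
Your argument is correct. The paper does not give its own proof of this lemma: the statement is marked with a \qed\ and attributed to Fock--Goncharov~\cite[Lemma~10.3]{FockGoncharov} and~\cite{GaroufalidisThurstonZickert}, so there is nothing to compare against beyond the citation.

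A couple of minor remarks. First, your sign claim checks out: a direct count gives $\epsilon_{1001}\epsilon_{0110}=\epsilon_{1100}\epsilon_{0011}=\epsilon_{1010}\epsilon_{0101}=(-1)^{s_1+s_3}$, and the reduced identity $[B,v_0,v_3][B,v_1,v_2]+[B,v_0,v_1][B,v_2,v_3]=[B,v_0,v_2][B,v_1,v_3]$ is exactly the three-term Pl\"ucker relation for a $2\times 4$ matrix. Second, the case $\mathrm{rank}(B)<n-2$ never actually occurs under the genericity hypothesis: since, e.g., $c_{s+1100}\neq 0$, the columns $B_0,v_0,B_1,v_1,B_2,B_3$ are already linearly independent, hence so is $B$. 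So that case split is harmless but superfluous.
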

\begin{corollary} We have a map
\begin{equation}
\mathcal C\colon\text{\{Generic $\GL(n,\C)/N$-decorations on $\Delta^3_n$\}}\to \text{\{Ptolemy assignments on $\Delta^3_n$\}}. 
\end{equation}\qed
\end{corollary}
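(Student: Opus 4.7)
The corollary asserts that the formula in Lemma~\ref{lemma:DecorationToPtolemy} descends to a well-defined map on equivalence classes of decorations. The lemma itself already provides, for every representative tuple $(g_0,g_1,g_2,g_3)$ of a generic decoration, a family of nonzero complex numbers indexed by $\dot\Delta_n^3(\Z)$ that satisfy the Ptolemy relations. What remains to establish the map $\mathcal C$ is that this family depends only on the cosets $g_i N$ and on the decoration's equivalence class under the diagonal $G$-action, not on the choice of coset representatives or on the particular element of the orbit.

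First I would verify invariance under right-multiplication $g_i \mapsto g_i n_i$ by an element $n_i \in N$, which encodes the ambiguity in picking a representative for the coset $g_i N$. The key elementary observation is that if $n_i$ is upper-triangular unipotent, then $\{g_i n_i\}_{t_i} = \{g_i\}_{t_i} \cdot U_i$, where $U_i$ is the upper-left $t_i \times t_i$ block of $n_i$; this block is itself upper-triangular unipotent and therefore has determinant $1$. Assembling the four column blocks into an $n\times n$ matrix and taking determinants then yields the same value as before, so the formula \eqref{eqn:DecoToPtolemy} is indifferent to the $N$-ambiguity. This is the only step of substance, and is a purely linear-algebraic check.

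Next I would check invariance under the simultaneous left action $(g_0,\dots,g_3) \mapsto (h g_0,\dots,h g_3)$, under which decorations are identified. This action left-multiplies the $n\times n$ matrix inside \eqref{eqn:DecoToPtolemy} by $h$, scaling each coordinate by $\det(h)$. In the $\SL(n,\C)/N$ setting of diagram~\eqref{eqn:DiagramIntro} this factor is automatically $1$, so the Ptolemy assignment is preserved on the nose; for the broader $\GL(n,\C)$ statement the effect is only a global rescaling, which manifestly preserves both the Ptolemy relations and the nonvanishing condition. Combined with Lemma~\ref{lemma:DecorationToPtolemy} and the genericity hypothesis, these two invariances produce the desired well-defined map $\mathcal C$.

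The main potential obstacle is ensuring that \emph{right} multiplication by $N$ really acts trivially on the initial column blocks, but this is handled cleanly by the block-triangular structure described above; no further ideas beyond those already encoded in the lemma are needed.
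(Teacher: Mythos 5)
Your verification of invariance under right $N$-multiplication is correct and is exactly the elementary content that the paper silently delegates to the word ``induces'' in Lemma~\ref{lemma:DecorationToPtolemy} (the corollary itself carries an immediate \qed): the first $t_i$ columns of $g_i n_i$ equal $\{g_i\}_{t_i}U_i$ with $U_i$ the upper-left $t_i\times t_i$ block of $n_i$, which is unipotent upper-triangular, so the concatenated $n\times n$ matrix is right-multiplied by a block-diagonal matrix of determinant $1$ and every Ptolemy coordinate is unchanged. This is the check the authors treat as obvious, and your account of it is clean and complete.

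Your treatment of the simultaneous left action by $h\in\GL(n,\C)$, however, contains a small slip. As you observe, all of the Ptolemy coordinates scale by $\det(h)$; but a globally rescaled family $\{\det(h)\,c_t\}$ is a \emph{different} Ptolemy assignment, even though it still satisfies the Ptolemy relations and nonvanishing, so $\mathcal C$ is not invariant under the full left $\GL(n,\C)$-action and is therefore not well-defined on $\GL(n,\C)$-orbits. The paper is itself careful on exactly this point: the sentence immediately following the corollary says that $\mathcal C$ is invariant under the left action by $\SL(n,\C)$ --- the unimodular subgroup only --- and the diagram \eqref{eqn:DiagramIntro} accordingly uses $\SL(n,\C)/N$-decorations. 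To read the corollary literally for $\GL(n,\C)$ one must regard a decoration here as a tuple of cosets rather than as an equivalence class modulo the diagonal $G$-action. So you should not claim that the left-$\GL$ ambiguity is absorbed; this does not affect the correctness of your central $N$-invariance computation, which is what the corollary actually rests on.
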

Note that $\mathcal C$ is invariant under the left action by $\SL(N,\C)$.

\begin{lemma} \label{lemma:DecorationsAndPullbacks}
The map $\mathcal C$ is compatible with pullbacks, i.e., 
\begin{equation}\label{eq:DecorationsAndPullbacks}
\sigma^*(\mathcal C(g_0N,g_1N,g_2N,g_3N)=\mathcal C(g_{\sigma(0)}N,g_{\sigma(1)}N,g_{\sigma(2)}N,g_{\sigma(3)}N).
\end{equation}
%if $c$ is the Ptolemy assignment of a generic tuple $(g_0N,g_1N,g_2N,g_3N)$, then $\sigma^*c$ is the Ptolemy assignment of the tuple $(g_{\sigma(0)}N,g_{\sigma(1)}N,g_{\sigma(2)}N,g_{\sigma(3)}N)$, where $\sigma\in S_4$. \label{lemma:genDecPtolemyCompS4}
\end{lemma}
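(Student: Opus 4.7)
The plan is to verify \eqref{eq:DecorationsAndPullbacks} pointwise at each non-vertex integral point $t\in\dot\Delta_n^3(\BZ)$. Both sides evaluate to determinants of $n\times n$ matrices whose columns are selected columns of the $g_i$'s, so the task reduces to recognizing the two matrices as differing by a column permutation whose sign equals $\det(I_{\sigma,\sigma(t)})$.

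First I would unwind both sides. Using Definition~\ref{def:PullbackPtolemy} and Lemma~\ref{lemma:DecorationToPtolemy} together with the identity $(\sigma(t))_i=t_{\sigma^{-1}(i)}$, the left-hand side at $t$ becomes
\[
\det(I_{\sigma,\sigma(t)})\cdot\det\bigl[\{g_0\}_{t_{\sigma^{-1}(0)}}\,\big|\,\{g_1\}_{t_{\sigma^{-1}(1)}}\,\big|\,\{g_2\}_{t_{\sigma^{-1}(2)}}\,\big|\,\{g_3\}_{t_{\sigma^{-1}(3)}}\bigr],
\]
while the right-hand side equals $\det\bigl[\{g_{\sigma(0)}\}_{t_0}\,\big|\,\{g_{\sigma(1)}\}_{t_1}\,\big|\,\{g_{\sigma(2)}\}_{t_2}\,\big|\,\{g_{\sigma(3)}\}_{t_3}\bigr]$. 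Reindexing the RHS blocks by $j=\sigma(i)$ reveals they are exactly the same four blocks $\{g_j\}_{t_{\sigma^{-1}(j)}}$, just permuted: the block indexed $j$ now occupies position $\sigma^{-1}(j)$ instead of position $j$.

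Hence the two matrices differ by a block column permutation $\pi$, and the identity reduces to $\sgn(\pi)=\det(I_{\sigma,\sigma(t)})$. This last point is essentially tautological: by construction, $I_{\sigma,\sigma(t)}$ is obtained from the identity $I=[I_0^{\sigma(t)}|I_1^{\sigma(t)}|I_2^{\sigma(t)}|I_3^{\sigma(t)}]$ by precisely the same block rearrangement, since its $i$-th block is $I_{\sigma(i)}^{\sigma(t)}$ of size $(\sigma(t))_{\sigma(i)}=t_i$. Its determinant is, by definition, the sign of that block permutation, which is $\sgn(\pi)$.

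The main obstacle is purely notational bookkeeping: tracking block sizes and avoiding confusion between $\sigma$ and $\sigma^{-1}$. If the direct argument feels brittle, a cleaner route would be to invoke multiplicativity $(\sigma\tau)^*=\tau^*\sigma^*$ (which follows from~\eqref{Isigmaproperties}) together with the evident functoriality of the RHS of~\eqref{eq:DecorationsAndPullbacks} in $\sigma$, so as to reduce to the three generating transpositions $\sigma_{01}$, $\sigma_{12}$, $\sigma_{23}$; in those cases $\det(I_{\sigma_{ij},\sigma_{ij}(t)})=(-1)^{t_it_j}$ is immediate, and swapping a single adjacent pair of column blocks in the determinant verifies the identity on the spot.
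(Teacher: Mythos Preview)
Your argument is correct and is essentially identical to the paper's: the paper writes the key step as the matrix identity $\bigl(\bigcup_{i}\{g_i\}_{t_{\sigma^{-1}(i)}}\bigr)I_{\sigma,\sigma(t)}=\bigcup_{i}\{g_{\sigma(i)}\}_{t_i}$ and takes determinants, which is exactly your observation that the two column-block arrangements differ by right multiplication by the permutation matrix $I_{\sigma,\sigma(t)}$. You have simply made explicit the ``one easily checks'' that the paper leaves to the reader.
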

\begin{proof}
%It is enough to prove this for $\sigma_{ij}$, the permutation switching $i$ and $j$. 
%Using~\eqref{eqn:PtolemyDets}, we see that switching $g_i$ and $g_j$ changes the determinant by a sign which is positive if and only if $t_i$ and $t_j$ are both even. Using \eqref{Isigmaproperties}, this proves the result.
Let $c=\mathcal C(g_0N,g_1N,g_2N,g_3N)$ and $c'=\mathcal C(g_{\sigma(0)}N,g_{\sigma(1)}N,g_{\sigma(2)}N,g_{\sigma(3)}N)$. Then
\begin{equation}
c_{\sigma(t)}=\det\left(\bigcup_{i=0}^3\{g_i\}_{t_{\sigma^{-1}(i)}}\right),\qquad c'_t=\det\left(\bigcup_{i=0}^3\{g_{\sigma(i)}\}_{t_i}\right).
\end{equation}
One easily checks that
\begin{equation}
\left(\bigcup_{i=0}^3\{g_i\}_{t_{\sigma^{-1}(i)}}\right)I_{\sigma,\sigma(t)}=\bigcup_{i=0}^3\{g_{\sigma(i)}\}_{t_i},
\end{equation}
from which it follows that $(\sigma^*c)_t=c_{\sigma(t)}\det(I_{\sigma,\sigma(t)})=c'_t$. This proves the result.
\end{proof}
\begin{corollary}
A generic $\SL(n,\C)/N$-decoration on $(M,\mathcal{T})$ induces a Ptolemy assignment on $(M,\mathcal{T})$.
\end{corollary}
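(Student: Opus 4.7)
The plan is to apply the single-simplex map $\mathcal{C}$ of Lemma~\ref{lemma:DecorationToPtolemy} to each simplex of $\T$ and then verify, using Lemma~\ref{lemma:DecorationsAndPullbacks}, that the resulting Ptolemy assignments on adjacent simplices satisfy the face-identification condition of Definition~\ref{def:PtolemyAssignmentGeneral}.

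First, I would fix a generic $\SL(n,\C)/N$-decoration on $(M,\T)$ and, for each simplex $\Delta$ of $\T$, choose coset representatives of the four decorating cosets to obtain a tuple $(g_0^\Delta N, g_1^\Delta N, g_2^\Delta N, g_3^\Delta N)$. Lemma~\ref{lemma:DecorationToPtolemy} produces a Ptolemy assignment $c_\Delta$ on $\Delta^3_n$ via the determinants in~\eqref{eqn:DecoToPtolemy}; genericity guarantees these are nonzero. The map $c_\Delta$ is independent of the choice of representatives since right multiplication of an $n \times n$ matrix by an element of $N$ (upper-triangular unipotent) leaves invariant the determinant of its first $i$ columns for every $i$.

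Next, consider a face-pairing between a face $f_0 \subset \Delta_0$ and a face $f_1 \subset \Delta_1$, with face-pairing permutation $\sigma \in S_4$. By the definition of a decoration on $(M,\T)$, the decorations restricted to $f_0$ and $f_1$ agree after left multiplication by a unique element of $G$. Using the freedom to act by $G$ on one of the two simplex-decorations, I would choose the representatives so that
\[
g_v^{\Delta_0} N = g_{\sigma(v)}^{\Delta_1} N \qquad \text{for every vertex } v \in f_0.
\]
For any $t_0 \in \Delta^3_n$ lying on $f_0$, the coordinate $(t_0)_w$ at the vertex $w$ opposite $f_0$ vanishes, so the set $\{g_w^{\Delta_0}\}_{(t_0)_w}$ is empty, and the determinant defining $(c_{\Delta_0})_{t_0}$ only involves $\{g_v^{\Delta_0}\}_{(t_0)_v}$ for $v \in f_0$. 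On the other hand, Lemma~\ref{lemma:DecorationsAndPullbacks} yields
\[
\sigma^{*} c_{\Delta_1} \;=\; \mathcal{C}\bigl(g_{\sigma(0)}^{\Delta_1} N,\, g_{\sigma(1)}^{\Delta_1} N,\, g_{\sigma(2)}^{\Delta_1} N,\, g_{\sigma(3)}^{\Delta_1} N\bigr),
\]
and on the face $f_0$ this determinant also depends only on the representatives $g_{\sigma(v)}^{\Delta_1}$ for $v \in f_0$. Our matching choice of representatives then forces $(c_{\Delta_0})_{t_0} = (\sigma^{*} c_{\Delta_1})_{t_0}$, which is precisely the face-gluing condition of Definition~\ref{def:PtolemyAssignmentGeneral}.

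The main obstacle is not conceptual but notational: carefully tracking which vertex is opposite the identified face and verifying that right-$N$-invariance of the Ptolemy determinants lets us simultaneously choose matching representatives on the glued face. Once this bookkeeping is in place, the symmetry statement of Lemma~\ref{lemma:DecorationsAndPullbacks}, applied face-by-face, assembles the simplexwise Ptolemy assignments into a global Ptolemy assignment on $(M,\T)$.
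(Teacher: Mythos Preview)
Your proposal is correct and follows essentially the same route as the paper: apply $\mathcal C$ simplexwise and then invoke Lemma~\ref{lemma:DecorationsAndPullbacks} to check the face-identification condition of Definition~\ref{def:PtolemyAssignmentGeneral}. The paper compresses this into two sentences, while you spell out the bookkeeping (right-$N$-invariance of the determinants, the vanishing coordinate at the vertex opposite the face, and the use of left-$\SL(n,\C)$-invariance of $\mathcal C$ to align representatives on a shared face); the only point you leave implicit is that the ``freedom to act by $G$'' preserves $c_{\Delta_1}$ precisely because $\mathcal C$ is left-$\SL(n,\C)$-invariant, which the paper notes just before this corollary.
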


\begin{proof}
We only need to show that Ptolemy coordinates are identified via the pullback. This follows from Lemma~\ref{lemma:DecorationsAndPullbacks}.
\end{proof}

\begin{remark}
A $\GL(N,\C)/N$-decoration does \emph{not} induce a Ptolemy assignment on $(M,\T)$.
\end{remark}

For $g\in\GL(N,\C)$, let $\bar{g}$ denote the image of $g$ in $\PGL(N,\C)$. 

\begin{proposition}\label{prop:DefofZ}
We have a well defined map
\begin{equation}
\begin{gathered}
\mathcal Z\colon\text{\{$\PGL(n,\C)/B$-decorations on $\Delta^3_n$\}}\to\text{\{shape assignments on $\Delta^3_n$\}},\\
(\bar{g}_0B,\bar{g}_1B,\bar{g}_2B,\bar{g}_3B)\mapsto\mu\circ\mathcal C(g_0N,g_1N,g_2N,g_3N),
\end{gathered}
\end{equation}
which is invariant under the left $\PGL(n,\C)$-action and compatible with pullbacks. 
\end{proposition}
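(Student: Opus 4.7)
The proposition makes three assertions about $\mathcal Z=\mu\circ\mathcal C$: (i) it is well-defined on $\PGL(n,\C)/B$-decorations, (ii) it is invariant under the left $\PGL(n,\C)$-action, and (iii) it is compatible with pullbacks. The plan is to reduce everything to a single homogeneity computation for the Ptolemy determinants \eqref{eqn:DecoToPtolemy} and then invoke the already-established symmetry properties of $\mathcal C$ and $\mu$.

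For (i), choose lifts $g_i\in\GL(n,\C)$ of $\bar g_i$. Any other admissible set of lifts has the form $g_i' = \lambda_i g_i b_i$ with $\lambda_i\in\C^*$ and $b_i\in\GL(n,\C)$ upper-triangular with diagonal entries $\lambda_{i,0},\dots,\lambda_{i,n-1}$. Since $b_i$ is upper-triangular, elementary column operations show that the first $t_i$ columns of $g_i'$ differ from those of $g_i$ by right multiplication by a $t_i\times t_i$ upper-triangular matrix of determinant $\prod_{k=0}^{t_i-1}\lambda_{i,k}$, combined with an overall scaling by $\lambda_i$. Hence the Ptolemy determinant transforms as
\[
c_t \;\longmapsto\; c_t \cdot \prod_{i=0}^3 \lambda_i^{t_i}\prod_{k=0}^{t_i-1}\lambda_{i,k}.
\]
The crucial point is that this scaling factor depends on $t$ only through the individual vertex indices $t_i$. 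Inspecting \eqref{eqn:PtolemyToShapes}, the numerator and denominator of each shape parameter involve a pair of Ptolemy coordinates $c_{s+e}$ and $c_{s+e'}$ with $e+e'=1111$, so at every vertex $i$ the multiset $\{(s+e)_i,(s+e')_i\}=\{s_i,s_i+1\}$ agrees between numerator and denominator. The scaling therefore cancels exponent-by-exponent in each $\lambda_i$ and each $\lambda_{i,k}$, and $\mathcal Z$ descends to a well-defined map from $\PGL(n,\C)/B$-decorations.

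For (ii), left multiplication by $g\in\GL(n,\C)$ applied simultaneously to all $g_i$ left-multiplies the block matrix in \eqref{eqn:DecoToPtolemy} by $g$, scaling every $c_t$ by $\det g$. Each shape formula has exactly two $c$-factors in the numerator and two in the denominator, so the common factor $\det(g)^2$ cancels, giving $\GL(n,\C)$-invariance and hence $\PGL(n,\C)$-invariance.

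For (iii), Lemma~\ref{lemma:DecorationsAndPullbacks} gives $\sigma^*\circ\mathcal C=\mathcal C\circ\sigma_*$ on decorations, and Lemma~\ref{lemma:ReorderCompatibility} shows that $\mu$ intertwines pullbacks; composing these two facts yields compatibility for $\mathcal Z$. The only non-formal step is the bookkeeping in (i), whose heart is the identity $e+e'=1111$ for opposite edges of a $2$-subsimplex; this produces the required multiset equality and is the main (minor) obstacle in the argument.
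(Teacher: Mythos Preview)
Your proof is correct and follows essentially the same route as the paper: compute how the Ptolemy determinants $c_t$ rescale under a change of lift, then verify that these factors cancel in each ratio of~\eqref{eqn:PtolemyToShapes}; the invariance and pullback statements are handled identically via Lemmas~\ref{lemma:DecorationsAndPullbacks} and~\ref{lemma:ReorderCompatibility}. Your multiset observation (that for each pair $c_{s+e}c_{s+e'}$ with $e+e'=1111$ the vertex indices form $\{s_i,s_i+1\}$ regardless of which pair of opposite edges is chosen) is a clean way to handle all three shape parameters at once, whereas the paper writes out the cancellation explicitly for $z_s^{1100}$ and leaves the remaining cases to the reader; the underlying computation is the same.
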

\begin{proof}
Let $c=\mathcal C(g_0N,g_1N,g_2N,g_3N)$.
To prove that $\mathcal Z$ is well defined, we must prove that 
\begin{equation}\mu(c)=\mu(c'),\text{ with } c'=\mathcal C(g_0d_0N,\dots,g_3d_3N),
\end{equation}
where $d_i=d(\lambda^i_1,\dots,\lambda^i_n)$ are diagonal matrices. We prove that $\mu(c)_s^{1100}=\mu(c')^{1100}_s$, the other cases being similar.
Using \eqref{eqn:PtolemyDets} we have
\begin{equation}\mu(c')_s^{1100}=\frac{c'_{s+1001}c'_{s+0110}}{c'_{s+1010}c'_{s+0101}},\quad c'_t=\det(\bigcup\{g_id_i\}_{t_i})=\left(\prod_{j=0}^{3}\prod_{i=1}^{t_i} \lambda^j_i\right) c_t.
\end{equation}
Expanding each term, we obtain
\begin{equation}
\begin{gathered}
c'_{s+1001} = \left(\prod_{j=0}^{3}\prod_{i=1}^{s_i} \lambda^j_i\right) \lambda^0_{s_0+1} \lambda^3_{s_3+1} c_{s+1001},\quad
c'_{s+0110} = \left(\prod_{j=0}^{3}\prod_{i=1}^{s_i} \lambda^j_i\right) \lambda^1_{s_1+1} \lambda^2_{s_2+1} c_{s+0110}\\
c'_{s+1010} = \left(\prod_{j=0}^{3}\prod_{i=1}^{s_i} \lambda^j_i\right) \lambda^0_{s_0+1} \lambda^2_{s_2+1} c_{s+1010},\quad
c'_{s+0101} = \left(\prod_{j=0}^{3}\prod_{i=1}^{s_i} \lambda^j_i\right) \lambda^1_{s_1+1} \lambda^3_{s_3+1} c_{s+0101}.
\end{gathered}
\end{equation}
It now easily follows that 
\begin{equation}
\mu(c')_s^{1100}=\frac{c'_{s+1001}c'_{s+0110}}{c'_{s+1010}c'_{s+0101}}=\frac{c_{s+1001}c_{s+0110}}{c_{s+1010}c_{s+0101}}=\mu(c)_s^{1100}
\end{equation}
as desired. Invariance under left multiplication follows from the fact that $\det(\cup\{gg_i\}_{t_i})=\det(g)\det(\cup\{g_i\}_{t_i})$, and compatibility with pullbacks follows from the fact that both $\mu$ and $\mathcal C$ enjoy this property. 
\end{proof}

\begin{corollary}
A generic $\PGL(n,\C)/B$-decoration on $(M,\T)$ induces a shape assignment on $(M,\T)$.
\end{corollary}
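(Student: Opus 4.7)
My plan is to reduce the claim to the already-established Theorem~\ref{thm:PtolemyYieldsShapes} by the local-model technique used in Section~\ref{sec:PtolemyToShapes}. By Proposition~\ref{prop:DefofZ}, the $\PGL(n,\C)/B$-decoration already assigns a well-defined shape to every edge of every subsimplex on every tetrahedron of $\T$; the only remaining task is to verify the gluing equation~\eqref{eqn:GeneralizedGluingOrderAgnostic} at each non-vertex integral point $p$ of $\T$. I would do this one $p$ at a time, after passing to the appropriate local model $K_p$ from Subsections~\ref{subsection:ProofForEdgeEqn}--\ref{subsection:ProofForInternalEqn}: a single tetrahedron when $p$ is an interior point, two tetrahedra glued along a face when $p$ is a face point, and the cyclic book of tetrahedra around a common edge when $p$ is an edge point.

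The key step is a local lift from $\PGL(n,\C)/B$-decorations to $\SL(n,\C)/N$-decorations. Since each local model $K_p$ is contractible, I would pick, once and for all and vertex-by-vertex of $K_p$, a lift of the given $B$-coset to an $N$-coset. Using these lifts uniformly on all simplices of $K_p$ sharing a given vertex produces a genuine $\SL(n,\C)/N$-decoration on $K_p$: the shared vertices of any pair of identified faces inside $K_p$ carry literally equal $N$-cosets, so by Lemmas~\ref{lemma:DecorationToPtolemy} and~\ref{lemma:DecorationsAndPullbacks} the map $\mathcal C$ produces a Ptolemy assignment on $K_p$. Theorem~\ref{thm:PtolemyYieldsShapes} applied to $K_p$ then yields the desired gluing equation at the image of $p$ in $K_p$.

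To close, I would invoke Proposition~\ref{prop:DefofZ} one more time: the shape on each simplex of $K_p$ computed from the chosen $\SL(n,\C)/N$-lift coincides with the shape coming from the original $\PGL(n,\C)/B$-decoration, so the equation on $K_p$ transfers verbatim into the desired equation on $(M,\T)$. The only subtlety I anticipate is bookkeeping the orientation sign $\epsilon_\Delta$ in~\eqref{eqn:GeneralizedGluingOrderAgnostic} when $\T$ is merely concrete rather than oriented, but this is already absorbed into Lemmas~\ref{shapesymmetries} and~\ref{lemma:reorderingshapes} together with Definition~\ref{def:PtolemyAssignmentGeneral}, so the oriented case of Theorem~\ref{thm:PtolemyYieldsShapes} passes to the concrete case without incident.
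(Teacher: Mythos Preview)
Your approach is essentially the same as the paper's: pull the decoration back to the local model $K_p$, lift to $N$-cosets, obtain a Ptolemy assignment, and invoke the local-model lemmas behind Theorem~\ref{thm:PtolemyYieldsShapes}.

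There is one step you gloss over. When you write ``pick, vertex-by-vertex of $K_p$, a lift of the given $B$-coset,'' you are presupposing that the pulled-back decoration assigns a single $B$-coset to each vertex of $K_p$. By the definition of a decoration on $(M,\T)$, the cosets on two face-identified simplices are only required to agree up to left multiplication by a (unique) element of $\PGL(n,\C)$; pulled back to $K_p$ this remains the case. The paper fixes this first: since $K_p$ is simply connected, one can propagate from one simplex and left-multiply the decoration on each successive simplex so that identified vertices carry literally the same $B$-coset, and by Proposition~\ref{prop:DefofZ} this does not change the shapes. Only after this normalization can you lift vertex-by-vertex to $N$-cosets and obtain an honest Ptolemy assignment on $K_p$. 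Your invocation of contractibility is the right ingredient for exactly this step, but you should say what it is being used for.
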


\begin{proof}
By Proposition~\ref{prop:DefofZ}, we have a shape assignment on each simplex, and we must prove that these satisfy the generalized gluing equations. We proceed as in the proof of Theorem~\ref{thm:PtolemyYieldsShapes}. Let $K$ be the local model of an edge point as defined in Section~\ref{subsection:ProofForEdgeEqn}. We can pullback the decoration on $\T$ to a decoration on $K$ using the simplical map $\pi$. Since $K$ is simply connected, we can change the decoration of each simplex by left multiplication by an element in $\PGL(n,\C)$ such that vertices of simplices that get identified in $K$ carry the same coset. This does not affect the shapes. For each vertex in $K$ decorated by $gB$ we pick a lift $\tilde{g}N$ and apply $\mathcal C$ to get a Ptolemy assignment on $K$. By Lemma~\ref{lemma:edgelocalmodel} the shapes satisfy the edge equations. The result for the face, and interior gluing equations is similar.
\end{proof}

\section{The natural cocycle of a generic decoration}\label{sec:NaturalCocycles}
%Assume triangulation is both ordered and oriented.
In this section, we introduce natural cocycles on $M$ arising from decorations. To define these we need two types of polyhedral decompositions of $M$, one by \emph{truncated simplices} and one by \emph{doubly truncated simplices}. We show that an $\SL(n,\C)/N$-decoration induces a natural cocycle on the truncated decomposition of $M$, and that a $\PGL(n,\C)/B$-decoration induces a natural cocycle on the doubly truncated decomposition of $M$. Later, we give explicit formulas in terms of the Ptolemy coordinates, respectively, shape coordinates.

\subsection{Truncated and doubly truncated simplices}
\begin{definition}
A \emph{truncated simplex} is a polyhedron obtained from a simplex by truncating its vertices. A \emph{doubly truncated simplex} is a polyhedron obtained from a simplex by first truncating the vertices and then truncating the edges.
\end{definition}
We refer to the edges of a truncated simplex as \emph{long} and \emph{short} edges, and the edges of a doubly truncated simplex as \emph{long}, \emph{middle}, and \emph{short} edges.

Note that a triangulation of $M$ induces a decomposition of $M$ into truncated simplices, as well as a decomposition of $M$ into doubly truncated simplices and \emph{prisms}. 
An \emph{ordering} of a simplex induces an orientation of the edges of the corresponding truncated simplex. Similarly, an \emph{orientation} of a simplex induces orientations of the edges of the corresponding doubly truncated simplex. Note that an ordering is required to obtain natural edge orientations on a truncated simplex.

\begin{figure}[htb]
\begin{center}
\begin{minipage}[b]{0.34\textwidth}
\scalebox{0.9}{
\input{figures_gen/TruncatedCocycle.tex}}
\end{minipage}
\hfill
\begin{minipage}[b]{0.34\textwidth}
\scalebox{0.9}{
\input{figures_gen/doublyTruncatedCocycle.tex}}
\end{minipage}
\hfill
\begin{minipage}[b]{0.3\textwidth}
\qquad
\scalebox{0.9}{
\input{figures_gen/doublyTruncatedCocycle_Prism.tex}}
\end{minipage}\\ \vspace{-0.4cm}
\begin{minipage}[t]{0.34\textwidth}
\caption{A truncated simplex. Edge orientations induced by the vertex ordering.}\label{fig:Truncated}
\end{minipage}
\begin{minipage}[t]{0.34\textwidth}
\caption{A doubly truncated simplex. Edge orientations induced by the orientation.}\label{fig:DoublyTruncated} %Edge labelings as described in Section~\ref{sec:LabelingConv}}
\end{minipage}
\begin{minipage}[t]{0.3\textwidth}
\caption{A prism.}\label{fig:Prism}
\end{minipage}
\hfill
\end{center}
\end{figure}

 %Our natural cocycles on truncated simplices are given in terms of Ptolemy coordinates, whereas the cocycles on doubly truncated simplices are given in terms of the shapes.

\begin{remark}
We can view a doubly truncated simplex as the permutohedron of $S_4$. We can embed it in a standard simplex $\Delta^3_1$ as the convex hull of $\frac{1}{11}\Delta^3_{11}(\{0,1,3,7\})$, the set $\frac{1}{11}\Delta^3_{11}(\{0,1,3,7\})$ being the vertex set. With this embedding, the long edges are twice as long as the middle edges, which are again twice as long as the short edges (which have length $\frac{\sqrt{2}}{11}$). Similarly, we may view a truncated simplex as the convex hull of $\frac{1}{5}\Delta^3_{5}(\{0,1,4\})$.
\end{remark}

%We need to index the vertices and \emph{oriented} edges of a doubly truncated simplex.  

%These are indexed by tuples of vertices in $\Delta^3_1$.

%Truncated simplex $\truncsimp$.
%We index  Each type of \emph{oriented} edge (long, middle, or short) is uniquely determined by its starting vertex. This determines the labeling.   

\subsection{Cocycles} Let $G$ be a group and let $X$ be any space with a polyhedral decomposition.

\begin{definition}\label{def:GCocycle}
A \emph{$G$-cocycle} on $X$ is an assignment of elements in $G$ to the oriented edges of $X$ such that the product around each face is $1\in G$ and such that reversing the orientation of an edge replaces the labeling by its inverse.
\end{definition}

\begin{definition}
Let $\tau$ be a $G$-valued $0$-cochain on $X$, i.e.~a function from the vertices to $G$. The \emph{coboundary} of $\tau$ is the $G$-cocycle
\begin{equation}
\delta\tau(\langle v_0,v_1\rangle)=(\tau^{v_0})^{-1}\tau^{v_1},
\end{equation}
where $\langle v_0,v_1\rangle$ is the edge from $v_0$ to $v_1$ and $\tau^{v_i}$ is the value of $\tau$ at $v_i$.
The \emph{coboundary action} of $\tau$ on $G$-cocycles is given by taking $\sigma$ to the cocycle
\begin{equation}\label{eq:CoboundaryAction}
\tau\sigma\colon\langle v_0,v_1\rangle\mapsto (\tau^{v_0})^{-1}\sigma(\langle v_0,v_1\rangle)\tau^{v_1}.
\end{equation}
\end{definition}

Given a simplex $\Delta$, let $\truncsimp$ and $\dtruncsimp$ denote the corresponding truncated, and doubly truncated simplices.
\begin{definition} Let $H\subset G$ be groups. A $(G,H)$-cocycle on $\truncsimp$ is a $G$-cocycle where short edges are labed by elements in $H$. 
\end{definition}

\begin{definition} Let $K\subset H\subset G$ be groups. A $(G,H,K)$-cocycle on $\dtruncsimp$ is a $G$-cocycle where short edges are labeled by elements in $K$, and middle edges by elements in $H$.
\end{definition}

\begin{remark}\label{rm:InducedCocycle} Note that every $(G,H)$-cocycle on $\truncsimp$ can be obtained from a unique $(G,H,\{e\})$-cocycle on $\dtruncsimp$ by collapsing the short edges. We shall thus always regard a cocycle on $\truncsimp$ as a cocycle on $\dtruncsimp$.
\end{remark}

\subsubsection{Labeling conventions}\label{sec:LabelingConv}
We index the vertices of $\truncsimp$ by ordered pairs of distinct vertices of $\Delta$, $v_0v_1$ being the vertex near $v_0$ on the edge to $v_1$. 
We index the vertices of $\dtruncsimp$ by ordered triples of distinct vertices of $\Delta$, $v_0v_1v_2$ being the vertex, whose closest vertex in $\Delta$ is $v_0$, closest edge $v_0v_1$, and closest face $v_0v_1v_2$. Given a cocycle on $\dtruncsimp$, we use $\alpha$'s to denote the labeling of long edges, $\beta$'s for the middle edges, and $\gamma$'s for the short edges. Note that an edge of each type is uniquely determined by its initial vertex. This gives a unique labeling scheme, e.g.~the long edge from $v_0v_1v_2$ to $v_1v_0v_2$ is labeled by $\beta^{v_0v_1v_2}$. Similarly, if $\tau$ is a $0$-cochain, the value at $v_0v_1v_2$ is denoted by $\tau^{v_0v_1v_2}$.
We shall not need a labeling scheme for cocycles on truncated simplices. By Remark~\ref{rm:InducedCocycle} we can regard these as cocycles on the corresponding doubly truncated simplices. 

%\section{Natural cocycles for $\SL(n,\C)$ and $\PGL(n,\C)$}

%

\subsection{The natural cocycle of a generic decoration}
We now show that sufficiently generic decorations naturally give rise to cocycles on $M$. %We shall see later that the required notion of \emph{sufficiently generic} is satisfied by Definition~\ref{def:generic}.
%The left $\SL(n,\C)$-action on (sufficiently generic) pairs $(g_0N,g_1N)$ is transitive and free, and by Bruhat decomposition each orbit has a unique representative of the form $(N,qN)$, where $q$ is counter diagonal. Similarly, the left $\PGL(n,\C)$-action on (sufficiently generic) triples $(g_0B,g_1B,g_2B)$ is transitive and each orbit has a representative of the form $(B,q_1B,n_-B)$, where $n_-$ is in $N^-$.  Note that $n_-$ is only unique up to conjugation by diagonal matrices. In order to obtain a unique representative, we conjugate it such that its first column vector consists of $1$'s. Such $n_-$ are called \emph{normalized}. 

\begin{definition} A pair $(g_0N,g_1N)$ of $N$-cosets in $\SL(n,\C)$ is \emph{sufficiently generic} if there exists  a (necessarily unique) $g\in\SL(n,\C)$ such that
\begin{equation}
(g_0N,g_1N)=g(N,qN), \text{ with $q$ counter-diagonal}.
\end{equation}
A tuple is sufficiently generic if it is pairwise sufficiently generic.
\end{definition}
Let $N^-$ denote the lower triangular matrices in $\SL(n,\C)$ with $1$'s on the diagonal.
\begin{definition}
An element $n\in B$ is \emph{normalized} if the \emph{last} column vector consists of $1$'s. An element $n_-\in N^-$ in \emph{normalized} if the \emph{first} column vector consists of $1$'s.
\end{definition}
\begin{definition}
A triple $(g_0B,g_1B,g_2B)$ of $B$-cosets in $\PGL(n,\C)$ is \emph{sufficiently generic} if there exists a (necessarily unique) $g\in\PGL(n,\C)$ such that
\begin{equation}
(g_0B,g_1B,g_2B)=g(B,q_1B,n_-B), \text{ with $n_-\in N^-$ normalized}.
\end{equation}
A tuple is sufficiently generic if each triple is sufficiently generic.
\end{definition}

\begin{remark} A simple exercise in linear algebra shows that generic (as in Definition~\ref{def:generic}) implies sufficiently generic. 
\end{remark}
%Since the tuples are assumed to be sufficiently generic, a unique normalized representative always exists. 

\begin{definition}\label{def:NaturalSLCocycle}
Let $(g_0N,g_1N,g_2N,g_3N)$ be a generic $\SL(n,\C)/N$-decoration on $\Delta^3_n$. The \emph{natural $(\SL(n,\C),N)$-cocycle} on $\overline{\Delta^3_n}$ is the coboundary of the unique $0$-cochain $\tau$  satisfying
\begin{equation}
(g_{v_0}N,g_{v_1}N)=\tau^{v_0v_1}(N,qN) \text{ with $q$ counter diagonal}.
\end{equation}
\end{definition} 
This defines the map $\mathcal L_{\alpha\beta}$ in~\eqref{eqn:DiagramIntro}. It follows immediately from the definition that the natural cocycle labels short edges by elements in $N$ and long edges by counter diagonal elements. 
Let $q_1$ denote the counter diagonal matrix whose non-zero entries are all $1$. Note that $q_1=q_1^{-1}$.
\begin{definition}\label{def:NaturalPGLCocycle}
Let $(g_0B,g_1B,g_2B,g_3B)$ be a generic decoration on a simplex. The \emph{natural $(\PGL(n,\C),B,H)$-cocycle} on $\overline{\overline{\Delta^3_n}}$ is the coboundary of the unique $0$-cochain $\tau$ satisfying
\begin{equation}
(g_{v_0}B,g_{v_1}B,g_{v_2}B)=\tau^{v_0v_1v_2}(B,q_1B,n_-B) \text{ with $n_-\in N^- $ normalized}.
\end{equation}
\end{definition} 

This defines the map $\mathcal L_{\alpha\beta\gamma}$ in~\eqref{eqn:DiagramIntro}.

\begin{remark}\label{rm:CocycleToDecoration}
Note that a $(G,H)$-cocycle $\sigma$ on $\truncsimp$ determines a $G/H$-decoration $D$ on $\Delta$. We say that $D$ is \emph{compatible} with $\sigma$. To see this note that $\sigma$ is the coboundary of a $0$-cochain $\tau$ on $\truncsimp$, which is unique up to left multiplication by an element in $G$. The value of $\tau$ at the vertices near a vertex of $\Delta$ are all in the same $H$-coset. Hence, we have a decoration on $\Delta$. Similarly, a $(G,B,H)$-cocycle on $\dtruncsimp$ determines a $G/B$-decoration on $\Delta$. It follows that the maps $\mathcal L_{\alpha\beta}$ and $\mathcal L_{\alpha\beta\gamma}$ are bijective with explicit inverses.
\end{remark}

%Note that any cocycle with short and middle edges labeled by elements in $B$ determines a decoration. We say that the cocycle is \emph{compatible} with the decoration.
\begin{lemma}\label{lemma:UniversalProperty} Let $D=(g_0B,g_1B,g_2B,g_3B)$ be a generic decoration on a simplex. The \emph{natural $(\PGL(n,\C),B,H)$-cocycle} is the unique 
cocycle, which is compatible with the decoration and satisfies
\begin{enumerate}[(i)]
\item Short edges are labeled by elements in $H$.\label{short}
\item Middle edges are labeled by normalized elements in $B$.\label{middle}
\item Long edges are labeled by $q_1$.\label{long}
\end{enumerate}
\end{lemma}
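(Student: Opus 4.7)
The plan is to verify existence (the natural cocycle of Definition~\ref{def:NaturalPGLCocycle} satisfies all four requirements) and uniqueness (any cocycle satisfying them agrees with the natural one), by unwinding the defining property of the $0$-cochain $\tau$ at each vertex $v_0v_1v_2$ of $\dtruncsimp$. Compatibility with $D$ is immediate from $\tau^{v_0v_1v_2}B=g_{v_0}B$, which is part of the defining relation. For short edges (property (i)), the label $h=(\tau^{v_0v_1v_2})^{-1}\tau^{v_0v_1v_3}$ stabilizes both $B$ and $q_1B$ under left multiplication, so $h\in B\cap q_1Bq_1^{-1}=B\cap B^-=H$. For long edges (property (iii)), the plan is to verify that $\tau^{v_0v_1v_2}q_1$ itself satisfies the three defining conditions of $\tau^{v_1v_0v_2}$; the only nontrivial one, $\tau^{v_0v_1v_2}q_1\cdot n'_-B=g_{v_2}B$ for some normalized $n'_-\in N^-$, reduces to showing that the coset $q_1n_-B$ contains a normalized $N^-$-element. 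This follows from an LU-type observation: the first column of $q_1n_-$ is the row-reversal of the first column of $n_-$ and hence still all ones, and in any factorization $q_1n_-=n'_-c$ with $n'_-\in N^-$, $c\in B$, one reads off $(n'_-)_{i1}c_{11}=1$ for every $i$, so after rescaling $n'_-$ is normalized.

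For middle edges (property (ii)), the label $b=(\tau^{v_0v_1v_2})^{-1}\tau^{v_0v_2v_1}$ lies in $B$ since both values of $\tau$ share the first flag component. The harder point is showing $b$ is normalized, which amounts to $be_n$ being proportional to $(1,\dots,1)^T$. The line of the flag $q_1B$ is spanned by $e_n$, so $\tau^{v_0v_2v_1}e_n$ is proportional to $g_{v_2}e_1$, the line of $g_{v_2}B$. On the other hand, since $n_-$ is normalized, $n_-e_1=(1,\dots,1)^T$, and the defining relation $\tau^{v_0v_1v_2}n_-B=g_{v_2}B$ shows that $\tau^{v_0v_1v_2}(1,\dots,1)^T=\tau^{v_0v_1v_2}n_-e_1$ is likewise proportional to $g_{v_2}e_1$. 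Dividing yields $be_n\propto(1,\dots,1)^T$.

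For uniqueness, I would take $\sigma=\delta\tau'$ satisfying all four properties and show that $\tau'^w$ satisfies the defining property of $\tau^w$ at every vertex $w=v_av_bv_c$, which forces $\tau'^w=\tau^w$ and therefore $\sigma=\delta\tau$. The first defining condition $\tau'^wB=g_{v_a}B$ is compatibility; the second, $\tau'^wq_1B=g_{v_b}B$, follows from the long edge $w\to v_bv_av_c$ with label $q_1$ together with compatibility at $v_bv_av_c$; the third is obtained by chaining the middle edge $w\to v_av_cv_b$ (with some normalized label $\tilde b\in B$) to the long edge at $v_av_cv_b$ and compatibility at $v_cv_av_b$, yielding $\tau'^w\tilde bq_1B=g_{v_c}B$. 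The same LU argument as above writes $\tilde bq_1B=\tilde n_-B$ for a normalized $\tilde n_-\in N^-$, completing the defining property, and uniqueness of the normalized representative in the big Bruhat cell then gives $\tau'^w=\tau^w$.

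The main obstacle is the LU-type normalization lemma: showing that the first-column-all-ones property of $q_1n_-$ (and symmetrically, of $\tilde bq_1$) survives the Bruhat factorization and produces a normalized $N^-$-factor. Once this is granted, the rest is routine bookkeeping with the Bruhat decomposition, with genericity of $D$ used only to ensure that all relevant cosets lie in the big cell so the factorizations exist.
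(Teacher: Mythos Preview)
Your argument is correct and follows the same underlying idea as the paper's proof---namely, checking that the coboundary $\delta\tau$ of the defining $0$-cochain satisfies (i)--(iii) by computing stabilizers of pairs and triples of flags, and using that the first column of $q_1n_-$ (equivalently, the last column of a normalized $b\in B$ times $q_1$) is all ones to verify the normalization conditions. The paper organizes the existence half slightly differently: it first normalizes $D$ to the form $(B,q_1B,n_-B,m_-B)$ so that $\tau^{012}=1$, and then computes $\tau^{102}$, $\tau^{021}$, $\tau^{013}$ directly from that base vertex. Your coordinate-free treatment at a general vertex $v_0v_1v_2$ avoids this normalization step and makes the normalization of the middle-edge label (your $be_n\propto(1,\dots,1)^T$ argument) explicit, which the paper leaves implicit.

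The uniqueness halves are genuinely different. The paper argues that two cocycles satisfying (i)--(iii) differ by a coboundary action $\eta$, observes that $\eta$ must take values in $H$ (from compatibility plus the long-edge condition), and then asserts it is elementary that a nontrivial such $\eta$ spoils either (ii) or (iii). Your approach instead shows directly that any $0$-cochain $\tau'$ with $\delta\tau'$ satisfying (i)--(iii) and compatible with $D$ must satisfy all three defining conditions of $\tau$ at every vertex, whence $\tau'=\tau$ by the uniqueness built into the notion of ``sufficiently generic.'' Your route is more self-contained and avoids the unspecified ``elementary check'' in the paper; the paper's route is shorter once one accepts that check. One small remark: your phrase ``after rescaling $n'_-$ is normalized'' is unnecessary---since $n'_-\in N^-$ already has $(n'_-)_{11}=1$, the relation $(n'_-)_{i1}c_{11}=1$ forces $c_{11}=1$ and hence $(n'_-)_{i1}=1$ without any rescaling.
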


%\begin{lemma} The \emph{natural $(\PGL(n,\C),B,H)$-cocycle} labels long edges by $q_1$, middle edges by elements in $B$, and short edges by elements in $H$.
%\end{lemma}
\begin{proof}
We first show that the natural cocycle satisfies the three conditions.
It is enough to prove this for a single edge of each type. We may assume that $D=(B,q_1,B,n_-B,m_-B)$ where $n_-, m_-\in N^-$ and $n_-$ is normalized. Then $\tau^{012}=1$, so for each edge starting at $012$, we only need to compute the value of $\tau$ at the end point. Since $(q_1B,B,n_-B)=q_1(B,q_1B,q_1n_-B)$, and the first column vector of $q_1n_-$ consists of $1$'s, it follows that $\tau^{102}=q_1$, proving the result for the long edges.
Since the stabilizer of $B$ is $B$, it follows that $\tau^{021}\in B$, proving the result for the middle edges.
Finally, $\tau^{013}$ is the unique element in $H$ such that $hm_-h^{-1}$ is normalized, proving the result for the short edges.

Let $\sigma_1$ and $\sigma_2$ be two cocycles satisfying the required conditions. Since any two cocycles differ by the coboundary action, $\sigma_2=\eta\sigma_1$ for some coboundary $\eta$. Since long edges are labeled by $q_1$ and since the cocycles determine the same decoration, we may assume that $\eta$ takes values in $H$. It is now elementary to check that if $\eta$ is not the identity, either~\eqref{middle} or~\eqref{long} fails.
\end{proof}

\begin{remark}
Note that for a generic decoration on a triangulation of $M$, the natural cocycles on each simplex fit together to form a natural cocycle on $M$.
%For a generic $\SL(n,\C)/N$-decoration on a triangulation $\T$ of $M$, the natural cocycles on each truncated simplex glue together to form a natural cocycle on $M$. For a generic $\PGL(n,\C)/B$-decoration on $\T$, the cocycles form a cocycle on the space obtained from $M$ by removing the prisms. We shall see later that the edge equations imply that the induced labalings on the prisms are cocycles, so that we get a cocycle on $M$.
\end{remark}

\section{Explicit formulas for the natural cocycles}\label{sec:ExplicitFormulas}
We now show that the cocycle associated to a $\PGL(n,\C)/B$-decoration $D$ is determined by the shape assignment $\mathcal Z(D)$. The reader should keep in mind the diagram~\eqref{eqn:DiagramIntro}. 
%\begin{equation} \label{eqn:DiagramIntro}
%\cxymatrix{{
%\left\{\txt{Ptolemy\\ assignments}\right\} \ar[d]^\mu & \ar[d]^\pi \ar_-{\mathcal C}[l] \left\{\txt{Generic $\SL(n,\C)/N$\\decorations}\right\} \ar^-{\mathcal L_{\alpha\beta}}[r] & \ar[d]^\tau \left\{\txt{Natural $(\SL(n,\C),N)$\\cocycles}\right\}\\
%\left\{\txt{Shape\\ assignments}\right\} & \ar_-{\mathcal Z}[l] \left\{\txt{Generic $\PGL(n,\C)/B$\\decorations}\right\} \ar^-{\mathcal L_{\alpha\beta\gamma}}[r] & \left\{\txt{Natural $(\PGL(n,\C),B,H)$\\cocycles}\right\}
%}}
%\end{equation}
 %We do this by constructing an explicit map defined on the image of $\mathcal Z$. 
%In section \ref{sec:ReconstructRep}, we show that the bottom horizontal maps are one-to-one correspondences. The fact that the top horizontal maps are one-to-one correspondences is proved in Garoufalidis-Thurston-Zickert~\cite{GaroufalidisThurstonZickert}.
%All maps preserve pullbacks.

Define
\begin{equation}
\begin{gathered}
q(a_1,\dots,a_n)=\begin{pmatrix}&&a_n\\&\iddots&\\a_1&&\end{pmatrix},\qquad d(a_1,\dots,a_n)=\begin{pmatrix}a_1&&\\&\ddots&\\&&a_1\end{pmatrix}\\
q_1=q(1,\dots,1),\qquad d_{\pm 1}=d\big(\{(-i)^{n-k}\}_{k=1}^n),\qquad H_i(x)=d\big(\overbrace{x,\dots,x}^i,1,\dots,1\big).
\end{gathered}
\end{equation}  
Letting $E_{i,i+1}$ be the matrix with a $1$ as the $(i,i+1)$ entry, and zeros elsewhere, we define
\begin{equation}
x_i(t)=I+tE_{i,i+1}.
\end{equation}

\subsection{Diamond and ratio coordinates}

It is shown in Garoufalidis-Thurston-Zickert~\cite{GaroufalidisThurstonZickert} that the short edges of the natural cocycle of a generic $\SL(n,\C)/N$-decoration are given by \emph{diamond coordinates}, and that the long edges are given by ratios of two Ptolemy coordinates. We review these results below.% We regard the vertices of a simplex as elements of $\Delta^3_1$.
%Maybe for convenience denote upper indices by $0$, $1$, $2$ and $3$. At least in proofs.

%{\bf Move the following definition to the appropriate place.}

%\begin{definition}
%For a doubly truncated simplex, let $\alpha^{v_0,v_1,v_2}$, $\beta^{v_0,v_1,v_2}$, respectively, $\gamma^{v_0,v_1,v_2}$ denote the matrices labeling the long, middle, respectively, short edge starting at the vertex labeled by $v_0v_1v_2$. By collapsing the short edges, we obtain the truncated simplex, the middle edges becoming the short edges of the truncated simplex. It is convenient to regard a cocycle on a truncated simplex as a cocycle on a doubly truncated simplex obtained by setting the short edges of the doubly truncated simplex to the identity. See Figure \ref{}.
% Thus, we denote the matrices labeling the long edges of a truncated simplex by $\alpha^{v_0,v_1,v_2}$ and the short edges by $\beta^{v_0,v_1,v_2}$ because they arise from the middle edges of the doubly truncated simplex. A cocycle on a  truncated simplex yields a cocycle on a doubly truncated simplex by setting $\gamma^{v_0,v_1,v_2}$ to the identity.
%\end{definition}

\begin{definition}
Let $c$ be a Ptolemy assignment on $\Delta_n^3$. For each vertex $v_0v_1v_2$ of $\overline{\overline{\Delta^3_n}}$ and each $\alpha\in\Delta^3_{n-2}(\Z)$ on the face containing $v_0v_1v_2$, we associate a \emph{diamond coordinate}
\begin{equation}\label{eq:DiamondDef}
d^{v_0v_1v_2}_\alpha(c)= - \varepsilon_<^{v_0v_1v_2} \frac{c_{\alpha+2v_0}c_{\alpha+v_1+v_2}}{c_{\alpha+v_0+v_1}c_{\alpha+v_0+v_2}}.
\end{equation}
Here $\varepsilon_<^{v_0v_1v_2}$ is the sign of the $S_3$ permutation required to bring the sequence $v_0,v_1,v_2$ into lexicographic order.
\end{definition}

\begin{definition}
Let $c$ be a Ptolemy assignment on $\Delta_n^3$. For each vertex $v_0v_1v_2$ of $\overline{\overline{\Delta^3_n}}$ and each point $kv_0+lv_1$ on the long edge containing $v_0v_1v_2$, we associate a \emph{ratio coordinate}
$$e^{v_0v_1}_{kv_0+lv_1}(c)= (-1)^{l} ~ \frac{c_{kv_0+(l+1)v_1}}{c_{(k+1)v_0+lv_1}}\quad\mbox{where}\quad k+l=n-1.$$
\end{definition}

\begin{figure}[htb]
\begin{center}
\begin{minipage}[c]{0.4\textwidth}
\input{figures_gen/diamondCoordinatesDef.tex}
\end{minipage}
\hfill
\begin{minipage}[c]{0.4\textwidth}
\input{figures_gen/FigRatioCoordinates.tex}
\end{minipage}
\hfill
\\ \vspace{-0.4cm}
\begin{minipage}[t]{0.4\textwidth}
\caption{Diamond coordinates.}\label{fig:DiamondCoordinates}
\end{minipage}
\begin{minipage}[t]{0.4\textwidth}
\caption{Ratio coordinates.}\label{fig:RatioCoordinates}
\end{minipage}
\end{center}
\end{figure}
\begin{notation}When $c$ is clear from the context, we suppress it from the notation, i.e.~we write $d^{v_0v_1v_2}_\alpha$ and $e^{v_0v_1}_{kv_0+lv_1}$ instead of $d^{v_0v_1v_2}_\alpha(c)$ and $e^{v_0v_1}_{kv_0+lv_1}(c)$.
\end{notation}

As explained in Remark~\ref{rm:InducedCocycle}, we can view the natural cocycle of an $\SL(n,\C)/N$-decoration as a cocycle on $\overline{\overline{\Delta^3_n}}$. We thus employ the labeling conventions of Section~\ref{sec:LabelingConv}.
\begin{proposition}[Garoufalidis-Thurston-Zickert~\cite{GaroufalidisThurstonZickert}]\label{prop:NaturalCocycleLongShort}
The natural cocycle $\mathcal L_{\alpha\beta}(D)$ of a generic $\SL(n,\C)/N$-decoration $D$ on $\Delta^3_n$ is given in terms of the Ptolemy assignment $\mathcal C(D)$ by
\begin{equation}\label{eq:betaTrunc}
%\alpha^{v_0,v_1,v_2}=\left(\begin{array}{cccc} & & & e^{v_0,v_1}_{(n-1)v_1}\\
%& & \iddots & \\
%& e^{v_0,v_1}_{(n-2)v_0+v_1} \\
%e^{v_0,v_1}_{(n-1)v_0}   \end{array}\right).
\begin{gathered}
\gamma^{v_0v_1v_2}=\id,\qquad \beta^{v_0v_1v_2}=\prod_{(\alpha_0,\alpha_1,\alpha_2)\in\Delta^2_{n-2}(\Z)} x_{\alpha_1+1}\left(d^{v_0v_1v_2}_{\alpha_1v_0+\alpha_2v_1+\alpha_0v_2}\right)\\\alpha^{v_0v_1v_2}=q(e^{v_0v_1}_{(n-1)v_0},e^{v_0v_1}_{(n-2)v_0+v_1},\dots,e^{v_0v_1}_{(n-1)v_1}),\qquad
\end{gathered}
\end{equation}
In the product, the order of the factors is given by the lexicographic order on $\Delta^2_{n-2}(\Z)$.\qed
\end{proposition}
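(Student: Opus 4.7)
The plan is to compute the $0$-cochain $\tau$ from Definition~\ref{def:NaturalSLCocycle} explicitly in terms of the decoration representatives, then read off its coboundary on each edge of $\dtruncsimp$, and match the resulting expressions with diamond and ratio coordinates. Throughout, I would use left $\SL(n,\C)$-invariance to normalize computations. The defining condition $(g_{v_0}N, g_{v_1}N) = \tau^{v_0v_1}(N, qN)$ with $q$ counter-diagonal forces (i) column $k$ of $\tau^{v_0v_1}$ to be a particular linear combination of the first $k$ columns $\{g_{v_0}\}_k$, and (ii) these coefficients to be pinned down by the requirement that $(\tau^{v_0v_1})^{-1} g_{v_1}$ have zeros above the anti-diagonal. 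By Cramer's rule this yields entries of $\tau^{v_0v_1}$ as ratios of flag minors $\det(\{g_{v_0}\}_a \cup \{g_{v_1}\}_b)$, which are Ptolemy coordinates by Lemma~\ref{lemma:DecorationToPtolemy}.

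For the long edges, $\alpha^{v_0v_1v_2} = (\tau^{v_0v_1})^{-1}\tau^{v_1v_0}$. Since $\tau^{v_1v_0}$ sends the flag of $g_{v_1}$ to the standard flag and the flag of $g_{v_0}$ to a counter-diagonal configuration, composing with $(\tau^{v_0v_1})^{-1}$ produces a counter-diagonal matrix. Its $k$-th anti-diagonal entry is a ratio of two flag minors involving $k$ columns of $g_{v_1}$ against either $n-k$ or $n-k+1$ columns of $g_{v_0}$; this is precisely the ratio coordinate $e^{v_0v_1}_{(n-k)v_0 + (k-1)v_1}$, and the sign $(-1)^{k-1}$ arises from reversing the column order in the determinant. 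The identity $\gamma^{v_0v_1v_2} = \id$ for short edges is immediate from Remark~\ref{rm:InducedCocycle} since they arise by collapsing.

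For the middle edges, $\beta^{v_0v_1v_2} = (\tau^{v_0v_1})^{-1}\tau^{v_0v_2}$ lies in $N$ because both factors represent the same flag at $g_{v_0}$. Its entries are ratios of three-vertex flag minors $\det(\{g_{v_0}\}_\bullet \cup \{g_{v_1}\}_\bullet \cup \{g_{v_2}\}_\bullet)$ supported on the face through $v_0 v_1 v_2$, and a direct computation would match each non-trivial entry with a diamond coordinate $d^{v_0v_1v_2}_\alpha$, including the sign $\varepsilon_<^{v_0v_1v_2}$ coming from the column reordering. To obtain the ordered product decomposition $\prod_\alpha x_{\alpha_1+1}(d_\bullet)$ one then uses the standard unique factorization of $N$ into elementary root unipotents, in the order dictated by the clearing sequence that transforms $\tau^{v_0v_2}$ into $\tau^{v_0v_1}$. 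The lexicographic order in the statement should correspond exactly to this clearing order on $\Delta^2_{n-2}(\Z)$.

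The main obstacle is the verification of the product expansion for $\beta^{v_0v_1v_2}$: matching each elementary unipotent factor with the correct diamond coordinate, and then confirming that the stated lexicographic order is the correct factorization order. I expect this step to require either an induction on $n$ compatible with a corner sub-simplex, or a direct block matrix calculation combined with a Pl\"ucker-type identity converting $k \times k$ minors of the three-vertex concatenation into products of the $2 \times 2$ minor ratios that define diamond coordinates. The long edge and short edge computations are comparatively routine once $\tau$ is in hand.
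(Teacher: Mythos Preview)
The paper does not give its own proof of this proposition: it is stated with attribution to Garoufalidis--Thurston--Zickert~\cite{GaroufalidisThurstonZickert} and closed immediately with a \qed. So there is nothing in the present paper to compare your argument against; the result is imported wholesale from the cited reference.

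That said, your sketch is a reasonable outline of how one would establish the formulas directly, and it is in the spirit of what the cited paper does. A few remarks. Your identification of $\tau^{v_0v_1}$ via the condition that $(\tau^{v_0v_1})^{-1}g_{v_0}\in N$ and $(\tau^{v_0v_1})^{-1}g_{v_1}$ is upper-triangular times a counter-diagonal is correct, and the Cramer/flag-minor description is the right bookkeeping device. The long-edge computation is indeed routine once $\tau$ is known. For the middle edges, the honest content is exactly what you flag: one must show that the unipotent $(\tau^{v_0v_1})^{-1}\tau^{v_0v_2}$ factors as the stated ordered product of elementary unipotents with diamond-coordinate arguments. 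The clean way to do this (and the way it is organized in~\cite{GaroufalidisThurstonZickert}) is to exhibit $\tau^{v_0v_1}$ itself as a product over a triangular array of Ptolemy-ratio data and then cancel; equivalently, one runs an induction on $n$ by peeling off the corner at $v_0$, using that the Ptolemy assignment restricts to a Ptolemy assignment on the sub-face. Your ``clearing sequence'' heuristic is correct in spirit, but to turn it into a proof you need a precise statement about which elementary step at stage $(k,i)$ introduces exactly the diamond $d_{k,i}$; this is where the Pl\"ucker-type three-term identity (i.e.\ the Ptolemy relation) enters, converting a $2\times 2$ minor of flag-minor ratios into a single diamond. Without that identity the matching of individual factors would not close up.
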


\begin{remark} It is convenient to introduce the notation
\begin{equation}
d_{k,i}=d^{v_0v_1v_2}_{(i-1)v_0+(n-i-k)v_1+(k-1)v_2}.
\end{equation} 
With this notation, the formula for the middle edge becomes
\begin{equation}
\beta^{v_0v_1v_2}=\prod_{k=1}^{n-1}\prod_{i=1}^{n-k}x_i(d_{k,i}).
\end{equation}
This agrees with the notation in~\cite{GaroufalidisThurstonZickert}. Although this notation is convenient, it does not behave properly under reordering.
\end{remark}

\begin{figure}[htb]
\begin{center}
\input{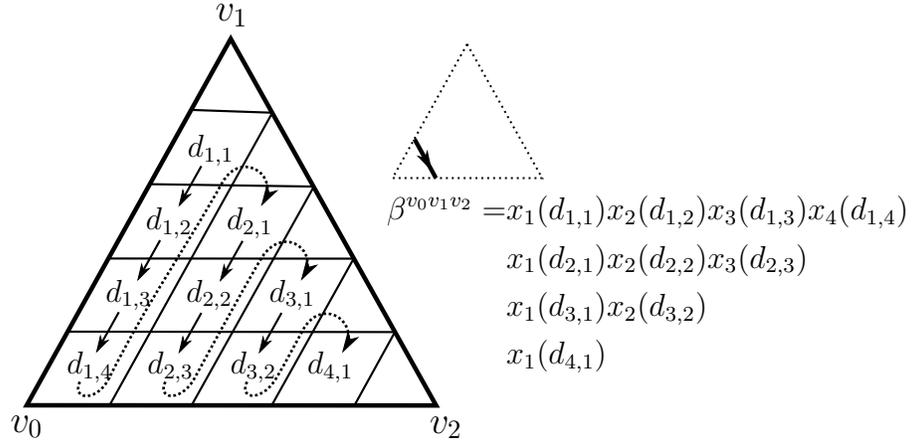}
\caption{Factorization of a middle edge $\beta^{v_0v_1v_2}$ in terms of diamond coordiates.}\label{fig:DiamondFactorization}
\end{center}
\end{figure}

\subsubsection{Behavior under reordering}
\begin{lemma} The diamond coordinates of $c$ and $\sigma^*(c)$ are related by
\begin{equation}
d(\sigma^*(c))_{\alpha}^{v_0v_1v_2}=d(c)_{\sigma(\alpha)}^{\sigma(v_0)\sigma(v_1)\sigma(v_2)}.
\end{equation}
\end{lemma}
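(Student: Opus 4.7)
The plan is to unfold both sides using the definition of the diamond coordinate and the pullback formula $(\sigma^*c)_t=\det(I_{\sigma,\sigma(t)})c_{\sigma(t)}$, then separate the result into a ``Ptolemy factor'' and a ``sign factor.'' The Ptolemy factors on the two sides will agree by linearity of $\sigma$, leaving only a $\pm 1$ identity to verify.

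First I would expand
\begin{equation*}
d(\sigma^*c)_\alpha^{v_0v_1v_2} = -\varepsilon_<^{v_0v_1v_2}\,\frac{(\sigma^*c)_{\alpha+2v_0}\,(\sigma^*c)_{\alpha+v_1+v_2}}{(\sigma^*c)_{\alpha+v_0+v_1}\,(\sigma^*c)_{\alpha+v_0+v_2}},
\end{equation*}
substitute the pullback formula, and use $\sigma(\alpha+2v_0)=\sigma(\alpha)+2\sigma(v_0)$ (and analogous identities for the other three points) to match the resulting Ptolemy ratio with the one in $d(c)_{\sigma(\alpha)}^{\sigma(v_0)\sigma(v_1)\sigma(v_2)}$. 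The lemma then reduces to the identity
\begin{equation*}
S(\alpha):=\frac{\det(I_{\sigma,\sigma(\alpha+2v_0)})\det(I_{\sigma,\sigma(\alpha+v_1+v_2)})}{\det(I_{\sigma,\sigma(\alpha+v_0+v_1)})\det(I_{\sigma,\sigma(\alpha+v_0+v_2)})}=\frac{\varepsilon_<^{\sigma(v_0)\sigma(v_1)\sigma(v_2)}}{\varepsilon_<^{v_0v_1v_2}}.
\end{equation*}

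To evaluate $S(\alpha)$ I would invoke Remark~\ref{rm:ShuffleOdd}: $\det(I_{\sigma,\sigma(t)})$ depends only on the odd-coordinate set $O(t)\subset\{0,1,2,3\}$ of $t$, and factors as $\prod_{\{i,j\}\subset O(t)}\mathrm{inv}_\sigma(\{i,j\})$, where $\mathrm{inv}_\sigma(\{i,j\})=-1$ iff $\sigma$ inverts the pair $\{i,j\}$. The four odd sets are $O(\alpha)$, $O(\alpha)\triangle\{v_1,v_2\}$, $O(\alpha)\triangle\{v_0,v_1\}$ and $O(\alpha)\triangle\{v_0,v_2\}$, so $S(\alpha)=\prod_{\{i,j\}}\mathrm{inv}_\sigma(\{i,j\})^{n_{ij}(\alpha)}$, where $n_{ij}(\alpha)$ counts, with signs $+,+,-,-$, how often $\{i,j\}$ lies inside those four sets. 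The main obstacle, though elementary, is the parity claim
\begin{equation*}
n_{ij}(\alpha)\equiv\mathbf{1}_{\{i,j\}\subset\{v_0,v_1,v_2\}}\pmod 2,
\end{equation*}
which I would verify by splitting on whether $v_3\in\{i,j\}$ and writing each indicator as a small polynomial in the $0/1$ variables $a=\mathbf{1}_{v_0\in O(\alpha)}$, $b=\mathbf{1}_{v_1\in O(\alpha)}$, $c=\mathbf{1}_{v_2\in O(\alpha)}$; the $a,b,c$-dependence will cancel modulo $2$ in each case.

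Granting the parity claim, $S(\alpha)=\prod_{\{i,j\}\subset\{v_0,v_1,v_2\}}\mathrm{inv}_\sigma(\{i,j\})=\mathrm{sgn}(\sigma|_{\{v_0,v_1,v_2\}})$, which is in particular independent of $\alpha$. A direct tracking of sorting permutations shows $\varepsilon_<^{\sigma(v_0)\sigma(v_1)\sigma(v_2)}=\varepsilon_<^{v_0v_1v_2}\cdot\mathrm{sgn}(\sigma|_{\{v_0,v_1,v_2\}})$, so $S(\alpha)$ equals the required ratio and the proof is complete.
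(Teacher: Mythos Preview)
Your proof is correct and complete. The reduction to the sign identity $S(\alpha)=\varepsilon_<^{\sigma(v_0)\sigma(v_1)\sigma(v_2)}/\varepsilon_<^{v_0v_1v_2}$ is exactly right, the inversion-sign factorization $\det(I_{\sigma,\sigma(t)})=\prod_{\{i,j\}\subset O(t)}\mathrm{inv}_\sigma(\{i,j\})$ follows directly from the block-permutation description (and matches Remark~\ref{rm:ShuffleOdd}), and the parity claim checks out in all cases (the key point being that $\alpha_{v_3}=0$, so $v_3$ never enters any of the odd sets).

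Your route differs from the paper's. The paper reduces to the generating transpositions $\sigma_{01},\sigma_{12},\sigma_{23}$ and to $v_i=i$, then carries out one explicit sign computation (e.g.\ for $\sigma_{01}$) using $\det(I_{\sigma_{01},t})=(-1)^{t_0t_1}$; the remaining cases are declared similar, and the general statement is obtained by composing. Your argument instead handles all $\sigma$ and all $(v_0,v_1,v_2)$ simultaneously, by reinterpreting $\det(I_{\sigma,\sigma(t)})$ as a product of pairwise inversion signs and then showing via the parity claim that the four contributions collapse to $\prod_{\{i,j\}\subset\{v_0,v_1,v_2\}}\mathrm{inv}_\sigma(\{i,j\})$, which is exactly the ratio of the two sorting signs. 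The advantage of your approach is that it explains \emph{why} the signs work (the answer is literally the change in inversion count on the face), and it avoids the slightly delicate bootstrap from generators and $v_i=i$ to the general case. The paper's approach, on the other hand, is shorter to write down and easier to verify by hand in any single instance.
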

\begin{proof}
It is enough to prove this for $v_i=i$ and $\sigma=\sigma_{01}$, $\sigma=\sigma_{12}$ and $\sigma=\sigma_{23}$. We prove it for $\sigma=\sigma_{01}$, the other cases being similar. Letting $t=\alpha+e_0+e_1$ and $\alpha'=\sigma_{01}(\alpha)$, and using that $\varepsilon_<^{012}=1$ and $\varepsilon_<^{102}=-1$, we have
\begin{equation}
\begin{aligned}
d(\sigma_{01}^*(c))^{012}_\alpha&=-\frac{\sigma_{01}^*(c)_{\alpha+2e_0}\sigma_{01}^*(c)_{\alpha+e_1+e_2}}{\sigma_{01}^*(c)_{\alpha+e_0+e_1}\sigma_{01}^*(c)_{\alpha+e_0+e_2}}\\
&=-\frac{\det(I_{\sigma_{01},\sigma_{01}(t+e_0-e_1)})\det(I_{\sigma_{01},\sigma_{01}(t+e_2-e_0)})}{\det(I_{\sigma_{01},\sigma_{01}(t)})\det(I_{\sigma_{01},\sigma_{01}(t+e_2-e_1)})}\frac{c_{\alpha'+2e_1}c_{\alpha'+e_0+e_2}}{c_{\alpha'+e_1+e_0}c_{\alpha'+e_1+e_2}}\\
&=-\frac{(-1)^{(t_0+1)(t_1-1)}(-1)^{(t_0-1)t_1}}{(-1)^{t_0t_1}(-1)^{t_0(t_1-1)}}\frac{c_{\alpha'+2e_1}c_{\alpha'+e_0+e_2}}{c_{\alpha'+e_1+e_0}c_{\alpha'+e_1+e_2}}\\
&=-(-1)\frac{c_{\alpha'+2e_1}c_{\alpha'+e_0+e_2}}{c_{\alpha'+e_1+e_0}c_{\alpha'+e_1+e_2}}\\
&=d(c)_{\alpha'}^{102}.
\end{aligned}
\end{equation}
This proves the result.
\end{proof}

\begin{lemma}
The ratio coordinates of $c$ and $\sigma^*c$ are related by
\begin{equation}
\sigma^*(e)^{v_0v_1}_{\alpha}= \varepsilon^{n-1} e^{\sigma(v_0)\sigma(v_1)}_{\sigma(\alpha)}
%\left(\frac{\epsilon_<^{v_0v_1}}{\epsilon_<^{\sigma(v_0)\sigma(v_1)}}\right)^{n-1} (e^{\sigma(v_0)\sigma(v_1)}_{\sigma(\alpha)})
\end{equation}
where $\varepsilon$ is a sign depending on whether or not $\sigma$ flips the orientation of the edge $v_0v_1$. 
\end{lemma}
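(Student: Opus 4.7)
The plan is to unfold both sides of the claimed identity directly from the definitions, reducing the statement to a sign computation that is handled by Remark~\ref{rm:ShuffleOdd}. Writing $\alpha = kv_0+lv_1$ with $k+l=n-1$, I would start from
$$\sigma^*(e)^{v_0v_1}_\alpha = e^{v_0v_1}_\alpha(\sigma^*c) = (-1)^l\frac{(\sigma^*c)_{kv_0+(l+1)v_1}}{(\sigma^*c)_{(k+1)v_0+lv_1}},$$
and then substitute the definition $(\sigma^*c)_t = \det(I_{\sigma,\sigma(t)})\,c_{\sigma(t)}$. Since $\sigma(kv_0+(l+1)v_1) = k\sigma(v_0)+(l+1)\sigma(v_1)$ and similarly for the other point, the ratio of Ptolemy coordinates that remains is precisely $\varepsilon^{n-1}$-independent and matches the definition of $e^{\sigma(v_0)\sigma(v_1)}_{\sigma(\alpha)}$ up to the single sign factor
$$S = \det(I_{\sigma,\sigma(kv_0+(l+1)v_1)})\cdot\det(I_{\sigma,\sigma((k+1)v_0+lv_1)}).$$
Hence the lemma reduces to showing $S = \varepsilon^{n-1}$.

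Next, I would apply Remark~\ref{rm:ShuffleOdd} to interpret each factor of $S$ as the sign of the permutation that $\sigma$ induces on the odd entries of the corresponding $4$-tuple. Since the points $kv_0+(l+1)v_1$ and $(k+1)v_0+lv_1$ have all entries zero except at positions $v_0$ and $v_1$, the only relevant parities are those of $k,k+1,l,l+1$. This gives a short case analysis on the parities of $k$ and $l$.

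If $n$ is odd, then $k$ and $l$ have the same parity, so each of the two points contributes exactly one odd entry. A single-element ``shuffle'' has sign $+1$, so $S=1=\varepsilon^{n-1}$ for any $\sigma$. If $n$ is even, then $k$ and $l$ have opposite parities, and exactly one of the two points has odd entries at \emph{both} positions $v_0$ and $v_1$, while the other has none. The second determinant is therefore $+1$, and the first is the sign of the transposition which $\sigma$ may or may not perform on the pair $(\sigma(v_0),\sigma(v_1))$ in the $4$-tuple. This sign is $+1$ iff $\sigma$ preserves the relative order of $v_0,v_1$, i.e.\ iff $\sigma$ does not flip the orientation of the edge $v_0v_1$; thus $S=\varepsilon=\varepsilon^{n-1}$ as required.

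The only genuine work is the sign bookkeeping in the two parity cases; the rest is a direct substitution. I expect the main obstacle is simply being careful that the two factors of $\det(I_{\sigma,\cdot})$ enter the ratio with the \emph{product} of signs (rather than the ratio), which is why they multiply rather than cancel, and that this product packages neatly into $\varepsilon^{n-1}$ after accounting for the parity of $n-1$.
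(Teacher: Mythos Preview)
Your argument is correct. The paper itself does not give a proof of this lemma --- it simply states ``We shall not need this, so we leave the proof to the reader'' --- so there is nothing to compare against; your direct computation via Remark~\ref{rm:ShuffleOdd} is exactly the kind of verification the authors had in mind. One small clarification worth making explicit: in the even-$n$ case where the two odd entries happen to be equal (namely $k+1=l$), the ``shuffle of odd entries'' should be read as the permutation $\sigma$ induces on the \emph{positions} $\{v_0,v_1\}$ rather than on the values; with that reading your conclusion $S=\varepsilon$ goes through unchanged.
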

\begin{proof}
We shall not need this, so we leave the proof to the reader.
\end{proof}
%where the sign $\epsilon_<(v_0v_1)$ depends on whether $v_0v_1$ agrees with the vertex ordering of the simplex and thus the overall sign is determined whether or not $\sigma$ flips the orientation of the edge $v_0v_1$. Is it clear what is meant? A doubly truncated simplex has well defined edge orientations.

\subsection{$X$-coordinates}
We define $X$-coordinates for Ptolemy assignments and shape assignments. These are defined for face points and agree with the $X$-coordinates of Fock and Goncharov~\cite[p.~133]{FockGoncharov}.

The natural $A_4$ action on vertices of $\dtruncsimp$ has two orbits. Let $\varepsilon_\circlearrowleft^{v_0v_1v_2}$ be a sign, which is positive if and only if $v_0v_1v_2$ is in the orbit of $012$.%, i.e. if the orientation of the face spanned by $v_0$, $v_1$ and $v_2$ agrees with the one induced by the orientation of $\Delta$. 
\begin{definition} Let $c$ be a Ptolemy assignment and let $t\in\Delta_n^3(\Z)$ be a face point. 
The \emph{$X$-coordinate} at $t$ is given by
\begin{equation}
X_t=\prod_{t\in \mathrm{face}(v_0v_1v_2)}c^{\varepsilon_\circlearrowleft^{v_0v_1v_2}}_{t+v_0-v_1}
\end{equation} 
where the product is taken over the six ordered triples of vertices $v_0, v_1, v_2$ spanning the face containing $t$.
\end{definition}
As an example, the $X$-coodinate of $t=(t_0,t_1,t_2,0)$ is given by
\begin{equation}\label{eq:XExample}
X_{(t_0,t_1,t_2,0)}=\frac{c_{t+e_0-e_1}c_{t+e_1-e_2}c_{t+e_2-e_0}}{c_{t+e_0-e_2}c_{t+e_1-e_0}c_{t+e_2-e_1}}.
\end{equation}

%\begin{remark}
%Our $X$-coordinates agree with those of Fock and Goncharov~\cite[ExactReference]{FockGoncharov}.
%\end{remark}
%Note that this is invariant under reordering $v_0$, $v_1$ and $v_2$.

\begin{definition} 
Let $z$ be a shape assignment on $\Delta^3_n$ and let $t$ be a face point spanned by $v_0$, $v_1$ and $v_2$. The \emph{$X$-coordinate} at $t$ is given by
\begin{equation}\label{eq:XandProducts}
X_t=-\prod_{s+e=t}z^e_s.
\end{equation}
\end{definition} 

\begin{remark}\label{rm:HalfofFaceEq}
Note that the product~\eqref{eq:XandProducts} consists of half of the terms involved in a face equation. More precisely, if $v_0v_1v_2\in\overline{\overline{\Delta_0}}$ is glued to $w_0w_1w_2\in\overline{\overline{\Delta_1}}$, the face equations are given by 
\begin{equation}
X_{t_0v_0+t_1v_1+t_2v_2}^{\varepsilon_\circlearrowleft^{v_0v_1v_2}}=X_{t_0w_0+t_1w_1+t_2w_2}^{\varepsilon_\circlearrowleft^{w_0w_1w_2}}.
\end{equation}
 Note that $\varepsilon_\circlearrowleft^{v_0v_1v_2}$ and $\varepsilon_\circlearrowleft^{w_0w_1w_2}$ are equal if and only if the face pairing preserves orientation. For oriented triangulations the signs are always opposite.
\end{remark}

\begin{lemma} The $X$-coordinates transform as the shapes under reordering, i.e.~we have
\begin{equation}
X(\sigma^*(c))_t=X(c)^{\sgn(\sigma)}_{\sigma(t)},\qquad X(\sigma^*(z))_t=X(z)^{\sgn(\sigma)}_{\sigma(t)}.
\end{equation}
\end{lemma}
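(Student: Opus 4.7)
Both identities reduce to short calculations once the definitions are unwound, with the shape case being the simpler of the two.

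The plan for the shape identity is to start from $X_t=-\prod_{s+e=t}z_s^e$ together with the pullback formula $(\sigma^*z)_s^e=(z_{\sigma(s)}^{\sigma(e)})^{\sgn(\sigma)}$. The bijection $(s,e)\mapsto(\sigma(s),\sigma(e))$ identifies the three pairs with $s+e=t$ with the three pairs $s'+e'=\sigma(t)$, so substitution and reindexing give
\[
X(\sigma^*z)_t=-\prod_{s'+e'=\sigma(t)}(z_{s'}^{e'})^{\sgn(\sigma)}.
\]
When $\sgn(\sigma)=1$ this is immediately $X(z)_{\sigma(t)}$. When $\sgn(\sigma)=-1$, using that a product of three reciprocals is the reciprocal of the product and that $(-a)^{-1}=-a^{-1}$, one obtains $X(z)_{\sigma(t)}^{-1}$.

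For the Ptolemy identity, I would begin from the definition together with the pullback formula $(\sigma^*c)_u=\det(I_{\sigma,\sigma(u)})\,c_{\sigma(u)}$. Setting $w_i=\sigma(v_i)$ and $t'=\sigma(t)$, direct substitution yields
\[
X(\sigma^*c)_t=\prod_{(v_0,v_1,v_2)}\det(I_{\sigma,t'+w_0-w_1})^{\varepsilon_\circlearrowleft^{v_0v_1v_2}}\cdot c_{t'+w_0-w_1}^{\varepsilon_\circlearrowleft^{v_0v_1v_2}}.
\]
Since the $A_4$-orbit of $012$ is preserved by even permutations and swapped with its complement by odd ones, $\varepsilon_\circlearrowleft^{v_0v_1v_2}=\sgn(\sigma)\,\varepsilon_\circlearrowleft^{w_0w_1w_2}$. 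Reindexing over $(w_0,w_1,w_2)$ turns the $c$-product into $X(c)_{\sigma(t)}^{\sgn(\sigma)}$, leaving the task of showing that the remaining product of determinant factors equals $1$. By Remark~\ref{rm:ShuffleOdd}, $\det(I_{\sigma,u})$ depends only on the parities of the entries of $u$. Pairing the triple $(w_0,w_1,w_2)$ with $(w_1,w_0,w_2)$ gives indices $t'+w_0-w_1$ and $t'+w_1-w_0$ that differ by an even vector, hence share parities and give equal determinants, while the two triples lie in different $A_4$-orbits so their $\varepsilon_\circlearrowleft$ signs are opposite. Each of the three such pairs contributes trivially, so the determinant product is $1$.

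The main obstacle is the sign bookkeeping in the Ptolemy case, namely reconciling three distinct sources of signs (the $\det(I_{\sigma,\cdot})$ factors, the $\varepsilon_\circlearrowleft$ signs, and the overall $\sgn(\sigma)$) so that everything collapses as claimed. This is combinatorial rather than conceptual, and the pairing argument above handles it cleanly.
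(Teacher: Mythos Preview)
Your proposal is correct and follows essentially the same approach as the paper. For the Ptolemy identity, both you and the paper unwind the definition, use the relation $\varepsilon_\circlearrowleft^{v_0v_1v_2}=\sgn(\sigma)\,\varepsilon_\circlearrowleft^{\sigma(v_0)\sigma(v_1)\sigma(v_2)}$ to reindex the $c$-product as $X(c)_{\sigma(t)}^{\sgn(\sigma)}$, and then dispose of the determinant factors via the same pairing argument (pair $(w_0,w_1,w_2)$ with $(w_1,w_0,w_2)$ and invoke the parity observation in Remark~\ref{rm:ShuffleOdd}); for the shape identity, the paper simply says ``obvious'' while you spell out the bijection $(s,e)\mapsto(\sigma(s),\sigma(e))$ and the sign check.
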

\begin{proof}
Unwinding the definitions, we have
\begin{multline}
X(\sigma^*c)_t=
\prod_{t\in \mathrm{face}(v_0v_1v_2)}(\sigma^* c)^{\varepsilon_\circlearrowleft^{v_0v_1v_2}}_{t+v_0-v_1} =
\prod_{t\in\mathrm{face}(v_0v_1v_2)} \det(I_{\sigma,\sigma(t+v_0-v_1)}) c^{\varepsilon_\circlearrowleft^{v_0v_1v_2}}_{\sigma(t)+\sigma(v_0)-\sigma(v_1)}=\\
\left(\prod_{t\in \mathrm{face}(v_0v_1v_2)} \det(I_{\sigma,\sigma(t+v_0-v_1)})\right) X(c)_{\sigma(t)}^{\sgn(\sigma)} =
 X(c)_{\sigma(t)}^{\sgn(\sigma)}.
\end{multline}
The fact that the product of determinants equals $1$ follows from Remark~\ref{rm:ShuffleOdd}, which implies that $\det(I_{\sigma,\sigma(t+v_0-v_1)})=\det(I_{\sigma,\sigma(t+v_1-v_0)})$. Since both are $\pm 1$, their product is $1$. The second equation is obvious.
\end{proof}

\begin{lemma} The $X$-coordinates of a Ptolemy assignment $c$ agree with the $X$-coordinates of the corresponding shape assignment $\mu(c)$.
\end{lemma}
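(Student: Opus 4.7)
By the preceding lemma both types of $X$-coordinates transform the same way under the pullback, so it suffices to verify the equality at a single face point per $S_4$-orbit. Up to reordering I may therefore assume $t$ lies on the face $v_3=0$ and has the form $t=\alpha+1110$ with $\alpha_3=0$. This is the same face point that was already analyzed in Section~\ref{subsection:ProofForFaceEqn}.

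For such $t$, the three pairs $(s,e)\in \Delta^3_{n-2}(\Z)\times\dot\Delta^3_2(\Z)$ with $s+e=t$ are
$$ (s_1,e_1)=(\alpha+0010,\,1100),\quad (s_2,e_2)=(\alpha+1000,\,0110),\quad (s_3,e_3)=(\alpha+0100,\,1010), $$
so by definition $X_t^{\mathrm{shape}}=-z^{1100}_{\alpha+0010}\,z^{0110}_{\alpha+1000}\,z^{1010}_{\alpha+0100}$. Substituting the monomial formulas~\eqref{eqn:PtolemyToShapes} for $\mu(c)$, the factor $z^{1010}$ contributes an explicit $-1$ which cancels the overall sign in the definition of $X^{\mathrm{shape}}_t$. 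This is precisely the calculation carried out in Section~\ref{subsection:ProofForFaceEqn}: the six Ptolemy coordinates $c_{\alpha+1011}$, $c_{\alpha+0111}$, $c_{\alpha+1101}$ cancel pairwise between numerator and denominator, leaving
$$ X_t^{\mathrm{shape}}\;=\;\frac{c_{\alpha+2010}\,c_{\alpha+1200}\,c_{\alpha+0120}}{c_{\alpha+2100}\,c_{\alpha+0210}\,c_{\alpha+1020}}. $$

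Now compare with the Ptolemy $X$-coordinate. Writing $t=(t_0,t_1,t_2,0)$ with $t_i=\alpha_i+1$ ($i=0,1,2$), the six triples $(t+e_i-e_j)$ with $i\neq j\in\{0,1,2\}$ read
$$ \alpha+2010,\ \alpha+1200,\ \alpha+0120,\ \alpha+2100,\ \alpha+0210,\ \alpha+1020, $$
the first three being the positively-oriented cyclic triples and the last three the negatively-oriented ones; hence~\eqref{eq:XExample} gives exactly the same ratio. Thus $X^{\mathrm{Ptolemy}}_t=X^{\mathrm{shape}}_t$ at this model face point, and the general case follows by equivariance under reordering. The argument is essentially bookkeeping; the only subtlety is tracking the single sign in the formula for $z^{1010}$, which matches the sign in the definition of the shape $X$-coordinate and is ultimately what makes the identification with the Fock--Goncharov $X$-coordinates work.
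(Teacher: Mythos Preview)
Your proof is correct and follows essentially the same approach as the paper: reduce via the pullback/reordering lemma to a face point $t=\alpha+1110$ on the face $v_3=0$, invoke the computation already done in Section~\ref{subsection:ProofForFaceEqn} (equation~\eqref{eq:Xcoordinate}) to evaluate $-\prod_{s+e=t}\mu(c)^e_s$ as the triple ratio of Ptolemy coordinates, and then identify this with the explicit formula~\eqref{eq:XExample} for the Ptolemy $X$-coordinate. Your bookkeeping of the three $(s,e)$ pairs and the sign cancellation is accurate and matches the paper's argument.
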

\begin{proof}
We must prove that
\begin{equation}
-\prod_{s+e=t}\mu(c)^e_s=\prod_{t\in \mathrm{face}(v_0v_1v_2)}c^{\varepsilon_\circlearrowleft^{v_0v_1v_2}}_{t+v_0-v_1}.
\end{equation}
By compatibility under reordering, it is enough to prove this for $t=(t_0,t_1,t_2,0)=t_0e_0+t_1e_1+t_2e_2$.
Let $t=\alpha+(1110)$. By \eqref{eq:Xcoordinate},
\begin{equation}
\begin{aligned}
-\prod_{s+e=t}\mu(c)^e_s&=z^{1100}_{\alpha+0010}z^{0110}_{\alpha+1000}z^{1010}_{\alpha+0010}\\
&=\frac{c_{\alpha+2010,0}c_{\alpha+1200,0}c_{\alpha+0120,0}}{c_{\alpha+2100,0} c_{\alpha+0210,0} c_{\alpha+1020,0}}\\
&=\frac{c_{t+e_0-e_1}c_{t+e_1-e_2}c_{t+e_2-e_0}}{c_{t+e_0-e_2}c_{t+e_1-e_0}c_{t+e_2-e_1}}\\
&=\prod_{t\in \mathrm{face}(v_0v_1v_2)}c^{\varepsilon_\circlearrowleft^{v_0v_1v_2}}_{t+v_0-v_1},
\end{aligned}
\end{equation}
where the last equality follows from~\eqref{eq:XExample}.
\end{proof}

\begin{lemma}\label{lemma:XandDiamonds} One can express the $X$-coordinates in terms of diamond coordinates: 
\begin{equation}
X_t=\left(\frac{d^{v_0v_1v_2}_{t-v_0-v_1}}{d^{v_0v_1v_2}_{t-v_0-v_2}}\right)^{\varepsilon_\circlearrowleft^{v_0v_1v_2}}
\end{equation}
Here $t$ is a face point spanned by $v_0$, $v_1$ and $v_2$. %The expression is independent of the ordering of $v_0$, $v_1$ and $v_2$. Maybe state lemma like that.
\end{lemma}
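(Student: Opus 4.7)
The plan is a direct substitution, followed by a sign-counting argument. First I would unpack the numerator and denominator of the right-hand side using the definition \eqref{eq:DiamondDef}. Setting $\alpha = t-v_0-v_1$ gives
$$d^{v_0v_1v_2}_{t-v_0-v_1} = -\varepsilon_<^{v_0v_1v_2}\,\frac{c_{t+v_0-v_1}\,c_{t+v_2-v_0}}{c_t\,c_{t+v_2-v_1}},$$
and setting $\alpha = t-v_0-v_2$ gives
$$d^{v_0v_1v_2}_{t-v_0-v_2} = -\varepsilon_<^{v_0v_1v_2}\,\frac{c_{t+v_0-v_2}\,c_{t+v_1-v_0}}{c_{t+v_1-v_2}\,c_t}.$$
In the ratio the prefactors $-\varepsilon_<^{v_0v_1v_2}$ cancel, as do the factors $c_t$ that appear in both denominators, leaving
$$\frac{d^{v_0v_1v_2}_{t-v_0-v_1}}{d^{v_0v_1v_2}_{t-v_0-v_2}} = \frac{c_{t+v_0-v_1}\,c_{t+v_1-v_2}\,c_{t+v_2-v_0}}{c_{t+v_1-v_0}\,c_{t+v_0-v_2}\,c_{t+v_2-v_1}}.$$

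Second, I would identify this with $X_t$ in the case $\varepsilon_\circlearrowleft^{v_0v_1v_2}=+1$. The defining product of $X_t$ ranges over the six ordered triples of distinct vertices spanning the face containing $t$. The three cyclic rotations $(v_0,v_1,v_2),(v_1,v_2,v_0),(v_2,v_0,v_1)$ lie in the $A_4$-orbit of $(v_0,v_1,v_2)$ and therefore share the sign $\varepsilon_\circlearrowleft^{v_0v_1v_2}=+1$; the three transpositions carry the opposite sign. Collecting the factors $c_{t+w_0-w_1}^{\varepsilon_\circlearrowleft^{w_0w_1w_2}}$ reproduces precisely the ratio displayed above. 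This proves the lemma whenever $v_0v_1v_2$ lies in the $A_4$-orbit of $012$.

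For triples in the other $A_4$-orbit I would invoke equivariance rather than repeat the computation. Swapping $v_0$ and $v_1$ interchanges the roles of numerator and denominator in the right-hand-side ratio, so the ratio is inverted; simultaneously the sign $\varepsilon_\circlearrowleft^{v_0v_1v_2}$ flips, so the entire expression $(d^{v_0v_1v_2}_{t-v_0-v_1}/d^{v_0v_1v_2}_{t-v_0-v_2})^{\varepsilon_\circlearrowleft^{v_0v_1v_2}}$ is invariant under any transposition of two vertices in $\{v_0,v_1,v_2\}$ (and obviously under the cyclic rotations, which preserve both the ratio and the sign). Since $X_t$ is also $S_3$-invariant in $v_0,v_1,v_2$ by construction, the identity established on one orbit extends to the other. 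The only obstacle is careful sign bookkeeping in the first step; once the $\varepsilon_<$'s and the common $c_t$'s are observed to cancel, the remaining manipulation is mechanical.
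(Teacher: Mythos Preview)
Your proof is correct and follows essentially the same approach as the paper: expand both diamond coordinates via \eqref{eq:DiamondDef}, cancel the common factors $-\varepsilon_<^{v_0v_1v_2}$ and $c_t$, and identify the resulting six-term ratio with $X_t$. The paper simply specializes to $v_0v_1v_2=012$ and cites \eqref{eq:XExample}, leaving the other triples to the ambient reordering compatibility, whereas you carry out the general computation and then make the $S_3$-invariance argument explicit to cover the orbit with $\varepsilon_\circlearrowleft=-1$; these are the same proof at different levels of detail.
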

\begin{proof}
It is enough to prove this for $t=(t_0,t_1,t_2,0)$. Since
\begin{equation}
d^{012}_{t-e_0-e_1}=-\frac{c_{t+e_0-e_1}c_{t+e_2-e_0}}{c_tc_{t+e_2-e_1}},\qquad d^{012}_{t-e_0-e_2}=-\frac{c_{t+e_0-e_2}c_{t+e_1-e_0}}{c_{t+e_1-e_2}c_t},
\end{equation}
the result follows from~\eqref{eq:XExample}.
\end{proof}

\begin{figure}[htb]
\begin{center}
\begin{minipage}[b]{0.45\textwidth}
\begin{center}
\scalebox{0.7}{
\input{figures_gen/XCoords.tex}}
\end{center}
\end{minipage}
\hfill
\begin{minipage}[b]{0.45\textwidth}
\begin{center}
\scalebox{0.7}{
\input{figures_gen/diamondCoordinatesCancellationXCoords.tex}}
\end{center}
\end{minipage}\\
\begin{minipage}[t]{0.45\textwidth}
\caption{Exponents of the Ptolemy coordinates involved in the $X$-coordinate at $t$.}
\end{minipage}
\begin{minipage}[t]{0.45\textwidth}
\begin{center}
\caption{An $X$-coordinate as a quotient of two diamond coordinates.}
\end{center}
\end{minipage}
\end{center}
\end{figure}

\subsection{From natural $(\SL(n,\C),N)$-cocycles to natural $(\PGL(n,\C),B,H)$-cocycles.}

The natural map $\pi\colon\SL(n,\C)\to\PGL(n,\C)$ induces a map from decorations by $N$-cosets to decorations by $B$-cosets. Given a generic $\SL(n,\C)/N$-decoration $D$ on $\Delta_n^3$, we show how the natural $(\PGL(n,\C),B,H)$-cocycle $\mathcal L_{\alpha\beta\gamma}(\pi(D))$ can be obtained from the natural cocycle $\mathcal L_{\alpha\beta}(D)$ by the coboundary action \eqref{eq:CoboundaryAction} of an explicit coboundary given in terms of the diamond coordinates. This defines the map $\tau$ in diagram~\eqref{eqn:DiagramIntro} and gives rise to an explicit formula for $\mathcal L_{\alpha\beta\gamma}(\pi(D))$ in terms of the shapes.

Given an $\SL(n,\C)/N$-decoration $D$ with diamond coordinates $d^{v_0,v_1,v_2}_{1,i}$ consider the $0$-cochain on $\overline{\overline{\Delta^3_n}}$ given by
\begin{equation}
\tau^{v_0v_1v_2}(D) = \begin{pmatrix}\prod_{i=1}^{n-1} d^{v_0v_1v_2}_{1,i} \\ & \prod_{i=2}^{n-1} d^{v_0v_1v_2}_{1,i} \\ & & \ddots \\ & & & d^{v_0v_1v_2}_{1,n-i} \\ & & & & 1\end{pmatrix}=\prod_{i=1}^{n-1}H_i(d^{v_0v_1v_2}_{1,i}).%\left(\begin{array}{ccccc}\prod_{i=0}^{n-2} d^{v_0,v_1,v_2}_{(n-2-i)v_0+iv_1} \\ & \prod_{i=0}^{n-3} d^{v_0,v_1,v_2}_{(n-2-i)v_0+iv_1} \\ & & \ddots \\ & & & d^{v_0,v_1,v_2}_{(n-2)v_0} \\ & & & & 1\end{array}\right)
\end{equation}
We shall make use of the abbreviations
\begin{equation}
X_{k,i}=X_{kv_2+iv_0+(n-k-i)v_1}^{v_0v_1v_2},\qquad z_i=z_{(i-1)v_0+(n-1-i)v_1}^{v_0+v_1}.
\end{equation}

\begin{theorem}\label{thm:NaturalCocycleShapes}
Let $D$ be a generic $\PGL(n,\C)/B$-decoration of $\Delta^3_n$. The natural cocycle $\mathcal L_{\alpha\beta\gamma}(D)$ is given by
\begin{equation}\label{eq:NaturalCocycleFormula}
\begin{gathered}
\alpha^{v_0v_1v_2}=q_1,\qquad \beta^{v_0v_1v_2}=\prod_{k=1}^{n-1}\left(\prod_{i=1}^{n-k}x_i(1)\prod_{i=1}^{n-k-1}H_i(X_{k,i}^{\varepsilon^{v_0v_1v_2}_\circlearrowleft})\right)d_{\pm 1}\\
\gamma^{v_0v_1v_2}=\prod_{i=1}^{n-1}H_i(z_{i}^{-\varepsilon_\circlearrowleft^{v_0v_1v_2}}).
\end{gathered}
\end{equation}
Moreover, if $\widetilde D$ is any $\SL(n,\C)/N$-decoration lifting $D$, $\mathcal L(D)=\tau \mathcal L_{\alpha\beta}(\widetilde D)$.\qed
\end{theorem}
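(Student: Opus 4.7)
The plan is to invoke the uniqueness statement from Lemma~\ref{lemma:UniversalProperty}: the natural $(\PGL(n,\C),B,H)$-cocycle attached to $D$ is the unique cocycle compatible with the decoration whose long edges are $q_1$, middle edges are normalized elements of $B$, and short edges lie in $H$. Viewing $\mathcal L_{\alpha\beta}(\widetilde D)$ as a cocycle on $\dtruncsimp$ via Remark~\ref{rm:InducedCocycle} (all short edges labeled $\id$) and projecting from $\SL(n,\C)$ to $\PGL(n,\C)$, it suffices to verify that $\tau\mathcal L_{\alpha\beta}(\widetilde D)$, for the specific $0$-cochain $\tau^{v_0v_1v_2}=\prod_{i=1}^{n-1}H_i(d^{v_0v_1v_2}_{1,i})$ defined just before the theorem, is compatible with $D=\pi(\widetilde D)$ and satisfies these three edge conditions. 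Compatibility is automatic from Remark~\ref{rm:CocycleToDecoration}, since the coboundary action preserves the associated decoration. Once the edge conditions are verified the explicit formulas \eqref{eq:NaturalCocycleFormula} fall out at the same time, and uniqueness yields $\mathcal L_{\alpha\beta\gamma}(D)=\tau\mathcal L_{\alpha\beta}(\widetilde D)$.

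I would verify the three conditions in turn. For the long edges, the label $\alpha^{v_0v_1v_2}=q(e^{v_0v_1}_{(n-1)v_0},\ldots,e^{v_0v_1}_{(n-1)v_1})$ given by Proposition~\ref{prop:NaturalCocycleLongShort} becomes $(\tau^{v_0v_1v_2})^{-1}\alpha^{v_0v_1v_2}\tau^{v_1v_0v_2}$, and a direct matrix computation, expanding the partial products of diamond coordinates via \eqref{eq:DiamondDef} and telescoping them against the ratio coordinates, reduces this to $q_1$. For the short edges, the label $\id$ becomes $(\tau^{v_0v_1v_2})^{-1}\tau^{v_0v_1v_3}$, a quotient of diagonal matrices which automatically lies in $H$; its $i$-th diagonal entry is $\prod_{j=i}^{n-1} d^{v_0v_1v_3}_{1,j}/d^{v_0v_1v_2}_{1,j}$, a four-term cross-ratio of Ptolemy coordinates which, through the monomial map $\mu$ of \eqref{eqn:PtolemyToShapes}, simplifies to a shape parameter $z_i^{\pm 1}$ on the edge $v_0v_1$. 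The sign $-\varepsilon_\circlearrowleft^{v_0v_1v_2}$ accounts for the orientation of the triple $v_0v_1v_2$ relative to the vertex ordering.

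The main work is the middle-edge case. One conjugates $\beta^{v_0v_1v_2}=\prod_{k=1}^{n-1}\prod_{i=1}^{n-k}x_i(d_{k,i})$ into $(\tau^{v_0v_1v_2})^{-1}\beta^{v_0v_1v_2}\tau^{v_0v_2v_1}$ and passes the diagonal factors through the elementary unipotents using the standard commutation relations between $H_i(y)$ and $x_j(t)$. Grouping the output row by row (i.e.\ by the level $k$), the quantities that emerge as arguments of the $H_i$'s are ratios of successive diamond coordinates $d_{k,i}/d_{k,i+1}$, and these are precisely the $X$-coordinates identified in Lemma~\ref{lemma:XandDiamonds}. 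This yields the claimed factorization $\prod_k\bigl(\prod_i x_i(1)\prod_i H_i(X_{k,i}^{\varepsilon_\circlearrowleft^{v_0v_1v_2}})\bigr)$, with the trailing $d_{\pm 1}$ being exactly the residual diagonal factor left over once all $H_i(X_{k,i})$'s have been extracted; the specific powers of $-i$ record the accumulated signs from the $012\leftrightarrow 021$ swap and from normalizing the last column to $1$'s. The main obstacle is the bookkeeping here: tracking which diamond coordinates originate from $\tau^{v_0v_1v_2}$ versus $\tau^{v_0v_2v_1}$ as they commute past the unipotents, and verifying that the residual diagonal after all cancellations is exactly $d_{\pm 1}$, independent of the shapes --- an independence which, via \eqref{eq:DiamondDef}, should reduce to a collection of Ptolemy identities on the boundary subsimplex of the face $v_0v_1v_2$.
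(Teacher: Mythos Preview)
Your strategy is the same as the paper's: act on $\mathcal L_{\alpha\beta}(\widetilde D)$ by the coboundary of the $0$-cochain $\tau$, compute the resulting long, middle, and short labels explicitly, and conclude via the uniqueness in Lemma~\ref{lemma:UniversalProperty}. The long- and short-edge computations you outline are exactly what the paper does (the short-edge ratio $d^{v_0v_1v_3}_{1,i}/d^{v_0v_1v_2}_{1,i}$ is isolated as Lemma~\ref{ShapesandDiamonds}).

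Two small corrections to your middle-edge sketch. First, after commuting the diagonal factors through, the arguments of the $H_i$'s are the ratios $d_{k+1,i}/d_{k,i}$ (varying the \emph{first} index), not $d_{k,i}/d_{k,i+1}$; this is what Lemma~\ref{lemma:XandDiamonds} actually gives, and it is how the $X_{k,i}$ enter. Second, the residual $d_{\pm 1}$ is not a Ptolemy identity at all: it comes from the sign $\varepsilon_<$ in~\eqref{eq:DiamondDef}, via $d^{v_0v_2v_1}_{1,i}=-d^{v_0v_1v_2}_{n-i,i}$, so that $\tau^{v_0v_2v_1}=\prod_k H_{n-k}(-d_{k,n-k})$; the Ptolemy coordinates then cancel exactly against the leftover factors $H_{n-k}(d_{k,n-k})^{-1}$ and only the signs $(-1)^{n-k}$ survive.
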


Before embarking on the proof, we give some examples.

\begin{example}For $n=2$,
\begin{equation}
\beta^{012}=x_1(1)d_{\pm 1}=\begin{pmatrix}-1&1\\&1\end{pmatrix},\qquad \gamma^{012}=H_1(z_1^{-1})=\begin{pmatrix}z_1^{-1}&\\&1\end{pmatrix}.
\end{equation}
For $n=3$, we have
\begin{equation}
\begin{gathered}
\beta^{012}=x_1(1)x_2(1)H_1(X_{1,1})x_1(1)d_{\pm 1}=
\begin{pmatrix}X_{1,1}&-X_{1,1}&1\\&-1&1\\&&1\end{pmatrix},
\\\gamma^{012}=
H_1(z_1^{-1})H_2(z_2^{-1})=\begin{pmatrix}z_1^{-1}z_2^{-1}&&\\&z_2^{-1}&\\&&1\end{pmatrix}.
\end{gathered}
\end{equation}
For $n=4$,
\begin{equation}
\begin{gathered}
\beta^{012}=x_1(1)x_2(1)x_3(1)H_1(X_{1,1})H_2(X_{1,2})x_1(1)x_2(1)H_1(X_{2,1})x_1(1)d_{\pm 1}\\ \gamma^{012}=d(z_1^{-1}z_2^{-1}z_3^{-1},z_2^{-1}z_3^{-1},z_3^{-1},1).
\end{gathered}
\end{equation}
\end{example}

\begin{remark}
The formula for $\beta^{v_0v_1v_2}$ is inspired by \cite[(9.14)]{FockGoncharov}.
\end{remark}

\begin{remark}\label{rm:DiagonalEntries}
Note that the diagonal entries of $\beta^{v_0v_1v_2}$ and $\gamma^{v_0v_1v_2}$ are given by
\begin{equation}
\beta^{v_0v_1v_2}_{ll}=(-1)^{n-l}\prod_{i=l}^{n-2}\prod_{k=1}^{n-1-i}X_{k,i}^{\varepsilon_\circlearrowleft^{v_0v_1v_2}},\quad \gamma_{ll}^{v_0v_1v_2}=\prod_{i=l}^{n-1}z_i^{-\varepsilon_\circlearrowleft^{v_0v_1v_2}}.
\end{equation}
\end{remark}

\begin{proof}[Proof of Theorem~\ref{thm:NaturalCocycleShapes}]
We prove that $\tau(\widetilde D)\mathcal L_{\alpha\beta}(\widetilde D)$ is given by~\eqref{eq:NaturalCocycleFormula}.
Since the last column of $\prod_{i=1}^{n-1}x_i(1)$ consists of $1$'s and since none of the other terms affect the last column, the middle edges are thus normalized, so by Lemma~\ref{lemma:UniversalProperty}, the cocycle is indeed the natural cocycle of $D$. 

Let $\widetilde \alpha$, $\widetilde \beta$ (and $\widetilde \gamma=\id$) be the labelings, of long, middle and short edges given by $L_{\alpha\beta}(\widetilde D)$. By Proposition~\ref{prop:NaturalCocycleLongShort} these are given by \eqref{eq:betaTrunc}. Let $\tau=\tau(\widetilde D)$.

\textit{Long edges:}
We must prove that $(\tau^{v_0v_1v_2})^{-1}\widetilde\alpha^{v_0v_1v_2}\tau^{v_1v_0v_2}=q_1$.
Letting 
\begin{equation}
l_k=\prod_{i=k}^{n-1}d^{v_0v_1v_2}_{1,i},\quad m_k=(-1)^{k-1}e^{v_0v_1}_{(n-k)v_0+(k-1)v_1},\quad r_k=\prod_{i=k}^{n-1}d^{v_1v_0v_2}_{1,i},
\end{equation}
this is equivalent to proving that
\begin{equation}
d(l_1,\dots,l_n)^{-1}q(m_1,\dots,m_n)d(r_1,\dots,r_n)=q_1\in\PGL(n,\C).
\end{equation}

Hence, we must prove that $l_{n-k+1}^{-1}m_kr_k$ is independent of $k$. From Figure~\ref{fig:Cancellations} it follows that
\begin{equation}
\prod_{i=k}^{n-1}d^{v_0v_1v_2}_{1,i}=\varepsilon^{n-k}\frac{c_{nv_0}c_{(k-1)v_0+(n-k)v_1+v_2}}{c_{(n-1)v_0+v_2}c_{kv_0+(n-k)v_1}},\quad \prod_{i=k}^{n-1}d^{v_1v_0v_2}_{1,i}=(-\varepsilon)^{n-k}\frac{c_{nv_1}c_{(k-1)v_1+(n-k)v_0+v_2}}{c_{(n-1)v_1+v_2}c_{kv_1+(n-k)v_1}}
\end{equation}
where $\varepsilon=-\varepsilon_<^{v_0,v_1,v_2}$. Hence, we have
\begin{equation}
\begin{gathered}
l_{n-k+1}=\varepsilon^{k-1}\frac{c_{(n-k)v_0+(k-1)v_1+v_2}}{c_{(n-k+1)v_0+(k-1)v_1}c_{(n-1)v_0+v_2}},\quad m_k=(-1)^{k-1}\frac{c_{(n-k)v_0+kv_1}}{c_{(n-k+1)v_0+(k-1)v_1}}\\
r_k=(-\varepsilon)^{n-k}\frac{c_{(k-1)v_1+(n-k)v_0+v_2}}{c_{(n-1)v_1+v_2}c_{kv_1+(n-k)v_0}},
\end{gathered}
\end{equation}
from which it follows that
\begin{equation}
l_{n-k+1}^{-1}m_kr_k=(-\varepsilon)^{n-1}\frac{c_{(n-1)v_0+v_2}}{c_{(n-1)v_1+v_2}},
\end{equation}
which is independent of $k$. This proves the result.

\begin{figure}[htb]
\begin{center}
\scalebox{0.75}{
\input{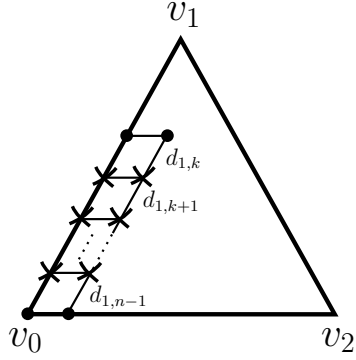}}
\end{center}
\caption{Cancellations.}\label{fig:Cancellations}
\end{figure}
%Convention that $c_{nv_0}=1$. Put that in a remark somewhere.

\textit{Middle edges:} We must prove that $\tau^{-1}_{v_0v_1v_2}\widetilde\beta_{v_0v_1v_2}\tau_{v_0v_2v_1}=\beta_{v_0v_1v_2}$.
Using the basic commutator relations
\begin{equation}\label{eq:CommutatorRelations}
H_i(x)x_j(y)=x_j(y)H_i(x) \text{ if } i\neq j, \quad x_i(y)=H_i(y)x_i(1)H_i(y)^{-1}
\end{equation}
the expression
\begin{equation}
\widetilde\beta^{v_0v_1v_2}=\prod_{k=1}^{n-1}\prod_{i=1}^{n-k}x_i(d_{k,i})=\prod_{k=1}^{n-1}\prod_{i=1}^{n-k}H_i(d_{k,i})x_i(1)H_i(d_{k,i})^{-1}
\end{equation}
expands to
\begin{equation}\label{eq:AuxMiddle1}
%\prod_{k=1}^{n-1}\left(\prod_{i=1}^{n-k}H_i(d_{k,i})\prod_{i=1}^{n-k}x_i(1)\prod_{i=1}^{n-k}H_i(d_{k,i})^{-1}\right)=
\widetilde\beta^{v_0v_1v_2}=\prod_{k=1}^{n-1}\left(\prod_{i=1}^{n-k}H_i(d_{k,i})\prod_{i=1}^{n-k}x_i(1)\left(\prod_{i=1}^{n-k-1}H_i(d_{k,i})^{-1}\right) H_{n-k}(d_{k,n-k})^{-1}\right).
\end{equation}
We have for brevity omitted the superscript $v_0v_1v_2$ of the diamond coordinates. Letting $$\mathcal H_k=\prod_{i=1}^{n-k}H_i(d_{k,i})\quad\mbox{and}\quad \mathcal H'_k=\prod_{i=1}^{n-k-1}H_i(d_{k,i}),$$
and moving the terms $H_{n-k}(d_{k,n-k})^{-1}$ to the right, we have
\begin{equation}
\widetilde\beta^{v_0v_1v_2}=\prod_{k=1}^{n-1}\left(\mathcal H_k\left(\prod_{i=1}^{n-k}x_i(1)\right)\mathcal H'^{-1}_k\right) \prod_{k=1}^{n-1}  H_{n-k}(d_{k,n-k})^{-1}.
\end{equation}
Since $\mathcal H_n=1$, the product $(\mathcal H_1\cdots \mathcal H'^{-1}_1)(\mathcal H_2 \cdots \mathcal H'^{-1}_2)\cdots$ equals $\mathcal H_1 (\cdots \mathcal H'^{-1}_1 \mathcal H_2)(\cdots \mathcal H'^{-1}_2 \mathcal H_3)\cdots$, and we obtain 
\begin{equation}
\widetilde\beta_{v_0v_1v_2}=\mathcal{H}_1\prod_{k=1}^{n-1}\left(\left(\prod_{i=1}^{n-k}x_i(1)\right)\mathcal H'^{-1}_k \mathcal H_{k+1}\right) \prod_{k=1}^{n-1}  H_{n-k}(d_{k,n-k})^{-1}.
\end{equation}
Using \eqref{eq:DiamondDef}, we have
\begin{equation}\label{eq:AuxMiddle2}
d_{1,i}^{v_0v_2v_1}=d_{(i-1)v_0+(n-1-i)v_2}^{v_0v_2v_1}=-d_{(i-1)v_0+(n-1-i)v_2}^{v_0v_1v_2}=-d_{n-i,i}^{v_0v_1v_2},
\end{equation}
and since (last equality follows from~\eqref{eq:AuxMiddle2})
\begin{equation}
\tau^{v_0v_1v_2}=\prod_{i=1}^{n-1}H_i(d^{v_0v_1v_2}_{1,i})=\mathcal H_1,\qquad \tau^{v_0v_2v_1}=\prod_{i=1}^{n-1}H_i(d^{v_0v_2v_1}_{1,i})=\prod_{k=1}^{n-1}H_{n-k}(-d_{k,n-k}).
\end{equation}
we have
\begin{equation}
\beta_{v_0v_1v_2}=\tau^{-1}_{v_0v_1v_2}\widetilde\beta_{v_0v_1v_2}\tau_{v_0v_2v_1} = \prod_{k=1}^{n-1}\left(\left(\prod_{i=1}^{n-k}x_i(1)\right)\mathcal H'^{-1}_k \mathcal H_{k+1}\right)d_{\pm 1}.
\end{equation}
By Lemma~\ref{lemma:XandDiamonds}, $\frac{d_{k+1,i}}{d_{k,i}}=X_{k,i}^{\varepsilon_\circlearrowleft(v_0v_1v_2)}$, so that
\begin{equation}
\mathcal H'^{-1}_k \mathcal H_{k+1} = \prod_{i=1}^{n-k} H_i(d_{k,i})^{-1} H_i(d_{k+1,i}) = \prod_{i=1}^{n-k} H_i(X_{k,i}^{\varepsilon_\circlearrowleft(v_0v_1v_2)}).
\end{equation}
This proves the result.

\textit{Short edges:} We must prove that $(\tau^{v_0v_1v_2})^{-1}\tau^{v_0v_1v_3}=\gamma^{v_0v_1v_2}$. We have
\begin{equation}
(\tau^{v_0v_1v_2})^{-1}\tau^{v_0v_1v_3}=\left(\prod_{i=1}^{n-1}H_i(d^{v_0v_1v_2}_{1,i})\right)^{-1}\prod_{i=1}^{n-1}H_i(d^{v_0v_1v_3}_{1,i})=\prod_{i=1}^{n-1}H_i(\frac{d^{v_0v_1v_3}_{1,i}}{d^{v_0v_1v_2}_{1,i}}).
\end{equation}
The result now follows from Lemma~\ref{ShapesandDiamonds} below.

\begin{lemma}\label{ShapesandDiamonds}
The shape parameters in \eqref{eq:NaturalCocycleFormula} are given in terms of diamond coordinates:
\begin{equation}
z_{i}^{-\varepsilon^{v_0v_1v_2}_\circlearrowleft}=\frac{d^{v_0v_1v_3}_{1,i}}{d^{v_0v_1v_2}_{1,i}}
\end{equation}
\end{lemma}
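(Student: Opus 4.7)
The plan is to prove this by a direct expansion of both sides in terms of the Ptolemy assignment $c$, followed by a sign check. Writing $\alpha=(i-1)v_0+(n-i-1)v_1$ for the common index of $d^{v_0v_1v_2}_{1,i}$ and $d^{v_0v_1v_3}_{1,i}$, the defining formula \eqref{eq:DiamondDef} shows that the numerator factor $c_{\alpha+2v_0}$ and the denominator factor $c_{\alpha+v_0+v_1}$ appear in each of the two diamond coordinates and cancel in the quotient. This leaves
$$\frac{d^{v_0v_1v_3}_{1,i}}{d^{v_0v_1v_2}_{1,i}}\;=\;\frac{\varepsilon_<^{v_0v_1v_3}}{\varepsilon_<^{v_0v_1v_2}}\cdot\frac{c_{\alpha+v_1+v_3}\,c_{\alpha+v_0+v_2}}{c_{\alpha+v_1+v_2}\,c_{\alpha+v_0+v_3}}.$$

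On the other side, $z_i=z^{v_0+v_1}_s$ with $s=\alpha$ is expanded via the monomial map \eqref{eqn:PtolemyToShapes}. Whichever type in the classification $\{1100,\,0110,\,1010\}$ the edge $v_0+v_1$ belongs to, the resulting expression is a (possibly signed) ratio of exactly the same four Ptolemy coordinates at $s+v_0+v_2,\;s+v_0+v_3,\;s+v_1+v_2,\;s+v_1+v_3$ as appear in the diamond ratio above. Thus, modulo signs, the identity is immediate from inspection.

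The main obstacle is the sign bookkeeping, which couples three separate contributions: the ratio of the $\varepsilon_<$ factors, the possible $-1$ appearing in \eqref{eqn:PtolemyToShapes} when $v_0+v_1$ is of type $z''$, and the exponent $\varepsilon_\circlearrowleft^{v_0v_1v_2}$. I would handle it by using pullback compatibility of $\mu$ (Lemma~\ref{lemma:ReorderCompatibility}), of Ptolemy assignments (Lemma~\ref{lemma:PtolemySymmetries}), and the analogous property of diamond coordinates, to reduce to the two $A_4$-orbit representatives $v_0v_1v_2=012$ (with $\varepsilon_\circlearrowleft=+1$) and $v_0v_1v_2=021$ (with $\varepsilon_\circlearrowleft=-1$). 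In the first case, both $\varepsilon_<^{012}$ and $\varepsilon_<^{013}$ equal $+1$, the edge $v_0+v_1=1100$ contributes no sign in \eqref{eqn:PtolemyToShapes}, and both sides coincide literally, so $z_i=d^{012}_{1,i}/d^{013}_{1,i}$. In the second case, $\varepsilon_<^{023}/\varepsilon_<^{021}=-1$ is precisely cancelled by the $-1$ coming from the type-$z''$ edge $v_0+v_1=1010$ in \eqref{eqn:PtolemyToShapes}, so again the two expressions agree, now each carrying a common minus sign. This verifies the identity in each orbit and hence completes the proof.
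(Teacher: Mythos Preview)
Your proof is correct and follows essentially the same approach as the paper: expand both sides in Ptolemy coordinates via \eqref{eq:DiamondDef} and \eqref{eqn:PtolemyToShapes}, observe that the same four coordinates appear, and handle the signs by pullback compatibility. The only difference is that the paper reduces to the single case $v_i=i$ using full $S_4$-compatibility (the sign $\sgn(\sigma)$ in the shape pullback \eqref{actiononshapeassignments} is exactly $\varepsilon_\circlearrowleft^{v_0v_1v_2}$, so one case covers both orbits), whereas you split into the two $A_4$-orbit representatives $012$ and $021$ and check each separately; this is a bit more work but equally valid.
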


\begin{proof}
Let $\alpha=(i-1)v_0+(n-1-i)v_1$, so that $z_i=z^{v_0+v_1}_{\alpha}$ and $d^{v_0v_1v_k}_{1,i}=d^{v_0v_1v_k}_\alpha$, $k=2,3$. By compatibility under reordering, it is enough to prove the result for $v_i=i$. We have
\begin{equation}
z^{1100}_{\alpha}=\frac{c_{\alpha+e_0+e_3}c_{\alpha+e_1+e_2}}{c_{\alpha+e_0+e_2}c_{\alpha+e_1+e_3}},\qquad d^{01k}_\alpha=-\frac{c_{\alpha+2e_0}c_{\alpha+e_1+e_k}}{c_{\alpha+e_0+e_1}c_{\alpha+e_0+e_k}}.
\end{equation}
Hence, $z_i^{-1}=d^{013}_{1,i}/d^{012}_{1,i}$ proving the result.
\end{proof}
This concludes the proof of Theorem~\ref{thm:NaturalCocycleShapes}.
\end{proof}

\begin{remark}\label{rm:NaturalLift}
Theorem~\ref{thm:NaturalCocycleShapes} implies that for a generic $\PGL(n,\C)/B$-decoration on $(M,\T)$, the restriction of the natural cocycle to $\partial M$ has a canonical lift to a cocycle with values in $B\subset\GL(n,\C)$ (not just in $\PGL(n,\C)$).
\end{remark}

\section{From shape assignments to cocycles}\label{sec:ReconstructRep}

We now prove that the bottom row of diagram~\eqref{eqn:DiagramIntro} consists of one-one correspondences.
The idea is to first prove that a shape assignment determines a natural cocycle on each doubly truncated simplex. This is a consequence of the internal gluing equations. The face and edge equations imply that the cocycles glue together to a cocycle on $M$; the middle edges glue together because of the face equations, and the edge equations imply that we can fill in the prisms.

\begin{lemma}\label{lemma:AgreeonFaces}
If two shape assignments $z$ and $w$ agree on two faces $i$ and $j$, then $z=w$, i.e.~if $z_s^e=w_s^e$ when $e_i=s_i=0$ or $e_j=s_j=0$, then $z_s^e=w_s^e$ for all $(s,e)\in\Delta^3_{n-2}(\Z)\times\dot\Delta^3_2(\Z)$.
\end{lemma}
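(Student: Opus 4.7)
The plan is to propagate agreement from the two distinguished faces into the interior of $\Delta^3_n$ using the internal gluing equations, which are the only gluing equations local to a single simplex. The first (essentially free) step is to observe that by the shape parameter relations~\eqref{eqn:shapeparamrel}, the six shape parameters on a subsimplex $s$ are each rational functions of any single one of them; hence as soon as $z_s^e = w_s^e$ holds for one edge of $s$, we get $z_s = w_s$ on the whole subsimplex. Applied to the hypothesis this immediately yields $z_s = w_s$ whenever $s_i = 0$ or $s_j = 0$, since such an $s$ has three edges lying on the corresponding face.

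It remains to handle subsimplices $s$ with $s_i, s_j \ge 1$. Let $\{k,\ell\} = \{0,1,2,3\} \setminus \{i,j\}$. I would induct on the depth $s_i + s_j \ge 2$, using the internal gluing equation at the integer point $t = s + e_k + e_\ell$, where $e_m$ denotes the $m$-th standard basis vector. Since $t_i = s_i \ge 1$, $t_j = s_j \ge 1$, $t_k = s_k + 1 \ge 1$, $t_\ell = s_\ell + 1 \ge 1$, and $\sum_m t_m = n$, the point $t$ lies in $\Delta^3_n(\Z_+)$, so the internal equation $\prod_{t = s' + e'} z_{s'}^{e'} = 1$ applies; by construction the factor corresponding to $e' = e_k + e_\ell$ is $z_s^{e_k + e_\ell}$.

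The heart of the argument is to verify that every other subsimplex $s' = t - e'$ appearing in this equation already satisfies $z_{s'} = w_{s'}$. If $e' \ne e_k + e_\ell$, then at least one of $e'_i, e'_j$ equals $1$; without loss of generality $e'_i = 1$ (the other case is symmetric). Then $s'_i = s_i - 1$: if $s_i = 1$ this gives $s'_i = 0$ and $s'$ is determined by the face-$i$ hypothesis, whereas if $s_i \ge 2$ then $s'_i \ge 1$, and either $s'_j = 0$ (covered by the face-$j$ hypothesis, noting that possibly $e'_j = 1$) or $s'_i + s'_j \le s_i + s_j - 1$ (covered by the induction hypothesis). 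Hence five of the six factors in the internal equation agree for $z$ and $w$, which forces the sixth, $z_s^{e_k+e_\ell}$, to agree as well, and the first step gives $z_s = w_s$. The only real ``obstacle'' is executing this case analysis carefully to confirm that all six subsimplices $s'$ other than $s$ have genuinely smaller depth or else lie on face $i$ or $j$; no analytic subtleties arise, the argument is purely combinatorial.
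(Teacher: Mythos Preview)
Your proof is correct and follows essentially the same route as the paper: both argue by induction on $s_i+s_j$ and use the internal gluing equation at $t=s+e_k+e_\ell$ to reduce the unknown shape $z_s^{e_k+e_\ell}$ to five subsimplices of strictly smaller depth. The only cosmetic differences are that the paper sets $i=2,j=3$ without loss of generality and leaves implicit the observation (which you spell out) that agreement at one edge of a subsimplex forces agreement at all six via the shape parameter relations.
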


\begin{proof}
We may assume that $z$ and $w$ agree on face $2$ and $3$. It is enough to prove that $z_s^{1100}=w_s^{1100}$ for all $s=(s_0,s_1,s_2,s_3)\in\Delta^3_{n-2}(\Z)$. By assumption, this holds if either $s_2$ or $s_3$ is $0$. Suppose by induction that $z_s^{1100}=w_s^{1100}$ for all $s$ with $s_2+s_3<k$, and let $s\in\Delta^3_{n-2}(\Z)$ be a subsimplex with $s_2+s_3=k$. Since the result holds, when either $s_2$ or $s_3$ is $0$, we may assume that $s=\alpha+0011$, with $\alpha\in\Delta^3_{n-4}(\Z)$. Let $t=s+1100$. By Lemma~\ref{lemma:InternalEquation}, $z$ and $w$ satisfy the internal gluing equations, i.e.~we have
\begin{equation}
z^{0011}_{\alpha+1100}z^{0101}_{\alpha+1010}z^{0110}_{\alpha+1001}z^{1001}_{\alpha+0110}z^{1010}_{\alpha+0101}z^{1100}_s=w^{0011}_{\alpha+1100}w^{0101}_{\alpha+1010}w^{0110}_{\alpha+1001}w^{1001}_{\alpha+0110}w^{1010}_{\alpha+0101}w^{1100}_s,
\end{equation}
which equals $1$. Note that for all terms except $z_s^{1100}$ and $w_s^{1100}$, the lower index satisfies $s_2+s_3<k$. By induction, each term $z^{1111-I}_{\alpha+I}$ equals $w^{1111-I}_{\alpha+I}$. Hence $z^{1100}_s=w_s^{1100}$, completing the induction. 
\end{proof}

\begin{lemma}\label{lemma:UniqueFactorization}
The factorization of the middle edges is unique, i.e.~if
\begin{equation}\label{eq:UniqueFactorization}
\prod_{k=1}^{n-1}\left(\prod^{n-k}_{i=1}x_i(1)\prod_{i=1}^{n-k-1}H_i(a_{k,i})\right)=\prod_{k=1}^{n-1}\left(\prod^{n-k}_{i=1}x_i(1)\prod_{i=1}^{n-k-1}H_i(b_{k,i})\right),
\end{equation}
then $a_{k,i}=b_{k,i}$ for all $k,i$.
\end{lemma}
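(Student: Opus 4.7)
I would prove this by induction on $n$. The base case $n=2$ is trivial since the product is $x_1(1)$ with no parameters. For the inductive step, write $M^{(a)}$ for the product as a function of the tuple, and factor off the outermost $k=1$ block,
\begin{equation*}
M^{(a)}=M_1^{(a)}\cdot M'^{(a)},\qquad M_1^{(a)}:=\prod_{i=1}^{n-1}x_i(1)\prod_{i=1}^{n-2}H_i(a_{1,i}).
\end{equation*}
A routine check identifies the tail $M'^{(a)}=\prod_{k=2}^{n-1}\left(\prod_{i=1}^{n-k}x_i(1)\prod_{i=1}^{n-k-1}H_i(a_{k,i})\right)$ with the level-$(n-1)$ factorization in the shifted parameters $a'_{k,i}:=a_{k+1,i}$ (acting as the identity on $e_n$). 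So it suffices to show that the first-level parameters $a_{1,1},\ldots,a_{1,n-2}$ are uniquely determined by $M^{(a)}$: then $M_1^{(a)}$ is determined, $M'^{(a)}=(M_1^{(a)})^{-1}M^{(a)}$ is determined, and the inductive hypothesis applied to $M'^{(a)}$ gives the rest.

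The key step is to read $a_{1,1},\ldots,a_{1,n-2}$ off a single column of $M^{(a)}$. First I would compute $M_1^{(a)}$ in closed form: using $\prod_{i=1}^{n-1}x_i(1)=E$, where $E$ is the upper triangular $n\times n$ matrix with $E_{lm}=1$ for $l\leq m$, and the elementary identity $\prod_{i=1}^{n-2}H_i(a_{1,i})=\diag(b_1,\ldots,b_n)$ for $b_m:=\prod_{i=m}^{n-2}a_{1,i}$ (with $b_{n-1}=b_n=1$), one finds $(M_1^{(a)})_{lm}=b_m$ when $l\leq m$ and $0$ otherwise. Second, I would verify the auxiliary identity $M'^{(a)}e_{n-1}=e_1+\cdots+e_{n-1}$ for every choice of parameters: each $H_i(a)$ appearing in $M'^{(a)}$ has $i\leq n-3$ and so fixes $e_{n-1}$; each $x_i(1)$ in $M'^{(a)}$ with $i<n-2$ fixes $e_{n-1}$; and the unique remaining string $\prod_{i=1}^{n-2}x_i(1)$ sends $e_{n-1}$ to $e_1+\cdots+e_{n-1}$ by an immediate right-to-left computation.

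Combining the two, $M^{(a)}e_{n-1}=M_1^{(a)}(e_1+\cdots+e_{n-1})$ has $l$-th coordinate $\sum_{j=l}^{n-1}b_j$, so the telescoping relations
\begin{equation*}
M^{(a)}_{l,n-1}-M^{(a)}_{l+1,n-1}=b_l\qquad (l=1,\ldots,n-2),\qquad M^{(a)}_{n-1,n-1}=b_{n-1}=1
\end{equation*}
recover each $b_l$, and hence each $a_{1,l}=b_l/b_{l+1}$, purely from entries of $M^{(a)}$. Thus $M^{(a)}=M^{(b)}$ forces $a_{1,l}=b_{1,l}$ for all $l$, and the induction closes. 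The only mild subtlety is verifying that the tail $M'^{(a)}$ really matches the level-$(n-1)$ expression in the shifted indices; every other step is straightforward matrix algebra.
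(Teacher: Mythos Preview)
Your argument is correct. The computation of $M_1^{(a)}$ is right, the verification that $M'^{(a)}e_{n-1}=e_1+\cdots+e_{n-1}$ is right (the only factor in $M'^{(a)}$ that moves $e_{n-1}$ is the leftmost string $\prod_{i=1}^{n-2}x_i(1)$ from the $k=2$ block, everything to its right fixes $e_{n-1}$), and the telescoping recovery of the $b_l$ and hence the $a_{1,l}$ from the $(n-1)$-th column of $M^{(a)}$ is clean. The identification of the tail $M'^{(a)}$ with the level-$(n-1)$ product in shifted parameters, acting trivially on $e_n$, is immediate since every $x_i$ and $H_i$ in $M'^{(a)}$ has $i\le n-2$.

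This is a genuinely different route from the paper's. The paper's proof reads off only the \emph{diagonal} entries of the product, invoking Remark~\ref{rm:DiagonalEntries} to obtain $\prod_{i=l}^{n-2}\prod_{k=1}^{n-1-i}a_{k,i}=\prod_{i=l}^{n-2}\prod_{k=1}^{n-1-i}b_{k,i}$ for each $l$, and then appeals to ``induction'' without further detail. Your approach instead uses the $(n-1)$-th \emph{column}, which already pins down all of $a_{1,1},\dots,a_{1,n-2}$ at once, and then peels off the outer block to reduce $n$ by one. The gain is that your argument is fully explicit and self-contained: it does not rely on the diagonal formula established elsewhere, and the inductive step is spelled out rather than asserted. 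The paper's version is terser and leverages a computation it already needed for other purposes, but as written it leaves the reader to reconstruct how the diagonal identities alone (which give only $n-1$ constraints on $\binom{n-1}{2}$ parameters) feed into the induction.
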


\begin{proof}
Suppose~\eqref{eq:UniqueFactorization} holds. In particular, all diagonal entries are equal. Hence, as in Remark~\ref{rm:DiagonalEntries}, the equality $\prod_{i=l}^{n-2}\prod_{k=1}^{n-1-i} a_{k,i}=\prod_{i=l}^{n-2}\prod_{k=1}^{n-1-i} b_{k,i}$ holds for all $l=1,\dots,n$. The result now follows by induction.
%Every $x\in B$ factors uniquely as $x=uh$, where $u\in N$, $h\in H$. Using the commutator relations~\eqref{eq:CommutatorRelations}, it follows that the $H$ component of the left hand side equals
%\begin{equation}
%\prod_{k=1}^{n-1}\prod_{i=1}^{n-1-k}H_i(a_{k,i})=\prod_{i=1}^{n-2}H_i(\prod_{k=1}^{n-1-i}a_{k,i})=d(\{\prod_{i=j}^{n-2}\prod_{k=1}^{n-1-i} a_{k,i}\}_{j=1}^n)
%\end{equation}
%If $\prod_{i=j}^{n-2}\prod_{k=1}^{n-1-i} a_{k,i}=\prod_{i=j}^{n-2}\prod_{k=1}^{n-1-i} b_{k,i}$ for all $j$, it follows by induction that $a_{k,i}=b_{k,i}$ for all $i$ and $j$.
\end{proof}

\begin{proposition}\label{prop:ZisSurjective}
The map $\mathcal Z$ from generic decorations on $\Delta^3_n$ to shape assignments on $\Delta^3_n$ is surjective.
\end{proposition}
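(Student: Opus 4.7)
The plan is to construct a generic $\PGL(n,\C)/B$-decoration $D$ from the given shape assignment $z$ and verify $\mathcal Z(D) = z$, by first building a cocycle on $\dtruncsimp$ that will end up being the natural cocycle of $D$. First, I would define candidate edge labels on $\dtruncsimp$ by setting $\alpha^{v_0v_1v_2} = q_1$ and taking $\beta^{v_0v_1v_2}$ and $\gamma^{v_0v_1v_2}$ to be given by the formulas \eqref{eq:NaturalCocycleFormula} of Theorem~\ref{thm:NaturalCocycleShapes}, with all $X$-coordinates and shape inputs drawn from $z$. By construction, these labels satisfy conditions (i)--(iii) of Lemma~\ref{lemma:UniversalProperty}.

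The key technical step is to verify that these labels form a cocycle, i.e.~that the product around each $2$-face of $\dtruncsimp$ is the identity. For the edge-square faces (long--short alternating), the condition reduces to a direct identity using $q_1 H_i(x) q_1 = H_{n-i}(x)^{-1}$ in $\PGL(n,\C)$, together with the symmetries $z^{(v_1v_0v_2)}_i = z^{(v_0v_1v_2)}_{n-i}$ and $\varepsilon_\circlearrowleft^{v_1v_0v_2} = -\varepsilon_\circlearrowleft^{v_0v_1v_2}$; so the $\gamma$ and $q_1\gamma q_1$ factors cancel directly. For the face-hexagons (long--middle alternating), the condition follows from the transformation of $X$-coordinates under cyclic permutation of $(v_0,v_1,v_2)$, so the three $\beta$'s around the hexagon close up. The vertex-hexagon faces (middle--short alternating) constitute the substantive case: after expanding using the commutator relations \eqref{eq:CommutatorRelations} and rewriting shapes in terms of diamond coordinates via Lemma~\ref{ShapesandDiamonds}, the closure condition becomes exactly the internal gluing equations satisfied by $z$ at interior integer points of $\Delta_n^3$; for $n = 2, 3$ (where no interior points exist) it reduces to the shape parameter relations \eqref{eqn:shapeparamrel} themselves. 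This vertex-hexagon verification will be the main obstacle, as it requires carefully teasing out how the $H_i$'s produced by the diamond-to-shape conversion interact with the $x_i(1)$'s in the middle-edge factorization across three different faces meeting at a vertex.

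Once the cocycle is established, Remark~\ref{rm:CocycleToDecoration} produces a decoration $D$ compatible with it, and Lemma~\ref{lemma:UniversalProperty} identifies our cocycle as the natural cocycle $\mathcal L_{\alpha\beta\gamma}(D)$. By Theorem~\ref{thm:NaturalCocycleShapes} the natural cocycle is expressed in terms of $\mathcal Z(D)$, and by Lemma~\ref{lemma:UniqueFactorization} the middle-edge factorization uniquely recovers the shape data, forcing $\mathcal Z(D) = z$. Genericity of $D$ follows from every shape parameter lying in $\C\setminus\{0,1\}$, which via the explicit formulas forces all of the relevant determinants in Definition~\ref{def:generic} to be non-zero.
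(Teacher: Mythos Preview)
Your approach is genuinely different from the paper's, and it has a real gap.

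The paper does not attempt to assemble a full cocycle on $\dtruncsimp$ and then check all the $2$-cell relations. Instead it simply \emph{writes down a decoration}: it sets
\[
D=(B,\;q_1B,\;\beta^{012}q_1B,\;\gamma^{012}\beta^{013}q_1B),
\]
with $\beta^{012},\beta^{013},\gamma^{012}$ given by the formulas~\eqref{eq:NaturalCocycleFormula} using the prescribed shape assignment $z$. It then computes $z'=\mathcal Z(D)$ and shows $z'=z$ by appealing to Lemma~\ref{lemma:AgreeonFaces}: a shape assignment is determined by its values on any two faces. Since $\gamma'^{012}=\gamma^{012}$ and $\beta'^{012}=\beta^{012}$ by construction, equality on face~$3$ follows by a short induction on $s_2$, using Lemma~\ref{lemma:UniqueFactorization} to match the $X$-coordinates and then propagating via~\eqref{eq:XandProducts}. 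Face~$2$ is analogous. Only three of the twenty-four $\beta$'s and one $\gamma$ are ever used, and no cocycle condition is checked.

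Your route, by contrast, requires verifying all the $2$-cell relations on $\dtruncsimp$ for labels built from $z$. The face-hexagon and edge-square cases are plausible (and can be handled by noting that the relevant data on a single face or edge can always be realized by \emph{some} decoration, so the identity is inherited). But your treatment of the vertex-hexagon has two problems. First, you propose to ``rewrite shapes in terms of diamond coordinates via Lemma~\ref{ShapesandDiamonds}'': diamond coordinates are defined from a Ptolemy assignment, i.e.\ from an $\SL(n,\C)/N$-decoration, which you do not have --- this is exactly what you are trying to construct, so the argument is circular. Second, the claim that the vertex-hexagon relation ``becomes exactly the internal gluing equations'' is not justified. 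The vertex-hexagon relation is a matrix identity in $B\subset\PGL(n,\C)$ involving all $X$-coordinates on the three faces through $v_0$ and all shapes on the three edges through $v_0$; there are $\binom{n-1}{3}$ internal equations, and it is not at all clear how they account for the full upper-triangular content of that identity. At a minimum this needs a careful argument; as stated it is a gap.

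If you want to rescue your strategy, you would need an independent proof that the labels~\eqref{eq:NaturalCocycleFormula} built from an arbitrary shape assignment satisfy the vertex-hexagon relation, without invoking Ptolemy/diamond data. The paper's approach sidesteps this entirely by using Lemma~\ref{lemma:AgreeonFaces} to reduce the problem to a single face, which is a substantial simplification you should adopt.
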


\begin{proof}
Let $z$ be a shape assignment. We wish to construct a decoration $D$ with $\mathcal Z(D)=z$. Let $D=(B, q_1B, \beta_{012} q_1 B, \gamma_{012} \beta_{013} q_1 B)$ and let $z'=\mathcal Z(D)$. By Lemma~\ref{lemma:AgreeonFaces}, it is enough to prove that $z'$ agrees with $z$ on face $2$ and $3$. 
We prove this for face $3$ ($s_3=0$), face $2$ being similar. We use induction on $s_2$. Let $\beta'$ and $\gamma'$ denote the labelings of the natural cocycle of $D$. Let $X_t$ and $X'_t$ denote the $X$ coordinates of $z$ and $z'$. Note that $\beta'_{012}=\beta_{012}$ and $\gamma'_{012}=\gamma_{012}$. Since $\gamma'_{012}=\gamma_{012}$, the equality $z_s^{1100}=w_s^{1100}$ holds for $s_2=0$ proving the induction start. Since $\beta'_{012}=\beta_{012}$, it follows from 
Lemma~\ref{lemma:UniqueFactorization} that $X_t=X'_t$ for all $t$ on face $2$ and $3$.
Now suppose by induction that $z_s^{1100}=w_s^{1100}$ holds for $s_2<k$. Let $t=s+1100$.  
We have
\begin{equation}
-z_s^{1100}z_{t-1010}^{1010}z^{0110}_{t-0110}=X_t=X'_t=-z_s'^{1100}z_{t-1010}'^{1010}z'^{0110}_{t-0110}.
\end{equation}
By induction, $z_{t-1010}^{1010}=z_{t-1010}'^{1010}$ and $z_{t-0110}^{0110}=z_{t-0110}'^{0110}$, so we must also have
$z_s^{1100}=z_s'^{1100}$. This proves the result.
\end{proof}

\begin{theorem}\label{thm:One-One}
The bottom row of diagram~\eqref{eqn:DiagramIntro} consists of one-one correspondences. 
\end{theorem}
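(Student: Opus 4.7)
The plan is to establish the bijectivity of the two maps $\mathcal Z$ and $\mathcal L_{\alpha\beta\gamma}$ in the bottom row of~\eqref{eqn:DiagramIntro} separately. Since surjectivity of $\mathcal Z$ is already Proposition~\ref{prop:ZisSurjective}, the new work lies in establishing bijectivity of $\mathcal L_{\alpha\beta\gamma}$ and injectivity of $\mathcal Z$.

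For $\mathcal L_{\alpha\beta\gamma}$, bijectivity follows essentially from the material of Section~\ref{sec:NaturalCocycles}. The inverse is the construction outlined in Remark~\ref{rm:CocycleToDecoration}: write a natural $(\PGL(n,\C),B,H)$-cocycle as the coboundary $\delta\tau$ of a $0$-cochain $\tau$ (unique up to left $\PGL(n,\C)$-multiplication), and assign to each vertex $v$ of $\Delta$ the $B$-coset $\tau^{vv'v''}B$. By Lemma~\ref{lemma:UniversalProperty}, short edges take values in $H\subset B$ and middle edges in $B$, so the coset is independent of the auxiliary choice of $v',v''$; uniqueness properties of $\tau$ make this two-sided inverse manifest on each simplex.

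For the injectivity of $\mathcal Z$, the key observation is that Theorem~\ref{thm:NaturalCocycleShapes} expresses $\mathcal L_{\alpha\beta\gamma}(D)$ explicitly, via formula~\eqref{eq:NaturalCocycleFormula}, entirely in terms of the shape assignment $\mathcal Z(D)$. Hence any two generic $\PGL(n,\C)/B$-decorations $D_1,D_2$ with $\mathcal Z(D_1)=\mathcal Z(D_2)$ produce equal natural cocycles $\mathcal L_{\alpha\beta\gamma}(D_1)=\mathcal L_{\alpha\beta\gamma}(D_2)$, and the bijectivity of $\mathcal L_{\alpha\beta\gamma}$ just established then forces $D_1=D_2$. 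This also shows that the assignment from shape assignments to natural cocycles given by~\eqref{eq:NaturalCocycleFormula} is in fact the composition $\mathcal L_{\alpha\beta\gamma}\circ\mathcal Z^{-1}$, which will be useful for transporting symmetries and identifications between the two columns.

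The main obstacle is the global assembly: verifying that the three simplex-level bijections descend consistently to bijections on triangulation-level objects. On the decoration side, face identifications glue adjacent simplices (up to a unique element of $\PGL(n,\C)$); on the shape side this correspondence is encoded by the face equations of Section~\ref{subsection:ProofForFaceEqn}; and on the cocycle side it amounts to matching middle-edge labels on identified faces, together with filling in the prisms between doubly truncated simplices (enforced by the edge equations of Section~\ref{subsection:ProofForEdgeEqn}). Since all three conditions have already been shown to correspond in the earlier sections and the bijection is local in nature, the global statement reduces to the simplex-level bijection, concluding the proof.
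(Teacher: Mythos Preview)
Your proposal is correct and follows essentially the same approach as the paper: bijectivity of $\mathcal L_{\alpha\beta\gamma}$ via Remark~\ref{rm:CocycleToDecoration}, injectivity of $\mathcal Z$ via Theorem~\ref{thm:NaturalCocycleShapes}, surjectivity via Proposition~\ref{prop:ZisSurjective}, and global assembly via the face equations (middle edges) and edge equations (prisms). The only imprecision is that the gluing of middle edges is governed not by Section~\ref{subsection:ProofForFaceEqn} directly but by formula~\eqref{eq:NaturalCocycleFormula} together with Remark~\ref{rm:HalfofFaceEq}; otherwise your argument mirrors the paper's.
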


\begin{proof}
We first prove this for a simplex. The map $\mathcal L_{\alpha\beta\gamma}$ is bijective by Remark~\ref{rm:CocycleToDecoration}. Injectivity of $\mathcal Z$ follows from Theorem~\ref{thm:NaturalCocycleShapes}, and surjectivity was proved in Proposition~\ref{prop:ZisSurjective}. Now suppose $z$ is a shape assignment on $(M,\T)$. We must prove that $z$ determines a generic decoration, or equivalently a natural cocycle. By Proposition~\ref{prop:ZisSurjective} $z$ determines a natural cocycle on each doubly truncated simplex. We must prove that these fit together to form a cocycle on $M$. The labelings of long edges obviously match up, and by \eqref{eq:NaturalCocycleFormula} and Remark~\ref{rm:HalfofFaceEq}, the middle edges match up if and only if the face equations are satisfied. 
Now all that is left to prove is that the induced labeling on the prisms are cocycles. This is a direct consequence of the edge equations.
\end{proof}

\begin{remark} 
It follows from Theorem~\ref{thm:One-One} that \eqref{eq:NaturalCocycleFormula} gives an explicit map from shape assignments to cocycles. Combined with Remark~\ref{rm:CocycleToDecoration} this gives explicit inverses of $\mathcal Z$ and $\mathcal L_{\alpha\beta\gamma}$. Similarly, we have explicit inverses of $\mathcal C$ and $\mathcal L_{\alpha\beta}$ via \eqref{eq:betaTrunc}.
\end{remark}

\section{Duality}\label{sec:Duality}

In this section we make some observations about the relationship between the shape coordinates and the Ptolemy coordinates. Our observations suggest that there is a fundamental duality between the two sets of coordinates, which is interesting in its own right. We believe that this duality is a $3$-dimensional aspect of the duality (see Fock-Goncharov~\cite[p.~33]{FockGoncharov}) between $\mathcal A$-coordinates and $\mathcal X$-coordinates on higher Teichm\"{u}ller space of a simply connected Lie group (e.g.~$\SL(n,\C)$), respectively, its Langlands dual group (e.g.~$\PGL(n,\C)$).

Note that for each subsimplex, one shape parameter determines the other two. We single out one:

\begin{definition}
We call the parameters $z^{1100}_s$ \emph{shape coordinates}.
\end{definition}

As is customary for $n=2$, we can write the gluing equations entirely in terms of the shape coordinates. Let $\sub_n(\T)$ denote the set of all subsimplices of the simplices of $\T$.

\begin{observation}[Duality] The coordinates and their relations are parametrized by the following sets:
\begin{center}
\begin{tabular}{c|c}
Ptolemy coordinates & shape coordinates\\
$\dot\T_n(\Z)$ & $\sub_n(\T)$\\ \hline
Ptolemy relations & gluing equations\\
$\sub_n(\T$) & $\dot\T_n(\Z)$
\end{tabular}
\end{center}
In particular, we have
\begin{equation}\label{eq:Duality}
\begin{array}{rcl}
\#\mbox{\{Ptolemy coordinates\}} & = & \#\mbox{\{Gluing equations\}}\\
\#\mbox{\{Ptolemy relations\}} & = & \#\mbox{\{Shape coordinates\}}.
\end{array}
\end{equation}
\end{observation}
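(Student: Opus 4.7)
The proposed duality is essentially a bookkeeping statement, so the plan is to verify each of the four identifications from the definitions already set up in the paper, and then read off \eqref{eq:Duality} as a corollary. The main work is to carefully match each set of objects with its indexing set; once this is done, the equalities of cardinalities follow at once.

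First I would recall the indexing of Ptolemy coordinates. By Definition~\ref{def:PtolemyAssignmentOrd} (and its extension to general concrete triangulations in Definition~\ref{def:PtolemyAssignmentGeneral}), a Ptolemy assignment is a map $\dot\T_n(\Z)\to\C^*$, so the set of Ptolemy coordinates is in bijection with $\dot\T_n(\Z)$. On the other hand, Definition~\ref{def:PtolemyAssignment} imposes one Ptolemy relation \eqref{eqn:PtolemyRelation} per subsimplex $s\in\Delta^3_{n-2}(\Z)$ of each simplex of $\T$, i.e.\ one relation per element of $\sub_n(\T)$. This gives the right-hand column of the table.

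Next I would identify the shape coordinates with $\sub_n(\T)$. By the definition singled out just before the observation, a shape coordinate is the parameter $z_s^{1100}$ attached to a subsimplex $s$, and by the shape parameter relations \eqref{eqn:shapeparamrel} the remaining five parameters on that subsimplex are determined by $z_s^{1100}$. Hence the set of shape coordinates is in bijection with $\sub_n(\T)$. Finally, Definition~\ref{def:GluingEquationsOri} (and its order-agnostic extension in Definition~\ref{def:ShapeAssignmentGeneral}) defines one generalized gluing equation \eqref{eqn:GeneralizedGluing} per non-vertex integral point $p\in\dot\T_n(\Z)$, which completes the left-hand column of the table.

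With the four identifications in hand, the two numerical equalities in \eqref{eq:Duality} are immediate: the first equates $|\dot\T_n(\Z)|$ on both sides, and the second equates $|\sub_n(\T)|$ on both sides. There is no real obstacle here beyond pointing to the correct definitions; the content of the observation is that the same two indexing sets $\dot\T_n(\Z)$ and $\sub_n(\T)$ play swapped roles on the two sides of the duality, which is precisely the phenomenon established, at the level of exponent matrices, in Proposition~\ref{prop:BetaStarBeta} where $\beta$ and $\beta^*$ interchange these indexing sets. Thus the observation is properly a corollary of the setup already developed in the paper.
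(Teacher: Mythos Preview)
Your proposal is correct and matches the paper's treatment: the Observation is stated without proof precisely because it is a direct reading of Definitions~\ref{def:PtolemyAssignment}--\ref{def:PtolemyAssignmentGeneral} and Definitions~\ref{def:GluingEquationsOri}--\ref{def:ShapeAssignmentGeneral}, exactly as you unpack them. One trivial slip: you have the columns of the table swapped---the Ptolemy data occupy the \emph{left} column and the shape data the \emph{right}---but this does not affect the argument.
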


\begin{proposition}\label{prop:DualityTori}
If all boundary components of $M$ are tori, we have
\begin{equation}
\#\left\{\txt{Ptolemy\\ ~coord.}\right\}=\#\left\{\txt{Ptolemy\\relations}\right\}=\#\left\{\txt{Shape\\ ~coord.}\right\} =\#\left\{\txt{Gluing\\equations}\right\}=\binom{n+1}{3}t,
\end{equation}
where $t$ is the number of simplices of $\T$.
\end{proposition}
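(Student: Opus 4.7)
\medskip

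\noindent\textbf{Proof plan.} The proposition asserts four equalities, which by the duality observation reduce to two counting statements: that the set of non-vertex integral points $\dot\T_n(\Z)$ and the set of subsimplices $\sub_n(\T)$ both have cardinality $\binom{n+1}{3}t$ when all boundary components of $M$ are tori. My plan is to dispatch these two counts separately.

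First, the number of subsimplices is easy and requires no hypothesis on the boundary. By Definition~\ref{def:Subsimplex}, the subsimplices of a single simplex $\Delta^3_n$ are in bijection with $\Delta^3_{n-2}(\Z)$, and since subsimplices live inside a fixed simplex there are no identifications between subsimplices of different simplices. Applying \eqref{eq:Counting} with $n$ replaced by $n-2$ gives $|\Delta^3_{n-2}(\Z)| = \binom{n+1}{3}$, so $|\sub_n(\T)| = t\binom{n+1}{3}$. This simultaneously counts the Ptolemy relations and the shape coordinates.

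Second, the count of $|\dot\T_n(\Z)| = t\binom{n+1}{3}$ is exactly the content of Lemma~\ref{lemma:Rank}, already established in the paper. The torus-boundary hypothesis enters through the Euler characteristic identity $e = \tfrac{1}{2}f = t$, which is then combined with the stratification of $\dot\T_n(\Z)$ into edge, face, and interior contributions as in \eqref{eq:EulerEquation}. This simultaneously counts the Ptolemy coordinates and the gluing equations.

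Combining the two counts yields all four equalities stated in the proposition. There is no real obstacle here: both facts already appear (explicitly or implicitly) earlier in the paper, so the proof is essentially a bookkeeping remark packaging them together with the duality observation. The only care needed is to note that the torus hypothesis is used only for the $\dot\T_n(\Z)$ count; the subsimplex count is a purely local statement.
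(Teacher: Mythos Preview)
Your proposal is correct and takes essentially the same approach as the paper: the paper's proof is a one-line citation of Lemma~\ref{lemma:Rank}, and you have simply unpacked this by making explicit both the $\dot\T_n(\Z)$ count (which is Lemma~\ref{lemma:Rank}) and the subsimplex count $|\sub_n(\T)| = t|\Delta^3_{n-2}(\Z)| = t\binom{n+1}{3}$ (which the paper treats as implicit via \eqref{eq:Counting}). Your observation that the torus hypothesis is needed only for the $\dot\T_n(\Z)$ count is a nice clarification not stated in the paper's proof.
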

\begin{proof}
This follows immediately from Lemma~\ref{lemma:Rank}.
\end{proof}

%\subsection{Further aspects of duality}
%We briefly sketch some more aspects of the duality. Since the exact nature of the duality remains speculative, we omit the details.

%In Section~\ref{sec:CuspEquations} we showed how to parametrize boundary-unipotent representations by adding more \emph{equations}. Dually one can parametrize non-boundary-unipotent $\SL(n,\C)$-representations by adding more \emph{variables}. We shall not discuss this here.
%As mentioned in Remark~\ref{rm:ObstructionCocycles} one can parametrize $\PSL(n,\C)$-representations by considering obstruction classes. This is done by adding extra \emph{variables}. Dually, one can obtain $\GL(n,\C)$-representation by adding extra equations. These ideas are summarized in Table~\ref{table:MoreDuality}.

%\begin{table}[htb]
%\begin{tabular}{c|c||c|c}
%\multicolumn{2}{c}{Shape assignments} & \multicolumn{2}{c}{Ptolemy assignments}\\ \hline \hline
%obstruction equations & -- {\bf native} -- &  non-parabolic variables & both variables \\
%$\mathrm{GL}$ & $\mathrm{PGL}$ & $\mathrm{SL}$ & $\mathrm{PSL}$\\ \hline
%both equations & cusp equations & -- {\bf native} --  & obstruction variables \\
%$(\mathrm{GL},N)$ & $(\mathrm{PGL},N)$ & $(\mathrm{SL},N)$ & $(\mathrm{PSL},N)$ \\ \hline
%\end{tabular}
%\caption{Further aspects of duality.} \label{table:MoreDuality}
%\end{table}

\section{The cusp equations}\label{sec:CuspEquations}

The decomposition of $M$ into doubly truncated simplices and prisms induces a polyhedral decomposition of $\partial M$. Note that every simple closed curve in $\partial M$ is isotopic to an edge path in this decomposition. %The natural cocycle on $M$ induces a cocycle on $\partial M$, and the corresponding representation is boundary-unipotent if and only if the product along each closed edge path is in $N$. 

Let $\sigma$ be a natural $(\PGL(n,\C),B,H)$-cocycle on $M$. By Remark~\ref{rm:NaturalLift}, the restriction of $\sigma$ to $\partial M$ has a canonical lift (also denoted by $\sigma$) to a cocycle with values in $B\subset\GL(n,\C)$.  

\begin{definition}
Let $\lambda$ be a closed edge path in $\partial M$ and let $\beta_1,\dots\beta_r$ and $\gamma_1,\dots,\gamma_s$ be the labelings induced by $\sigma$ of the middle, respectively, short edges traversed by $\lambda$. For $l=1,\dots, n-1$, the \emph{level $l$ cusp equation} of $\lambda$ is the equation
\begin{equation}
\prod_{j=1}^r(\beta_j)_{ll}\prod_{j=1}^s(\gamma_j)_{ll}=1,
\end{equation}
where the subscript $ll$ denotes the $l$th diagonal entry.
\end{definition}

\begin{figure}[htb]
\begin{center}
\scalebox{0.9}{
\input{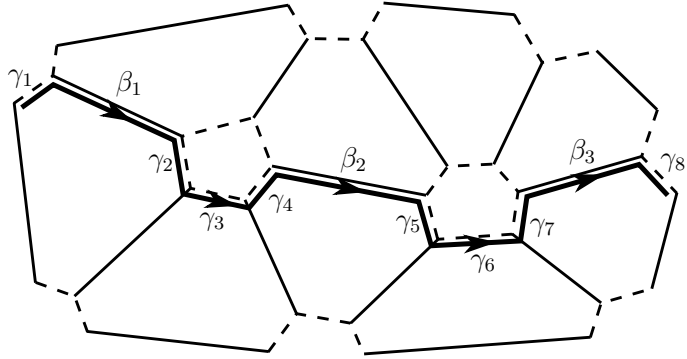}}
\end{center}
\caption{A curve in the polyhedral decomposition of $\partial M$.}
\end{figure}

\begin{lemma}
The representation $\rho$ determined (up to conjugation) by $\sigma$ is boundary-unipotent if and only if the cusp equation at each level is satisfied for each edge path representing a generator of the fundamental group of a boundary component of $\partial M$.
\end{lemma}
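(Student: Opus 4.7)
The plan is to analyze $\rho(\lambda)$ for a peripheral edge loop $\lambda$ through the canonical $B$-valued lift $\widetilde\sigma$ of $\sigma|_{\partial M}$ provided by Remark~\ref{rm:NaturalLift}, using the explicit formula \eqref{eq:NaturalCocycleFormula} of Theorem~\ref{thm:NaturalCocycleShapes}. I will show that the cusp equations at levels $l=1,\dots,n-1$ are exactly the conditions that force the lifted value $\widetilde\rho(\lambda)$ to lie in $N\subset\GL(n,\C)$, which is equivalent to $\rho(\lambda)$ being unipotent in $\PGL(n,\C)$.

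First I would verify that each edge label $\beta^{v_0v_1v_2}$ and $\gamma^{v_0v_1v_2}$ in \eqref{eq:NaturalCocycleFormula} lies in $B$ -- it is a product of the upper-triangular factors $x_i(1)$, $H_i(\cdot)$, and $d_{\pm 1}$ -- and that its $n$-th diagonal entry equals $1$: the products defining $\beta_{nn}^{v_0v_1v_2}$ and $\gamma_{nn}^{v_0v_1v_2}$ in Remark~\ref{rm:DiagonalEntries} are empty for $l=n$, and the bottom-right entry of $d_{\pm 1}$ equals $(-i)^{n-n}=1$. Consequently $\widetilde\rho(\lambda)=\prod_j\beta_j\prod_j\gamma_j$ is upper triangular with $n$-th diagonal entry $1$, and its $l$-th diagonal entry equals precisely the left-hand side $\prod_j(\beta_j)_{ll}\prod_j(\gamma_j)_{ll}$ of the level-$l$ cusp equation for $\lambda$. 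Since $\rho(\lambda)\in\PGL(n,\C)$ is conjugate to an element of $N$ if and only if its $\GL$-lift has all eigenvalues equal, and since the eigenvalues of an upper triangular matrix are its diagonal entries, the automatic normalization $\widetilde\rho(\lambda)_{nn}=1$ shows that this happens if and only if every diagonal entry of $\widetilde\rho(\lambda)$ equals $1$, i.e., the level-$l$ cusp equation for $\lambda$ holds for every $l=1,\dots,n-1$.

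To conclude, if the cusp equations at every level hold for a set of generators $\lambda_1,\dots,\lambda_m$ of $\pi_1(\partial_i M)$, then each $\widetilde\rho(\lambda_j)$ lies in $N$, and since $N$ is a subgroup of $\GL(n,\C)$ this forces $\widetilde\rho(\pi_1(\partial_i M))\subset N$, so $\rho$ is boundary-unipotent on $\partial_i M$. The converse is immediate by applying the equivalence of the previous paragraph to each generator. The only non-formal input is Remark~\ref{rm:NaturalLift} itself, asserting that the canonical lift is genuinely $B$-valued and a true cocycle on $\partial M$; this is verified directly from the explicit formulas of Theorem~\ref{thm:NaturalCocycleShapes} and is the step I expect to require the most care, since one must confirm that the face relations on $\partial M$ hold exactly in $B$ and not merely up to central scalars in $\PGL(n,\C)$.
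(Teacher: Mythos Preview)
Your argument is correct and matches the paper's one-line proof: boundary-unipotence means by definition that the product of labelings along each peripheral loop lies in $N$, and since the canonical lift has last diagonal entry $1$, this is exactly the cusp equations at levels $1,\dots,n-1$. The cocycle verification you flag is immediate from the same normalization---each face relation holds in $\PGL(n,\C)$ and the lifted product is upper triangular with last diagonal entry $1$, forcing it to be the identity in $\GL(n,\C)$---or can be bypassed entirely by arguing with $\rho$ rather than $\widetilde\rho$, since $N$ is already a subgroup of $\PGL(n,\C)$.
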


\begin{proof}
By definition, $\rho$ is boundary-unipotent if and only if for each closed edge path $\lambda$ in $\partial M$, the product of the labelings of edges traversed by $\lambda$ is in $N$. This proves the result.
\end{proof}

\begin{remark}
Note that for $n=2$ we recover the traditional cusp equations.%\textbf{Elaborate.}
\end{remark}

\subsection{Simplifying the cusp equations}

\begin{lemma} 
The cusp equations are equivalent to the equations
\begin{equation} \label{eq:SimplifiedCuspEquation}
\prod_{j=1}^r\frac{(\beta_j)_{ll}}{(\beta_j)_{l+1,l+1}}\prod_{j=1}^s\frac{(\gamma_j)_{ll}}{(\gamma_i)_{l+1,l+1}}=1.
\end{equation}
Moreover, each factor is given by an expression of the form
\begin{equation}\label{eq:Simplification}
\frac{(\beta_j)_{ll}}{(\beta_j)_{l+1,l+1}}=-\prod_{k=1}^{n-1-l}X_{k,l,\Delta}^{\varepsilon_\circlearrowleft^{v_0v_1v_2}},\qquad \frac{(\gamma_j)_{ll}}{(\gamma_j)_{l+1,l+1}}=z_{l,\Delta}^{-\varepsilon_\circlearrowleft^{v_0v_1v_2}}
\end{equation}
where $v_0v_1v_2$ is the starting vertex of $\beta_j$, respectively, $\gamma_j$ and $\Delta$ is the corresponding simplex.
\end{lemma}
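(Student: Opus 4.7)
The plan is to prove the two assertions separately: first the equivalence of \eqref{eq:SimplifiedCuspEquation} with the cusp equations, and then the explicit formulas \eqref{eq:Simplification}, both by direct computation from Theorem~\ref{thm:NaturalCocycleShapes} and Remark~\ref{rm:DiagonalEntries}.

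For the equivalence, set $d_l = \prod_{j=1}^r (\beta_j)_{ll}\prod_{j=1}^s (\gamma_j)_{ll}$, so that the level-$l$ cusp equation states $d_l = 1$ while \eqref{eq:SimplifiedCuspEquation} states $d_l/d_{l+1} = 1$. The implication ``cusp $\Rightarrow$ simplified'' is immediate. For the converse, the key observation is that the canonical lift provided by Remark~\ref{rm:NaturalLift} is built so that every middle edge $\beta_j$ is a normalized element of $B$, which by definition has last column equal to $(1,\dots,1)^t$ and in particular $(\beta_j)_{nn} = 1$; similarly every short edge $\gamma_j = \prod_i H_i(z_i^{\pm 1})$ satisfies $(\gamma_j)_{nn} = 1$, since $H_i(x)$ has last diagonal entry $1$. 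Hence $d_n = 1$ holds automatically, and the relation
\begin{equation*}
d_l = d_n \prod_{m=l}^{n-1}\frac{d_m}{d_{m+1}}
\end{equation*}
shows that \eqref{eq:SimplifiedCuspEquation} holding for $l=1,\dots,n-1$ forces $d_l = 1$ for every $l$.

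For the explicit formulas, I will substitute the diagonal entries from Remark~\ref{rm:DiagonalEntries},
\begin{equation*}
\beta^{v_0v_1v_2}_{ll}=(-1)^{n-l}\prod_{i=l}^{n-2}\prod_{k=1}^{n-1-i}X_{k,i}^{\varepsilon_\circlearrowleft^{v_0v_1v_2}}, \qquad \gamma^{v_0v_1v_2}_{ll}=\prod_{i=l}^{n-1}z_i^{-\varepsilon_\circlearrowleft^{v_0v_1v_2}},
\end{equation*}
into the ratios $\beta_{ll}/\beta_{l+1,l+1}$ and $\gamma_{ll}/\gamma_{l+1,l+1}$. Both products telescope: all factors indexed by $i \geq l+1$ cancel, leaving the $i=l$ factor together with the residual sign $(-1)^{n-l}/(-1)^{n-l-1} = -1$ for $\beta$, and the single factor $z_l^{-\varepsilon_\circlearrowleft^{v_0v_1v_2}}$ for $\gamma$. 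This is exactly \eqref{eq:Simplification}.

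The main (and only) obstacle is tracking the normalization used for the canonical lift. Once one confirms via Lemma~\ref{lemma:UniversalProperty} that middle edges really are normalized in $B\subset\GL(n,\C)$ (rather than merely representing an element of $B\subset\PGL(n,\C)$), both claims reduce to the routine telescoping above. No further geometric input is required, since the formulas in Theorem~\ref{thm:NaturalCocycleShapes} have already done all the work of translating shapes into diagonal entries.
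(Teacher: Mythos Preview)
Your proof is correct and follows essentially the same approach as the paper: both use that the $n$th diagonal entry of each $\beta_j$ and $\gamma_j$ equals $1$ to deduce the equivalence, and both obtain \eqref{eq:Simplification} by forming the quotient of consecutive diagonal entries from Remark~\ref{rm:DiagonalEntries}. You have simply written out the telescoping step more explicitly than the paper does.
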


\begin{proof}
Equation \eqref{eq:SimplifiedCuspEquation} follows from the fact that the $n$th diagonal entry is $1$ for both short and middle edges.
By Remark~\ref{rm:DiagonalEntries}, the diagonal entries are expressions of the form
\begin{equation}
\label{main}
(\beta_j)_{ll}=(-1)^{n-l}\prod_{i=l}^{n-2}\prod_{k=1}^{n-1-i}X_{k,i}^{\varepsilon_\circlearrowleft^{v_0v_1v_2}},\quad (\gamma_j)_{ll}=\prod_{i=l}^{n-1}z_i^{-\varepsilon_\circlearrowleft^{v_0v_1v_2}}.
\end{equation}
Taking quotients, this proves the result.
\end{proof}

\begin{figure}[htb]
\begin{center}
\scalebox{0.9}{
\input{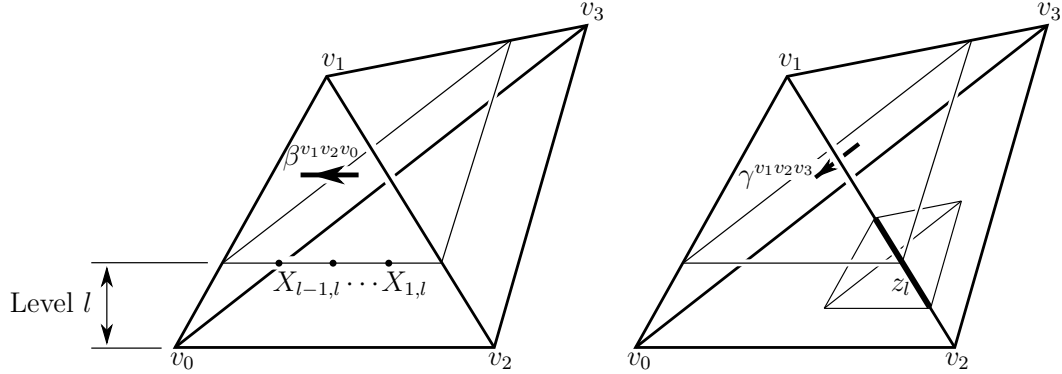}}
\end{center}
\caption{The terms in the simplified cusp equations at level $l$.}\label{fig:CuspEqTerms}
\end{figure}

\begin{remark}
Note that the contribution from a middle edge is (minus) the product of the $X$-coordinates at level $l$. The contribution from a short edge is the shape coordinate at level $l$. See Figure~\ref{fig:CuspEqTerms}. 
\end{remark}

\begin{remark}
Note that the cusp equations can be written in the form~\eqref{eq:GluinEqSimplified}, but with $1$ replaced by a sign.
\end{remark}

\section{Example: the figure-eight knot}\label{sec:Example}

Consider the triangulation of the figure-eight knot complement given in Figure~\ref{fig:Fig8Ordered}. Figure~\ref{fig:Figure8Cusp} shows the induced triangulation of the link of the ideal vertex and indicates two peripheral curves $\mu$ and $\lambda$ generating the peripheral fundamental group. These are \emph{not} the standard meridian and longitude of the knot. The shape, respectively, $X$-coordinates of the left simplex are denoted by $z^e_s$ and $X_t$, whereas those for the right simplex are denoted by $w_s^e$ and $Y_t$.

\begin{figure}[htb]
\begin{center}
\scalebox{0.85}{
\input{figures_gen/Figure8KnotCusp.tex}}
\end{center}
\caption{Generators of the peripheral fundamental group of the figure-eight knot complement. The indicated coordinates are those involved in the cusp equations at level $l=2$ for $n=4$.}\label{fig:Figure8Cusp}
\end{figure}

We first consider the gluing equations for $n=3$. By examining Figure~\ref{fig:Fig8Ordered}, we see that there are $4$ edge points giving rise to the gluing equations

\begin{equation}
\begin{aligned}
z^{0101}_{0100}z^{0110}_{0100}z^{1010}_{1000}(w^{1100}_{1000})^{-1}(w^{1001}_{1000})^{-1}(w^{0011}_{0010})^{-1}&=1,\\
z^{0101}_{0001}z^{0110}_{0010}z^{1010}_{0010}(w^{1100}_{0100})^{-1}(w^{1001}_{0001})^{-1}(w^{0011}_{0001})^{-1}&=1,\\
z^{1100}_{1000}z^{1001}_{1000}z^{0011}_{0010}(w^{0101}_{0100})^{-1}(w^{0110}_{0100})^{-1}(w^{1010}_{1000})^{-1}&=1,\\
z^{1100}_{0100}z^{1001}_{0001}z^{0011}_{0001}(w^{0101}_{0001})^{-1}(w^{0110}_{0010})^{-1}(w^{1010}_{0010})^{-1}&=1,
\end{aligned}
\end{equation}
and four face points giving rise to the equations
\begin{equation}
\begin{aligned}
z^{1100}_{0010}z^{0110}_{1000}z^{1010}_{0100}(w^{0011}_{1000})^{-1}(w^{1001}_{0010})^{-1}(w^{1010}_{0001})^{-1}&=1,\\
z^{0101}_{1000}z^{1001}_{0100}z^{1100}_{0001}(w^{0011}_{0100})^{-1}(w^{0101}_{0010})^{-1}(w^{0110}_{0001})^{-1}&=1,\\
z^{0011}_{0100}z^{0101}_{0010}z^{0110}_{0001}(w^{0101}_{1000})^{-1}(w^{1001}_{0100})^{-1}(w^{1100}_{0001})^{-1}&=1,\\
z^{0011}_{1000}z^{1001}_{0010}z^{1010}_{0001}(w^{0110}_{1000})^{-1}(w^{1010}_{0100})^{-1}(w^{1100}_{0010})^{-1}&=1.
\end{aligned}
\end{equation}
The cusp equations for $\lambda$ are
\begin{equation}
\begin{aligned}
z^{0101}_{0001}X_{0111}w^{1100}_{0100}Y_{1110}z^{1001}_{0001}X_{1101}w^{0110}_{0010}Y_{1110}z^{1010}_{1000}X_{1110}w^{0011}_{0010}Y_{0111}z^{1001}_{1000}X_{1011}w^{0110}_{0100}Y_{0111}&=1,\\
z^{0101}_{0100}w^{1100}_{1000}z^{1001}_{1000}w^{0110}_{0100}z^{1010}_{0010}w^{0011}_{0001}z^{1001}_{0001}w^{0110}_{0010}&=1,
\end{aligned}
\end{equation}
and the cusp equations for $\mu$ are
\begin{equation}
\begin{aligned}
z^{1010}_{0010}X_{1011}w^{1010}_{0010}Y_{1011}&=1,\\
z^{1010}_{1000}w^{1010}_{1000}&=1.
\end{aligned}
\end{equation}

Using Magma~\cite{Magma} to compute the primary decomposition of the ideal generated by the above equations (together with the shape parameter relations~\eqref{eqn:shapeparamrel}, the formula \eqref{eq:XandProducts} for the $X$-coordinates in terms of the shapes, and an extra equation making sure that none of the shapes are $0$ and $1$) we obtain $4$ zero-dimensional algebraic components displayed below. For notational convenience, we write $z_i=z_{e_i}^{1100}$ (similarly for $w$).
\begin{equation}
\begin{aligned}
       z_0&=w_3+\frac{3}{2},\quad &z_1&=\frac{1}{2}w_3+\frac{1}{2},\quad &z_2&=-\frac{1}{2}w_3+\frac{1}{4},\quad &z_3= -w_3+1,\\
        w_0&=-w_3-\frac{1}{2},\quad &w_1&=2w_3+2,\quad &w_2&=-2w_3+1,\quad &w_3^2 + \frac{1}{2}w_3 + \frac{1}{2},
\end{aligned}
\end{equation}

%z1000 - w0001 - 3/2,
 %       z0100 - 1/2*w0001 - 1/2,
  %      z0010 + 1/2*w0001 - 1/4,
   %     z0001 + w0001 - 1,
    %    w1000 + w0001 + 1/2,
     %   w0100 - 2*w0001 - 2,
      %  w0010 + 2*w0001 - 1,
       % w0001^2 + 1/2*w0001 + 1/2

\begin{equation}
\begin{aligned}
z_0&=-w_3+1\quad &z_1&=w_3,\quad&z_2&=w_3,\quad&z_3=-w_3+1,\\
w_0&=w_3,\quad&w_1&=-w_3+1,\quad&w_2&=-w_3+1,\quad&w_3^2-w_3+1,
\end{aligned}
\end{equation}

%       z1000 + w0001 - 1,
 %       z0100 - w0001,
  %      z0010 - w0001,
   %     z0001 + w0001 - 1,
    %    w1000 - w0001,
     %   w0100 + w0001 - 1,
      %  w0010 + w0001 - 1,
       % w0001^2 - w0001 + 1

\begin{equation}
\begin{aligned}
z_0&=w_3-\frac{3}{2},\quad&z_1&=-2w_3+4,\quad&z_2&=2w_3-1,\quad&z_3=-w_3+1,\\
w_0&=-w_3+\frac{5}{2},\quad&w_1&=-\frac{1}{2}w_3+1,\quad&w_2&=\frac{1}{2}w_3-\frac{1}{4},\quad&w_3^2-\frac{5}{2}w_3+2,
\end{aligned}
\end{equation}

%       z1000 - w0001 + 3/2,
 %       z0100 + 2*w0001 - 4,
  %      z0010 - 2*w0001 + 1,
   %     z0001 + w0001 - 1,
    %    w1000 + w0001 - 5/2,
     %   w0100 + 1/2*w0001 - 1,
      %  w0010 - 1/2*w0001 + 1/4,
       % w0001^2 - 5/2*w0001 + 2

\begin{equation}
\begin{aligned}
z_0=z_1=z_2=z_3=-w_3+1,\qquad w_0=w_1=w_2=w_3,\qquad w_3^2-w_3+1.
\end{aligned}
\end{equation}

%        z1000 + w0001 - 1,
 %       z0100 + w0001 - 1,
  %      z0010 + w0001 - 1,
   %     z0001 + w0001 - 1,
    %    w1000 - w0001,
     %   w0100 - w0001,
      %  w0010 - w0001,
       % w0001^2 - w0001 + 1

\begin{remark}
Note that the first and third component is defined over $\Q(\sqrt{-7})$, whereas the second and fourth are defined over $\Q(\sqrt{-3})$. The fourth component corresponds to the representation arising from the geometric representation via the canonical irreducible map $\PSL(2,\C)\to\SL(3,\C)$. The fact that for this component the shapes of all subsimplices are equal for both of the simplices is a general phenomenon, see Garoufalidis-Thurston-Zickert~\cite[Theorem~11.3]{GaroufalidisThurstonZickert}.
\end{remark}

\begin{remark}
All representations except the second component lift uniquely to boundary-unipotent representations in $\SL(3,\C)$, so these are also detected by the Ptolemy variety (the non-geometric representations were ignored in~\cite{GaroufalidisThurstonZickert}, since they have $0$ volume). %For examples of computations of Ptolemy varieties, see~\cite{GaroufalidisThurstonZickert}.
\end{remark}

\begin{remark}
The Ptolemy varieties seem to be much better suited for exact computations. For $n=2$ exact computations of Ptolemy varieties are usually very fast when there are less than $15$ simplices (usually a fraction of a section on a laptop). In comparison, exact computations of the gluing equation varieties require a lot more time and memory and are often impractical when there are more than a few simplices. Using the monomial map $\mu$, one can obtain solutions to the gluing equations from the Ptolemy coordinates.
\end{remark}

The gluing equations for $n=4$ are shown in Table~\ref{table:Figure8KnotEqns}. 

\begin{table}[p]
\caption{Gluing equations for the figure-eight knot and $n=4$.}\label{table:Figure8KnotEqns}
\begin{center}
\setlength\extrarowheight{5pt}
\begin{tabular}{c|c}
Face A: & Face B:\\ \hline \hline
$z_{2000}^{0110} z_{1100}^{1010} z_{1010}^{1100} (w_{2000}^{0011})^{-1}   (w_{1010}^{1001})^{-1} (w_{1001}^{1010})^{-1}  =1$ & 
$z^{0101}_{2000} z^{1001}_{1100} z^{1100}_{1001} (w^{0011}_{0200})^{-1} (w^{0101}_{0110})^{-1} (w^{0110}_{0101})^{-1} = 1$\\ \hline
$z_{1100}^{0110} z_{0200}^{1010} z_{0110}^{1100} (w_{1010}^{0011})^{-1}   (w_{0020}^{1001})^{-1} (w_{0011}^{1010})^{-1}  =1$ &
$z^{0101}_{1100} z^{1001}_{0200} z^{1100}_{0101} (w^{0011}_{0110})^{-1} (w^{0101}_{0020})^{-1} (w^{0110}_{0011})^{-1} = 1$\\ \hline
$z_{1010}^{0110} z_{0110}^{1010} z_{0020}^{1100} (w_{1001}^{0011})^{-1}   (w_{0011}^{1001})^{-1} (w_{0002}^{1010})^{-1}  =1$ &
$z^{0101}_{1001} z^{1001}_{0101} z^{1100}_{0002} (w^{0011}_{0101})^{-1} (w^{0101}_{0011})^{-1} (w^{0110}_{0002})^{-1} = 1$ \\ \hline
Face C: & Face D:\\ \hline \hline
$z^{0011}_{0200} z^{0101}_{0110} z^{0110}_{0101} (w^{0101}_{2000})^{-1} (w^{1001}_{1100})^{-1} (w^{1100}_{1001})^{-1} =1$ &
$z^{0011}_{2000} z^{1001}_{1010} z^{1010}_{1001} (w^{0110}_{2000})^{-1} (w^{1010}_{1100})^{-1} (w^{1100}_{1010})^{-1}=1$\\ \hline
$z^{0011}_{0110} z^{0101}_{0020} z^{0110}_{0011} (w^{0101}_{1100})^{-1} (w^{1001}_{0200})^{-1} (w^{1100}_{0101})^{-1} =1$ &
$z^{0011}_{1010} z^{1001}_{0020} z^{1010}_{0011} (w^{0110}_{1100})^{-1} (w^{1010}_{0020})^{-1} (w^{1100}_{0110})^{-1}=1$\\ \hline
$z^{0011}_{0101} z^{0101}_{0011} z^{0110}_{0002} (w^{0101}_{1001})^{-1} (w^{1001}_{0101})^{-1} (w^{1100}_{0002})^{-1} =1$ &
$z^{0011}_{1001} z^{1001}_{0011} z^{1010}_{0002} (w^{0110}_{1010})^{-1} (w^{1010}_{0002})^{-1} (w^{1100}_{0020})^{-1}=1$ \\ \hline
Edge $\rightarrow$ & Edge $\twoheadrightarrow$ \\ \hline \hline
$z_{2000}^{1010} z_{0200}^{0110} z_{0200}^{0101} (w_{2000}^{1100})^{-1} (w_{2000}^{1001})^{-1} (w_{0020}^{0011})^{-1} = 1$ &
$z_{2000}^{1100} z_{2000}^{1001} z_{0020}^{0011} (w_{2000}^{1010})^{-1} (w_{0200}^{0110})^{-1} (w_{0200}^{0101})^{-1}=1$ \\ \hline
$z_{1010}^{1010} z_{0110}^{0110} z_{0101}^{0101} (w_{1100}^{1100})^{-1} (w_{1001}^{1001})^{-1} (w_{0011}^{0011})^{-1}=1$ &
$z_{1100}^{1100} z_{1001}^{1001} z_{0011}^{0011} (w_{1010}^{1010})^{-1} (w_{0110}^{0110})^{-1} (w_{0101}^{0101})^{-1}=1$ \\ \hline
$z_{0020}^{1010} z_{0020}^{0110} z_{0002}^{0101} (w_{0200}^{1100})^{-1} (w_{0002}^{1001})^{-1} (w_{0002}^{0011})^{-1}=1$ &
$z_{0200}^{1100} z_{0002}^{1001} z_{0002}^{0011} (w_{0020}^{1010})^{-1} (w_{0020}^{0110})^{-1} (w_{0002}^{0101})^{-1}=1$ \\ \hline
Interior equation for $z$& Interior equation for $w$\\ \hline \hline
$z_{1100}^{0011}z^{0101}_{1010}z^{0110}_{1001}z^{1001}_{0110}z^{1010}_{0101}z^{1100}_{0011}=1$&$w_{1100}^{0011}w^{0101}_{1010}w^{0110}_{1001}w^{1001}_{0110}w^{1010}_{0101}w^{1100}_{0011}=1$
\end{tabular}
\end{center}

\begin{center}
\setlength\extrarowheight{5pt}
\begin{tabular}{llllll}
\multicolumn{4}{c}{Cusp equations for $\mu$} \\ \hline \hline
level $l=1$ &
$z^{1010}_{0020}X_{1021}X_{1012}$ & $w^{1010}_{0020}Y_{1021}Y_{1012}$ & $=1$ 
\\ \hline
level $l=2$ &
$z^{1010}_{1010}X_{2011}$ & $w^{1010}_{1010}Y_{2011}$ & $=1$\\ \hline
level $l=3$ &
$z^{1010}_{2000}$ & $w^{1010}_{2000}$ & $=1$\\ \hline
\end{tabular}
\end{center}

\begin{center}
\setlength\extrarowheight{5pt}
\begin{tabular}{llllll}
\multicolumn{6}{c}{Cusp equations for $\lambda$} \\ \hline \hline
level $l=1$ &
$w^{0110}_{0200}Y_{0112}Y_{0211}$ & $z^{0101}_{0002}X_{0112}X_{0121}$ & $w^{1100}_{0200}Y_{1120}Y_{1210}$ &
$z^{1001}_{0002}X_{1102}X_{1201}$\\
&$w^{0110}_{0020}Y_{1120}Y_{2110}$& $z_{2000}^{1010}X_{1210}X_{2110}$ & $w^{0011}_{0020}Y_{0121}Y_{0211}$ &
$z^{1001}_{2000}X_{1021}X_{2011}$ & $=1$\\ \hline
level $l=2$ &
$w^{0110}_{0110}Y_{0121}$ & $z^{0101}_{0101}X_{0211}$ & $w^{1100}_{1100}Y_{2110}$ &
$z^{1001}_{1001}X_{2101}$ \\
& $w^{0110}_{0110}Y_{1210}$ & $z_{1010}^{1010}X_{1120}$ & $w^{0011}_{0011}Y_{0112}$ &
$z^{1001}_{1001}X_{1012}$ & $=1$\\ \hline
level $l=3$ &
$w^{0110}_{0020}$ & $z^{0101}_{0200}$ & $w^{1100}_{2000}$ &
$z^{1001}_{2000}$ \\
& $w^{0110}_{0200}$ & $z_{0020}^{1010}$ & $w^{0011}_{0002}$ &
$z^{1001}_{0002}$ & $=1$\\ \hline
\end{tabular}
\end{center}
\end{table}

%\begin{definition}
%An \emph{orientation agnostic} shape assignment is a map
%\begin{equation}%\label{eqn:shapeparamrel}
%\begin{gathered}
%\begin{aligned}
%z^{0011}_s=z^{1100}_s=\frac{1}{1-z^{0101}_s}\\
%z^{0110}_s=z^{1001}_s=\frac{1}{1-z^{0011}_s}\\
%z^{0101}_s=z^{1010}_s=\frac{1}{1-z^{0110}_s}.
%\end{aligned}
%\\
%z_s^{e,\epsilon}=(z_s^{e,-\epsilon})^{-1}
%\end{gathered}
%\end{equation}
%\end{definition}

%\begin{definition}
%An \emph{order agnostic} Ptolemy assignment is a map
%\begin{equation}
%\dot{\Delta}^3_n(\Z)\times S_4\to\C\setminus\{0\},\qquad t\mapsto c_t
%\end{equation}
%\begin{equation}
%\begin{gathered}
%c_{s+0011}c_{s+1100}+c_{s+0110}c_{s+1001}=c_{s+0101}c_{s+1010}\\
%c_t^{\sigma\tau}=(\sigma c^\tau)_{\sigma t}
%\end{gathered}
%\end{equation}
%\end{definition}

\bibliographystyle{plain}
\bibliography{BibFile}

\end{document}